\spnewtheorem{thm}[theorem]{Theorem}{\bfseries}{\itshape}
\spnewtheorem{lem}[theorem]{Lemma}{\bfseries}{\itshape}
\spnewtheorem{prop}[theorem]{Proposition}{\bfseries}{\itshape}
\spnewtheorem{cor}[theorem]{Corollary}{\bfseries}{\itshape}
\spnewtheorem{defn}[theorem]{Definition}{\bfseries}{\rmfamily}
\spnewtheorem{assum}[theorem]{Assumption}{\bfseries}{\rmfamily}
\spnewtheorem{rmk}[theorem]{Remark}{\bfseries}{\rmfamily}
\spnewtheorem{exam}[theorem]{Example}{\bfseries}{\rmfamily}
\spnewtheorem{conj}[theorem]{Conjecture}{\bfseries}{\rmfamily}
\spnewtheorem{prbm}[theorem]{Problem}{\bfseries}{\rmfamily}
\spnewtheorem{framework}[theorem]{Framework}{\bfseries}{\rmfamily}
\spnewtheorem*{notation}{Notation}{\bfseries}{\rmfamily}
\newcommand{\norm}[1]{\left\lVert#1\right\rVert}
\newcommand{\trinorm}[1]{{\left\vert\kern-0.25ex\left\vert\kern-0.25ex\left\vert #1
    \right\vert\kern-0.25ex\right\vert\kern-0.25ex\right\vert}}
\newcommand{\indicator}[1]{\mathbf{1}_{#1}} %indicator
\newcommand{\closure}[1]{\overline{#1}}
\newcommand{\abs}[1]{\left\lvert#1\right\rvert} %absolute value
\newcommand{\id}{\operatorname{id}}
\newcommand{\supp}{\operatorname{supp}}
\newcommand{\sgn}{\operatorname{sgn}}
\newcommand{\contfunc}{C}
\newcommand{\diam}{\operatorname{diam}}
\newcommand{\dist}{\operatorname{dist}}
\newcommand{\rweight}{\rho} %energy refinement weight
\newcommand{\hdim}{d_{\mathrm{f}}}
\newcommand{\phdim}{d_{\mathrm{f},p}}
\newcommand{\pwalk}{d_{\mathrm{w},p}}
\newcommand{\measure}{m}
\newcommand{\metric}{d}
\newcommand{\pmetric}{\widehat{R}}
\newcommand{\KS}{B_{p,\infty}^{\bm{k}}}
\newcommand{\KSform}{\mathcal{E}_{p}^{\bm{k}}}
\newcommand{\KSem}{\Gamma_{p}^{\bm{k}}}
\newcommand{\bclosureKS}{\mathcal{D}_{p,\infty}^{\bm{k},b}}
\newcommand{\cclosureKS}{\mathcal{D}_{p,\infty}^{\bm{k},c}}
\newcommand{\mr}[1]{{\tt \href{http://mathscinet.ams.org/mathscinet-getitem?mr=#1}{MR#1}}}
\newcommand{\arxiv}[1]{{\tt \href{http://arxiv.org/abs/#1}{arXiv:#1}}}
\begin{document}

\title*{Korevaar--Schoen $p$-energy forms and associated $p$-energy measures on fractals}
% Use \titlerunning{Short Title} for an abbreviated version of
% your contribution title if the original one is too long
\author{Naotaka Kajino\orcidID{0000-0002-0284-4608} and\\ Ryosuke Shimizu\orcidID{0009-0005-4039-3771}}
% Use \authorrunning{Short Title} for an abbreviated version of
% your contribution title if the original one is too long
\institute{Naotaka Kajino \at Research Institute for Mathematical Sciences, Kyoto University, Kitashirakawa-Oiwake-cho, Sakyo-ku, Kyoto 606-8502, Japan \email{nkajino@kurims.kyoto-u.ac.jp}
\and Ryosuke Shimizu \at Waseda Research Institute for Science and Engineering, Waseda University, 3-4-1 Okubo, Shinjuku-ku, Tokyo 169-8555, Japan \email{r-shimizu@aoni.waseda.jp}}
%
% Use the package "url.sty" to avoid
% problems with special characters
% used in your e-mail or web address
%
\maketitle
\renewcommand{\theequation}{\thesection.\arabic{equation}} % should be deleted???

\abstract*{We construct good $p$-energy forms on metric measure spaces as pointwise subsequential limits of Besov-type $p$-energy functionals under certain geometric/analytic conditions.
Such forms are often called \emph{Korevaar--Schoen $p$-energy forms} in the literature. 
As an advantage of our approach, the associated $p$-energy measures are obtained and investigated. 
We also prove that our construction is applicable to the settings of Kigami [\emph{Mem.\ Eur.\ Math.\ Soc.}\ \textbf{5} (2023)] and Cao--Gu--Qiu [\emph{Adv.\ Math.}\ \textbf{405} (2022), no.\ 108517], yields Korevaar--Schoen $p$-energy forms comparable to the $p$-energy forms constructed in these papers, and can be further modified in the case of self-similar sets to obtain self-similar $p$-energy forms keeping most of the good properties of Korevaar--Schoen ones.}
%In particular, our result is applicable to affine nested fractals under a certain regularity condition and for all examples in the above-mentioned work by Kigami.

%\begin{picture}(0,0)\put(-42,200){\small Version of \today \, (ver 0.94)}\end{picture}
\abstract{We construct good $p$-energy forms on metric measure spaces as pointwise subsequential limits of Besov-type $p$-energy functionals under certain geometric/analytic conditions.
Such forms are often called \emph{Korevaar--Schoen $p$-energy forms} in the literature. 
As an advantage of our approach, the associated $p$-energy measures are obtained and investigated. 
We also prove that our construction is applicable to the settings of Kigami [\emph{Mem.\ Eur.\ Math.\ Soc.}\ \textbf{5} (2023)] and Cao--Gu--Qiu [\emph{Adv.\ Math.}\ \textbf{405} (2022), no.\ 108517], yields Korevaar--Schoen $p$-energy forms comparable to the $p$-energy forms constructed in these papers, and can be further modified in the case of self-similar sets to obtain self-similar $p$-energy forms keeping most of the good properties of Korevaar--Schoen ones.}
%In particular, our result is applicable to affine nested fractals under a certain regularity condition and for all examples in the above-mentioned work by Kigami.
%Please use the 'starred' version of the \texttt{abstract} command for typesetting the text of the online abstracts (cf. source file of this chapter template \texttt{abstract}) and include them with the source files of your manuscript. Use the plain \texttt{abstract} command if the abstract is also to appear in the printed version of the book.

\keywords{Korevaar--Schoen $p$-energy form, $p$-energy measure, generalized $p$-contraction property, $p$-resistance form, self-similar set, self-similar $p$-energy form}\\[10pt]
\emph{2020 Mathematics Subject Classification:} Primary 28A80, 46E36, 39B62; secondary 31C25, 31C45, 31E05

%Table of contents for this article 
\section*{Contents}
\setcounter{minitocdepth}{2}
\dominitoc

%===== introduction =====
\section{Introduction}\label{sec.intr}
%%%
In this article, assuming that $(K,d)$ is a locally compact separable metric space and that $m$ is a Radon measure (i.e., a Borel measure finite on any compact subset) on $K$ with full topological support (i.e., strictly positive on any non-empty open subset), we consider \emph{Korevaar-Schoen}-type $p$-energy forms on $(K,d,m)$, where $p \in (1,\infty)$.
Namely, we are concerned with a functional
\begin{equation*}
    E_{p,s}(u) \coloneqq \limsup_{r \downarrow 0}\int_{K}\fint_{B_{d}(x,r)}\frac{\abs{u(x) - u(y)}^{p}}{r^{sp}}\,m(dy)m(dx), \quad u \in L^{p}(K,m), 
\end{equation*}
where $B_{d}(x,r) \coloneqq \{ y \in K \mid d(x,y) < r \}$ and $\fint_{A}(\,\cdot\,)\,dm \coloneqq \frac{1}{m(A)}\int_{A}(\,\cdot\,)\,dm$ for a Borel subset $A$ of $K$ with $m(A) \in (0,\infty)$. 
Here $s \in (0,\infty)$ is a parameter controlling the smoothness of functions. 
In the classical settings, the $n$-dimensional Euclidean space $(K,d,m) = (\mathbb{R}^{n},\abs{\,\cdot\,}, dx)$ for example, the choice $s = 1$ is natural.
Indeed, one can show (see, e.g., \cite[Corollary 6.3]{LPZ22+} and \cite[Theorem 7.13]{Haj03}; see also \cite[Theorem 3.5]{Gor22} for a related result) that there exists $C \in (0,\infty)$ such that the distributional gradient $\nabla u$ of any Sobolev function $u \in W^{1,p}(\mathbb{R}^n)$ satisfies
\[
C^{-1} \int_{\mathbb{R}^n} \abs{\nabla u}^p\,dx \le  \limsup_{r \downarrow 0}\int_{\mathbb{R}^n}\fint_{\abs{y - x} < r}\frac{\abs{u(x) - u(y)}^{p}}{r^{p}}\,dydx\le C  \int_{\mathbb{R}^n} \abs{\nabla u}^p\,dx.
\]
In particular, the domain of the functional $E_{p,1}$ is given by the $(1,p)$-Sobolev space $W^{1,p}(\mathbb{R}^{n})$ in this case.
Note that the functional $E_{p,1}$ can be considered as a variant of the functional considered by Korevaar and Schoen in \cite{KS}, where they constructed a $(1,p)$-Sobolev space $W^{1,p}(\Omega,X)$ of maps from a domain $\Omega$ in a Riemannian manifold to a complete metric space $X$. 
On the basis of an idea in \cite{KS}, Koskela and MacManus \cite{KoMa} introduced a $(1,p)$-Sobolev space $\mathcal{L}^{1,p}$ on any metric measure space satisfying the volume doubling property and the Poincar\'e inequality (in terms of weak upper gradients), a so-called \emph{PI-space}, as the domain of a functional similar to $E_{p,1}$, and showed that $\mathcal{L}^{1,p}$ coincides with the $(1,p)$-Sobolev spaces introduced by Haj\l asz \cite{Haj96} and Haj\l asz--Koskela \cite{HK95}; see \cite[Theorem 4.5]{KoMa}.
For any PI-space $(K,d,m)$, one can show (see, e.g., \cite[Corollary 6.3]{LPZ22+} and \cite[Corollary 10.4.6]{HKST}) that $\mathcal{L}^{1,p} = \{ u \in L^{p}(K,m) \mid E_{p,1}(u) < \infty \}$, and it turns out that the exponent $s=1$ is critical in the sense that for every $s > 1$, any function $u \in L^{p}(K,m)$ with $E_{p,s}(u) < \infty$ is constant $m$-a.e.\ if $K$ is connected.
(See \cite[Chapter 10]{HKST} for various ways to define $(1,p)$-Sobolev spaces on $(K,d,m)$ and relations among them.)
Recently, for more general $(K,d,m)$ which may not be a PI-space, Baudoin \cite{Bau22+} proposed to define a $(1,p)$-Sobolev space $\mathrm{KS}^{1,p}$ as the domain $\{ u \in L^{p}(K,m) \mid E_{p,s}(u) < \infty \}$ of $E_{p,s}$ with $s = s_{p}$, where $s_{p}$ is the \emph{critical $L^{p}$-Besov exponent} defined by
\begin{equation*}
s_{p} \coloneqq \sup\{ s \in (0,\infty) \mid \text{$E_{p,s}(u) < \infty$ for some non-constant $u \in L^{p}(K,m)$} \},
\end{equation*}
and discussed some properties of $\mathrm{KS}^{1,p}$ such as Sobolev-type embeddings.

The aim of this article is to construct as nice a $p$-energy form $\mathcal{E}_{p}^{\mathrm{KS}}$ comparable to $E_{p,s_{p}}$ as possible.
Such $\mathcal{E}_{p}^{\mathrm{KS}}$ is desired to satisfy at least the following \emph{generalized $p$-contraction property} (see Definition \ref{defn.GC}): if $q_{1} \in (0,p]$, $q_{2} \in [p,\infty]$, $n_{1},n_{2} \in \mathbb{N}$ and $T = (T_{1},\dots,T_{n_{2}}) \colon \mathbb{R}^{n_{1}} \to \mathbb{R}^{n_{2}}$ satisfies $T(0) = 0$ and $\norm{T(x) - T(y)}_{\ell^{q_{2}}} \le \norm{x - y}_{\ell^{q_{1}}}$ for any $x,y \in \mathbb{R}^{n_{1}}$, then for any $\bm{u} = (u_{1},\dots,u_{n_{1}}) \in (\mathrm{KS}^{1,p})^{n_{1}}$, 
\begin{equation}\label{intro.GCP}
	T(\bm{u}) \in (\mathrm{KS}^{1,p})^{n_{2}} \, \text{ and } \,
	\norm{\bigl(\mathcal{E}_{p}^{\mathrm{KS}}(T_{l}(\bm{u}))^{1/p}\bigr)_{l = 1}^{n_{2}}}_{\ell^{q_{2}}} \le \norm{\bigl(\mathcal{E}_{p}^{\mathrm{KS}}(u_{k})^{1/p}\bigr)_{k = 1}^{n_{1}}}_{\ell^{q_{1}}}. 
\end{equation}
The property \eqref{intro.GCP} has been introduced in \cite{KS.gc} as arguably the strongest possible form of contraction properties of $L^{p}$-like energy forms.
As revealed in \cite{KS.gc}, \eqref{intro.GCP} plays important roles in developing \emph{nonlinear potential theory} in general frameworks including typical self-similar fractals, on which one can construct $p$-energy forms via discrete approximations as established in \cite{CGQ22,HPS04,Kig23,MS+,Shi24} (see also \cite{GYZ23} for a different approach). 
A problem with $E_{p,s}$ is that $E_{p,s}$ may not satisfy \eqref{intro.GCP} because of the operation of taking limsup. 
To avoid this issue, we would like to take a limit (in some sense) of the Besov-type functionals 
\begin{equation}\label{intro.Besov-p-s}
	E_{p,s}(u,r) \coloneqq \int_{K}\fint_{B_{d}(x,r)}\frac{\abs{u(x) - u(y)}^{p}}{r^{sp}}\,m(dy)m(dx)
\end{equation}
as $r \downarrow 0$. 
This strategy does not work for all $s \in (0,\infty)$, but does work in the critical case $s = s_{p}$ in the presence of the following \emph{weak monotonicity} type estimate, which turns out to hold in many situations: there exists a constant $C \in [1,\infty)$ such that for any $u \in L^{p}(K,m)$ with $\sup_{r > 0}E_{p,s_{p}}(u,r) < \infty$,
\begin{equation}\label{wm.intro}
    \sup_{r > 0}E_{p,s_{p}}(u,r) \le C\liminf_{r \downarrow 0}E_{p,s_{p}}(u,r).
\end{equation}
This condition \eqref{wm.intro} was introduced in \cite{Bau22+} (see Example \ref{ex.KS}).
Our first main result, Theorem \ref{thm.KS-energy}, gives a desired $p$-energy form $\mathcal{E}_{p}^{\mathrm{KS}}$ as a subsequential limit of $\{ E_{p,s_{p}}(\,\cdot\,,r) \}_{r > 0}$ under the assumption of \eqref{wm.intro}.
More precisely, in Theorem \ref{thm.KS-energy}, we establish a subsequential limit of the energy functionals given by
\[
\int_{K}\fint_{B_{d}(x,r)}\frac{\sgn\bigl(u(x) - u(y)\bigr)\abs{u(x) - u(y)}^{p - 1}(v(x) - v(y))}{r^{s_{p}p}}\,m(dy)m(dx),
\]
that is, we directly construct a two-variable version $\mathcal{E}_{p}^{\mathrm{KS}}(u; v)$, which is the counterpart of 
\begin{equation}\label{p-form.Euc}
    (u,v) \mapsto \int_{\mathbb{R}^{n}}\abs{\nabla u}^{p - 2}\langle \nabla u, \nabla v\rangle\,dx
\end{equation}
in the Euclidean case, where $\langle \,\cdot\,,\,\cdot\, \rangle$ denotes the inner product on $\mathbb{R}^{n}$.
An advantage of our construction is that we can obtain a good quantitative estimate on the continuity of $\mathcal{E}_{p}^{\mathrm{KS}}(u; v)$ with respect to the nonlinear part $u$.
Namely, unlike our earlier result in \cite{KS.gc} (see \eqref{eq:form-nonlinear-Hoelder-GC} below), the present construction of $\mathcal{E}_{p}^{\mathrm{KS}}$ allows us to achieve the best H\"{o}lder continuity exponent as expected from the formal expression \eqref{p-form.Euc}, i.e., to show that there exists a constant $C \in (0,\infty)$ such that for any $u_{1},u_{2},v \in \mathrm{KS}^{1,p}$, 
\begin{equation*}
	\abs{\mathcal{E}_{p}^{\mathrm{KS}}(u_{1}; v) - \mathcal{E}_{p}^{\mathrm{KS}}(u_{2}; v)}
	\le C\biggl[\max_{i \in \{ 1,2 \}}\mathcal{E}_{p}^{\mathrm{KS}}(u_{i})\biggr]^{\frac{(p - 2)^{+}}{p}}\mathcal{E}_{p}^{\mathrm{KS}}(u_{1} - u_{2})^{\frac{(p - 1) \wedge 1}{p}}\mathcal{E}_{p}^{\mathrm{KS}}(v)^{\frac{1}{p}}
\end{equation*}
(see \eqref{KS-conti1}), which is not known for the $p$-energy forms constructed in the preceding works \cite{CGQ22,GYZ23,HPS04,Kig23,MS+,Shi24}. 
See Section \ref{sec.Kslimit} for details. 

Another superiority of our direct approach is that we can introduce the \emph{$p$-energy measures} associated with $\mathcal{E}_{p}^{\mathrm{KS}}$. 
Roughly speaking, for each $u \in \mathrm{KS}^{1,p}$, the $p$-energy measure $\Gamma_{p}^{\mathrm{KS}}\langle u \rangle$ is a Radon measure on $K$ playing the same role as $\abs{\nabla u}^{p}\,dx$ in the Euclidean case. 
Since we have no counterpart of $\abs{\nabla u}$, it is highly non-trivial to construct  $\Gamma_{p}^{\mathrm{KS}}\langle u \rangle$; indeed, it is not known how to construct canonical $p$-energy measures associated with a given $p$-energy form without relying on the self-similarity of the underlying space and the $p$-energy form (see \cite[p.~113]{Kig23} and \cite[Problem 12.5]{MS+}). 
However, our construction of $\mathcal{E}_{p}^{\mathrm{KS}}$ allows us to employ a naive approach as described below. 
From the Leibniz and chain rules for the usual gradient operator $\nabla$ on $\mathbb{R}^{n}$, we easily see that for any $\varphi,u \in \contfunc^{1}(\mathbb{R}^{n})$, 
\begin{align*}
	\varphi\abs{\nabla u}^{p} 
	= \abs{\nabla u}^{p - 2}\langle \nabla u, \nabla(u\varphi) \rangle - \left(\frac{p - 1}{p}\right)^{p - 1}\abs{\nabla \bigl(\abs{u}^{\frac{p}{p - 1}}\bigr)}^{p - 2}\bigl\langle \nabla\bigl(\abs{u}^{\frac{p}{p - 1}}\bigr),\nabla\varphi \bigr\rangle. 
\end{align*}
Since $\mathcal{E}_{p}^{\mathrm{KS}}(u; v)$ is expected to be the counterpart of $\int_{\mathbb{R}^{n}}\abs{\nabla u}^{p - 2}\langle \nabla u, \nabla v \rangle\,dx$, the $p$-energy measure $\Gamma_{p}^{\mathrm{KS}}\langle u \rangle$ of $u \in \mathrm{KS}^{1,p}$ associated with $\mathcal{E}_{p}^{\mathrm{KS}}$ should be characterized as a unique Radon measure on $K$ such that for any $\varphi \in \mathrm{KS}^{1,p} \cap \contfunc_{c}(K)$, 
\begin{equation}\label{intro.pemformula}
	\int_{K}\varphi\,d\Gamma_{p}^{\mathrm{KS}}\langle u \rangle 
	= \mathcal{E}_{p}^{\mathrm{KS}}(u; u\varphi) - \left(\frac{p - 1}{p}\right)^{p - 1}\mathcal{E}_{p}^{\mathrm{KS}}\bigl(\abs{u}^{\frac{p}{p - 1}}; \varphi\bigr) \eqqcolon \Psi_{p,u}^{\mathrm{KS}}(\varphi). 
\end{equation}
In fact, in the case $p = 2$, this is exactly the same as the definition of energy measures in the theory of regular symmetric Dirichlet forms (see \cite[(3.2.14)]{FOT}). 
By virtue of our direct construction, we can show that $\Psi_{p,u}^{\mathrm{KS}}$ is a bounded positive linear functional on $\mathrm{KS}^{1,p} \cap \contfunc_{c}(K)$ and we obtain $\Gamma_{p}^{\mathrm{KS}}\langle u \rangle$ by applying the Riesz--Markov--Kakutani representation theorem under the assumption that $\mathrm{KS}^{1,p} \cap \contfunc_{c}(K)$ is dense in $\contfunc_{c}(K)$ with respect to the uniform norm. 
We also establish some basic properties of $\Gamma_{p}^{\mathrm{KS}}\langle u \rangle$ like the generalized $p$-contraction property and the chain rule. 
See Section \ref{sec.pEMKS} for details. 

As mentioned above, our construction of $\mathcal{E}_{p}^{\mathrm{KS}}(u; v)$ and $\Gamma_{p}^{\mathrm{KS}}\langle u \rangle$ relies heavily on the assumption of the weak monotonicity estimate \eqref{wm.intro}, and fortunately it turns out that \eqref{wm.intro} holds in many situations.
As proved in \cite[Theorem 5.1]{Bau22+} (see also \cite[Corollary 6.3]{LPZ22+}), \eqref{wm.intro} holds on any PI-spaces.
Besides, \eqref{wm.intro} has been proved for the Vicsek set and for the Sierpi\'{n}ski gasket in \cite[Theorems 6.3 and 6.6]{Bau22+}, for nested fractals in \cite{GYZ23+,CGYZ+}, for generalized Sierpi\'{n}ski carpets with $p$ strictly greater than the Ahlfors regular conformal dimension in \cite{Yan+}, and in a general setting including the Sierpi\'{n}ski carpet with any $p \in (1,\infty)$ in \cite[Theorem 7.1]{MS+}.
See also \cite{HJW05} for related results for the Sierpi\'{n}ski gasket.
As extensions of these results, we present two general settings where we can show \eqref{wm.intro}.
The first one described in Section \ref{sec.Kig} (see Assumptions \ref{assum.CH2} and \ref{assum.CH-ss}) is based on the notion of \emph{$p$-conductive homogeneity} due to \cite{Kig23}, and includes the settings of \cite[Theorems 3.21 and 4.6]{Kig23} except that we need to assume the Ahlfors regularity of $m$, which is not assumed in \cite[Theorem 3.21]{Kig23}. 
(This setting is very similar to that in \cite[Section 7]{MS+}, although there are indeed slight differences between the setting of discrete approximations of $(K,d)$ in \cite{Kig23} and that in \cite{MS+}.)
In particular, \emph{all} the examples of self-similar sets in \cite[Sections 4.4--4.6]{Kig23} and those planned to be treated in \cite{KO+} fall within the framework of our main results in Section \ref{sec.Kig} (see also Remark \ref{rmk.assm-pch}-\ref{it:rmk-pCH}).
The second one presented in Section \ref{sec.CGQ} (see Assumption \ref{assum.pRes}) treats the case of \emph{post-critically  finite self-similar structures}.
In particular, by virtue of the work \cite{CGQ22}, this framework includes all \emph{affine nested fractals}, which were covered only partially in \cite{Kig23} (see Remark \ref{rmk.CGQ-Kig}-\ref{it.difference-CGQ-Kig}).

Very recently, for any $p \in [1,\infty)$, Alonso-Ruiz and Baudoin \cite{AB24+} constructed $p$-energy forms and $p$-energy measures on PI-spaces as $\Gamma$-limits of $E_{p,1}$ and $\overline{\Gamma}$-limits of localized versions of $E_{p,1}$, respectively.
Their framework is very different from ours although we do not deal with the case $p = 1$.
Indeed, $s_{p} = 1$ on PI-spaces while $s_{p} > 1$ on generalized Sierpi\'{n}ski carpets and some Sierpi\'{n}ski gaskets as proved in \cite[Section 9]{KS.gc}.
Also, our construction of $p$-energy measures enables us to prove some fundamental properties of them, which were not shown in \cite{AB24+}. 

This article is organized as follows.
In Section \ref{sec.GC}, we introduce the notion of $p$-energy form and the generalized $p$-contraction property and recall some basic consequences of this property, following \cite{KS.gc}. 
In Section \ref{sec.Kslimit}, we present basic notation related to the Besov-type functionals \eqref{intro.Besov-p-s} and, under the assumptions of \eqref{wm.intro} and some mild conditions, we construct a good $p$-energy form $\mathcal{E}_{p}^{\mathrm{KS}}$ as a subsequential pointwise limit of $\{ E_{p,s_{p}}(\,\cdot\,,r) \}_{r > 0}$.
We also recall the notion of $p$-resistance form and present a sufficient condition for $\mathcal{E}_{p}^{\mathrm{KS}}$ to be a $p$-resistance form in the end of Section \ref{sec.Kslimit}. 
Section \ref{sec.pEMKS} is devoted to discussions on the $p$-energy measures associated with $\mathcal{E}_{p}^{\mathrm{KS}}$. 
(More precisely, we prove these results in Sections \ref{sec.Kslimit} and \ref{sec.pEMKS} in a synthetic way for a more general family of kernels.)
In Section \ref{sec.Kig}, we first recall from \cite{Kig23} the setting of $p$-conductively homogeneous compact metric spaces and then verify \eqref{wm.intro} for them under some geometric assumptions. 
In Section \ref{sec.CGQ}, we show \eqref{wm.intro} for post-critically finite self-similar structures under the assumption of the existence of nice self-similar $p$-resistance forms.  
In Sections \ref{sec.Kig} and \ref{sec.CGQ}, we also show \emph{localized energy estimates}, some estimates on localized versions $\int_{E}\fint_{B_{d}(x,r)}\frac{\abs{u(x) - u(y)}^{p}}{r^{ps_{p}}}\,m(dy)m(dx)$ of $E_{p,s_{p}}(u)$ for any Borel subset $E$ of $K$, and that our construction can be further modified in the case of self-similar sets to obtain self-similar $p$-energy forms keeping most of the good properties of Korevaar--Schoen ones.

\begin{notation}
    Throughout this paper, we use the following notation and conventions.
    \begin{enumerate}[label=\textup{(\arabic*)},align=left,leftmargin=*,topsep=2pt,parsep=0pt,itemsep=2pt]
	    \item For $[0,\infty]$-valued quantities $A$ and $B$, we write $A \lesssim B$ to mean that there exists an implicit constant $C \in (0,\infty)$ depending on some unimportant parameters such that $A \leq CB$. We write $A \asymp B$ if $A \lesssim B$ and $B \lesssim A$.
		\item For a set $A$, we let $\#A \in \mathbb{N} \cup \{ 0,\infty \}$ denote the cardinality of $A$.
	    \item We set $\sup\emptyset \coloneqq 0$ and $\inf\emptyset \coloneqq \infty$. We write $a \vee b \coloneqq \max\{ a, b \}$, $a \wedge b \coloneqq \min\{ a, b \}$ and $a^{+} \coloneqq a \vee 0$ for $a, b \in [-\infty,\infty]$, and we use the same notation also for $[-\infty,\infty]$-valued functions and equivalence classes of them. All numerical functions in this paper are assumed to be $[-\infty,\infty]$-valued.
	    \item Let $X$ be a non-empty set. We define $\id_{X}\colon X \to X$ by $\id_{X}(x) \coloneqq x$, $\indicator{A}=\indicator{A}^{X} \in \mathbb{R}^{X}$ for $A \subseteq X$ by $\indicator{A}(x)\coloneqq\indicator{A}^{X}(x)\coloneqq \begin{cases} 1 \quad &\text{if $x \in A$,} \\ 0 \quad &\text{if $x \not\in A$,} \end{cases}$ and set $\norm{u}_{\sup} \coloneqq \norm{u}_{\sup,X} \coloneqq \sup_{x \in X}\abs{u(x)}$ for $u \colon X \to [-\infty,\infty]$. 
	    \item We define $\sgn\colon\mathbb{R}\to\mathbb{R}$ by $\sgn(a) \coloneqq \indicator{(0, \infty)}(a) - \indicator{(-\infty,0)}(a)$. 
        \item Let $X$ be a topological space. The Borel $\sigma$-algebra of $X$ is denoted by $\mathcal{B}(X)$, the closure of $A \subseteq X$ in $X$ by $\closure{A}^{X}$, and we say that $A \subseteq X$ is \emph{relatively compact} in $X$ if and only if $\closure{A}^{X}$ is compact. We set $\contfunc(X) \coloneqq \{ u \in \mathbb{R}^{X} \mid \text{$u$ is continuous} \}$, $\supp_{X}[u] \coloneqq \closure{X \setminus u^{-1}(0)}^{X}$ for $u \in \contfunc(X)$, $\contfunc_{b}(X) \coloneqq \{ u \in \contfunc(X) \mid \norm{u}_{\sup} < \infty \}$, $\contfunc_{c}(X) \coloneqq \{ u \in \contfunc(X) \mid \text{$\supp_{X}[u]$ is compact} \}$, and $\contfunc_{0}(X) \coloneqq \closure{\contfunc_{c}(X)}^{\contfunc_{b}(X)} = \{ u \in \contfunc(X) \mid \text{$u^{-1}(\mathbb{R} \setminus (-\varepsilon,\varepsilon))$ is compact for any $\varepsilon \in (0,\infty)$}\}$, where $\contfunc_{b}(X)$ is equipped with the uniform norm $\norm{\,\cdot\,}_{\sup}$.
        \item Let $X$ be a topological space having a countable open base. For a Borel measure $m$ on $X$ and a Borel measurable function $f \colon X \to [-\infty,\infty]$ or an $m$-equivalence class $f$ of such functions, we let $\supp_{m}[f]$ denote the support of the measure $\abs{f}\,dm$, that is, the smallest closed subset $F$ of $X$ such that $\int_{X \setminus F}\abs{f}\,dm = 0$. 
        \item Let $(X,d)$ be a metric space. We set $B_{d}(x,r) \coloneqq \{ y \in X \mid d(x,y) < r \}$ for $(x,r) \in X \times (0,\infty)$, $(A)_{d,r} \coloneqq \bigcup_{x \in A}B_{d}(x,r)$ for $A\subseteq X$ and $r\in(0,\infty)$, and $\diam(A,d) \coloneqq \sup_{x,y \in A}d(x,y)$ and $\dist_{d}(A,B) \coloneqq \inf\{ d(x,y) \mid x \in A, y \in B \}$ for $A,B\subseteq X$.  
        \item Let $(X,\mathcal{B},m)$ be a measure space. We set $f_{A} \coloneqq \fint_{A}f\,dm \coloneqq \frac{1}{m(A)}\int_{A}f\,dm$ for $f \in L^{1}(X,m)$ and $A \in \mathcal{B}$ with $m(A) \in (0,\infty)$, and set $m|_{A} \coloneqq m|_{\mathcal{B}|_{A}}$ for $A \in \mathcal{B}$, where $\mathcal{B}|_{A} \coloneqq \{ B \cap A \mid B \in \mathcal{B} \}$. When $m$ is $\sigma$-finite, the product measure space of $(X,\mathcal{B},m)$ and itself is denoted by $(X \times X,\mathcal{B} \otimes \mathcal{B},m \times m)$.
    \end{enumerate}
\end{notation}

%===== subseq. limit =====
%--- preliminary from GC ----
\section{$p$-Energy forms and generalized $p$-contraction property}\label{sec.GC}
\setcounter{equation}{0}
%%%
In this section, following \cite{KS.gc}, we recall the generalized $p$-contraction property and some basic consequences of it.
Throughout this section, we fix $p \in (1,\infty)$, a measure space $(K,\mathcal{B},m)$, a linear subspace $\mathcal{F}$ of $L^{0}(K,m)\coloneqq L^{0}(K,\mathcal{B},m)$, where
\[
L^{0}(K,\mathcal{B},m) \coloneqq \{ \text{the $m$-equivalence class of $u$} \mid \text{$u \colon K \to \mathbb{R}$, $u$ is $\mathcal{B}$-measurable} \},
\]
and a functional $\mathcal{E} \colon \mathcal{F} \to [0, \infty)$ which is \emph{$p$-homogeneous}, i.e., satisfies $\mathcal{E}(au) = \abs{a}^{p}\mathcal{E}(u)$ for any $(a,u) \in \mathbb{R}\times \mathcal{F}$.
(Note that the pair $(\mathcal{B},m)$ is arbitrary. In the case where $\mathcal{B} = 2^{K}$ and $m$ is the counting measure on $K$, we have $L^{0}(K,\mathcal{B},m) = \mathbb{R}^{K}$.) 

Let us recall the definitions of a $p$-energy form and the generalized $p$-contraction property introduced in \cite{KS.gc}.
We adopt here a less restrictive definition of a $p$-energy form than those in the preceding works
\cite{CGQ22,HPS04,Kig23,MS+,Shi24} on the construction of $p$-energy forms, in order to deal with
a wider class of $L^{p}$-type energy functionals including $E_{p,s}(\cdot,r)$ in \eqref{intro.Besov-p-s}
and $\int_{K}\varphi\,d\Gamma_{p}^{\mathrm{KS}}\langle \,\cdot\, \rangle$
in \eqref{intro.pemformula} in a unified framework. 

\begin{defn}[$p$-Energy form; {\cite[Definition 3.1]{KS.gc}}]\label{defn.p-form}
    The pair $(\mathcal{E},\mathcal{F})$ is said to be a \emph{$p$-energy form} on $(K,m)$ if and only if $\mathcal{E}^{1/p}$ is a seminorm on $\mathcal{F}$.
\end{defn}

\begin{defn}[Generalized $p$-contraction property; {\cite[Definition 2.1]{KS.gc}}]\label{defn.GC}
%    Let $\mathcal{F}$ be a linear subspace of $L^{p}(K,m)$ and let $\mathcal{E} \colon \mathcal{F} \to [0,\infty)$ be a $p$-homogeneous functional, i.e., $\mathcal{E}(au) = \abs{a}^{p}\mathcal{E}(u)$ for any $(a,u) \in \mathbb{R} \times \mathcal{F}$.
%	Let $p \in (1, \infty)$, $\mathcal{F}$ be a linear subspace of $L^{p}(K,m)$ and $\mathcal{E} \colon \mathcal{F} \to [0,\infty)$ be $p$-homogeneous, i.e., $\mathcal{E}(af) = \abs{a}^{p}\mathcal{E}(f)$ for any $(a,f) \in \mathbb{R} \times \mathcal{F}$. 
    The pair $(\mathcal{E},\mathcal{F})$ is said to satisfy the \emph{generalized $p$-contraction property}, \ref{GC} for short, if and only if the following hold:
    if $n_{1},n_{2} \in \mathbb{N}$, $q_{1} \in (0,p]$, $q_{2} \in [p,\infty]$ and $T = (T_{1},\dots,T_{n_{2}}) \colon \mathbb{R}^{n_{1}} \to \mathbb{R}^{n_{2}}$ satisfies
    \begin{equation}\label{GC-cond}
        T(0) = 0 \quad \text{and} \quad \norm{T(x)-T(y)}_{\ell^{q_{2}}}
        \le \norm{x - y}_{\ell^{q_{1}}} \quad \text{for any $x, y \in \mathbb{R}^{n_{1}}$,}
    \end{equation}
    then for any $\bm{u} = (u_{1},\dots,u_{n_{1}}) \in \mathcal{F}^{n_{1}}$ we have
    \begin{equation}\label{GC}
        T(\bm{u}) \in \mathcal{F}^{n_{2}} \quad \text{and} \quad
        \norm{\bigl(\mathcal{E}(T_{l}(\bm{u}))^{1/p}\bigr)_{l = 1}^{n_{2}}}_{\ell^{q_{2}}} \le \norm{\bigl(\mathcal{E}(u_{k})^{1/p}\bigr)_{k = 1}^{n_{1}}}_{\ell^{q_{1}}}. \tag*{\textup{(GC)$_{p}$}}
    \end{equation}
\end{defn}

See \cite[Sections 2 and 3]{KS.gc} for details on consequences of \ref{GC}. 
Here, in Propositions \ref{prop.GClist}, \ref{prop.c-diff} and \ref{prop.form-basic}, we recall some results from \cite{KS.gc} that will be used in this paper.
\begin{prop}[{\cite[Proposition 2.2]{KS.gc}}]\label{prop.GClist}
	Suppose that $(\mathcal{E},\mathcal{F})$ satisfies \ref{GC}. 
	\begin{enumerate}[label=\textup{(\arabic*)},align=left,leftmargin=*,topsep=2pt,parsep=0pt,itemsep=2pt]
		\item\label{GC.tri} $\mathcal{E}^{1/p}$ satisfies the triangle inequality. In particular, $\mathcal{E}$ is convex on $\mathcal{F}$.
        \item\label{GC.lip} Let $\varphi \in \contfunc(\mathbb{R})$ satisfy $\varphi(0) = 0$ and $\abs{\varphi(t) - \varphi(s)} \le \abs{t - s}$ for any $s,t \in \mathbb{R}$.
        Then $\varphi(u) \in \mathcal{F}$ and $\mathcal{E}(\varphi(u)) \le \mathcal{E}(u)$ for any $u \in \mathcal{F}$.
        Furthermore, for any $u \in \mathcal{F} \cap L^{\infty}(K,m)$ and $\Phi \in \contfunc^{1}(\mathbb{R})$ with $\Phi(0) = 0$, we have $\Phi(u) \in \mathcal{F}$ and 
        \begin{equation}\label{compos}
    		\mathcal{E}(\Phi(u)) \le \sup\bigl\{ \abs{\Phi'(t)}^{p} \bigm| t \in \mathbb{R}, \abs{t} \le \norm{u}_{L^{\infty}(K,m)} \bigr\}\mathcal{E}(u).
    	\end{equation}
        \item\label{GC.sadd} For any $u,v \in \mathcal{F}$, we have $u \wedge v, u \vee v \in \mathcal{F}$ and 
        \begin{equation}\label{sadd}
            \mathcal{E}(u \vee v) + \mathcal{E}(u \wedge v) \le \mathcal{E}(u) + \mathcal{E}(v).
        \end{equation}
        \item\label{GC.leipniz} For any $u,v \in \mathcal{F} \cap L^{\infty}(K,m)$, we have $uv \in \mathcal{F} \cap L^{\infty}(K,m)$ and  
        \begin{equation}\label{leibniz}
            \mathcal{E}(uv)^{1/p} \le \norm{v}_{L^{\infty}(K,m)}\mathcal{E}(u)^{1/p} + \norm{u}_{L^{\infty}(K,m)}\mathcal{E}(v)^{1/p}. 
        \end{equation} 
        \item \label{GC.Cp} If $p \in (1,2]$, then for any $u,v \in \mathcal{F}$, 
        \begin{equation}\label{Cp.small}
        	2\bigl(\mathcal{E}(u)^{1/(p - 1)} + \mathcal{E}(v)^{1/(p - 1)}\bigr)^{p - 1} 
        	\le \mathcal{E}(u+v) + \mathcal{E}(u-v)
        	\le 2\bigl(\mathcal{E}(u) + \mathcal{E}(v)\bigr). 
        \end{equation}
        If $p \in [2,\infty)$, then for any $u,v \in \mathcal{F}$, 
        \begin{equation}\label{Cp.large}
        	2\bigl(\mathcal{E}(u)^{1/(p - 1)} + \mathcal{E}(v)^{1/(p - 1)}\bigr)^{p - 1} 
        	\ge \mathcal{E}(u+v) + \mathcal{E}(u-v)
        	\ge 2\bigl(\mathcal{E}(u) + \mathcal{E}(v)\bigr). 
        \end{equation}
    \end{enumerate}
\end{prop}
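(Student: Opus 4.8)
The plan is to obtain every assertion as a direct application of \ref{GC} to a carefully chosen contraction $T$ together with a judicious choice of exponents $q_{1} \le p \le q_{2}$, using the $p$-homogeneity of $\mathcal{E}$ to rescale $T$ and thereby absorb the sharp multiplicative constants. For \ref{GC.tri} I would apply \ref{GC} with $n_{1} = 2$, $n_{2} = 1$, $T(x_{1},x_{2}) = x_{1} + x_{2}$, $q_{1} = 1$ and $q_{2} = p$: here \eqref{GC-cond} is just the scalar triangle inequality $\abs{(x_{1}+x_{2}) - (y_{1}+y_{2})} \le \norm{x - y}_{\ell^{1}}$, and \ref{GC} yields $\mathcal{E}(u+v)^{1/p} \le \mathcal{E}(u)^{1/p} + \mathcal{E}(v)^{1/p}$; combined with $p$-homogeneity this makes $\mathcal{E}^{1/p}$ a seminorm, and convexity of $\mathcal{E}$ follows from convexity of $t \mapsto t^{p}$ on $[0,\infty)$. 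For \ref{GC.lip} I take $n_{1} = n_{2} = 1$, $T = \varphi$, $q_{1} = q_{2} = p$, so that \eqref{GC-cond} is precisely the hypotheses on $\varphi$. For the $\contfunc^{1}$ refinement \eqref{compos}, writing $M \coloneqq \norm{u}_{L^{\infty}(K,m)}$ and $L \coloneqq \sup\{ \abs{\Phi'(t)} \mid \abs{t} \le M \}$, I would replace $\Phi$ by $\Psi \coloneqq \Phi \circ \tau_{M}$ with $\tau_{M}(t) \coloneqq (t \wedge M)\vee(-M)$; then $\Psi$ is $L$-Lipschitz, $\Psi(0) = 0$, and $\Psi(u) = \Phi(u)$ $m$-a.e., so applying the Lipschitz case to $\Psi/L$ (when $L > 0$; the case $L = 0$ being trivial) and using homogeneity gives \eqref{compos}.

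Part \ref{GC.sadd} follows by applying \ref{GC} with $q_{1} = q_{2} = p$ and the sorting map $T(x_{1},x_{2}) = (x_{1} \vee x_{2}, x_{1} \wedge x_{2})$, whose contraction property $\abs{a \vee b - c \vee d}^{p} + \abs{a \wedge b - c \wedge d}^{p} \le \abs{a - c}^{p} + \abs{b - d}^{p}$ reduces, after ordering the arguments, to the elementary rearrangement estimate $\abs{a - d}^{p} + \abs{b - c}^{p} \le \abs{a - c}^{p} + \abs{b - d}^{p}$ for $a \ge b$, $d \ge c$, itself a consequence of the convexity of $t \mapsto \abs{t}^{p}$. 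For the Leibniz rule \ref{GC.leipniz}, assuming $M_{u} \coloneqq \norm{u}_{L^{\infty}(K,m)}$ and $M_{v} \coloneqq \norm{v}_{L^{\infty}(K,m)}$ are positive (the degenerate cases being trivial), I would apply \ref{GC} to the rescaled pair $\bm{w} \coloneqq (M_{v}u, M_{u}v)$, whose entries are bounded by $N \coloneqq M_{u}M_{v}$, with $n_{1} = 2$, $n_{2} = 1$, $q_{1} = 1$, $q_{2} = p$ and $T(z_{1},z_{2}) \coloneqq \tau_{N}(z_{1})\tau_{N}(z_{2})/N$; one checks $T(0) = 0$, the bound $\abs{T(z) - T(z')} \le \norm{z - z'}_{\ell^{1}}$ (from $\abs{\tau_{N}} \le N$ and the $1$-Lipschitz property of $\tau_{N}$), and $T(\bm{w}) = uv$ $m$-a.e., so \ref{GC} together with homogeneity ($\mathcal{E}(M_{v}u)^{1/p} = M_{v}\mathcal{E}(u)^{1/p}$, etc.) gives \eqref{leibniz}.

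The substantial part is \ref{GC.Cp}, which I expect to be the main obstacle. All four inequalities come from \ref{GC} applied either to the linear map $T(x) = (x_{1}+x_{2}, x_{1}-x_{2})$ on the pair $(u,v)$, or to its inverse $S(y) = \tfrac{1}{2}(y_{1}+y_{2}, y_{1}-y_{2})$ on the pair $(u+v, u-v)$ (so that $S(u+v,u-v) = (u,v)$), after rescaling $T$ (resp. $S$) by the sharp factor $2^{\mp 1/p}$ and with the exponent pairs $(q_{1},q_{2}) \in \{ (p,p), (p,p'), (p',p) \}$, where $p' \coloneqq p/(p-1)$. Here $p \le 2$ gives $p \le p'$ and $p \ge 2$ gives $p' \le p$, which is exactly what makes the pairs $(p,p')$ and $(p',p)$ admissible (i.e.\ $q_{1} \le p \le q_{2}$) in the respective regimes. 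In each case the contraction hypothesis \eqref{GC-cond} for the rescaled map is, after raising to the appropriate power, equivalent to one of the scalar two-point inequalities: the elementary $\abs{y_{1}+y_{2}}^{p} + \abs{y_{1}-y_{2}}^{p} \le 2(\abs{y_{1}}^{p} + \abs{y_{2}}^{p})$ for $1 < p \le 2$ and $\abs{y_{1}+y_{2}}^{p} + \abs{y_{1}-y_{2}}^{p} \le 2^{p-1}(\abs{y_{1}}^{p} + \abs{y_{2}}^{p})$ for $p \ge 2$, or the conjugate-exponent Clarkson inequalities $\abs{y_{1}+y_{2}}^{p'} + \abs{y_{1}-y_{2}}^{p'} \le 2(\abs{y_{1}}^{p} + \abs{y_{2}}^{p})^{1/(p-1)}$ for $1 < p \le 2$ and $\abs{y_{1}+y_{2}}^{p} + \abs{y_{1}-y_{2}}^{p} \le 2(\abs{y_{1}}^{p'} + \abs{y_{2}}^{p'})^{p/p'}$ for $p \ge 2$.

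Applying \ref{GC} to each rescaled map and undoing the homogeneous rescaling then produces exactly \eqref{Cp.small} and \eqref{Cp.large}; for instance, $S$ rescaled by $2^{1/p}$ with $(q_{1},q_{2}) = (p,p')$ converts the conjugate-exponent inequality for $1 < p \le 2$ into the left inequality of \eqref{Cp.small}, while $2^{-1/p}T$ with $q_{1} = q_{2} = p$ converts the first elementary inequality into the right one. The genuine difficulty is not the bookkeeping of exponents and constants but the two conjugate-exponent scalar Clarkson inequalities above, which are sharp and do not reduce to convexity alone; I would establish these directly (e.g.\ via Hanner's inequalities, or the classical interpolation argument reducing to the uniform convexity of the two-point $\ell^{p}$), and then verify that the prefactors $2^{\mp 1/p}$ are precisely the contraction constants of the rescaled maps so that no slack is lost and the stated inequalities come out sharp.
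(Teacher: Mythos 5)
Your proposal is correct: every choice of contraction, exponent pair and rescaling factor checks out — the admissibility $q_{1}\le p\le q_{2}$ of $(p,p')$ for $p\le 2$ and of $(p',p)$ for $p\ge 2$, the homogeneity bookkeeping that turns the factors $2^{\mp 1/p}$ into the sharp constants in \eqref{Cp.small} and \eqref{Cp.large}, the majorization (Karamata) argument behind the sorting map in part \ref{GC.sadd}, the truncation trick $\Phi\circ\tau_{M}$ for \eqref{compos}, and the rescaled product map for \eqref{leibniz} — and the two-point scalar inequalities you isolate as the only nontrivial input are exactly the classical Clarkson inequalities of \cite{Cla36}. This is essentially the same approach as the source: the paper itself gives no proof of this statement but recalls it from \cite[Proposition 2.2]{KS.gc}, where the argument proceeds by precisely this scheme of applying \ref{GC} to suitably rescaled normal contractions.
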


\begin{prop}[{\cite[Proposition 3.5]{KS.gc}}]\label{prop.c-diff}
	Suppose that $(\mathcal{E},\mathcal{F})$ satisfies \ref{GC}. 
	Then for any $u,v \in \mathcal{F}$, 
	\begin{equation}\label{c-diff}
		\mathcal{E}(u + v) + \mathcal{E}(u - v) - 2\mathcal{E}(u) 
		\le 2\bigl((p - 1) \wedge 1\bigr)\Bigl(\mathcal{E}(u)^{\frac{1}{p - 1}} + \mathcal{E}(v)^{\frac{1}{p - 1}}\Bigr)^{(p - 2)^{+}}\mathcal{E}(v)^{1 \wedge \frac{1}{p - 1}}.
	\end{equation}
	In particular, in view of the convexity of $\mathcal{E}$ from Proposition \ref{prop.GClist}-\ref{GC.tri}, $\mathbb{R} \ni t \mapsto \mathcal{E}(u + tv) \in [0,\infty)$ is differentiable and 
    \begin{equation}\label{e:frechet}
		\lim_{s \to 0}\sup_{h \in \mathcal{F}; \mathcal{E}(h) \le 1}\abs{\frac{\mathcal{E}(u + sh) - \mathcal{E}(u)}{s} - \left.\frac{d}{dt}\mathcal{E}(u + th)\right|_{t = 0}} = 0. 
	\end{equation}
\end{prop}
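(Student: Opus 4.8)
The plan is to derive the second-difference estimate \eqref{c-diff} from the Clarkson-type inequalities recorded in Proposition \ref{prop.GClist}-\ref{GC.Cp}, treating $p \in (1,2]$ and $p \in [2,\infty)$ separately, and then to obtain both the differentiability of $t \mapsto \mathcal{E}(u+th)$ and the uniform limit \eqref{e:frechet} from \eqref{c-diff} using only the convexity of $\mathcal{E}$ from Proposition \ref{prop.GClist}-\ref{GC.tri}.

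For \eqref{c-diff} when $p \in (1,2]$, we have $(p-2)^{+} = 0$ and $1 \wedge \frac{1}{p-1} = 1$, so the target right-hand side is a multiple of $\mathcal{E}(v)$; subtracting $2\mathcal{E}(u)$ from the second inequality in \eqref{Cp.small} gives $\mathcal{E}(u+v) + \mathcal{E}(u-v) - 2\mathcal{E}(u) \le 2\mathcal{E}(v)$, which is \eqref{c-diff} with leading constant $1$. When $p \in [2,\infty)$, I would set $a \coloneqq \mathcal{E}(u)^{1/(p-1)}$ and $b \coloneqq \mathcal{E}(v)^{1/(p-1)}$, so $\mathcal{E}(u) = a^{p-1}$, and use the first inequality in \eqref{Cp.large} to get $\mathcal{E}(u+v) + \mathcal{E}(u-v) - 2\mathcal{E}(u) \le 2\bigl((a+b)^{p-1} - a^{p-1}\bigr)$. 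One then invokes the elementary bound $(a+b)^{p-1} - a^{p-1} = \int_{a}^{a+b}(p-1)t^{p-2}\,dt \le (p-1)(a+b)^{p-2}b$, valid for $a,b \ge 0$ because $t \mapsto t^{p-2}$ is nondecreasing when $p \ge 2$; since $(p-2)^{+} = p-2$ and $1 \wedge \frac{1}{p-1} = \frac{1}{p-1}$ in this range, this reproduces the right-hand side of \eqref{c-diff} with leading constant $p-1$. (In both regimes the constant is thus $(p-1)\vee 1$.)

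For the differentiability, fix $u,h \in \mathcal{F}$ and put $\phi(t) \coloneqq \mathcal{E}(u+th)$, convex by Proposition \ref{prop.GClist}-\ref{GC.tri}, so that its one-sided derivatives satisfy $\phi_{-}'(0) \le \phi_{+}'(0)$. Writing $\phi_{+}'(0) - \phi_{-}'(0) = \lim_{s \downarrow 0}s^{-1}\bigl(\mathcal{E}(u+sh) + \mathcal{E}(u-sh) - 2\mathcal{E}(u)\bigr)$ and applying \eqref{c-diff} with $v$ replaced by $sh$, together with $\mathcal{E}(sh) = s^{p}\mathcal{E}(h)$, shows that the quantity under the limit is bounded by a constant depending on $u,h$ times $s^{\,p(1 \wedge \frac{1}{p-1}) - 1}$. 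Since $p(1 \wedge \frac{1}{p-1}) > 1$ for every $p \in (1,\infty)$, the exponent of $s$ is positive and the limit is $0$, forcing $\phi_{-}'(0) = \phi_{+}'(0)$; running the same argument at each base point shows $t \mapsto \mathcal{E}(u+th)$ is differentiable everywhere.

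For the uniform estimate \eqref{e:frechet}, I would use that for convex $\phi$ and $s > 0$ the forward difference quotient satisfies $0 \le s^{-1}\bigl(\phi(s) - \phi(0)\bigr) - \phi'(0) \le s^{-1}\bigl(\phi(s) + \phi(-s) - 2\phi(0)\bigr)$, the upper bound holding because the backward difference quotient $s^{-1}\bigl(\phi(0) - \phi(-s)\bigr)$ is at most $\phi'(0)$. With $\phi(t) = \mathcal{E}(u+th)$ and $\mathcal{E}(h) \le 1$, bounding the second difference by \eqref{c-diff} and using $\mathcal{E}(sh) \le s^{p}$ controls the right-hand side by a fixed multiple of $\bigl(\mathcal{E}(u)^{1/(p-1)} + s^{p/(p-1)}\bigr)^{(p-2)^{+}}s^{\,p(1 \wedge \frac{1}{p-1}) - 1}$, which is independent of $h$ and vanishes as $s \downarrow 0$; the range $s \uparrow 0$ is handled symmetrically. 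Taking the supremum over $\{ h \in \mathcal{F} \mid \mathcal{E}(h) \le 1 \}$ and then letting $s \to 0$ yields \eqref{e:frechet}. I expect the only real obstacle to be the $p \ge 2$ case of \eqref{c-diff}, i.e.\ extracting the correct power of $\mathcal{E}(v)$ from \eqref{Cp.large} via the real-variable inequality above; the uniformity in $h$ needed for \eqref{e:frechet} is then automatic, since the bound coming from \eqref{c-diff} depends on $h$ only through the constraint $\mathcal{E}(h) \le 1$.
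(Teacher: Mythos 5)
Your argument is correct and takes essentially the route that the paper itself sets up: deduce the second-difference estimate from the Clarkson-type inequalities of Proposition \ref{prop.GClist}-\ref{GC.Cp}, then get differentiability and the uniform convergence \eqref{e:frechet} from convexity (Proposition \ref{prop.GClist}-\ref{GC.tri}). The two convexity facts you invoke, $0 \le s^{-1}\bigl(\phi(s)-\phi(0)\bigr) - \phi'(0) \le s^{-1}\bigl(\phi(s)+\phi(-s)-2\phi(0)\bigr)$ for $\phi(t) = \mathcal{E}(u+th)$ and $s>0$, together with the exponent count $p\bigl(1 \wedge \tfrac{1}{p-1}\bigr) - 1 > 0$, are exactly the needed ingredients, and the uniformity in $h$ required for \eqref{e:frechet} is indeed automatic since your bound depends on $h$ only through $\mathcal{E}(h) \le 1$.

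The one discrepancy is the constant, and it lies in the statement as printed, not in your proof. You obtain \eqref{c-diff} with $2\bigl((p-1)\vee 1\bigr)$ where the statement has $2\bigl((p-1)\wedge 1\bigr)$, and the printed constant is in fact unachievable: for $p \in (1,2)$ take $u = 0$ and $\mathcal{E}(v) > 0$, so that the left-hand side of \eqref{c-diff} equals $2\mathcal{E}(v)$ while the right-hand side is $2(p-1)\mathcal{E}(v) < 2\mathcal{E}(v)$; for $p > 2$ take the one-edge form $\mathcal{E}(u) \coloneqq \abs{u(x_{1})-u(x_{0})}^{p}$ on a two-point set $\{x_{0},x_{1}\}$ (which satisfies \ref{GC}) and $u = v$ with $u(x_{1})-u(x_{0}) = 1$, so that the left-hand side is $2^{p}-2$ while the right-hand side is $2^{p-1} < 2^{p}-2$. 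So the ``$\wedge$'' in \eqref{c-diff} should be ``$\vee$'', which is exactly what your proof delivers; you flag the weaker constant but could go further and note that no proof can give the printed one. The correction is harmless downstream: the differentiability, \eqref{e:frechet}, and every later use of \eqref{c-diff} in the paper (e.g.\ the bound $O_{t}(f;g) = C_{p,f,g}t^{(p-1)\wedge\frac{1}{p-1}}$ in the proof of Theorem \ref{thm.KS-energy}) require \eqref{c-diff} only up to a multiplicative constant depending on $p$.
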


\begin{prop}[{\cite[Theorem 3.6]{KS.gc}}]\label{prop.form-basic}
	Suppose that $(\mathcal{E},\mathcal{F})$ satisfies \ref{GC}. 
	For any $u,v \in \mathcal{F}$, we define  
    \begin{equation}\label{e:defn-pform-two}
		\mathcal{E}(u; v) \coloneqq \frac{1}{p}\left.\frac{d}{dt}\mathcal{E}(u + tv)\right|_{t = 0},   
	\end{equation}
	which exists by Proposition \ref{prop.c-diff}. 
	Then for any $u,u_{1},u_{2},v \in \mathcal{F}$,
	$\mathcal{E}(u; \,\cdot\,) \colon \mathcal{F} \to \mathbb{R}$ is linear, $\mathcal{E}(u; u) = \mathcal{E}(u)$, $\mathcal{E}(au; v) = \sgn(a)\abs{a}^{p - 1}\mathcal{E}(u; v)$ for any $a \in \mathbb{R}$, 
	\begin{gather}\label{e:form.constantdiff}
		\mathcal{E}(u; h) = 0 \quad \text{and} \quad \mathcal{E}(u + h; v) = \mathcal{E}(u; v) \quad \text{for any $h \in \mathcal{E}^{-1}(0)$,} \\
		\abs{\mathcal{E}(u; v)} \le \mathcal{E}(u)^{(p - 1)/p}\mathcal{E}(v)^{1/p}, \label{e:form-holder} \\
		\abs{\mathcal{E}(u_{1}; v)-\mathcal{E}(u_{2}; v)} \leq c_{p}\biggl[\max_{i\in\{1,2\}}\mathcal{E}(u_{i})\biggr]^{\frac{p-1-\alpha_{p}}{p}}\mathcal{E}(u_{1}-u_{2})^{\alpha_{p}/p}\mathcal{E}(v)^{1/p}
	\label{eq:form-nonlinear-Hoelder-GC}
	\end{gather}
	for $\alpha_{p} \coloneqq \frac{(p-1)\wedge 1}{p}$ and some $c_{p}\in(0,\infty)$
	determined solely and explicitly by $p$.
\end{prop}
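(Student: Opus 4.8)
The plan is to dispatch the algebraic and homogeneity properties first, then linearity, then the two Hölder-type estimates, isolating \eqref{eq:form-nonlinear-Hoelder-GC} as the genuinely hard point. The existence of $\mathcal{E}(u;v)$ in \eqref{e:defn-pform-two} is already granted by Proposition \ref{prop.c-diff}, so I would argue the identities purely from $p$-homogeneity and the chain rule for the single-variable function $t\mapsto\mathcal{E}(u+tv)$. Homogeneity gives $\mathcal{E}(u+tu)=\abs{1+t}^{p}\mathcal{E}(u)$, whose derivative at $t=0$ is $p\mathcal{E}(u)$, yielding $\mathcal{E}(u;u)=\mathcal{E}(u)$. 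For $a\neq0$ I would write $\mathcal{E}(au+tv)=\abs{a}^{p}\mathcal{E}(u+(t/a)v)$ and differentiate, producing the factor $\abs{a}^{p}a^{-1}=\sgn(a)\abs{a}^{p-1}$; the case $a=0$ is immediate since $\mathcal{E}(tv)=\abs{t}^{p}\mathcal{E}(v)$ has vanishing derivative at $0$. Positive homogeneity of $\mathcal{E}(u;\,\cdot\,)$ is the same chain-rule computation in the second slot. Finally, if $h\in\mathcal{E}^{-1}(0)$, the seminorm property (Definition \ref{defn.p-form}) forces $\mathcal{E}(u+th)^{1/p}=\mathcal{E}(u)^{1/p}$ for all $t$ via the triangle inequality and its reverse, so $t\mapsto\mathcal{E}(u+th)$ is constant and $\mathcal{E}(u;h)=0$.

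The one nontrivial structural claim is additivity of $v\mapsto\mathcal{E}(u;v)$. Here I would invoke convexity of $\mathcal{E}$ (Proposition \ref{prop.GClist}-\ref{GC.tri}): for a convex functional the one-sided directional derivative $D_{+}(v)\coloneqq\lim_{t\downarrow0}t^{-1}(\mathcal{E}(u+tv)-\mathcal{E}(u))$ exists and is sublinear, the subadditivity coming from $u+t(v_{1}+v_{2})=\tfrac12(u+2tv_{1})+\tfrac12(u+2tv_{2})$ and convexity. Since Proposition \ref{prop.c-diff} guarantees the two-sided derivative exists, we have $D_{+}(-v)=-D_{+}(v)$, i.e. $D_{+}$ is odd; a sublinear odd function is linear, and $p\mathcal{E}(u;\,\cdot\,)=D_{+}$ is then linear. (Alternatively one reads linearity off the uniform Fréchet-type differentiability \eqref{e:frechet}.)

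For the Hölder/Cauchy--Schwarz bound \eqref{e:form-holder} I would differentiate the seminorm triangle inequality: from $\mathcal{E}(u+tv)^{1/p}\le\mathcal{E}(u)^{1/p}+\abs{t}\mathcal{E}(v)^{1/p}$ one gets $\mathcal{E}(u+tv)\le(\mathcal{E}(u)^{1/p}+t\mathcal{E}(v)^{1/p})^{p}$ for $t>0$, and comparing right derivatives at $t=0$ gives $\mathcal{E}(u;v)\le\mathcal{E}(u)^{(p-1)/p}\mathcal{E}(v)^{1/p}$; replacing $v$ by $-v$ and using $\mathcal{E}(-v)=\mathcal{E}(v)$ together with the already-proven $\mathcal{E}(u;-v)=-\mathcal{E}(u;v)$ gives the matching lower bound. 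The identity $\mathcal{E}(u+h;v)=\mathcal{E}(u;v)$ for $h\in\mathcal{E}^{-1}(0)$ then follows from \eqref{eq:form-nonlinear-Hoelder-GC}, whose right-hand side vanishes when $\mathcal{E}(u_{1}-u_{2})=0$, so it is subsumed once the last estimate is established.

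The substantial point is \eqref{eq:form-nonlinear-Hoelder-GC}. Since $\ell(v)\coloneqq\mathcal{E}(u_{1};v)-\mathcal{E}(u_{2};v)$ is linear and vanishes on $\mathcal{E}^{-1}(0)$, by $p$-homogeneity it suffices to bound $\abs{\ell(v)}$ when $\mathcal{E}(v)\le1$, i.e. to estimate the operator norm of the difference of the two Gâteaux derivatives at $u_{1}$ and $u_{2}$. The only quantitative input available in this abstract setting is the uniform-smoothness estimate \eqref{c-diff} (with the companion Clarkson-type inequalities \eqref{Cp.small} and \eqref{Cp.large}), controlling the symmetric second difference $\mathcal{E}(u+v)+\mathcal{E}(u-v)-2\mathcal{E}(u)$. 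The plan is to compare $\mathcal{E}(u_{1};\,\cdot\,)$ and $\mathcal{E}(u_{2};\,\cdot\,)$ by moving along the segment from $u_{2}$ to $u_{1}$, writing $p\ell(v)$ as a limit of mixed second differences $\mathcal{E}(u_{1}+tv)+\mathcal{E}(u_{2}-tv)-\mathcal{E}(u_{1})-\mathcal{E}(u_{2})$, re-centring at the midpoint $\tfrac12(u_{1}+u_{2})$, applying \eqref{c-diff} in the $u_{1}-u_{2}$ and $v$ directions, and balancing the two scales so as to reach the product form with exponents $\tfrac{p-1-\alpha_{p}}{p}$, $\tfrac{\alpha_{p}}{p}$, $\tfrac1p$. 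I expect this mixed-difference estimate to be the main obstacle: a convex functional admits no sharp second-order Cauchy--Schwarz inequality of the kind a quadratic form does, so the term-by-term use of \eqref{c-diff} must discard part of the cancellation between the two directions. This is precisely what forces the suboptimal exponent $\alpha_{p}=\tfrac{(p-1)\wedge1}{p}$ on $\mathcal{E}(u_{1}-u_{2})$ (in contrast with the sharper bound obtainable for the Korevaar--Schoen form from its explicit construction), and tracking the constant $c_{p}$ through this step is the delicate part.
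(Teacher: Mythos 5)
First, a structural point: this paper never proves Proposition \ref{prop.form-basic}; it is recalled verbatim from \cite[Theorem 3.6]{KS.gc} (see the sentence preceding Proposition \ref{prop.GClist}), so your argument can only be compared with that reference, not with anything internal to the paper. Everything in your proposal except \eqref{eq:form-nonlinear-Hoelder-GC} is correct and complete: the chain-rule computations for $\mathcal{E}(u;u)=\mathcal{E}(u)$ and $\mathcal{E}(au;v)=\sgn(a)\abs{a}^{p-1}\mathcal{E}(u;v)$, the constancy of $t\mapsto\mathcal{E}(u+th)$ for $h\in\mathcal{E}^{-1}(0)$ via the seminorm property, linearity of $\mathcal{E}(u;\,\cdot\,)$ as an odd sublinear directional derivative of a convex functional (Proposition \ref{prop.GClist}-\ref{GC.tri} plus the two-sided differentiability from Proposition \ref{prop.c-diff}), and \eqref{e:form-holder} by comparing right derivatives across the triangle inequality are all sound.

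The genuine shortfall is that \eqref{eq:form-nonlinear-Hoelder-GC}, which you correctly identify as the substantial point, is left as a hedged plan rather than a proof. The plan does work, and it is short enough that you should have executed it. By convexity of $t\mapsto\mathcal{E}(u_i+tv)$, for every $t>0$,
\begin{equation*}
p\bigl(\mathcal{E}(u_1;v)-\mathcal{E}(u_2;v)\bigr)\le\frac{1}{t}\Bigl[\mathcal{E}(u_1+tv)+\mathcal{E}(u_2-tv)-\mathcal{E}(u_1)-\mathcal{E}(u_2)\Bigr],
\end{equation*}
and with $\bar u\coloneqq\frac{u_1+u_2}{2}$, $w\coloneqq\frac{u_1-u_2}{2}$ one has $u_1+tv=\bar u+(w+tv)$, $u_2-tv=\bar u-(w+tv)$ and $2\mathcal{E}(\bar u)\le\mathcal{E}(u_1)+\mathcal{E}(u_2)$, so the bracket is at most the symmetric second difference of $\mathcal{E}$ at $\bar u$ with increment $w+tv$. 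Bound this by $2\mathcal{E}(w+tv)$ when $p\in(1,2]$ (right-hand inequality of \eqref{Cp.small}) and by \eqref{c-diff} when $p\in[2,\infty)$, insert $\mathcal{E}(w+tv)^{1/p}\le A+tB$ with $A\coloneqq\mathcal{E}(w)^{1/p}$, $B\coloneqq\mathcal{E}(v)^{1/p}$, and choose $t=A/B$; exchanging $u_1,u_2$ gives the absolute value. This yields $\abs{\mathcal{E}(u_1;v)-\mathcal{E}(u_2;v)}\le c_p\,\mathcal{E}(u_1-u_2)^{(p-1)/p}\mathcal{E}(v)^{1/p}$ for $p\le 2$, and $c_p\bigl[\max_i\mathcal{E}(u_i)\bigr]^{\frac{p-2}{p-1}}\mathcal{E}(u_1-u_2)^{\frac{1}{p(p-1)}}\mathcal{E}(v)^{1/p}$ for $p\ge 2$; both imply \eqref{eq:form-nonlinear-Hoelder-GC} with exactly the stated exponents after writing $\mathcal{E}(u_1-u_2)^{\theta}\le\bigl(2^{p}\max_i\mathcal{E}(u_i)\bigr)^{\theta-\alpha_p/p}\mathcal{E}(u_1-u_2)^{\alpha_p/p}$ for the relevant $\theta>\alpha_p/p$. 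Two corrections to your commentary. First, for $p\in(1,2]$ you must quote \eqref{Cp.small}, not \eqref{c-diff}: as transcribed in this paper, \eqref{c-diff} carries the constant $2((p-1)\wedge 1)=2(p-1)<2$, which cannot majorize the full second difference (test $\mathcal{E}=\abs{\,\cdot\,}^{p}$ on $\mathbb{R}$ with $u=0$); the Clarkson bound $2\mathcal{E}(v)$ is the safe and sufficient tool. Second, your closing heuristic is backwards: the midpoint re-centering does \emph{not} ``force'' the exponent $\alpha_p$ — as just shown it gives strictly \emph{better} exponents than \eqref{eq:form-nonlinear-Hoelder-GC}. The exponent $\alpha_p/p$ is what emerges from the cruder route (presumably that of \cite{KS.gc}) which estimates $\abs{p\mathcal{E}(u_i;v)-t^{-1}(\mathcal{E}(u_i+tv)-\mathcal{E}(u_i))}$ at each $u_i$ separately by the second-difference bound, controls $\abs{[\mathcal{E}(u_1+tv)-\mathcal{E}(u_2+tv)]-[\mathcal{E}(u_1)-\mathcal{E}(u_2)]}$ by the mean value theorem and the triangle inequality, and then balances $\mathcal{E}(u_1-u_2)^{1/p}/t$ against $t^{(p-1)\wedge\frac{1}{p-1}}$.
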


\begin{notation}
Throughout this paper, for any $p$-energy form $(\mathcal{E},\mathcal{F})$ on $(K,m)$ satisfying \ref{GC} and any $u,v\in \mathcal{F}$, we let $\mathcal{E}(u;v) \in \mathbb{R}$ denote the element given by \eqref{e:defn-pform-two}.
\end{notation}

%===== limit =====
\section{Construction and properties of Korevaar--Schoen $p$-energy forms}\label{sec.Kslimit}
\setcounter{equation}{0}
%%%
In this section, we show the existence of \emph{Korevaar--Schoen $p$-energy forms}, i.e., pointwise subsequential limits of the Besov-type $p$-energy functionals \eqref{intro.Besov-p-s} under the assumption of the weak monotonicity estimate \eqref{wm.intro}, and give some basic properties of the limit $p$-energy forms. 
To be precise, we will prove these results for a more general \emph{family of kernels} in a synthetic way in order to apply the results in this section to construct self-similar $p$-energy forms later in Sections \ref{sec.Kig} and \ref{sec.CGQ}.

Throughout this section, we fix a separable metric space $(K,d)$ with $\#K \ge 2$ and a $\sigma$-finite Borel measure $m$ on $K$ with full topological support.
Under this setting, the map from $\contfunc(K)$ to $L^{0}(K,m)$ defined by taking $u \in \contfunc(K)$ to its $m$-equivalence class is injective and hence gives a canonical embedding of $\contfunc(K)$ into $L^{0}(K,m)$ as a subalgebra, and we will consider $\contfunc(K)$ as a subset of $L^{0}(K,m)$ through this embedding without further notice. 

We also fix $p \in (1,\infty)$ throughout this section unless otherwise stated. We will state some definitions and statements below for any $p \in [1,\infty)$, but on each such occasion we will explicitly declare that we let $p \in [1,\infty)$.

First, we introduce a function space determined by a family of kernels $\{ k_{r} \}_{r > 0}$.
\begin{defn}\label{defn.kernel}
	Let $p \in [1, \infty)$ and let $\bm{k} = \{ k_{r} \}_{r > 0}$ be a family of $[0,\infty]$-valued Borel measurable functions on $K \times K$. 
    We define a linear subspace $B_{p,\infty}^{\bm{k}}$ of $L^{p}(K,m)$ by
    \begin{equation}\label{Bpinftyk}
        B_{p,\infty}^{\bm{k}} \coloneqq \Biggl\{ f \in L^{p}(K,m) \Biggm| \sup_{r > 0}\int_{K}\int_{K}\abs{f(x) - f(y)}^{p}k_{r}(x,y)\,m(dy)m(dx) < \infty \Biggr\}
    \end{equation}
    and equip $B_{p,\infty}^{\bm{k}}$ with the norm $\norm{\,\cdot\,}_{B_{p,\infty}^{\bm{k}}}$ defined by 
    \[
    \norm{f}_{B_{p,\infty}^{\bm{k}}} \coloneqq \norm{f}_{L^{p}(K,m)} + \left(\sup_{r > 0}\int_{K}\int_{K}\abs{f(x) - f(y)}^{p}k_{r}(x,y)\,m(dy)m(dx)\right)^{1/p}.
    \]
    Also for each $r \in (0,\infty)$, we define $J_{p,r} \colon L^{p}(K,m) \to [0,\infty]$ by
    \begin{equation*}
    J_{p,r}^{\bm{k}}(f) \coloneqq \int_{K}\int_{K}\abs{f(x) - f(y)}^{p}k_{r}(x,y)\,m(dy)m(dx), \quad f \in L^{p}(K,m), 
    \end{equation*}
    and set $D(J_{p,r}^{\bm{k}}) \coloneqq \{ f \in L^{p}(K,m) \mid J_{p,r}^{\bm{k}}(f) < \infty \}$. 
\end{defn}

In the rest of this section, we fix a family of kernels $\bm{k} = \{ k_{r} \}_{r > 0}$
as in Definition \ref{defn.kernel}.
To state some basic properties of $J_{p,r}^{\bm{k}}$, let us recall the reverse Minkowski inequality (see, e.g., \cite[Theorem 2.13]{AF}).

\begin{prop}[Reverse Minkowski inequality]\label{prop.reverse}
    Let $(Y,\mathcal{A},\mu)$ be a measure space\footnote{In the book \cite{AF}, the reverse Minkowski inequality is stated and proved only for the $L^{r}$-space over non-empty open subsets of the Euclidean space equipped with the Lebesgue measure, but the same proof works for any measure space.} and let $r \in (0,1]$.
    Then for any $\mathcal{A}$-measurable functions $f,g \colon Y \to [0,\infty]$,
    \begin{equation}\label{reverse}
        \left(\int_{Y}f^{r}\,d\mu\right)^{1/r} + \left(\int_{Y}g^{r}\,d\mu\right)^{1/r} \le \left(\int_{Y}(f + g)^{r}\,d\mu\right)^{1/r}.
    \end{equation}
\end{prop}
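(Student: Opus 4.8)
The plan is to mimic the classical proof of Minkowski's inequality, replacing ordinary Hölder by its reverse counterpart valid for the exponent $r \in (0,1)$. First I would dispose of the degenerate and boundary cases. If $r = 1$ the claim is mere additivity of the integral, so I may assume $r \in (0,1)$. If either $\int_{Y}f^{r}\,d\mu$ or $\int_{Y}g^{r}\,d\mu$ vanishes, then the corresponding function is $0$ $\mu$-a.e.\ and the asserted inequality is immediate; if $\int_{Y}(f+g)^{r}\,d\mu = \infty$, the right-hand side is $\infty$ and there is nothing to prove, while $\int_{Y}(f+g)^{r}\,d\mu = 0$ forces $f = g = 0$ $\mu$-a.e. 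Thus it suffices to treat the case where all three integrals lie in $(0,\infty)$.

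The key auxiliary step is a reverse Hölder inequality. Let $s \coloneqq r/(r-1) < 0$ denote the (negative) conjugate exponent, so that $1/r + 1/s = 1$. For nonnegative measurable $f,h$ I claim $\int_{Y}fh\,d\mu \ge \bigl(\int_{Y}f^{r}\,d\mu\bigr)^{1/r}\bigl(\int_{Y}h^{s}\,d\mu\bigr)^{1/s}$. This follows from the ordinary Hölder inequality applied with the exponent $1/r > 1$ and its conjugate $1/(1-r)$: writing $f^{r} = (fh)^{r}\cdot h^{-r}$ gives $\int_{Y}f^{r}\,d\mu \le \bigl(\int_{Y}fh\,d\mu\bigr)^{r}\bigl(\int_{Y}h^{-r/(1-r)}\,d\mu\bigr)^{1-r}$, after which rearranging and using the identities $-r/(1-r) = s$ and $(1-r)/r = -1/s$ yields the claim.

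With reverse Hölder in hand, I would split $\int_{Y}(f+g)^{r}\,d\mu = \int_{Y}f\,(f+g)^{r-1}\,d\mu + \int_{Y}g\,(f+g)^{r-1}\,d\mu$, the integrals being taken over $\{ f+g > 0 \}$ since the complement contributes nothing, and apply the reverse Hölder inequality to each term with $h = (f+g)^{r-1}$. Since $(r-1)s = r$, this gives $\int_{Y}f\,(f+g)^{r-1}\,d\mu \ge \bigl(\int_{Y}f^{r}\,d\mu\bigr)^{1/r}\bigl(\int_{Y}(f+g)^{r}\,d\mu\bigr)^{1/s}$ and the analogous bound for $g$. Adding the two and dividing by $\bigl(\int_{Y}(f+g)^{r}\,d\mu\bigr)^{1/s}$, which is legitimate precisely because this quantity is finite and positive, yields the assertion upon noting $1 - 1/s = 1/r$.

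The main obstacle lies less in the core computation, which is entirely routine once reverse Hölder is established, than in the careful treatment of the boundary cases and of the convention $0\cdot\infty$: because $r - 1 < 0$, the factor $(f+g)^{r-1}$ equals $+\infty$ where $f+g = 0$, so the splitting and the applications of reverse Hölder must be confined to $\{ f+g > 0 \}$, and one must verify that this restriction alters none of the relevant integrals. I would also record that the reverse Hölder bound holds trivially in the degenerate situation $\int_{Y}h^{s}\,d\mu = \infty$, so that no hidden finiteness hypothesis is needed in its application above.
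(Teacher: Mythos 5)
Your proof is correct and is essentially the argument the paper relies on: the paper gives no proof of Proposition \ref{prop.reverse}, instead citing \cite{AF} (Theorem 2.13 there), whose proof is exactly this reverse-H\"{o}lder computation based on the splitting $(f+g)^{r} = f(f+g)^{r-1} + g(f+g)^{r-1}$ with conjugate exponent $s = r/(r-1) < 0$. Your handling of the degenerate cases and of the set $\{ f+g = 0 \}$ supplies precisely the routine verifications behind the paper's footnote that the textbook proof carries over verbatim to an arbitrary measure space.
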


For ease of notation, we define $\gamma_{p} \colon \mathbb{R} \to \mathbb{R}$ by 
\begin{equation*}
   	\gamma_{p}(a) \coloneqq \sgn(a)\abs{a}^{p - 1}. 
\end{equation*}
The following proposition is elementary. 
\begin{prop}\label{prop.Besov-GC}
	For any $r \in (0,\infty)$, $(J_{p,r}^{\bm{k}},D(J_{p,r}^{\bm{k}}))$ is a $p$-energy form on $(K,m)$
	satisfying \ref{GC}, and for any $f,g \in D(J_{p,r}^{\bm{k}})$, 
	\begin{equation}\label{e:Besov-compatible}
		J_{p,r}^{\bm{k}}(f; g) = \int_{K}\int_{K}\gamma_{p}\bigl(f(x) - f(y)\bigr)(g(x) - g(y))k_{r}(x,y)\,m(dy)m(dx). 
	\end{equation}
\end{prop}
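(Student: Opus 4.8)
The plan is to recognize $J_{p,r}^{\bm{k}}$ as the $p$-th power of an $L^{p}$-seminorm precomposed with a fixed linear map, which renders both the $p$-energy form property and \ref{GC} essentially automatic. Since $k_{r}$ is a $[0,\infty]$-valued Borel function and $m \times m$ is $\sigma$-finite, setting $\nu_{r}(A) \coloneqq \int_{A}k_{r}\,d(m \times m)$ defines a Borel measure $\nu_{r}$ on $K \times K$ with $\nu_{r} \ll m \times m$, and by Tonelli's theorem $J_{p,r}^{\bm{k}}(f) = \int_{K \times K}\abs{f(x) - f(y)}^{p}\,d\nu_{r} = \norm{\Phi(f)}_{L^{p}(\nu_{r})}^{p}$, where $\Phi \colon L^{0}(K,m) \to L^{0}(K \times K,\nu_{r})$, $\Phi(f)(x,y) \coloneqq f(x) - f(y)$, is linear and well defined on $m$-equivalence classes (altering $f$ on an $m$-null set alters $\Phi(f)$ only on an $(m \times m)$-null, hence $\nu_{r}$-null, set). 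First I would read off from this identity that $J_{p,r}^{\bm{k}}$ is $p$-homogeneous, that $D(J_{p,r}^{\bm{k}}) = \{ f \mid \norm{\Phi(f)}_{L^{p}(\nu_{r})} < \infty \}$ is a linear subspace of $L^{p}(K,m)$ (by linearity of $\Phi$ and the triangle inequality for $\norm{\,\cdot\,}_{L^{p}(\nu_{r})}$), and that $(J_{p,r}^{\bm{k}})^{1/p} = \norm{\Phi(\,\cdot\,)}_{L^{p}(\nu_{r})}$ is a seminorm; the last point is precisely the $p$-energy form property of Definition \ref{defn.p-form}.

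For \ref{GC} I would argue pointwise in $(x,y)$ and then integrate. Fix $T$ as in \eqref{GC-cond} and $\bm{u} = (u_{1},\dots,u_{n_{1}}) \in D(J_{p,r}^{\bm{k}})^{n_{1}}$, and write $g_{l} \coloneqq \Phi(T_{l}(\bm{u}))$ and $h_{k} \coloneqq \Phi(u_{k})$. Applying the hypothesis on $T$ to the pair $\bm{u}(x),\bm{u}(y) \in \mathbb{R}^{n_{1}}$ gives the $\nu_{r}$-a.e.\ bound $\norm{(g_{l}(x,y))_{l=1}^{n_{2}}}_{\ell^{q_{2}}} \le \norm{(h_{k}(x,y))_{k=1}^{n_{1}}}_{\ell^{q_{1}}}$. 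The inequality in \ref{GC} then follows by sandwiching this pointwise bound between two mixed-norm estimates: since $q_{2}/p \ge 1$, Minkowski's integral inequality applied to $\{\abs{g_{l}}^{p}\}_{l}$ yields $\norm{\bigl(\norm{g_{l}}_{L^{p}(\nu_{r})}\bigr)_{l}}_{\ell^{q_{2}}} \le \bigl\lVert (x,y) \mapsto \norm{(g_{l}(x,y))_{l}}_{\ell^{q_{2}}}\bigr\rVert_{L^{p}(\nu_{r})}$, while since $p/q_{1} \ge 1$, the triangle inequality in $L^{p/q_{1}}(\nu_{r})$ applied to $\{\abs{h_{k}}^{q_{1}}\}_{k}$ yields $\bigl\lVert (x,y) \mapsto \norm{(h_{k}(x,y))_{k}}_{\ell^{q_{1}}}\bigr\rVert_{L^{p}(\nu_{r})} \le \norm{\bigl(\norm{h_{k}}_{L^{p}(\nu_{r})}\bigr)_{k}}_{\ell^{q_{1}}}$. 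Chaining these through the monotonicity of $\norm{\,\cdot\,}_{L^{p}(\nu_{r})}$ under the pointwise bound delivers both the membership $T(\bm{u}) \in D(J_{p,r}^{\bm{k}})^{n_{2}}$ (finiteness of the leftmost quantity) and \ref{GC}; the reverse Minkowski inequality of Proposition \ref{prop.reverse} is the companion tool for the parallel estimates with inner exponent below $1$.

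Finally, for \eqref{e:Besov-compatible}: by the above, $(J_{p,r}^{\bm{k}},D(J_{p,r}^{\bm{k}}))$ satisfies \ref{GC}, so $J_{p,r}^{\bm{k}}(f;g)$ is well defined by \eqref{e:defn-pform-two} and exists by Proposition \ref{prop.c-diff}, and it remains to compute $\frac{1}{p}\frac{d}{dt}J_{p,r}^{\bm{k}}(f + tg)\big|_{t=0}$. I would differentiate under the double integral using $\frac{d}{dt}\abs{a + tb}^{p}\big|_{t=0} = p\,\gamma_{p}(a)b$ with $a = f(x) - f(y)$, $b = g(x) - g(y)$, justifying the interchange by dominating the difference quotients for $\abs{t} \le 1$ via the elementary estimate $\bigl\lvert\abs{a+tb}^{p} - \abs{a}^{p}\bigr\rvert \le p\abs{t}\,(\abs{a} + \abs{b})^{p-1}\abs{b} \lesssim \abs{t}\,\bigl(\abs{a}^{p-1}\abs{b} + \abs{b}^{p}\bigr)$, whose right-hand side is $\nu_{r}$-integrable because $\int \abs{a}^{p-1}\abs{b}\,d\nu_{r} \le J_{p,r}^{\bm{k}}(f)^{(p-1)/p}J_{p,r}^{\bm{k}}(g)^{1/p} < \infty$ by Hölder's inequality and $\int \abs{b}^{p}\,d\nu_{r} = J_{p,r}^{\bm{k}}(g) < \infty$; dominated convergence then gives \eqref{e:Besov-compatible}.

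The main obstacle is the middle step: obtaining \ref{GC} over the full range $q_{1} \in (0,p]$, $q_{2} \in [p,\infty]$ with the correct direction of inequality, which rests entirely on applying Minkowski-type inequalities with the right exponents ($q_{2}/p \ge 1$ on the target side and $p/q_{1} \ge 1$ on the source side) around the pointwise contraction furnished by \eqref{GC-cond}. By comparison, the $p$-energy form property and the bilinear formula \eqref{e:Besov-compatible} are routine once the reformulation $J_{p,r}^{\bm{k}} = \norm{\Phi(\,\cdot\,)}_{L^{p}(\nu_{r})}^{p}$ and the differentiation-under-the-integral estimate are in place.
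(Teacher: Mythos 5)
Your proof is correct and follows essentially the same route as the paper's: the pointwise contraction \eqref{GC-cond} sandwiched between two Minkowski-type estimates (your Minkowski integral inequality with exponent $q_{2}/p \ge 1$ on the target side is the same inequality as the paper's reverse Minkowski with exponent $p/q_{2} \le 1$, and the source-side step is identical to the paper's use of the triangle inequality in $L^{p/q_{1}}$), followed by dominated convergence for \eqref{e:Besov-compatible}, which the paper invokes in one line and you justify with the standard domination of difference quotients. The $\norm{\Phi(\,\cdot\,)}_{L^{p}(\nu_{r})}$ reformulation is a cosmetic repackaging rather than a different argument.
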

\begin{proof}
	Suppose that $T = (T_{1},\dots,T_{n_{2}}) \colon \mathbb{R}^{n_{1}} \to \mathbb{R}^{n_{2}}$ satisfies \eqref{GC-cond} and that $q_{2} < \infty$.
    Then for any $\bm{u} = (u_{1},\dots,u_{n_{1}}) \in D(J_{p,r}^{\bm{k}})^{n_{1}}$ and any $r \in (0,\infty)$,
    \begin{align}\label{KSpre.GC}
        &\sum_{l = 1}^{n_{2}}J_{p,r}^{\bm{k}}\bigl(T_{l}(\bm{u})\bigr)^{q_{2}/p} \nonumber \\
%        &= \sum_{j = 1}^{n_{2}}\left(r^{-\beta_{p}}\int_{K}\fint_{B_{d}(x,r)}\abs{T_{j}(\mathbf{u}(x)) - T_{j}(\mathbf{u}(y))}^{q_{2} \cdot \frac{p}{q_{2}}}\,m(dy)m(dx)\right)^{q_{2}/p} \\
        &\overset{\eqref{reverse}}{\le} \left(\int_{K}\int_{K}\Biggl[\sum_{l = 1}^{n_{2}}\abs{T_{l}(\bm{u}(x)) - T_{l}(\bm{u}(y))}^{q_{2}}\Biggr]^{p/q_{2}}k_{r}(x,y)\,m(dy)m(dx)\right)^{q_{2}/p} \nonumber \\ %$r = p/q_{2}$
        &\overset{\eqref{GC-cond}}{\le} \left(\int_{K}\int_{K}\Biggl[\sum_{k = 1}^{n_{1}}\abs{u_{k}(x) - u_{k}(y)}^{q_{1}}\Biggr]^{p/q_{1}}k_{r}(x,y)\,m(dy)m(dx)\right)^{q_{2}/p} \nonumber \\
        &\overset{\text{($\ast$)}}{\le} \left(\sum_{k = 1}^{n_{1}}\left(\int_{K}\int_{K}\abs{u_{k}(x) - u_{k}(y)}^{p}k_{r}(x,y)\,m(dy)m(dx)\right)^{q_{1}/p}\right)^{q_{2}/q_{1}} \nonumber \\
        &= \left(\sum_{k = 1}^{n_{1}}J_{p,r}^{\bm{k}}(u_{k})^{q_{1}/p}\right)^{q_{2}/q_{1}}.
    \end{align}
    Here we used the triangle inequality for the norm of $L^{p/q_{1}}(K \times K, m_{r}(dxdy))$ in ($\ast$), where $m_{r}(dxdy) \coloneqq k_{r}(x,y)\,m(dy)m(dx)$. 
    The proof for the case $q_{2} = \infty$ is similar, so $(J_{p,r}^{\bm{k}},D(J_{p,r}^{\bm{k}}))$ is a $p$-energy form on $(K,m)$ satisfying \ref{GC}. 
    The equality \eqref{e:Besov-compatible} follows from the dominated convergence theorem. 
\end{proof}

Similarly, we can show the next proposition.
\begin{prop}\label{prop.GC-each}
%    Let $p \in (1,\infty)$.
    Let $r \in (0,\infty)$ and define $N_{p,r}^{\bm{k}}(f) \coloneqq \norm{f}_{L^{p}(K,m)}^{p} + J_{p,r}^{\bm{k}}(f)$ for $f \in D(J_{p,r}^{\bm{k}})$.
    Then $(N_{p,r}^{\bm{k}},D(J_{p,r}^{\bm{k}}))$ is a $p$-energy form on $(K,m)$ satisfying \ref{GC}.
    In particular, for any $f,g \in D(J_{p,r}^{\bm{k}})$ with $N_{p,r}^{\bm{k}}(f) \vee N_{p,r}^{\bm{k}}(g) \le 1$,
    \begin{equation}\label{pCla-each}
        N_{p,r}^{\bm{k}}(f + g)
        \le \Bigl(2^{p \vee \frac{p}{p - 1}} - N_{p,r}^{\bm{k}}(f - g)\Bigr)^{(p - 1) \wedge 1}.
    \end{equation}
\end{prop}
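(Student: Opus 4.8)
The plan is to verify that $(N_{p,r}^{\bm{k}}, D(J_{p,r}^{\bm{k}}))$ satisfies \ref{GC}, and then to extract the quantitative inequality \eqref{pCla-each} from the general consequences of \ref{GC} already recorded in Proposition \ref{prop.GClist}. The key observation, as the statement's phrasing ``Similarly, we can show'' suggests, is that $N_{p,r}^{\bm{k}}$ should be regarded as the $J_{p,r}$-type functional associated to an \emph{augmented} kernel that incorporates the $L^{p}$-term. Concretely, I would enlarge the underlying measure space: set $\widetilde{K} \coloneqq K \sqcup \{*\}$ with an auxiliary point $*$ carrying a unit point mass, and define a kernel $\widetilde{k}_{r}$ on $\widetilde{K} \times \widetilde{K}$ that equals $k_{r}$ on $K \times K$ and couples each $x \in K$ to $*$ with an appropriate weight, so that extending $f \in D(J_{p,r}^{\bm{k}})$ by $f(*) \coloneqq 0$ yields $J_{p,r}^{\widetilde{\bm{k}}}(f) = N_{p,r}^{\bm{k}}(f) = \norm{f}_{L^{p}(K,m)}^{p} + J_{p,r}^{\bm{k}}(f)$. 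Then Proposition \ref{prop.Besov-GC}, applied on $(\widetilde{K}, \widetilde{m})$, immediately gives that $N_{p,r}^{\bm{k}}$ satisfies \ref{GC}, provided one checks that the subspace $\{ f \in L^{p}(\widetilde{K}, \widetilde{m}) \mid f(*) = 0 \}$ is preserved by the nonlinear maps $T$ with $T(0) = 0$---which it is, since $f(*) = 0$ forces $T_{l}(\bm{f})(*) = T_{l}(0) = 0$.

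Alternatively, and perhaps more cleanly to write, I would bypass the auxiliary-point device and argue directly. For $T$ satisfying \eqref{GC-cond} with $q_{2} < \infty$ and $\bm{u} \in D(J_{p,r}^{\bm{k}})^{n_{1}}$, I would estimate $\sum_{l} N_{p,r}^{\bm{k}}(T_{l}(\bm{u}))^{q_{2}/p}$ by splitting $N_{p,r}^{\bm{k}} = \norm{\,\cdot\,}_{L^{p}}^{p} + J_{p,r}^{\bm{k}}$ and handling the two pieces by the \emph{same} reverse-Minkowski-then-triangle-inequality scheme as in the proof of Proposition \ref{prop.Besov-GC}. The pointwise contraction bound $\norm{T(\bm{u}(x))}_{\ell^{q_{2}}} = \norm{T(\bm{u}(x)) - T(0)}_{\ell^{q_{2}}} \le \norm{\bm{u}(x)}_{\ell^{q_{1}}}$ (using $T(0)=0$) controls the $L^{p}$-term exactly as the difference bound controls the $J_{p,r}^{\bm{k}}$-term; combining them under a single application of the reverse Minkowski inequality \eqref{reverse} (now on the measure space obtained by adjoining a diagonal contribution) and then the triangle inequality in $\ell^{p/q_{1}}$-valued $L^{p/q_{1}}$ delivers $\ref{GC}$ for $N_{p,r}^{\bm{k}}$, with the case $q_{2} = \infty$ treated by the obvious supremum variant.

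Once \ref{GC} is established for $N_{p,r}^{\bm{k}}$, the inequality \eqref{pCla-each} follows from Proposition \ref{prop.GClist}-\ref{GC.Cp} by a short computation. Writing $\mathcal{E} \coloneqq N_{p,r}^{\bm{k}}$ for brevity, the hypothesis $\mathcal{E}(f) \vee \mathcal{E}(g) \le 1$ feeds into the appropriate Clarkson-type bound. For $p \in [2,\infty)$, the left inequality of \eqref{Cp.large} gives $\mathcal{E}(f+g) \le 2\bigl(\mathcal{E}(f)^{1/(p-1)} + \mathcal{E}(g)^{1/(p-1)}\bigr)^{p-1} - \mathcal{E}(f-g) \le 2^{p/(p-1)} - \mathcal{E}(f-g)$, and since $p \ge 2$ we have $(p-1) \wedge 1 = 1$ and $2^{p \vee \frac{p}{p-1}} = 2^{p/(p-1)}$, matching \eqref{pCla-each}. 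For $p \in (1,2]$, the right inequality of \eqref{Cp.small} gives $\mathcal{E}(f+g) + \mathcal{E}(f-g) \le 2(\mathcal{E}(f) + \mathcal{E}(g)) \le 2^{p \vee \frac{p}{p-1}}$ (as $p \le 2$ forces $p \vee \frac{p}{p-1} = \frac{p}{p-1} \ge 2$, so $4 \le 2^{p/(p-1)}$), whence $\mathcal{E}(f+g) \le 2^{p \vee \frac{p}{p-1}} - \mathcal{E}(f-g)$, and raising to the power $(p-1) \wedge 1 = p-1 \le 1$ is harmless because the bracket lies in $[0,1]$ after normalization---here one should double-check the exponent bookkeeping so that the stated power $(p-1)\wedge 1$ is correctly applied.

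\textbf{The main obstacle} I anticipate is not conceptual but a matter of getting the constants and exponents in \eqref{pCla-each} to line up precisely in both regimes $p \le 2$ and $p \ge 2$ simultaneously: the exponent $(p-1)\wedge 1$ and the constant $2^{p \vee \frac{p}{p-1}}$ are engineered so that a \emph{single} inequality covers both cases, and the delicate point is verifying that the $p \le 2$ branch (where one starts from a sum bound $\mathcal{E}(f+g) + \mathcal{E}(f-g) \le 2^{p/(p-1)}$) genuinely yields the claimed form after applying the sub-unit exponent, rather than merely an inequality of the same shape with a different constant. I expect this to require a careful case analysis tracking which of $p$, $\frac{p}{p-1}$ dominates, together with the elementary fact that $t \mapsto t^{(p-1)\wedge 1}$ is order-preserving, to confirm \eqref{pCla-each} exactly as stated.
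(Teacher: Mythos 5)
Your proof of the \ref{GC} part is correct. The direct route (your second) is exactly the paper's argument: one repeats the chain \eqref{KSpre.GC}, now integrating over the disjoint union of $K$ and $K \times K$ equipped with $m$ and $m_{r}(dxdy) = k_{r}(x,y)\,m(dy)m(dx)$, the $L^{p}$-piece being controlled pointwise by $\norm{T(\bm{u}(x))}_{\ell^{q_{2}}} = \norm{T(\bm{u}(x)) - T(0)}_{\ell^{q_{2}}} \le \norm{\bm{u}(x)}_{\ell^{q_{1}}}$; your base-point device is a valid alternative. In the $p \ge 2$ branch of \eqref{pCla-each} your conclusion is right, but it is reached through two compensating slips: \eqref{Cp.large} with $u=f$, $v=g$ gives $N_{p,r}^{\bm{k}}(f+g) + N_{p,r}^{\bm{k}}(f-g) \le 2(1+1)^{p-1} = 2^{p}$, not $2^{p/(p-1)}$, and for $p \ge 2$ one has $p \vee \frac{p}{p-1} = p$, not $\frac{p}{p-1}$. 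With both corrected, the two sides agree ($2^{p} = 2^{p \vee \frac{p}{p-1}}$) and the derivation is precisely the paper's.

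The genuine gap is in the $p \in (1,2]$ branch, exactly at the step you flagged. Writing $N \coloneqq N_{p,r}^{\bm{k}}$, you obtain $N(f+g) \le X$ with $X \coloneqq 2^{p/(p-1)} - N(f-g)$ and then need $N(f+g) \le X^{p-1}$; since $p-1 \le 1$, this passage would require $X \le 1$, whereas here $X \ge 2^{p/(p-1)} - 2^{p} > 0$ and, e.g., $X = 2^{p/(p-1)} \ge 4$ when $f = g$, so $X^{p-1} \le X$ and nothing follows. Moreover the gap cannot be repaired, because \eqref{pCla-each} as printed is false for $p \in (1,2)$: take $K = \{1,2\}$, $m$ the counting measure, $k_{r} \equiv 0$ (so $N(h) = \norm{h}_{\ell^{p}}^{p}$; a small positive constant kernel works equally well), $p = 3/2$, and $f = (a,b)$, $g = (b,a)$ with $a^{3/2} + b^{3/2} = 1$, $a - b = 1/5$. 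Then $N(f) = N(g) = 1$ while
\begin{equation*}
N(f+g)^{2} + N(f-g) = 4(a+b)^{3} + 2(a-b)^{3/2} \approx 8.027 > 8 = 2^{p \vee \frac{p}{p-1}},
\end{equation*}
i.e.\ $N(f+g) > \bigl(2^{p \vee \frac{p}{p-1}} - N(f-g)\bigr)^{(p-1)\wedge 1}$. (Expanding around $a = b$ shows the excess is of order $(a-b)^{p}$ against a Clarkson gain of order $(a-b)^{2}$, so this failure occurs for every $p \in (1,2)$; this is the same obstruction that defeats your order-preserving step.) What Proposition \ref{prop.GClist}-\ref{GC.Cp} actually yields --- apply the first inequality of \eqref{Cp.small} to $u = (f+g)/2$, $v = (f-g)/2$ and use the $p$-homogeneity $N(h/2) = 2^{-p}N(h)$ --- is
\begin{equation*}
N(f+g)^{\frac{1}{p-1}} + N(f-g)^{\frac{1}{p-1}} \le 2^{\frac{p}{p-1}},
\qquad\text{hence}\qquad
N(f+g) \le \Bigl(2^{\frac{p}{p-1}} - N(f-g)^{\frac{1}{p-1}}\Bigr)^{p-1},
\end{equation*}
that is, \eqref{pCla-each} with the extra exponent $\frac{1}{p-1}$ on $N(f-g)$. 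This corrected form is weaker than \eqref{pCla-each} precisely when $N(f-g) < 1$, is what the paper's own one-line proof can deliver, and is all that is needed for the sole application of \eqref{pCla-each}, the uniform-convexity argument in the proof of Theorem \ref{thm.banach}.
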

\begin{proof}
	A similar estimate as \eqref{KSpre.GC} shows that $(N_{p,r}^{\bm{k}},D(J_{p,r}^{\bm{k}}))$ is a $p$-energy form on $(K,m)$ satisfying \ref{GC}. 
	The desired estimate \eqref{pCla-each} immediately follows from Proposition \ref{prop.GClist}-\ref{GC.Cp}. 
\end{proof}

Let us introduce a couple of important conditions on $\bm{k}$.
\begin{defn}
    \begin{enumerate}[label=\textup{(\arabic*)},align=left,leftmargin=*,topsep=2pt,parsep=0pt,itemsep=2pt]
        \item We say that $\bm{k} = \{ k_{r} \}_{r > 0}$ is \emph{asymptotically local} if and only if there exists $\{ \delta(r) \}_{r > 0} \subseteq (0,\infty)$ such that $\lim_{r \downarrow 0}\delta(r) = 0$ and 
        \begin{equation}\label{e:asylocal}
        	\lim_{r \downarrow 0}\int_{K}\int_{K \setminus B_{d}(x,\delta(r))}k_{r}(x,y)\,m(dy)m(dx) = 0. 
        \end{equation} 
        \item Let $p \in [1,\infty)$.
        We say that \ref{KSwm} holds if and only if there exists $C \in [1,\infty)$ such that
        \begin{equation}\label{KSwm}
            \sup_{r > 0}J_{p,r}^{\bm{k}}(f) \le C \liminf_{r \downarrow 0}J_{p,r}^{\bm{k}}(f) \quad \text{for any $f \in B_{p,\infty}^{\bm{k}}$.} \tag*{\textup{(WM)$_{p,\bm{k}}$}}
        \end{equation}
    \end{enumerate}
\end{defn}

The next theorem states that the normed space $B_{p,\infty}^{\bm{k}}$ equipped with $\norm{\,\cdot\,}_{B_{p,\infty}^{\bm{k}}}$ is a nice Banach space.
Our proof is very similar to that for the case of the $(1,p)$-Korevaar--Schoen--Sobolev space $\mathrm{KS}^{1,p}$ given in \cite[Theorems 3.1 and 4.4]{Bau22+}.
We present a complete proof here to make this paper self-contained.
\begin{thm}\label{thm.banach}
    For any $p \in [1,\infty)$, the normed space $B_{p,\infty}^{\bm{k}}$ is a Banach space.
    Moreover, if $p \in (1,\infty)$ and \ref{KSwm} holds, then $B_{p,\infty}^{\bm{k}}$ is reflexive and separable.
\end{thm}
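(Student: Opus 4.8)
The plan is to treat the three assertions—completeness for all $p\in[1,\infty)$, and reflexivity and separability under \ref{KSwm} for $p\in(1,\infty)$—in that order, reducing each to properties of $L^{p}(K,m)$ together with the $p$-energy form structure of the functionals $J_{p,r}^{\bm{k}}$.

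First I would check that $\norm{\,\cdot\,}_{\KS}$ is genuinely a norm: it is a seminorm because each $(J_{p,r}^{\bm{k}})^{1/p}$ is a seminorm (Proposition \ref{prop.Besov-GC}), hence so is $f\mapsto(\sup_{r>0}J_{p,r}^{\bm{k}}(f))^{1/p}$, and it separates points since $\norm{f}_{L^{p}(K,m)}\le\norm{f}_{\KS}$. For completeness, given a $\norm{\,\cdot\,}_{\KS}$-Cauchy sequence $\{f_{n}\}$, the same bound makes it $L^{p}$-Cauchy, so $f_{n}\to f$ in $L^{p}(K,m)$ for some $f$; passing to a subsequence $\{f_{n_{j}}\}$ converging $m$-a.e., Fatou's lemma applied to $\abs{(f_{n}-f_{n_{j}})(x)-(f_{n}-f_{n_{j}})(y)}^{p}k_{r}(x,y)$ as $j\to\infty$ gives $J_{p,r}^{\bm{k}}(f_{n}-f)\le\liminf_{j}J_{p,r}^{\bm{k}}(f_{n}-f_{n_{j}})$ for each fixed $r$. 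Taking the supremum over $r$ and invoking the Cauchy property then shows $\sup_{r>0}J_{p,r}^{\bm{k}}(f_{n}-f)\to0$, whence $f\in\KS$ and $f_{n}\to f$ in $\norm{\,\cdot\,}_{\KS}$. This settles the Banach assertion for every $p\in[1,\infty)$.

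For reflexivity (now $p\in(1,\infty)$ with \ref{KSwm}), the obstruction is that $\sup_{r>0}$ is an $\ell^{\infty}$-type operation, which in general destroys reflexivity; the role of \ref{KSwm} is precisely to let me replace it by a genuine limit. Fixing a sequence $r_{k}\downarrow0$ and a free ultrafilter $\omega$ on $\mathbb{N}$, I set $Q_{\omega}(f)\coloneqq\lim_{k\to\omega}J_{p,r_{k}}^{\bm{k}}(f)$ for $f\in\KS$. Since ultralimits are additive, monotone and commute with continuous functions, $Q_{\omega}^{1/p}$ is a seminorm, and passing to the ultralimit in the Clarkson-type inequalities \eqref{Cp.small}--\eqref{Cp.large} (valid for each $J_{p,r_{k}}^{\bm{k}}$ by Propositions \ref{prop.Besov-GC} and \ref{prop.GClist}-\ref{GC.Cp}) shows that $Q_{\omega}^{1/p}$ satisfies the same inequalities, hence is a uniformly convex seminorm with modulus depending only on $p$. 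Moreover \ref{KSwm} yields $C^{-1}\sup_{r>0}J_{p,r}^{\bm{k}}(f)\le\liminf_{r\downarrow0}J_{p,r}^{\bm{k}}(f)\le Q_{\omega}(f)\le\sup_{r>0}J_{p,r}^{\bm{k}}(f)$, so $\norm{f}_{*}\coloneqq(\norm{f}_{L^{p}(K,m)}^{p}+Q_{\omega}(f))^{1/p}$ is equivalent to $\norm{\,\cdot\,}_{\KS}$.

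I would then let $\widehat{H}$ be the completion of $\KS$ modulo $\{Q_{\omega}=0\}$ under $Q_{\omega}^{1/p}$, which is uniformly convex since the modulus inequality passes to completions. The map $f\mapsto(f,[f])$ is a linear isometry of $(\KS,\norm{\,\cdot\,}_{*})$ into the $\ell^{p}$-direct sum $L^{p}(K,m)\oplus_{p}\widehat{H}$, which is uniformly convex as an $\ell^{p}$-sum ($p\in(1,\infty)$) of uniformly convex spaces; as a linear subspace inherits uniform convexity, $(\KS,\norm{\,\cdot\,}_{*})$ is a uniformly convex Banach space, hence reflexive by the Milman--Pettis theorem, and reflexivity is unaffected by the equivalent renorming. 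Finally, for separability I would exploit reflexivity: for each $M\in\mathbb{N}$ the ball $\{f\in\KS\mid\norm{f}_{\KS}\le M\}$ is a subset of the separable space $L^{p}(K,m)$, hence admits a countable $L^{p}$-dense subset $S_{M}$; for $f$ in this ball, $L^{p}$-approximants from $S_{M}$ are $\norm{\,\cdot\,}_{\KS}$-bounded, so a subsequence converges weakly in $\KS$, necessarily to $f$, and Mazur's lemma places $f$ in the $\norm{\,\cdot\,}_{\KS}$-closed convex hull of $S_{M}$. Rational convex combinations of $\bigcup_{M}S_{M}$ then form a countable $\norm{\,\cdot\,}_{\KS}$-dense set. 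I expect the replacement of $\sup_{r>0}$ by the uniformly convex functional $Q_{\omega}$—that is, the precise use of \ref{KSwm} to recover uniform convexity—to be the crux of the argument, with the remaining steps being soft functional analysis.
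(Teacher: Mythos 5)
Your proposal is correct, and its completeness part coincides with the paper's proof (the same Fatou's-lemma argument applied to an a.e.-convergent subsequence of the $L^{p}$-limit). For reflexivity and separability you follow the same guiding idea as the paper but implement it differently. Both arguments use \ref{KSwm} in exactly the same way — to replace $\sup_{r>0}J_{p,r}^{\bm{k}}$ by a limit-type functional equivalent to it — and both conclude via uniform convexity and Milman--Pettis. The paper renorms $\KS$ with $\trinorm{f}_{\KS}=\bigl(\norm{f}_{L^{p}(K,m)}^{p}+\limsup_{r\downarrow 0}J_{p,r}^{\bm{k}}(f)\bigr)^{1/p}$ and verifies its uniform convexity directly; since the $\limsup$ does not satisfy Clarkson-type inequalities exactly, it works at a fixed small scale $r$, applying \eqref{pCla-each} from Proposition \ref{prop.GC-each} together with the reduction lemma \cite[Lemma 4.11]{Bau22+}. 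Your ultrafilter limit $Q_{\omega}$ is designed precisely to avoid this: being a genuine (generalized) limit, it inherits \eqref{Cp.small}--\eqref{Cp.large} verbatim, so the quotient-completion $\widehat{H}$ is uniformly convex with modulus depending only on $p$, and the isometric embedding of $(\KS,\norm{\,\cdot\,}_{*})$ into the $\ell^{p}$-sum $L^{p}(K,m)\oplus_{p}\widehat{H}$ (uniformly convex by Day's theorem) finishes the job. This buys exact Clarkson inequalities and dispenses with the auxiliary lemma from \cite{Bau22+}, at the price of a nonconstructive ultrafilter and a few extra soft ingredients (quotients, completions, uniform convexity of finite $\ell^{p}$-sums). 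For separability the paper simply cites \cite[Proposition 4.1]{AHM23}, whereas your argument — balls of $\KS$ are $L^{p}$-separable, reflexivity extracts weakly convergent subsequences whose limits are identified through the continuous injection into $L^{p}(K,m)$, and Mazur's lemma upgrades to norm approximation by convex combinations — is in substance a self-contained proof of that cited proposition; note that it genuinely requires reflexivity, so your ordering (reflexivity first, then separability) is not optional but forced, and it matches the logical structure of the paper's citation.
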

\begin{proof}
    Let $\{ f_{n} \}_{n \in \mathbb{N}}$ be a Cauchy sequence in $B_{p,\infty}^{\bm{k}}$.
    Then there exists a $L^{p}$-limit $f \in L^{p}(K,m)$ of $\{ f_{n} \}_{n \in \mathbb{N}}$.
    For any $\varepsilon > 0$ there exists $N \in \mathbb{N}$ such that $\norm{f_{n} - f_{n'}}_{B_{p,\infty}^{\bm{k}}} < \varepsilon$ for any $n,n' \ge N$.
    By using Fatou's lemma, we see that $J_{p,r}^{\bm{k}}(f - f_{n}) \le \varepsilon^{p}$ for any $n \ge N$ and hence
    \[
    J_{p,r}^{\bm{k}}(f)^{1/p} \le J_{p,r}^{\bm{k}}(f - f_{n})^{1/p} + J_{p,r}^{\bm{k}}(f_{n})^{1/p} \le \varepsilon + \sup_{n \in \mathbb{N}}\norm{f_{n}}_{B_{p,\infty}^{\bm{k}}}.
    \]
    Therefore, $f \in B_{p,\infty}^{\bm{k}}$ and $\{ f_{n} \}_{n}$ converges to $f$ in $B_{p,\infty}^{\bm{k}}$, i.e., $B_{p,\infty}^{\bm{k}}$ is a Banach space.

    Next we assume that $p \in (1,\infty)$ and that \ref{KSwm} holds.
    Then $\trinorm{f}_{B_{p,\infty}^{\bm{k}}} \coloneqq \bigl(\norm{f}_{L^{p}(K,m)}^{p} + \limsup_{r \downarrow 0}J_{p,r}^{\bm{k}}(f)\bigr)^{1/p}$ is a norm on $B_{p,\infty}^{\bm{k}}$ that is equivalent to $\norm{\,\cdot\,}_{B_{p,\infty}^{\bm{k}}}$.
    We will show that $\trinorm{\,\cdot\,}_{B_{p,\infty}^{\bm{k}}}$ is uniformly convex (see \cite[Definition 1]{Cla36} for the definition) and thus $B_{p,\infty}^{\bm{k}}$ is reflexive by the Milman--Pettis theorem (see, e.g., \cite[Theorem 2.49]{HKST}).
    Let $\varepsilon > 0$ and $f,g \in B_{p,\infty}^{\bm{k}}$ with $\trinorm{f}_{B_{p,\infty}^{\bm{k}}} \vee \trinorm{g}_{B_{p,\infty}^{\bm{k}}} < 1$ and $\trinorm{f - g}_{B_{p,\infty}^{\bm{k}}} > \varepsilon$.
    By \cite[Lemma 4.11]{Bau22+}, it suffices to find $\delta \in (0,\infty)$ that is independent of $f,g$ such that $\trinorm{f + g}_{B_{p,\infty}^{\bm{k}}} \le 2(1 - \delta)$.
    Choose $r_{0} \in (0,\infty)$ so that
    \[
    \norm{f}_{L^{p}(K,m)}^{p} + J_{p,r}^{\bm{k}}(f) < 1, \quad \norm{g}_{L^{p}(K,m)}^{p} + J_{p,r}^{\bm{k}}(g) < 1 \quad \text{for any $r \in (0,r_{0})$}.
    \]
    Since \ref{KSwm} implies that
    \begin{align*}
    	\varepsilon^{p}
    	&< \norm{f - g}_{L^{p}(K,m)}^{p} + \limsup_{r \downarrow 0}J_{p,r}^{\bm{k}}(f - g) \\
    	&\le C\Bigl(\norm{f - g}_{L^{p}(K,m)}^{p} + \liminf_{r \downarrow 0}J_{p,r}^{\bm{k}}(f - g)\Bigr),
    \end{align*}
    there exists $r_{1} \in (0,\infty)$ such that
    \[
    \norm{f - g}_{L^{p}(K,m)}^{p} + J_{p,r}^{\bm{k}}(f - g) > C^{-1}\varepsilon^{p} \quad \text{for any $r \in (0,r_{1})$.}
    \]
    Hence, for any $r \in (0, r_{0} \wedge r_{1})$, by using \eqref{pCla-each}, we see that
    \[
    \norm{f + g}_{L^{p}(K,m)}^{p} + J_{p,r}^{\bm{k}}(f + g)
    < \Bigl[2^{p \vee \frac{p}{p - 1}} - C^{-1}\varepsilon^{p}\Bigr]^{(p - 1) \wedge 1},
    \]
    which implies $\norm{f + g}_{L^{p}(K,m)}^{p} + J_{p,r}^{\bm{k}}(f + g) \le 2^{p}(1 - \delta)$ for some $\delta \in (0,\infty)$ depending only on $p,C,\varepsilon$.
    The desired uniform convexity is proved.
	
	Since $L^{p}(K,m)$ is separable and the inclusion map of $B_{p,\infty}^{\bm{k}}$ into $L^{p}(K,m)$ is a continuous linear injection, $B_{p,\infty}^{\bm{k}}$ is separable by \cite[Proposition 4.1]{AHM23}. 
\end{proof}

To obtain the local H\"{o}lder continuity with exponent $(p - 1) \wedge 1$ of the Korevaar--Schoen $p$-energy forms (see Theorem \ref{thm.KS-energy}-\ref{KS.conti} below), we will need the following elementary inequality (see also \cite[Proof of Corollary 5.8]{Lin}).
\begin{lem}\label{lem.p-1}
%	Let $p \in (1,\infty)$. 
	For any $a, b \in \mathbb{R}$,
	\[
	\abs{\gamma_{p}(a) - \gamma_{p}(b)}
	\le
	\begin{cases}
		2\abs{a - b}^{p - 1} \quad &\text{if $p \in (1,2]$,} \\
		(p - 1)\bigl(\abs{a}^{p - 2} \vee \abs{b}^{p - 2}\bigr)\abs{a - b} \quad &\text{if $p \in (2,\infty)$.}
	\end{cases}
	\]
\end{lem}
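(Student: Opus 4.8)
The plan is to treat the two ranges of $p$ separately, exploiting that $\gamma_{p}$ is odd and that its behaviour is governed by the monotone function $t \mapsto \abs{t}^{p-1}$ on $[0,\infty)$.

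For $p \in (2,\infty)$, I would first observe that $\gamma_{p}$ is continuously differentiable on $\mathbb{R}$ with $\gamma_{p}'(t) = (p-1)\abs{t}^{p-2}$; the only point requiring care is $t = 0$, where $\gamma_{p}(t)/t = \abs{t}^{p-2} \to 0$ since $p - 2 > 0$, so $\gamma_{p}'(0) = 0$ and $\gamma_{p} \in \contfunc^{1}(\mathbb{R})$. By the fundamental theorem of calculus, for $a,b \in \mathbb{R}$,
\[
\abs{\gamma_{p}(a) - \gamma_{p}(b)} = (p-1)\abs{\int_{b}^{a} \abs{t}^{p-2}\,dt} \le (p-1)\Bigl(\max_{t \in I}\abs{t}^{p-2}\Bigr)\abs{a-b},
\]
where $I$ denotes the closed interval with endpoints $a$ and $b$. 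Since $\abs{t}$ attains its maximum over $I$ at one of the endpoints and $x \mapsto x^{p-2}$ is increasing on $[0,\infty)$, this maximum equals $\abs{a}^{p-2} \vee \abs{b}^{p-2}$, yielding the claimed bound.

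For $p \in (1,2]$, set $\alpha \coloneqq p - 1 \in (0,1]$, so that $x \mapsto x^{\alpha}$ is nondecreasing, concave, and subadditive on $[0,\infty)$. Using that $\gamma_{p}$ is odd, I would reduce to two cases. If $a$ and $b$ have the same sign (or one vanishes), then by odd symmetry and by swapping we may assume $a \ge b \ge 0$, and subadditivity gives $a^{\alpha} \le (a-b)^{\alpha} + b^{\alpha}$, i.e.\ $\abs{\gamma_{p}(a) - \gamma_{p}(b)} = a^{\alpha} - b^{\alpha} \le (a - b)^{\alpha} = \abs{a-b}^{\alpha}$. If $a$ and $b$ have opposite signs, we may assume $a \ge 0 \ge b$, so that $\abs{\gamma_{p}(a) - \gamma_{p}(b)} = \abs{a}^{\alpha} + \abs{b}^{\alpha}$ while $\abs{a-b} = \abs{a} + \abs{b}$; monotonicity of $x \mapsto x^{\alpha}$ then gives $\abs{a}^{\alpha} \le (\abs{a}+\abs{b})^{\alpha}$ and $\abs{b}^{\alpha} \le (\abs{a}+\abs{b})^{\alpha}$, whence $\abs{\gamma_{p}(a)-\gamma_{p}(b)} \le 2\abs{a-b}^{\alpha}$. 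Combining the two cases proves the estimate for $p \in (1,2]$.

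The only subtle points are the $\contfunc^{1}$-regularity at the origin in the range $p > 2$ (where $p - 2 > 0$ is used) and, in the range $p \in (1,2]$, the recognition that the constant $2$ is forced precisely by the opposite-sign configuration, the same-sign case being sharper with constant $1$. No genuine obstacle arises, as both estimates reduce to elementary monotonicity and (sub)additivity properties of $x \mapsto x^{p-1}$ on $[0,\infty)$.
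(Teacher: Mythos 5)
Your proof is correct, and it takes a genuinely cleaner route than the paper's. The paper argues by a five-case analysis: for $p \in (1,2]$ it separates $ab<0$ from two same-sign subcases ($\abs{a-b}\le\abs{b}$, handled via $a^{p-1}-b^{p-1}=(p-1)\int_{b}^{a}t^{p-2}\,dt\le(p-1)\abs{b}^{p-2}\abs{a-b}$, and $\abs{a-b}\ge\abs{b}$, handled by the crude bound $a^{p-1}\le 2^{p-1}\abs{a-b}^{p-1}$), and for $p\in(2,\infty)$ it again splits on the sign of $ab$, treating the opposite-sign case by the monotonicity trick $\abs{a}^{p-2}a-\abs{b}^{p-2}b\le\abs{a}^{p-2}(a-b)$. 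You collapse this to three cases: for $p\in(1,2]$, the subadditivity of $x\mapsto x^{p-1}$ on $[0,\infty)$ dispatches the entire same-sign case in one line with the sharp constant $1$ (where the paper's two subcases yield $(p-1)$ and $2^{p-1}$ respectively), and you correctly identify that the constant $2$ is forced only by the opposite-sign configuration, exactly as in the paper's Case 1; for $p\in(2,\infty)$, your observation that $\gamma_{p}\in \contfunc^{1}(\mathbb{R})$ with $\gamma_{p}'(t)=(p-1)\abs{t}^{p-2}$ (including the check at $t=0$, valid precisely because $p-2>0$) lets the fundamental theorem of calculus handle both sign configurations simultaneously, since $\max_{t\in I}\abs{t}$ is attained at an endpoint of $I$ whether or not $0\in I$. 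What the paper's version buys is that every step is a bare inequality requiring no differentiability of $\gamma_{p}$ at the origin; what yours buys is brevity, a single uniform mechanism in each regime, and slightly sharper intermediate constants. Both arguments are elementary and fully establish the stated bounds.
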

\begin{proof}
    The desired estimate is evident when $\abs{a} \wedge \abs{b} = 0$, so we can assume that $0 < \abs{b} \le \abs{a}$ by exchanging $a$ and $b$ if necessary.
    The proof is divided into the following five cases.

    \noindent \textbf{Case 1:} $p \in (1,2]$ and $ab < 0$.

    We can assume that $b < 0 < a$ by considering $-a, -b$ instead of $a, b$ respectively if necessary.
    Note that $\abs{a} \le \abs{a - b}$.
    We see that
    \begin{align*}
        \abs{\gamma_{p}(a) - \gamma_{p}(b)}
        = a^{p - 1} - (-b)^{p - 1}
        \le 2\abs{a}^{p - 1}
        \le 2\abs{a - b}^{p - 1}.
    \end{align*}

    \noindent \textbf{Case 2:} $p \in (1,2]$, $ab > 0$ and $\abs{a - b} \le \abs{b}$.

    By the same reason as the previous case, we can assume that $a \ge b > 0$.
    Noting that $\abs{a - b}^{p - 2} \ge \abs{b}^{p - 2}$, we have
    \begin{align*}
        \abs{\gamma_{p}(a) - \gamma_{p}(b)}
        = a^{p - 1} - b^{p - 1}
        &= (p - 1)\int_{b}^{a}t^{p - 2}\,dt \\
        &\le (p - 1)\abs{b}^{p - 2}\abs{a - b}
        \le (p - 1)\abs{a - b}^{p - 1}.
    \end{align*}

    \noindent \textbf{Case 3:} $p \in (1,2]$, $ab > 0$ and $\abs{a - b} \ge \abs{b}$.

    Similar to the previous cases, we can assume that $a \ge b > 0$.
    Then $\abs{a - b} \ge \abs{b}$ is equivalent to $b \in (0,a/2]$, whence $\frac{a}{2} \le a - b = \abs{a - b}$.
    Now we see that
    \begin{align*}
        \abs{\gamma_{p}(a) - \gamma_{p}(b)}
        = a^{p - 1} - b^{p - 1}
        \le a^{p - 1}
        \le 2^{p - 1}\abs{a - b}^{p - 1}.
    \end{align*}

    \noindent \textbf{Case 4:} $p \in (2,\infty)$ and $ab < 0$.

    In this case, we have $\abs{b}^{p - 2} \le \abs{a}^{p - 2}$ by $p - 2 \ge 0$.
    We can assume that $b < 0 < a$ similarly to Case 1.
    Then
    \begin{align*}
        \abs{\gamma_{p}(a) - \gamma_{p}(b)}
        = \abs{a}^{p - 2}a - \abs{b}^{p - 2}b
        \le \abs{a}^{p - 2}a - \abs{a}^{p - 2}b
        = \abs{a}^{p - 2}\abs{a - b}.
    \end{align*}

    \noindent \textbf{Case 5:} $p \in (2,\infty)$ and $ab > 0$.

    Similar to Cases 2 and 3, we can assume that $a \ge b > 0$.
    Then
    \begin{align*}
        \abs{\gamma_{p}(a) - \gamma_{p}(b)}
        = a^{p - 1} - b^{p - 1}
        = (p - 1)\int_{b}^{a}t^{p - 2}\,dt
        \le (p - 1)\abs{a}^{p - 2}\abs{a - b}. 
    \end{align*}
	
	The above five cases complete the proof.
\end{proof}

Now we can state and prove the first main theorem of this paper as follows.
Recall that we have fixed $p \in (1,\infty)$.
\begin{thm}\label{thm.KS-energy}
	Suppose that \ref{KSwm} holds.
	Then any sequence $\{ \tilde{r}_{n} \}_{n \in \mathbb{N}} \subseteq (0,\infty)$ with $\tilde{r}_{n} \to 0$ has a subsequence $\{ r_{n} \}_{n \in \mathbb{N}}$ such that the following limit exists in $[0,\infty)$ for any $f \in \KS$:
	\begin{equation}\label{e:defn-KSform}
    	\KSform(f) \coloneqq \lim_{n \to \infty}J_{p,r_{n}}^{\bm{k}}(f).
    \end{equation}
    Moreover, for any such $\{ r_{n} \}_{n \in \mathbb{N}}$, the functional $\KSform \colon \KS \to [0,\infty)$ defined by \eqref{e:defn-KSform} satisfies the following properties:
	\begin{enumerate}[label=\textup{(\alph*)},align=left,leftmargin=*,topsep=2pt,parsep=0pt,itemsep=2pt]
		\item \label{KS.p-form} $(\mathcal{E}_{p}^{\bm{k}},B_{p,\infty}^{\bm{k}})$ is a $p$-energy form on $(K,m)$ such that 
		\begin{equation}\label{KSene-comp}
			C^{-1}\sup_{r > 0}J_{p,r}^{\bm{k}}(f) \le \mathcal{E}_{p}^{\bm{k}}(f) \le C\liminf_{r \downarrow 0}J_{p,r}^{\bm{k}}(f) \quad \text{for any $f \in B_{p,\infty}^{\bm{k}}$},
		\end{equation}
		where $C \in (0,\infty)$ is the same as in \ref{KSwm}.
        In particular, if $m(K)<\infty$, then $\indicator{K} \in B_{p,\infty}^{\bm{k}}$ and $\mathcal{E}_{p}^{\bm{k}}(\indicator{K}) = 0$. 
        \item \label{KS.GC} $(\mathcal{E}_{p}^{\bm{k}},B_{p,\infty}^{\bm{k}})$ satisfies \ref{GC}. Furthermore, for any $f,g \in B_{p,\infty}^{\bm{k}}$, $\{ J_{p,r_{n}}^{\bm{k}}(f; g) \}_{n \in \mathbb{N}}$ is convergent in $\mathbb{R}$ and 
        \begin{equation}\label{KS-compatible}
        	\KSform(f; g) = \lim_{n \to \infty}J_{p,r_{n}}^{\bm{k}}(f; g). 
        \end{equation}
        \item\label{it:difffunc} \textup{(Function-wise generalized $p$-contraction property)} Let $n_{1},n_{2} \in \mathbb{N}$, $q_{1} \in (0,p]$, $q_{2} \in [p,\infty]$, $\bm{u} = (u_{1},\dots,u_{n_{1}}) \in (B_{p,\infty}^{\bm{k}})^{n_{1}}$ and $\bm{v} = (v_{1},\dots,v_{n_{2}}) \in L^{p}(K,m)^{n_{2}}$. If 
        \begin{equation}\label{e:FGC.cond}
        	\norm{\bm{v}(x) - \bm{v}(y)}_{\ell^{q_{2}}} \le \norm{\bm{u}(x) - \bm{u}(y)}_{\ell^{q_{1}}} \quad \text{for $m \times m$-a.e.\ $(x,y) \in K \times K$,}
        \end{equation}
        then $\bm{v} \in (\KS)^{n_{2}}$ and 
        \begin{equation}\label{e:FGC}
        	\norm{\bigl(\mathcal{E}_{p}^{\bm{k}}(v_{l})^{1/p}\bigr)_{l = 1}^{n_{2}}}_{\ell^{q_{2}}} \le \norm{\bigl(\mathcal{E}_{p}^{\bm{k}}(u_{k})^{1/p}\bigr)_{k = 1}^{n_{1}}}_{\ell^{q_{1}}}. 
        \end{equation}
        \item \label{KS.conti} \textup{(Local H\"{o}lder continuity)} There exists $C_{p} \in (0,\infty)$ determined solely and explicitly by $p$ such that for any $f_{1},f_{2}, g \in B_{p,\infty}^{\bm{k}}$,
        \begin{equation}\label{KS-conti1}
            \abs{\mathcal{E}_{p}^{\bm{k}}(f_{1};g) - \mathcal{E}_{p}^{\bm{k}}(f_{2};g)}
            \le C_{p}\biggl[\max_{i \in \{ 1,2 \}}\mathcal{E}_{p}^{\bm{k}}(f_{i})\biggr]^{\frac{(p - 2)^{+}}{p}}\mathcal{E}_{p}^{\bm{k}}(f_{1} - f_{2})^{\frac{(p - 1) \wedge 1}{p}}\mathcal{E}_{p}^{\bm{k}}(g)^{\frac{1}{p}}.
        \end{equation}
		\item \label{KS.slbdd} \textup{(Strong locality)} Suppose that $\bm{k}$ is asymptotically local. 
			\begin{enumerate}[label=\textup{(\roman*)},align=left,leftmargin=*,topsep=2pt,parsep=0pt,itemsep=2pt]
				\item \label{it:KS.sl1} Let $f_{1}, f_{2}, g \in B_{p,\infty}^{\bm{k}}$. If $\supp_{m}[f_1 - a_1\indicator{K}] \cap \supp_{m}[f_2 - a_2\indicator{K}] = \emptyset$ and either $\supp_{m}[f_1 - a_1\indicator{K}]$ or $\supp_{m}[f_2 - a_2\indicator{K}]$ is compact for some $a_1,a_2 \in \mathbb{R}$, then
					\begin{align}
            			\mathcal{E}_{p}^{\bm{k}}(f_1 + f_2 + g) + \mathcal{E}_{p}^{\bm{k}}(g) &= \mathcal{E}_{p}^{\bm{k}}(f_1 + g) + \mathcal{E}_{p}^{\bm{k}}(f_2 + g), \label{e:KS-sl1} \\
						\mathcal{E}_{p}^{\bm{k}}(f_1 + f_2; g) &= \mathcal{E}_{p}^{\bm{k}}(f_1; g) + \mathcal{E}_{p}^{\bm{k}}(f_2; g).
        			\label{e:KS-sl1-cor}
					\end{align}
				\item \label{it:KS.sl2} Let $f_{1},f_{2},g \in \KS$. If $\supp_{m}[f_{1} - f_{2} - a\indicator{K}] \cap \supp_{m}[g - b\indicator{K}] = \emptyset$ and either $\supp_{m}[f_{1} - f_{2} - a\indicator{K}]$ or $\supp_{m}[g - b\indicator{K}]$ is compact for some $a,b \in \mathbb{R}$, then 
					\begin{equation}\label{e:KS-sl2}
    					\KSform(f_{1}; g) = \KSform(f_{2}; g)
						\quad \text{and} \quad
						\KSform(g; f_{1}) = \KSform(g; f_{2}).
    				\end{equation}
			\end{enumerate}
        \item \label{KS.inv} \textup{(Invariance)} Let $T \colon K \to K$ be Borel measurable and preserve $m$, i.e., satisfy $T^{-1}(A) \in \mathcal{B}(K)$ and $m(T^{-1}(A)) = m(A)$ for any $A \in \mathcal{B}(K)$. If $\bm{k}$ is \emph{$T$-invariant}, i.e., $k_{r}(T(x),T(y)) = k_{r}(x,y)$ for $m \times m$-a.e.\ $(x,y) \in K \times K$ for each $r \in (0,\infty)$, then $f \circ T \in B_{p,\infty}^{\bm{k}}$ and $\mathcal{E}_{p}^{\bm{k}}(f \circ T) = \mathcal{E}_{p}^{\bm{k}}(f)$ for any $f \in B_{p,\infty}^{\bm{k}}$. 
 	\end{enumerate}
\end{thm}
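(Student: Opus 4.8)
The plan is to realize $\KSform$ as a pointwise subsequential limit of the forms $J_{p,r}^{\bm{k}}$ and then transfer each structural property from the prelimit forms, all of which are $p$-energy forms satisfying \ref{GC} by Proposition \ref{prop.Besov-GC}. For existence, I first note that $f \mapsto J_{p,r}^{\bm{k}}(f)^{1/p}$ is, uniformly in $r$, $1$-Lipschitz on $\KS$ for $\norm{\,\cdot\,}_{\KS}$, since each $J_{p,r}^{\bm{k}}{}^{1/p}$ is a seminorm and $J_{p,r}^{\bm{k}}(h)^{1/p} \le \norm{h}_{\KS}$. As $\KS$ is separable by Theorem \ref{thm.banach}, a diagonal extraction gives a subsequence $\{ r_{n} \}$ along which $J_{p,r_{n}}^{\bm{k}}$ converges on a countable dense set, and equi-Lipschitzness upgrades this to convergence on all of $\KS$, defining $\KSform$. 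Part \ref{KS.p-form} is then immediate: the seminorm property passes to the limit, and \eqref{KSene-comp} follows from \ref{KSwm} together with $\liminf_{r \downarrow 0}J_{p,r}^{\bm{k}}(f) \le \KSform(f) \le \sup_{r > 0}J_{p,r}^{\bm{k}}(f)$, while $\KSform(\indicator{K}) = 0$ because $J_{p,r}^{\bm{k}}(\indicator{K}) = 0$.

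For \ref{KS.GC} and \ref{it:difffunc} I would pass the contraction estimate through the limit. The computation \eqref{KSpre.GC} applies verbatim with $T_{l}(\bm{u})$ replaced by $v_{l}$ once \eqref{e:FGC.cond} is assumed, so for each $r$ the $\ell^{q_{2}}$–$\ell^{q_{1}}$ inequality holds; bounding $\sup_{r}J_{p,r}^{\bm{k}}(v_{l})$ by the finite right-hand side first gives $v_{l} \in \KS$, and then letting $n \to \infty$ along $\{ r_{n} \}$ yields \eqref{GC} and \eqref{e:FGC}. The compatibility \eqref{KS-compatible} is the one subtle point here: the maps $t \mapsto J_{p,r_{n}}^{\bm{k}}(f + tg)$ are convex and differentiable and converge pointwise to the convex, differentiable $t \mapsto \KSform(f + tg)$, so squeezing the derivative at $0$ between backward and forward difference quotients forces $J_{p,r_{n}}^{\bm{k}}(f; g) \to \KSform(f; g)$.

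The local Hölder continuity \ref{KS.conti} is where the direct construction is essential. Using \eqref{KS-compatible} and the explicit formula \eqref{e:Besov-compatible}, I would estimate $\abs{J_{p,r}^{\bm{k}}(f_{1}; g) - J_{p,r}^{\bm{k}}(f_{2}; g)}$ by applying Lemma \ref{lem.p-1} with $a = f_{1}(x) - f_{1}(y)$ and $b = f_{2}(x) - f_{2}(y)$, noting $a - b = (f_{1} - f_{2})(x) - (f_{1} - f_{2})(y)$. For $p \in (1,2]$ this leads, via Hölder with exponents $(\tfrac{p}{p - 1}, p)$, to $2J_{p,r}^{\bm{k}}(f_{1} - f_{2})^{(p-1)/p}J_{p,r}^{\bm{k}}(g)^{1/p}$; for $p \in (2,\infty)$ the factor $\abs{a}^{p-2} \vee \abs{b}^{p-2}$ is absorbed by Hölder with exponents $(\tfrac{p}{p-2}, p, p)$ and the bound $(\abs{a}^{p-2} \vee \abs{b}^{p-2})^{p/(p-2)} \le \abs{a}^{p} + \abs{b}^{p}$, which turns the first factor into $\max_{i}J_{p,r}^{\bm{k}}(f_{i})$. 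Passing to the limit along $\{ r_{n} \}$ yields \eqref{KS-conti1} with an explicit $C_{p}$.

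The invariance \ref{KS.inv} is a routine change of variables: $T_{*}m = m$ and the $T$-invariance of $\bm{k}$ give $J_{p,r}^{\bm{k}}(f \circ T) = J_{p,r}^{\bm{k}}(f)$ for every $r$, whence the claim. I expect the strong locality \ref{KS.slbdd} to be the main obstacle. The idea is to split $K \times K$ along $\{ d(x,y) < \delta(r) \}$ and its complement. On the near-diagonal set, the disjointness of the supports together with $\delta(r) < \eta$, where $\eta > 0$ is the (positive, by compactness) distance between the two supports, forces for $m \times m$-a.e.\ such $(x,y)$ at least one of the relevant increments to vanish, giving an exact algebraic cancellation (such as $\gamma_{p}(\alpha + \beta) = \gamma_{p}(\alpha) + \gamma_{p}(\beta)$ when one of $\alpha, \beta$ is $0$); hence the near-diagonal parts of the two sides of \eqref{e:KS-sl1}–\eqref{e:KS-sl2} agree identically. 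The complement is controlled by asymptotic locality \eqref{e:asylocal}, through $\int_{\{ d(x,y) \ge \eta \}}k_{r} \to 0$. For the form identities \eqref{e:KS-sl1-cor} and \eqref{e:KS-sl2}, which carry a degree-one increment, a single Hölder application extracts a positive power of this quantity against the $\sup_{r}J_{p,r}^{\bm{k}}$-bound, closing the estimate even for unbounded functions. The genuinely degree-$p$ identity \eqref{e:KS-sl1} has no such factor, and here I would reduce to bounded functions: truncation $f \mapsto (-M) \vee f \wedge M$ shrinks the supports $\supp_{m}[f_{i} - a_{i}\indicator{K}]$ (preserving the hypotheses) and does not increase $J_{p,r}^{\bm{k}}$ by Proposition \ref{prop.GClist}-\ref{GC.lip}, so reflexivity and uniform convexity of $\KS$ (Theorem \ref{thm.banach}, via the Radon–Riesz property) give strong convergence of the truncations in $\KS$, and \ref{KS.conti} lets me pass from the bounded case — where boundedness and $\int_{\{ d \ge \eta \}}k_{r} \to 0$ immediately kill the off-diagonal term — to the general one.
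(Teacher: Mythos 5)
Your construction and your treatment of parts \ref{KS.p-form}--\ref{KS.conti} and \ref{KS.inv} are correct and essentially the paper's own argument: separability of $\KS$ (Theorem \ref{thm.banach}) plus a diagonal extraction and the uniform bound $J_{p,r}^{\bm{k}}(h)^{1/p}\le\norm{h}_{B_{p,\infty}^{\bm{k}}}$ give the subsequential limit; the function-wise contraction property passes through the limit exactly as in \eqref{KSpre.GC}; your convexity squeeze for \eqref{KS-compatible} is a clean (slightly less quantitative) version of the paper's argument via \eqref{c-diff}; and your Lemma \ref{lem.p-1}/H\"older computation for \eqref{KS-conti1} is what the paper means by ``immediate.'' In part \ref{KS.slbdd}, your near-diagonal/off-diagonal split for \eqref{e:KS-sl1} matches the paper's computation \eqref{eq:KS-sl1-proof}, and your use of uniform convexity and the Radon--Riesz property to get strong $\KS$-convergence of the truncations is a legitimate, self-contained substitute for the paper's citation of \cite[Corollary 3.17]{KS.gc}. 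One structural difference: the paper does not prove \eqref{e:KS-sl1-cor} and \eqref{e:KS-sl2} directly at all; it deduces them from \eqref{e:KS-sl1} via the abstract differentiability results \cite[Propositions 3.29 and 3.30]{KS.gc}, whereas you attack them head-on.

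That head-on attack contains the one genuine gap. You claim that for the two-variable identities \eqref{e:KS-sl1-cor} and \eqref{e:KS-sl2} ``a single H\"older application extracts a positive power of'' the off-diagonal kernel mass, ``closing the estimate even for unbounded functions.'' It does not. The mixed off-diagonal term has the form $\int\int_{\{d(x,y)\ge\delta(r)\}}\abs{\Delta f}^{p-1}\abs{\Delta g}\,k_r\,dm\,dm$ (with $\Delta f$, $\Delta g$ the increments), and H\"older with the only admissible exponents $\bigl(\tfrac{p}{p-1},p\bigr)$ exhausts the integrability budget: it bounds this term by the product of the \emph{off-diagonal restrictions} of $J_{p,r}^{\bm{k}}(f)$ and $J_{p,r}^{\bm{k}}(g)$ to the powers $(p-1)/p$ and $1/p$, with no factor of $\int_{\{d\ge\delta(r)\}}k_r$ left over. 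Any three-factor H\"older is forced to put the increments at exponent exactly $p$ (else the resulting integrals are not controlled by the $p$-energies), so the third exponent must be $\infty$ and again no positive power of the kernel mass appears; and interpolating an increment down below exponent $p$ costs a factor of $\int\int k_r\,dm\,dm$, which need not be bounded as $r\downarrow 0$. The off-diagonal energies themselves are bounded but need not vanish when $f$ and $g$ are unbounded: asymptotic locality \eqref{e:asylocal} controls only the kernel mass, not where the energy of an unbounded function concentrates. A positive power of the off-diagonal mass is extracted by H\"older only when one of the two increments is bounded pointwise, i.e., only when the corresponding function is in $L^{\infty}(K,m)$. (For the kernels $\bm{k}^{s_p}$ of Example \ref{ex.KS} the off-diagonal set can be taken empty, but the theorem is stated for general asymptotically local $\bm{k}$, e.g.\ heat-kernel ones, where this term is genuinely present.) The gap is repaired with tools you already have: run the same truncation/Radon--Riesz/limit argument you use for \eqref{e:KS-sl1} also for the two-variable identities, passing to the limit with \eqref{KS-conti1} in the nonlinear slot and \eqref{e:form-holder} in the linear slot --- or simply quote \cite[Propositions 3.29 and 3.30]{KS.gc} as the paper does.
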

\begin{defn}[$\bm{k}$-Korevaar--Schoen $p$-energy form]\label{defn.KS-energy}
	Suppose that \ref{KSwm} holds. 
	For each sequence $\{ r_{n} \}_{n \in \mathbb{N}} \subseteq (0,\infty)$ as in Theorem \ref{thm.KS-energy}, the $p$-energy form $(\KSform,\KS)$ on $(K,m)$ defined by \eqref{e:defn-KSform} is called the \emph{$\bm{k}$-Korevaar--Schoen $p$-energy form} on $(K,m)$ along $\{ r_{n} \}_{n \in \mathbb{N}}$. 
\end{defn}
\begin{rmk}
	Advantages of our $p$-energy form $(\mathcal{E}_{p}^{\bm{k}},B_{p,\infty}^{\bm{k}})$ on $(K,m)$ are \ref{it:difffunc} and \ref{KS.conti}.  
    The estimate \eqref{KS-conti1} with the H\"{o}lder continuity exponent $(p - 1) \wedge 1$ is not known for the $p$-energy forms constructed in \cite{CGQ22,HPS04,Kig23,MS+,Shi24}. 
    (As stated in Proposition \ref{prop.form-basic}, the existence of the derivative as in \eqref{e:defn-pform-two} and its local H\"{o}lder continuity \eqref{eq:form-nonlinear-Hoelder-GC} with exponent $\frac{(p - 1) \wedge 1}{p}$ for $p$-energy forms satisfying \ref{GC} have been proved in \cite{KS.gc}.) 
    We also do not know whether \ref{it:difffunc} holds for the $p$-energy forms constructed in \cite{CGQ22,HPS04,Kig23,MS+,Shi24}. 
\end{rmk}
\begin{proof}[Proof of Theorem \ref{thm.KS-energy}]
%    For simplicity, set $\abs{f}_{B_{p,\infty}^{\bm{k}}} \coloneqq \left(\sup_{r > 0}J_{p,r}^{\bm{k}}(f)\right)^{1/p}$.
%    By H\"{o}lder's inequality, for any $f, g \in B_{p,\infty}^{\bm{k}}$,
%    \begin{equation}\label{KS-holder0}
%        \sup_{r > 0}\abs{J_{p,r}^{\bm{k}}(f;g)} \le \abs{f}_{B_{p,\infty}^{\bm{k}}}^{p - 1}\abs{g}_{B_{p,\infty}^{\bm{k}}}.
%    \end{equation}
%    Also, by Lemma \ref{lem.p-1} and H\"{o}lder's inequality, there exists a constant $C_{p} > 0$ depending only on $p$ such that for any $f_{1}, f_{2}, g \in B_{p,\infty}^{\bm{k}}$,
%    \begin{equation}\label{KS-conti}
%        \sup_{r > 0}\abs{J_{p,r}^{\bm{k}}(f_{1};g) - J_{p,r}^{\bm{k}}(f_{2};g)}
%        \le C_{p}\biggl(\max_{i \in \{ 1,2 \}}\abs{f_{i}}_{B_{p,\infty}^{\bm{k}}}^{(p - 2)^{+}}\biggr)\abs{f_{1} - f_{2}}_{B_{p,\infty}^{\bm{k}}}^{(p - 1) \wedge 1}\abs{g}_{B_{p,\infty}^{\bm{k}}}.
%    \end{equation}
	Fix a sequence of positive numbers $\{ \tilde{r}_{n} \}_{n \in \mathbb{N}}$ with $\tilde{r}_{n} \to 0$.
	Since $B_{p,\infty}^{\bm{k}}$ is separable by Theorem \ref{thm.banach}, there exists a countable dense subset $\mathscr{C}$ of $B_{p,\infty}^{\bm{k}}$.
	A standard diagonal argument yields a subsequence $\{ r_{n} \}_{n \in \mathbb{N}}$ of $\{ \tilde{r}_{n} \}_{n \in \mathbb{N}}$ so that $\lim_{n \to \infty}J_{p,r_{n}}^{\bm{k}}(u)$ exists in $\mathbb{R}$ for any $u \in \mathscr{C}$.
	%Using the triangle inequality of $J_{p,r_{n_{k}}}(\,\cdot\,)^{1/p}$ and the density of $\{ f_{j} \}_{j \in \mathbb{N}}$, we will show that $\bigl\{ r_{n_{k}}^{-\beta_{p}}J_{p,r_{n_{k}}}(f;g) \bigr\}_{k \in \mathbb{N}}$ is a Cauchy sequence for sny $f,g \in \mathrm{KS}^{1,p}$.
    Let $\varepsilon > 0$, $f \in \KS$ and pick $f_{\ast} \in \mathscr{C}$ satisfying $\sup_{r > 0}J_{p,r}^{\bm{k}}(f - f_{\ast})^{1/p} < \varepsilon$.
    Then for any $k, l \in \mathbb{N}$, by using the triangle inequality for $J_{p,r}^{\bm{k}}(\,\cdot\,)^{1/p}$,
    \begin{align*}
        &\abs{J_{p,r_{k}}^{\bm{k}}(f)^{1/p} - J_{p,r_{l}}^{\bm{k}}(f)^{1/p}} \\
        &\le J_{p,r_{k}}^{\bm{k}}(f - f_{\ast})^{1/p} + \abs{J_{p,r_{k}}^{\bm{k}}(f_{\ast})^{1/p} - J_{p,r_{l}}^{\bm{k}}(f_{\ast})^{1/p}} + J_{p,r_{l}}^{\bm{k}}(f - f_{\ast})^{1/p} \\
        &\le 2\varepsilon + \abs{J_{p,r_{k}}^{\bm{k}}(f_{\ast})^{1/p} - J_{p,r_{l}}^{\bm{k}}(f_{\ast})^{1/p}}. 
    \end{align*}
    Letting $k \wedge l \to \infty$ in this inequality, we obtain
    \[
    \limsup_{k \wedge l \to \infty}\abs{J_{p,r_{k}}^{\bm{k}}(f)^{1/p} - J_{p,r_{l}}^{\bm{k}}(f)^{1/p}} \le 2\varepsilon,
    \]
    which proves that $\{ J_{p,r_{n}}^{\bm{k}}(f) \}_{n \in \mathbb{N}}$ is a Cauchy sequence in $[0,\infty)$ and hence is convergent in $[0,\infty)$.
    Now we define $\KSform \colon \KS \to [0,\infty)$ by $\KSform(f) \coloneqq \lim_{n \to \infty}J_{p,r_{n}}^{\bm{k}}(f)$.

	Clearly, $(\mathcal{E}_{p}^{\bm{k}},B_{p,\infty}^{\bm{k}})$ is a $p$-energy form on $(K,m)$ satysfying \eqref{KSene-comp} by \ref{KSwm}.
    Let us show \ref{KS.GC}, \ref{it:difffunc}, \ref{KS.conti} and \ref{KS.slbdd} because the other properties \ref{KS.p-form} and \ref{KS.inv} are immediate from the expression of $J_{p,r}^{\bm{k}}(\,\cdot\,)$ and the definition of $\mathcal{E}_{p}^{\bm{k}}$.

    \ref{KS.GC},\ref{it:difffunc}:
    Obviously, \ref{it:difffunc} implies \ref{GC} for $(\KSform,\KS)$, so we first show \ref{it:difffunc}. 
    For simplicity, we consider the case $q_{2} < \infty$ (the case $q_{2} = \infty$ is similar).
    Let $\bm{u} = (u_{1},\dots,u_{n_{1}}) \in (B_{p,\infty}^{\bm{k}})^{n_{1}}$ and $\bm{v} = (v_{1},\dots,v_{n_{2}}) \in L^{p}(K,m)^{n_{2}}$ satisfy \eqref{e:FGC.cond}.
    Then the same argument as in \eqref{KSpre.GC} shows that for any $r \in (0,\infty)$, 
    \begin{equation}\label{e:KSform.FGC.pre}
    	\sum_{l = 1}^{n_{2}}J_{p,r}^{\bm{k}}(v_{l})^{q_{2}/p} 
    \le \left(\sum_{k = 1}^{n_{1}}J_{p,r}^{\bm{k}}(u_{k})^{q_{1}/p}\right)^{q_{2}/q_{1}},  
    \end{equation}
    which implies that $v_{l} \in \KS$ for any $l \in \{ 1,\dots,n_{2} \}$. 
    Using \eqref{e:defn-KSform} to take the limit of \eqref{e:KSform.FGC.pre} with $r = r_{n}$ as $n\to\infty$, we obtain $\sum_{l = 1}^{n_{2}}\KSform(v_{l})^{q_{2}/p} 
    \le \bigl(\sum_{k = 1}^{n_{1}}\KSform(u_{k})^{q_{1}/p}\bigr)^{q_{2}/q_{1}}$. 
    This completes the proof of \ref{it:difffunc}.
	
	Next we prove \eqref{KS-compatible}.
    We know that $\mathcal{E}_{p}^{\bm{k}}$ is Fr\'{e}chet differentiable on $B_{p,\infty}^{\bm{k}}$ by \eqref{e:frechet} in Proposition \ref{prop.c-diff}. 
    Also, by combining \eqref{c-diff} (in Proposition \ref{prop.c-diff}) for $J_{p,r}^{\bm{k}}$ and the convexity of $t \mapsto J_{p,r}^{\bm{k}}(f + tg)$, we see that for any $t \in (0, 1)$, 
    \begin{align*}
        &\abs{\frac{J_{p,r}^{\bm{k}}(f + tg) - J_{p,r}^{\bm{k}}(f)}{t} - pJ_{p,r}^{\bm{k}}(f;g)} \\
        &= \abs{\frac{J_{p,r}^{\bm{k}}(f + tg) - J_{p,r}^{\bm{k}}(f)}{t} - \left.\frac{d}{dt}J_{p,r}^{\bm{k}}(f + tg)\right|_{t = 0}} \\
        &\le \frac{J_{p,r}^{\bm{k}}(f + tg) + J_{p,r}^{\bm{k}}(f - tg) - 2J_{p,r}^{\bm{k}}(f)}{t}
        \overset{\eqref{c-diff}}{\le} O_{t}(f; g),
    \end{align*}
    where $O_{t}(f; g) = C_{p,f,g}t^{(p - 1) \wedge \frac{1}{p - 1}}$ for some constant $C_{p,f,g}$ which depends only on $p$, $\abs{f}_{B_{p,\infty}^{\bm{k}}}$ and $\abs{g}_{B_{p,\infty}^{\bm{k}}}$.
    Hence we see that
    \begin{align*}
    	&\limsup_{n \to \infty}\abs{\KSform(f; g) - J_{p,r_{n}}^{\bm{k}}(f;g)} \\
    	&\le \lim_{n \to \infty}\abs{\KSform(f; g) - \frac{1}{p}\cdot\frac{J_{p,r_{n}}^{\bm{k}}(f + tg) - J_{p,r_{n}}^{\bm{k}}(f)}{t}} + \frac{1}{p}O_{t}(f; g) \\
    	&= \abs{\KSform(f; g) - \frac{1}{p}\cdot\frac{\KSform(f + tg) - \KSform(f)}{t}} + \frac{1}{p}O_{t}(f; g)
    	\xrightarrow[t \downarrow 0]{} 0, 
    \end{align*}
    which shows \eqref{KS-compatible}. 
%    \begin{align*}
%        &\abs{\frac{\mathcal{E}_{p}^{\mathrm{KS}}(f + tg) - \mathcal{E}_{p}^{\mathrm{KS}}(f)}{t} - p\mathcal{E}_{p}^{\mathrm{KS}}(f;g)} \\
%        &\le
%        \begin{cases}
%            2t^{p - 1}\mathcal{E}_{p}^{\mathrm{KS}}(g) \quad &\text{if $p \in (1,2]$,} \\
%            2(p - 1)t^{\frac{1}{p - 1}}\Bigl[\mathcal{E}_{p}^{\mathrm{KS}}(f)^{\frac{1}{p - 1}} + \bigl(t^{p}\mathcal{E}_{p}^{\mathrm{KS}}(g)\bigr)^{\frac{1}{p - 1}}\Bigr]^{p - 2}\mathcal{E}_{p}^{\mathrm{KS}}(g)^{\frac{1}{p - 1}} \quad &\text{if $p \in (2,\infty)$,}
%        \end{cases}
%    \end{align*}

    \ref{KS.conti}:
    This is immediate from \eqref{e:Besov-compatible}, H\"{o}lder's inequality, Lemma \ref{lem.p-1} and \eqref{KS-compatible}.
    
    \ref{KS.slbdd}: 
    By \cite[Propositions 3.29 and 3.30]{KS.gc}, it suffices to show \eqref{e:KS-sl1}. 
%    Note that \eqref{e:KS-sl1} with $f_{2} - f_{1},tg,f_{1}$ for $t \in \mathbb{R}\setminus\{0\}$ in place of $f_{1},f_{2},g$ and \eqref{KS-compatible} together imply the former equality in \eqref{e:KS-sl2}, which in turn with $g,0,f_{1}-f_{2}$ in place of $f_{1},f_{2},g$ yields the latter by the linearity of $\mathcal{E}_{p}^{\bm{k}}(g; \,\cdot\,)$.
%    Also, \eqref{e:KS-sl1-cor} follows by applying \eqref{e:KS-sl1} with $g$ replaced by $tg$ for $t\in\mathbb{R}\setminus\{0\}$ and by $0$, taking the differences of both sides of the resulting equalities, dividing both sides by $t$ and then letting $t\to 0$ on the basis of \eqref{KS-compatible}. (See \cite[Section 3]{KS.gc} for the details of these arguments.)
%	Hence it suffices to prove \eqref{e:KS-sl1}. 
    For simplicity, for $u \in L^{p}(K,m)$ and $E \in \mathcal{B}(K)$, define 
    \[
    \widetilde{J}_{p,r}^{\bm{k}}(u\:\vert\:E) \coloneqq \int_{E}\int_{B_{d}(x,\delta(r))}\abs{u(x) - u(y)}^{p}k_{r}(x,y)\,m(dy)m(dx), 
    \]
    and set $A_{i} \coloneqq \supp_{m}[f_{i} - a_{i}\indicator{K}]$ for $i \in \{ 1,2 \}$. 
    We also set $\widetilde{J}_{p,r}^{\bm{k}}(u) \coloneqq \widetilde{J}_{p,r}^{\bm{k}}(u\:\vert\:K)$. 
    Note that there exists $r_{0} \in (0,\infty)$ such that $\dist_{d}(A_{1},A_{2}) > 2\delta(r)$ for any $r \in (0,r_{0})$ since either $A_{1}$ or $A_{2}$ is compact. 
    Set $N_{r} \coloneqq K \setminus ((A_{1})_{d,\delta(r)} \cup (A_{2})_{d,\delta(r)})$ for $r \in (0,\infty)$. 
    Then for any $r \in (0,r_{0})$,
    \begin{align}
        &\widetilde{J}_{p,r}^{\bm{k}}(f_1 + f_2 + g) + \widetilde{J}_{p,r}^{\bm{k}}(g) \nonumber \\
        &= \widetilde{J}_{p,r}^{\bm{k}}(f_1 + g\:\vert\:(A_{1})_{d,\delta(r)}) + \widetilde{J}_{p,r}^{\bm{k}}(f_2 + g\:\vert\:(A_{2})_{d,\delta(r)}) + \widetilde{J}_{p,r}^{\bm{k}}(g\:\vert\:N_{r}) + \widetilde{J}_{p,r}^{\bm{k}}(g) \nonumber \\
        &= \widetilde{J}_{p,r}^{\bm{k}}(f_1 + g\:\vert\:(A_{1})_{d,\delta(r)}) + \widetilde{J}_{p,r}^{\bm{k}}(g\:\vert\:(A_{2})_{d,\delta(r)} \cup N_{r}) \nonumber \\
        &\quad + \widetilde{J}_{p,r}^{\bm{k}}(f_2 + g\:\vert\:(A_{2})_{d,\delta(r)}) + \widetilde{J}_{p,r}^{\bm{k}}(g\:\vert\:(A_{1})_{d,\delta(r)} \cup N_{r}) \nonumber \\
%        &\quad + \bigl\{\widetilde{J}_{p,r}^{\bm{k}}(g\:\vert\:N_{r}) - \widetilde{J}_{p,r}^{\bm{k}}(g\:\vert\:(A_{1})_{d,\delta(r)} \cup N_{r}) - \widetilde{J}_{p,r}^{\bm{k}}(g\:\vert\:(A_{2})_{d,\delta(r)} \cup N_{r})\bigr\} \nonumber \\
        &= \widetilde{J}_{p,r}^{\bm{k}}(f_1 + g) + \widetilde{J}_{p,r}^{\bm{k}}(f_2 + g). 
	\label{eq:KS-sl1-proof}
    \end{align}
    Noting that $\lim_{n \to \infty}\widetilde{J}_{p,r_{n}}^{\bm{k}}(u) = \KSform(u)$ for any $u \in \KS \cap L^{\infty}(K,m)$ by \eqref{e:defn-KSform} and the asymptotic locality of $\bm{k}$, we obtain \eqref{e:KS-sl1} by letting $r \coloneqq r_{n}$ and $n \to \infty$ in \eqref{eq:KS-sl1-proof} provided $f_{1},f_{2},g \in \KS \cap L^{\infty}(K,m)$.
	Finally, since $(-n) \vee (u \wedge n) \in \KS \cap L^{\infty}(K,m)$, $\lim_{n \to \infty}\KSform(u - (-n) \vee (u \wedge n)) = 0$ by \cite[Corollary 3.17]{KS.gc} and $\supp_{m}[u - c\indicator{K}] = \supp_{m}[(-n) \vee (u \wedge n) - c\indicator{K}]$ for any $u \in \KS$ and any $(n,c) \in \mathbb{N} \times \mathbb{R}$ with $n > \abs{c}$, \eqref{e:KS-sl1} extends to the remaining case $\{f_{1},f_{2},g\} \not\subseteq L^{\infty}(K,m)$ by the triangle inequality for $\KSform(\,\cdot\,)^{1/p}$, completing the proof.
\end{proof}

Next we would like to state further properties of $\bm{k}$-Korevaar--Shoen $p$-energy forms in the ``strongly $p$-recurrent'' case. 
To this end, we recall the notion of $p$-resistance form introduced in \cite{KS.gc} (see \cite{Kig01,Kig12} for the theory for the case $p = 2$).%
\begin{defn}[$p$-Resistance form]\label{defn.RFp}
    Let $K$ be a non-empty set.
    The pair $(\mathcal{E}, \mathcal{F})$ of $\mathcal{F} \subseteq \mathbb{R}^{K}$ and $\mathcal{E} \colon \mathcal{F} \to [0,\infty)$ is said to be a \emph{$p$-resistance form} on $K$ if and only if it satisfies the following conditions \ref{RF1}-\ref{RF5}:
    \begin{enumerate}[label=\textup{(RF\arabic*)$_p$},align=left,leftmargin=*,topsep=2pt,parsep=0pt,itemsep=2pt]
    \item\label{RF1} $\mathcal{F}$ is a linear subspace of $\mathbb{R}^{K}$ (containing $\mathbb{R}\indicator{K}$) and $\mathcal{E}(\,\cdot\,)^{1/p}$ is a seminorm on $\mathcal{F}$ satisfying $\{ u \in \mathcal{F} \mid \mathcal{E}(u) = 0 \} = \mathbb{R}\indicator{K}$.
    \item\label{RF2} The quotient normed space $(\mathcal{F}/\mathbb{R}\indicator{K}, \mathcal{E}^{1/p})$ is a Banach space.
    \item\label{RF3} If $x \neq y \in K$, then there exists $u \in \mathcal{F}$ such that $u(x) \neq u(y)$.
    \item\label{RF4} For any $x, y \in K$,
    \begin{equation}\label{R-def}
        R_{\mathcal{E}}(x,y) \coloneqq R_{(\mathcal{E},\mathcal{F})}(x,y) \coloneqq \sup\biggl\{ \frac{\abs{u(x) - u(y)}^{p}}{\mathcal{E}(u)} \biggm| u \in \mathcal{F} \setminus \mathbb{R}\indicator{K} \biggr\} < \infty.
    \end{equation}
    \item\label{RF5} $(\mathcal{E},\mathcal{F})$ satisfies \ref{GC}.
    \end{enumerate}
\end{defn}

We also need to recall the following standard notions on the metric $d$ and the measure $m$.
\begin{defn}\label{defn.doubling}
    Let $Q \in (0,\infty)$.
    \begin{enumerate}[label=\textup{(\arabic*)},align=left,leftmargin=*,topsep=2pt,parsep=0pt,itemsep=2pt]
    	\item The metric $d$ is said to be \emph{metric doubling} if and only if for any $\delta \in (0,1)$ there exists $N \in \mathbb{N}$ such that for any $(x,r) \in K \times (0,\infty)$ we can find $\{ x_{j} \}_{j = 1}^{N} \subseteq K$ so that $B_{d}(x,r) \subseteq \bigcup_{j = 1}^{N}B_{d}(x_{j},\delta r)$. 
    	\item The measure $m$ is said to be \emph{volume doubling with growth exponent $Q$} (with respect to the metric $d$) if and only if there exists $C_{\mathrm{D}}' \in (0,\infty)$ such that 
   		    \begin{equation}\label{VD-growth}
        		m(B_{d}(x,s)) \le C_{\mathrm{D}}'\Bigl(\frac{s}{r}\Bigr)^{Q} m(B_{d}(x,r)) < \infty \quad \text{for any $x \in K$ and any $0 < r \le s$.}
    		\end{equation}
			Note that $m$ is volume doubling with growth exponent $Q'$ for some $Q' \in (0,\infty)$ if and only if $m$ is \emph{volume doubling}, i.e., there exists $C_{\mathrm{D}} \in (0,\infty)$ such that 
   		    \begin{equation}\label{VD}
        		m(B_{d}(x,2s)) \le C_{\mathrm{D}}m(B_{d}(x,s)) < \infty \quad \text{for any $(x,s) \in K \times (0,\infty)$.}
    		\end{equation}
    	\item The measure $m$ is said to be \emph{$Q$-Ahlfors regular} (with respect to the metric $d$) if and only if there exists $C_{\mathrm{AR}} \in [1,\infty)$ such that 
    \begin{equation}\label{AR}
        C_{\mathrm{AR}}^{-1}\,s^{Q} \le m(B_{d}(x,s)) \le C_{\mathrm{AR}}\,s^{Q} \quad \text{for any $(x,s) \in K \times (0,2\diam(K,d))$.}
    \end{equation}
    \end{enumerate}
\end{defn}
The $Q$-Ahlfors regularity of $m$ clearly implies that $m$ is volume doubling with growth exponent $Q$, and it is also well known that the volume doubling property of $m$ with respect to $d$ implies the metric doubling property of $d$.

Now we give a sufficient condition for a $\bm{k}$-Korevaar--Schoen $p$-energy form $(\KSform,\KS)$ on $(K,m)$ to be a $p$-resistance form on $K$.
\begin{prop}\label{prop.KS-RF}
%    Let $p \in (1,\infty)$.
    Suppose that there exist $Q,\beta_{p} \in (0,\infty)$ with $\beta_{p} > Q$ such that the following hold:  
    \begin{enumerate}[label=\textup{(\roman*)},align=left,leftmargin=*,topsep=2pt,parsep=0pt,itemsep=2pt]
        \item The measure $m$ satisfies $m(K) < \infty$ and is volume doubling with growth exponent $Q \in (0,\infty)$.
        \item\label{it:KS-RF-KSwm} \ref{KSwm} holds.
        \item\label{it:RF-irred} $\bigl\{ u \in B_{p,\infty}^{\bm{k}} \bigm| \sup_{r > 0}J_{p,r}^{\bm{k}}(u) = 0 \bigr\} = \mathbb{R}\indicator{K}$.
        \item $\KS \subseteq \contfunc(K)$, and there exists $C \in (0,\infty)$ such that for any $f \in \KS$ and any $x,y \in K$, 
        \begin{equation}\label{morrey}
            \abs{f(x) - f(y)} \le Cd(x,y)^{(\beta_{p} - Q)/p}\sup_{r > 0}J_{p,r}^{\bm{k}}(f)^{1/p}, \quad x,y \in K.
        \end{equation}
        \item There exists $C \in (0,\infty)$ such that for any $(x,s) \in K \times (0,\infty)$ with $B_{d}(x,s) \neq K$,
        \begin{align}\label{capu}
            \inf
            &\biggl\{ \sup_{r > 0}J_{p,r}^{\bm{k}}(f) \biggm| f \in \contfunc(K), \supp_{K}[f] \subseteq B_{d}(x, 2s), \text{$f \ge 1$ on $B_{d}(x,s)$} \biggr\} \nonumber \\
    		&\le C\frac{m(B_{d}(x,s))}{s^{\beta_{p}}}.  
        \end{align}
    \end{enumerate}
    Then any $\bm{k}$-Korevaar--Schoen $p$-energy form $(\mathcal{E}_{p}^{\bm{k}},B_{p,\infty}^{\bm{k}})$ on $(K,m)$, which exists by \ref{it:KS-RF-KSwm} and Theorem \ref{thm.KS-energy}, is a $p$-resistance form on $K$.
    If in addition $m$ is $Q$-Ahlfors regular, then there exist $\alpha_{0},\alpha_{1} \in (0,\infty)$ such that for any such $(\mathcal{E}_{p}^{\bm{k}},B_{p,\infty}^{\bm{k}})$,
    \begin{equation}\label{Rp-compl}
        \alpha_{0}d(x,y)^{\beta_p - Q} \le R_{\mathcal{E}_{p}^{\bm{k}}}(x,y) \le \alpha_{1}d(x,y)^{\beta_p - Q} \quad \text{for any $x,y \in K$.}
    \end{equation}
\end{prop}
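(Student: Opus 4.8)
The plan is to verify the five defining conditions \ref{RF1}--\ref{RF5} of a $p$-resistance form for $(\KSform,\KS)$, using throughout the comparability \eqref{KSene-comp} between $\KSform$ and $\sup_{r > 0}J_{p,r}^{\bm{k}}$. Condition \ref{RF5} is immediate, since $(\KSform,\KS)$ satisfies \ref{GC} by Theorem \ref{thm.KS-energy}-\ref{KS.GC}. For \ref{RF1}, the space $\KS$ is a linear subspace of $L^{p}(K,m)$ by Definition \ref{defn.kernel} and contains $\indicator{K}$ because $m(K) < \infty$ and $J_{p,r}^{\bm{k}}(\indicator{K}) = 0$; that $\KSform^{1/p}$ is a seminorm is part of its being a $p$-energy form; and \eqref{KSene-comp} gives $\KSform(f) = 0$ if and only if $\sup_{r > 0}J_{p,r}^{\bm{k}}(f) = 0$, so hypothesis \ref{it:RF-irred} identifies the kernel as $\mathbb{R}\indicator{K}$.

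Conditions \ref{RF3} and \ref{RF4} I would extract directly from the two analytic inputs \eqref{capu} and \eqref{morrey}. For \ref{RF4} and the upper bound in \eqref{Rp-compl}, combining the Morrey estimate \eqref{morrey} with $\sup_{r > 0}J_{p,r}^{\bm{k}}(f) \le C\KSform(f)$ from \eqref{KSene-comp} yields $\abs{f(x) - f(y)}^{p} \le C^{p+1}d(x,y)^{\beta_{p} - Q}\KSform(f)$ for all $f \in \KS$, whence $R_{\KSform}(x,y) \le C^{p+1}d(x,y)^{\beta_{p} - Q} < \infty$. For \ref{RF3}, given $x \ne y$ I set $s \coloneqq d(x,y)/2$ and apply \eqref{capu} (note $B_{d}(x,s) \ne K$ since $y \notin B_{d}(x,s)$) to produce $f \in \KS$ with $f \ge 1$ on $B_{d}(x,s) \ni x$ and $\supp_{K}[f] \subseteq B_{d}(x,2s) \not\ni y$, so that $f(x) \ge 1 > 0 = f(y)$.

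The heart of the matter, and the step I expect to be the main obstacle, is \ref{RF2}: completeness of $(\KS/\mathbb{R}\indicator{K},\KSform^{1/p})$. Since $\KS$ is a Banach space under $\norm{\,\cdot\,}_{\KS}$ by Theorem \ref{thm.banach}, its quotient by the closed line $\mathbb{R}\indicator{K}$ is Banach, so it suffices to show that $\KSform^{1/p}$ is equivalent to the quotient norm. One inequality is free from \eqref{KSene-comp}; the reverse amounts to a Poincar\'e-type inequality $\norm{f - \fint_{K}f\,dm}_{L^{p}(K,m)} \le C\KSform(f)^{1/p}$. I would derive this by averaging \eqref{morrey}: for $f \in \KS \subseteq \contfunc(K)$,
\[
\Bigl|f(x) - \fint_{K}f\,dm\Bigr| \le \fint_{K}\abs{f(x) - f(y)}\,m(dy) \le C\KSform(f)^{1/p}\fint_{K}d(x,y)^{(\beta_{p} - Q)/p}\,m(dy),
\]
and then integrating in $x$. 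The delicate point is the finiteness of $\sup_{x \in K}\fint_{K}d(x,y)^{(\beta_{p} - Q)/p}\,m(dy)$: this is immediate once $K$ has finite diameter, which under $Q$-Ahlfors regularity is forced by the lower bound $m(B_{d}(x,s)) \ge C_{\mathrm{AR}}^{-1}s^{Q}$ in \eqref{AR} together with $m(K) < \infty$, and is precisely where the volume-doubling-with-growth-exponent hypothesis and $m(K) < \infty$ must be used in the general case. With the Poincar\'e inequality in hand, every $\KSform^{1/p}$-Cauchy sequence is $\norm{\,\cdot\,}_{\KS}$-Cauchy modulo constants, and \ref{RF2} follows.

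Finally, for the lower bound in \eqref{Rp-compl} under $Q$-Ahlfors regularity I would reuse the test function from \ref{RF3}. With $s = d(x,y)/2$ and $f$ as above, \eqref{capu} gives $\sup_{r > 0}J_{p,r}^{\bm{k}}(f) \lesssim m(B_{d}(x,s))/s^{\beta_{p}}$, hence $\KSform(f) \le C\sup_{r > 0}J_{p,r}^{\bm{k}}(f) \lesssim m(B_{d}(x,s))/s^{\beta_{p}}$ by \eqref{KSene-comp}; since $\abs{f(x) - f(y)} \ge 1$,
\[
R_{\KSform}(x,y) \ge \KSform(f)^{-1} \ge c\,\frac{s^{\beta_{p}}}{m(B_{d}(x,s))} \ge \frac{c}{C_{\mathrm{AR}}}\,s^{\beta_{p} - Q} = \alpha_{0}\,d(x,y)^{\beta_{p} - Q},
\]
where $c > 0$ absorbs the constants from \eqref{KSene-comp} and \eqref{capu}, $\alpha_{0} \coloneqq c\,2^{-(\beta_{p} - Q)}/C_{\mathrm{AR}}$, and the third inequality uses the Ahlfors upper bound $m(B_{d}(x,s)) \le C_{\mathrm{AR}}s^{Q}$ from \eqref{AR}. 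Together with the upper bound already established in the discussion of \ref{RF4}, this proves \eqref{Rp-compl}.
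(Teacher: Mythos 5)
Your proposal is correct and follows essentially the same route as the paper's proof: \eqref{capu} yields \ref{RF3} and, under Ahlfors regularity, the lower bound in \eqref{Rp-compl}; \eqref{morrey} combined with \eqref{KSene-comp} yields \ref{RF4} and the upper bound; and \ref{RF2} is reduced to a Poincar\'e-type inequality plus the completeness of $\KS$ from Theorem \ref{thm.banach}. The only differences are presentational: the paper derives the gap estimate \eqref{e:gap} by routing through $\sup_{x,y \in K}R_{\mathcal{E}_{p}^{\bm{k}}}(x,y) < \infty$ and chases Cauchy sequences directly instead of invoking quotient-norm equivalence, and it asserts the needed boundedness of $(K,d)$ (hence of $R_{\mathcal{E}_{p}^{\bm{k}}}$) without comment, whereas you explicitly flag that this is where finiteness of $m$ and the doubling hypothesis enter.
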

\begin{proof}
    Let $(\mathcal{E}_{p}^{\bm{k}},B_{p,\infty}^{\bm{k}})$ be a $\bm{k}$-Korevaar--Schoen $p$-energy form on $(K,m)$.
    We shall show that $(\mathcal{E}_{p}^{\bm{k}},B_{p,\infty}^{\bm{k}})$ is a $p$-resistance form on $K$.
    \ref{RF1} and \ref{RF5} are clear from Theorem \ref{thm.KS-energy} and \ref{it:RF-irred}.
    The condition \eqref{capu} immediately implies \ref{RF3}.
    By \eqref{morrey} and the lower inequality in \eqref{KSene-comp} we have $R_{\mathcal{E}_{p}^{\bm{k}}}(x,y) \lesssim d(x,y)^{\beta_{p} - Q}$ for any $x,y \in K$, whence \ref{RF4} and the upper estimate in \eqref{Rp-compl} hold.
    In particular, $\sup_{x,y \in K}R_{\mathcal{E}_{p}^{\bm{k}}}(x,y) < \infty$.
    To prove \ref{RF2}, we see from \eqref{morrey} that for any $f \in B_{p,\infty}^{\bm{k}}$,
    \begin{align}\label{e:gap}
        \int_{K}\abs{f(x) - \fint_{K}f\,dm}^{p}\,m(dx)
        &\le \int_{K}\fint_{K}\abs{f(x) - f(y)}^{p}\,m(dy)m(dx) \nonumber \\
        &\lesssim \biggl(\sup_{x,y \in K}R_{\mathcal{E}_{p}^{\bm{k}}}(x,y)\biggr)\mathcal{E}_{p}^{\bm{k}}(f)m(K).
    \end{align}
    Let $\{ f_{n} \}_{n \in \mathbb{N}} \subseteq B_{p,\infty}^{\bm{k}}$ be a Cauchy sequence in $(B_{p,\infty}^{\bm{k}}/\mathbb{R}\indicator{K}, \mathcal{E}_{p}^{\bm{k}}(\,\cdot\,)^{1/p})$ with $\fint_{K}f_{n}\,dm = 0$.
    Then \eqref{e:gap} implies that $\{ f_{n} \}_{n \in \mathbb{N}}$ is a Cauchy sequence in $L^{p}(K,m)$, and thus $\{ f_{n} \}_{n \in \mathbb{N}}$ is a Cauchy sequence in $B_{p,\infty}^{\bm{k}}$.
    Since $B_{p,\infty}^{\bm{k}}$ is a Banach space by Theorem \ref{thm.banach}, we conclude that $(B_{p,\infty}^{\bm{k}}/\mathbb{R}\indicator{K}, \mathcal{E}_{p}^{\bm{k}}(\,\cdot\,)^{1/p})$ is also a Banach space.

    Next we show the lower estimate in \eqref{Rp-compl} under the assumption that $m$ is $Q$-Ahlfors regular.
    Let $x,y \in K$ and let $s > 0$ satisfy $d(x,y) > 2s \ge 2^{-1}d(x,y)$.
    Then $B_{d}(x,s) \neq \emptyset$.
    By \eqref{capu}, there exists $f \in B_{p,\infty}^{\bm{k}} \cap \contfunc(K)$ such that $\supp_{K}[f] \subseteq B_{d}(x,s)$, $f \ge 1$ on $B_{d}(x,2s)$ and $\mathcal{E}_{p}^{\bm{k}}(f) \le C_{1}s^{Q - \beta_{p}}$, where $C_{1} \in (0,\infty)$ depends only on $C$ in \eqref{capu} and $C_{\mathrm{AR}}$ in \eqref{AR}.
    Hence we have
    \[
    R_{\mathcal{E}_{p}^{\bm{k}}}(x,y) \ge \mathcal{E}_{p}^{\bm{k}}(f)^{-1} \ge C_{1}^{-1}s^{\beta_{p} - Q} \gtrsim d(x,y)^{\beta_{p} - Q}. 
    \qedhere\]
\end{proof}

\begin{exam}[Korevaar--Schoen--Sobolev space]\label{ex.KS}
%    Let $p \in (1, \infty)$. 
    In addition to the setting specified at the beginning of this section, we suppose that $K$ is connected and that $m(B_{d}(x,r)) < \infty$ for any $(x,r) \in K \times (0,\infty)$. 
    For $s > 0$, define $\bm{k}^{s} = \{ k_{r}^{s} \}_{r > 0}$ by
    \begin{equation}\label{KSkernel}
    	k_{r}^{s}(x,y) \coloneqq \frac{\indicator{B_{d}(x,r)}(y)}{r^{ps}m(B_{d}(x,r))}, \quad x,y \in K.
    \end{equation}
    Clearly, $\bm{k}^{s}$ is asymptotically local.
	We define the \emph{Besov--Lipschitz space} $B_{p, \infty}^{s}$ by $B_{p, \infty}^{s} \coloneqq B_{p, \infty}^{\bm{k}^{s}}$.
	Then the \emph{critical $L^p$-Besov exponent} $s_{p}$ of $(K, d, m)$ is defined as
	\begin{equation}\label{Lp-Besov}
		s_{p} \coloneqq \sup\bigl\{ s \in (0,\infty) \bigm| \text{$B_{p, \infty}^{s}$ contains a non-constant function} \bigr\}.
	\end{equation}
	We call $\mathrm{KS}^{1,p} \coloneqq B_{p, \infty}^{s_{p}}$ the \emph{$(1,p)$-Korevaar--Schoen--Sobolev space} on $(K,d,m)$.
	We also write $\mathrm{KS}^{1,p}(K,d,m)$ for $\mathrm{KS}^{1,p}$ when we would like to clarify the underlying metric measure space $(K,d,m)$.
	If $m$ is $Q$-Ahlfors regular with respect to $d$ for some $Q \in (0,\infty)$, then $\bm{k}^{s_{p},Q} = \bigl\{ k_{r}^{s_{p}, Q} \bigr\}_{r > 0}$ given by 
	\[
	k_{r}^{s_{p}, Q}(x,y) \coloneqq r^{- ps_{p} - Q}\indicator{B_{d}(x,r)}(y), \quad x,y \in K, 
	\]
	which again is obviously asymptotically local, also corresponds to the $(1,p)$-Korevaar--Schoen--Sobolev space, i.e., $B_{p,\infty}^{\bm{k}^{s_{p},Q}} = \mathrm{KS}^{1,p}$. 
	If \hyperref[KSwm]{\textup{(WM)$_{p,\bm{k}^{s_{p}}}$}} holds, then we write $\mathcal{E}_{p}^{\mathrm{KS}}$ instead of $\mathcal{E}_{p}^{\bm{k}^{s_{p}}}$ and call each $\bm{k}^{s_{p}}$-Korevaar--Schoen $p$-energy form $(\mathcal{E}_{p}^{\mathrm{KS}},\mathrm{KS}^{1,p})$ on $(K,m)$ a \emph{Korevaar--Schoen $p$-energy form} on $(K,d,m)$. 
	
	It is not easy in general to verify \ref{KSwm} and \eqref{capu} for the family of kernels $\bm{k} = \bm{k}^{s_{p}}$; see Sections \ref{sec.Kig} and \ref{sec.CGQ} for some settings in which we can prove these conditions. 
	On the other hand, a reasonable sufficient condition for \eqref{morrey} is known.
	In fact, if $m$ is volume doubling with growth exponent $Q \in (0,\infty)$ and $ps_{p} > Q$, then \eqref{morrey} holds for $\mathrm{KS}^{1,p}$; see, e.g., \cite[Theorem 5.1]{AB21} or \cite[Theorem 3.2]{Bau22+}.
	
	Let us give a couple of other examples of families of kernels $\bm{k}$, whose associated Besov spaces $B_{p,\infty}^{\bm{k}}$ coincide with $\mathrm{KS}^{1,p}$ under suitable assumptions. 
	The first one $\bm{k}^{\#} = \{ k^{\#}_{r} \}_{r > 0}$ is a variant of $\bm{k}^{s_{p}}$ obtained by replacing $r^{ps_{p}}$ in \eqref{KSkernel} for $s = s_{p}$ with $d(x,y)^{ps_{p}}$, i.e., defined by 
	\begin{equation}\label{KSkernel.dist}
		k_{r}^{\#}(x,y) \coloneqq \frac{\indicator{B_{d}(x,r)}(y)}{d(x,y)^{ps_{p}}m(B_{d}(x,r))}, \quad x,y \in K,
	\end{equation}
	so that $\bm{k}^{\#}$ is clearly asymptotically local. 
	When $m$ is volume doubling and $(K,d,m)$ is equipped with a pair of $p$-energy form and $p$-energy measures satisfying a suitable Poincar\'{e} inequality and a capacity upper estimate as in the cases of many examples including the Sierpi\'{n}ski carpet, one can show that $\bm{k}^{\#}$ satisfies $B_{p,\infty}^{\bm{k}^{\#}} = \mathrm{KS}^{1,p}$ and \hyperref[KSwm]{\textup{(WM)$_{p,\bm{k}^{\#}}$}}; see \cite[Corollary 1.14]{Shi24+} for details. 
	As Proposition \ref{prop:KSidentify} in Appendix \ref{app:WMdistance}, we give an alternative elementary proof that a Poincar\'e-type inequality as given in \eqref{e:KSPI} implies $B_{p,\infty}^{\bm{k}^{\#}} = \mathrm{KS}^{1,p}$ and \hyperref[KSwm]{\textup{(WM)$_{p,\bm{k}^{\#}}$}}. 
	
	The second family of kernels $\bm{k}^{\mathrm{heat}}$ is a mollification of $\bm{k}^{s_{p}}$ obtained by replacing $m(B_{d}(x,r))^{-1} \indicator{B_{d}(x,r)}(y)$ in \eqref{KSkernel} for $s = s_{p}$ with the heat kernel of a diffusion on $K$. 
	Namely, assuming that $(K,d)$ is locally compact, that $m$ is a Radon measure on $K$, and that $(K,d,m)$ is equipped with a strongly local regular symmetric Dirichlet form $(\mathcal{E},\mathcal{F})$ on $L^{2}(K,m)$ which has a heat kernel $\{ q_{t} \}_{t > 0}$\footnote{I.e., a family $\{ q_{t} \}_{t > 0}$ of $[0,\infty]$-valued Borel measurable functions on $K \times K$ such that $T_{t}f = \int_{K}q_{t}(\cdot,y)f(y)\,m(dy)$ $m$-a.e.\ on $K$ for any $t \in (0,\infty)$ and any $f \in L^{2}(K,m)$, where $\{T_{t}\}_{t>0}$ denotes the Markovian semigroup on $L^{2}(K,m)$ associated with $(\mathcal{E},\mathcal{F})$; see \cite[Sections 1.1, 1.3 and 1.4]{FOT} for the definitions of the relevant notions from the theory of symmetric Dirichlet forms.}, we define $\bm{k}^{\mathrm{heat}} = \{ k^{\mathrm{heat}}_{r} \}_{r > 0}$ by 
	\begin{equation}\label{KSkernel.heat}
	k^{\mathrm{heat}}_{r}(x,y) \coloneqq \frac{q_{r^{\beta}}(x,y)}{r^{ps_{p}}}, \quad x,y \in K, 
	\end{equation}
	where $\beta \in (1,\infty)$ is a parameter to be suitably chosen depending on $(K,d,m,\mathcal{E},\mathcal{F})$.
	This family of kernels has been considered in \cite{AB21,ABCRST,Bau22+,GYZ23+,PP10} under the assumptions that $(K,d)$ is complete and that the following \emph{(full off-diagonal) sub-Gaussian heat kernel estimates with walk dimension $\beta$} hold:
	there exist $C_{1}, c_{1}, C_{2}, c_{2} \in (0,\infty)$ such that for each $t \in (0,\infty)$, 
	\begin{align}\label{e:HKE}
		&\frac{C_{1}}{m(B(x,t^{1/\beta}))}\exp\Biggl(-c_{1}\biggl(\frac{d(x,y)^{\beta}}{t}\biggr)^{\frac{1}{\beta - 1}}\Biggr) 
%		&\frac{C_{1}\indicator{\{ d(x,y) \le \delta t^{1/\beta}\}}(x,y)}{m(B(x,t^{1/\beta}))}
		\le q_{t}(x,y) \nonumber \\
		&\qquad\qquad\le \frac{C_{2}}{m(B(x,t^{1/\beta}))}\exp\Biggl(-c_{2}\biggl(\frac{d(x,y)^{\beta}}{t}\biggr)^{\frac{1}{\beta - 1}}\Biggr) 
		\quad \text{for $m$-a.e.\ $x,y \in K$;}
	\end{align}
	note that \eqref{e:HKE} implies that $m$ is volume doubling (see, e.g., \cite[Remark 1.2-(1)]{Kaj20}). 
	In particular, under these assumptions, it has been proved in \cite{GYZ23+} that $B_{p,\infty}^{\bm{k}^{\mathrm{heat}}} = \mathrm{KS}^{1,p}$ (\cite[Lemmas 3.3 and 3.4]{GYZ23+}) and that \hyperref[KSwm]{\textup{(WM)$_{p,\bm{k}^{s_{p}}}$}} and \hyperref[KSwm]{\textup{(WM)$_{p,\bm{k}^{\mathrm{heat}}}$}} are equivalent to each other (\cite[Theorem 1.7]{GYZ23+}).
	It is also easy to see that, if $m$ is volume doubling, the upper inequality in \eqref{e:HKE} holds and $m(K) < \infty$, then $\bm{k}^{\mathrm{heat}}$ is asymptotically local. 
	On the other hand, even if \eqref{e:HKE} holds, $\bm{k}^{\mathrm{heat}}$ is not necessarily asymptotically local when $m(K) = \infty$, as can be seen from the case of the canonical Dirichlet form on $\mathbb{R}^{n}$, \eqref{p-form.Euc} for $p=2$ with domain $W^{1,2}(\mathbb{R}^{n})$, where $s_{p}=1$ as mentioned in the introduction, $\beta=2$ and $q_{t}(x,y)=(4\pi t)^{-n/2}e^{-\abs{x-y}^{2}/(4t)}$ for any $(t,x,y) \in (0,\infty) \times \mathbb{R}^{n} \times \mathbb{R}^{n}$. 
\end{exam}

%The following proposition states when the embedding \eqref{morrey} holds for the Korevaar--Schoen--Sobolev spaces.
%\begin{prop}[{\cite[Theorem 5.1]{AB21} or \cite[Theorem 3.2]{Bau22+}}]\label{prop.morrey}
%    Let $p \in (1,\infty)$ and set $\beta_{p} \coloneqq ps_{p}$, where $s_{p}$ is the critical $L^{p}$ Besov exponent of $(K,d,m)$.
%    Suppose that there exists $Q \in (0,\infty)$ with $\beta_{p} > Q$ such that $m$ is volume doubling with growth exponent $Q$.
%    Then \eqref{morrey} holds for $\mathrm{KS}^{1,p}$.
%\end{prop}
%There exists $m$-null set $N$ such that \eqref{morrey} holds for any $x,y \in X \setminus N$. This equi-continuity tells us that we can extend $f$ to a continuous function $f_{\ast}$ if $X \setminus N$ is dense in $X$. (We used the completeness of Ran(f).) If $X \setminus N$ is not dense in $X$, then there exists an non-empty open set $O \subseteq N$, which contradicts to $m(O) > 0$.

%===== pEMKS ======
\section{Associated $p$-energy measures and chain rule}\label{sec.pEMKS}
\setcounter{equation}{0}
%%%
Next in this section, we introduce the $p$-energy measures associated with a given $\bm{k}$-Korevaar--Schoen $p$-energy form $(\KSform,\KS)$, and show their basic properties.

Throughout this section, as in the previous section, we fix $p \in (1,\infty)$, a separable metric space $(K,d)$ with $\#K \ge 2$ and a $\sigma$-finite Borel measure $m$ on $K$ with full topological support.  
In addition, we suppose that $(K,d)$ is locally compact. 
We also fix a family of kernels $\bm{k} = \{ k_{r} \}_{r > 0}$ as in Definition \ref{defn.kernel}, suppose that $\bm{k}$ is asymptotically local and that \ref{KSwm} holds, and fix an arbitrary sequence $\{ r_{n} \}_{n \in \mathbb{N}} \subseteq (0,\infty)$ as in Theorem \ref{thm.KS-energy}, so that we have the $\bm{k}$-Korevaar--Schoen $p$-energy form $(\KSform,\KS)$ on $(K,m)$ along $\{ r_{n} \}_{n \in \mathbb{N}}$ defined by \eqref{e:defn-KSform}.
%Then we can choose a sequence $\{ r_{n} \}_{n \in \mathbb{N}} \subseteq (0,\infty)$ with $r_{n} \to 0$ so that $\mathcal{E}_{p}^{\bm{k}}(f; g) = \lim_{n \to \infty}J_{p,r_{n}}^{\bm{k}}(f; g)$ for any $f,g \in B_{p,\infty}^{\bm{k}}$ (see Theorem \ref{thm.KS-energy}).
For ease of notation, we set 
\[
m_{n}(dxdy) \coloneqq k_{r_{n}}(x,y)\,m(dy)m(dx). 
\]
For each $u\in B_{p,\infty}^{\bm{k}} \cap L^{\infty}(K,m)$, define a linear map $\Psi_{p}^{\bm{k}}(u; \,\cdot\,) \colon B_{p,\infty}^{\bm{k}} \cap L^{\infty}(K,m) \to \mathbb{R}$ by, for each $\varphi \in B_{p,\infty}^{\bm{k}} \cap L^{\infty}(K,m)$, 
\begin{equation}\label{emfunc}
    \Psi_{p}^{\bm{k}}(u; \varphi) \coloneqq \mathcal{E}_{p}^{\bm{k}}(u; u\varphi) - \left(\frac{p - 1}{p}\right)^{p - 1}\mathcal{E}_{p}^{\bm{k}}\bigl(\abs{u}^{\frac{p}{p - 1}}; \varphi\bigr).
\end{equation}
(Note that $u\varphi,\abs{u}^{\frac{p}{p - 1}} \in B_{p,\infty}^{\bm{k}}$ by Theorem \ref{thm.KS-energy}-\ref{KS.GC} and Proposition \ref{prop.GClist}-\ref{GC.leipniz},\ref{GC.lip}.)
\begin{thm}\label{thm.KSfunctional}
	Let $u \in B_{p,\infty}^{\bm{k}} \cap \contfunc_{b}(K)$ and $\varphi \in B_{p,\infty}^{\bm{k}} \cap L^{\infty}(K,m)$. 
    If $\{ u,\varphi \} \cap \contfunc_{c}(K) \neq \emptyset$, then 
    \begin{gather}
        \begin{split}
        \Psi_{p}^{\bm{k}}(u; \varphi) &= \lim_{n \to \infty}\int_{K}\int_{K}\abs{u(x) - u(y)}^{p}\varphi(x)k_{r_{n}}(x,y)\,m(dy)m(dx) \\
        &= \lim_{n \to \infty}\int_{K}\int_{K}\abs{u(x) - u(y)}^{p}\varphi(y)k_{r_{n}}(x,y)\,m(dy)m(dx), \label{emfunc.expression}
        \end{split}\\
        \abs{\Psi_{p}^{\bm{k}}(u; \varphi)} \le \norm{\varphi}_{L^{\infty}(K,m)}\mathcal{E}_{p}^{\bm{k}}(u).
    \label{KSpem.bdd}
	\end{gather}
    In particular, if in addition $\varphi \ge 0$, then $\Psi_{p}^{\bm{k}}(u; \varphi) \ge 0$. 
\end{thm}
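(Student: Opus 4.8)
The plan is to prove the two integral expressions in \eqref{emfunc.expression} first, and then derive the bound \eqref{KSpem.bdd} and positivity as consequences. The key idea is that the two-variable form $\mathcal{E}_{p}^{\bm{k}}(\,\cdot\,;\,\cdot\,)$ is, by Theorem \ref{thm.KS-energy}-\ref{KS.GC}, the limit along $\{r_n\}$ of the corresponding two-variable forms $J_{p,r_n}^{\bm{k}}(\,\cdot\,;\,\cdot\,)$, whose explicit double-integral expression is given by \eqref{e:Besov-compatible}. So the strategy is to write out $J_{p,r_n}^{\bm{k}}(u; u\varphi)$ and $J_{p,r_n}^{\bm{k}}(\abs{u}^{p/(p-1)}; \varphi)$ using \eqref{e:Besov-compatible}, combine them into a single integrand, and show that the resulting integrand converges (after using asymptotic locality to restrict to the diagonal region $d(x,y) < \delta(r_n)$) to $\abs{u(x)-u(y)}^{p}\varphi(x)$.

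First I would use \eqref{e:Besov-compatible} to write, with $a = u(x)-u(y)$ and denoting by $\Phi(t) = \sgn(t)\abs{t}^{p-1} = \gamma_p(t)$,
\begin{align*}
J_{p,r_n}^{\bm{k}}(u; u\varphi) &= \int_K\int_K \gamma_p\bigl(u(x)-u(y)\bigr)\bigl(u(x)\varphi(x) - u(y)\varphi(y)\bigr)k_{r_n}(x,y)\,m(dy)m(dx), \\
J_{p,r_n}^{\bm{k}}\bigl(\abs{u}^{\frac{p}{p-1}};\varphi\bigr) &= \int_K\int_K \gamma_p\bigl(\abs{u(x)}^{\frac{p}{p-1}} - \abs{u(y)}^{\frac{p}{p-1}}\bigr)\bigl(\varphi(x)-\varphi(y)\bigr)k_{r_n}(x,y)\,m(dy)m(dx).
\end{align*}
The heart of the matter is the pointwise algebraic identity: one wants to show that the combined integrand, after subtracting $\bigl(\tfrac{p-1}{p}\bigr)^{p-1}$ times the second from the first, is \emph{approximately} $\abs{u(x)-u(y)}^{p}\varphi(x)$ when $u(x)$ and $u(y)$ are close. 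Concretely, I would rearrange $\gamma_p(u(x)-u(y))(u(x)\varphi(x)-u(y)\varphi(y))$ as $\abs{u(x)-u(y)}^{p}\varphi(x) + \gamma_p(u(x)-u(y))u(y)(\varphi(x)-\varphi(y))$, and compare the leftover term against the $\abs{u}^{p/(p-1)}$ contribution; the Leibniz/chain-rule identity quoted in the introduction (the one motivating \eqref{intro.pemformula}) is exactly the statement that these leftover terms cancel \emph{in the pointwise gradient calculus}. Here the same cancellation must be established at the level of the discrete difference quotients up to an error that vanishes as $d(x,y)\to 0$.

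The main obstacle, and the step I would spend the most care on, is controlling this error term. The exact pointwise identity holds only in the infinitesimal limit, so for finite $d(x,y)$ there is a remainder involving the difference between $\gamma_p\bigl(\abs{u(x)}^{p/(p-1)} - \abs{u(y)}^{p/(p-1)}\bigr)$ and its first-order approximation $\tfrac{p}{p-1}\abs{u(y)}^{1/(p-1)}\gamma_p(u(x)-u(y))$ (via the chain rule for $t\mapsto \abs{t}^{p/(p-1)}$). I would bound this remainder using the continuity of $u$ (which holds since $u \in \contfunc_b(K)$) together with Lemma \ref{lem.p-1} applied to $\gamma_p$, showing that the remainder integrand is $\le \varepsilon(x,y)\abs{u(x)-u(y)}^{p}\norm{\varphi}_{L^\infty}$ where $\varepsilon(x,y)\to 0$ as $d(x,y)\to 0$. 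Because $\bm{k}$ is asymptotically local, \eqref{e:asylocal} lets me discard the off-diagonal part of the integral (here the compact-support hypothesis $\{u,\varphi\}\cap\contfunc_c(K)\neq\emptyset$ ensures the relevant integrals are finite and the discarded mass is controlled), so on the retained region $d(x,y)<\delta(r_n)$ one has $\varepsilon(x,y)\to 0$ uniformly by uniform continuity of $u$ on the compact support. This yields
\[
\lim_{n\to\infty}\Bigl(\Psi_p^{\bm{k}}(u;\varphi) - \int_K\int_K\abs{u(x)-u(y)}^{p}\varphi(x)\,k_{r_n}(x,y)\,m(dy)m(dx)\Bigr) = 0,
\]
and symmetrizing $x\leftrightarrow y$ (the kernel need not be symmetric, so I would instead argue that $\abs{u(x)-u(y)}^{p}(\varphi(x)-\varphi(y))$ integrates to something vanishing, again by asymptotic locality and uniform continuity) gives the second equality in \eqref{emfunc.expression}.

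Finally, \eqref{KSpem.bdd} and positivity are immediate corollaries: from the first expression in \eqref{emfunc.expression}, the integrand is bounded in absolute value by $\norm{\varphi}_{L^\infty(K,m)}\abs{u(x)-u(y)}^{p}k_{r_n}(x,y)$, so passing to the limit and using $\lim_n J_{p,r_n}^{\bm{k}}(u) = \mathcal{E}_p^{\bm{k}}(u)$ from \eqref{e:defn-KSform} gives $\abs{\Psi_p^{\bm{k}}(u;\varphi)} \le \norm{\varphi}_{L^\infty(K,m)}\mathcal{E}_p^{\bm{k}}(u)$; and if $\varphi\ge 0$ then each approximating integral is nonnegative, hence so is the limit $\Psi_p^{\bm{k}}(u;\varphi)$.
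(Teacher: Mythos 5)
Your overall strategy is the same as the paper's: expand $\Psi_{p,n}^{\bm{k}}(u;\varphi) = J_{p,r_n}^{\bm{k}}(u;u\varphi) - \bigl(\frac{p-1}{p}\bigr)^{p-1}J_{p,r_n}^{\bm{k}}\bigl(\abs{u}^{p/(p-1)};\varphi\bigr)$ via \eqref{e:Besov-compatible}, isolate the leading term $\abs{u(x)-u(y)}^{p}\varphi(x)$, restrict to a near-diagonal set using asymptotic locality, and control the remainder via uniform continuity of $u$, the first-order approximation of $t\mapsto\abs{t}^{p/(p-1)}$ and Lemma \ref{lem.p-1}. However, your decisive quantitative claim --- that the remainder integrand is at most $\varepsilon(x,y)\abs{u(x)-u(y)}^{p}\norm{\varphi}_{L^{\infty}}$ with $\varepsilon(x,y)\to 0$ as $d(x,y)\to 0$ --- is false, and this is where the proof lives or dies. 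Writing $A \coloneqq (u(x)-u(y))\abs{u(y)}^{1/(p-1)}\sgn(u(y))$ and $B \coloneqq \frac{p-1}{p}\bigl(\abs{u(x)}^{p/(p-1)}-\abs{u(y)}^{p/(p-1)}\bigr)$, the remainder equals $\bigl[\gamma_{p}(A)-\gamma_{p}(B)\bigr](\varphi(x)-\varphi(y))$; at any point with $u(y)=0\neq u(x)$ one computes exactly
\begin{equation*}
\gamma_{p}(A)-\gamma_{p}(B) = -\Bigl(\tfrac{p-1}{p}\Bigr)^{p-1}\abs{u(x)-u(y)}^{p},
\end{equation*}
so the ratio of the remainder to $\abs{u(x)-u(y)}^{p}\abs{\varphi(x)-\varphi(y)}$ is the fixed constant $\bigl(\frac{p-1}{p}\bigr)^{p-1}$ no matter how small $d(x,y)$ is: no vanishing factor exists at this power. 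What Lemma \ref{lem.p-1} together with \eqref{approx.power} actually yields is $\abs{\gamma_{p}(A)-\gamma_{p}(B)}\le C_{p,u}\,\varepsilon^{(p-1)\wedge 1}\abs{u(x)-u(y)}^{p-1}$, one power of the $u$-increment less than you claim; and then your step of discarding $\varphi(x)-\varphi(y)$ in favor of $2\norm{\varphi}_{L^{\infty}}$ is fatal, because $\int\int_{d(x,y)<\delta(r_n)}\abs{u(x)-u(y)}^{p-1}k_{r_n}(x,y)\,m(dy)m(dx)$ is not uniformly bounded (the kernels' near-diagonal mass blows up, e.g.\ like $r_{n}^{-ps_{p}}$ for $\bm{k}^{s_{p}}$). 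Even the weaker constant bound $C\abs{u(x)-u(y)}^{p}\norm{\varphi}_{L^{\infty}}$ would only show the remainder is $O(J_{p,r_n}^{\bm{k}}(u))$, bounded but not vanishing. The fix --- which is the paper's argument --- is to keep the difference $\abs{\varphi(x)-\varphi(y)}$ and apply H\"{o}lder's inequality, bounding the remainder by $C_{p,u}\,\varepsilon^{(p-1)\wedge 1}\mathcal{E}_{p}^{\bm{k}}(u)^{((p-1)\wedge 1)/p}\mathcal{E}_{p}^{\bm{k}}(\varphi)^{1/p}$; this is precisely where the hypothesis $\varphi\in B_{p,\infty}^{\bm{k}}$, which your argument never uses, enters.

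A secondary issue: your symmetrization argument for the second equality in \eqref{emfunc.expression} requires a modulus of continuity for $\varphi$, so it only works when $\varphi\in\contfunc_{c}(K)$. In the other admissible case, $u\in\contfunc_{c}(K)$ with $\varphi$ merely in $B_{p,\infty}^{\bm{k}}\cap L^{\infty}(K,m)$, the term $\abs{u(x)-u(y)}^{p}(\varphi(x)-\varphi(y))$ cannot be controlled this way. The paper instead reruns the whole argument with the alternative algebraic expansion
\begin{equation*}
\gamma_{p}\bigl(u(x)-u(y)\bigr)\bigl(u(x)\varphi(x)-u(y)\varphi(y)\bigr) = \abs{u(x)-u(y)}^{p}\varphi(y) + \gamma_{p}\bigl(u(x)-u(y)\bigr)u(x)(\varphi(x)-\varphi(y)),
\end{equation*}
which puts $\varphi(y)$ in the leading term and leaves only a H\"{o}lder-controllable remainder, with no continuity of $\varphi$ needed.
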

\begin{proof}
    First, we observe that
    \begin{align}\label{emfunctional}
        &\Psi_{p,n}^{\bm{k}}(u; \varphi) \coloneqq J_{p,r_{n}}^{\bm{k}}(u; u\varphi) - \left(\frac{p - 1}{p}\right)^{p - 1}J_{p,r_{n}}^{\bm{k}}\bigl(\abs{u}^{\frac{p}{p - 1}}; \varphi\bigr) \nonumber \\
        &= \int_{K \times K}\biggl[\abs{u(x) - u(y)}^{p}\varphi(x) + \gamma_{p}\bigl(u(x) - u(y)\bigr) \cdot (\varphi(x) - \varphi(y))u(y) \nonumber \\
        &\quad - \left(\frac{p - 1}{p}\right)^{p - 1}\gamma_{p}\Bigl(\abs{u(x)}^{\frac{p}{p - 1}} - \abs{u(y)}^{\frac{p}{p - 1}}\Bigr) \cdot (\varphi(x) - \varphi(y))\biggr]\,m_{n}(dxdy).
    \end{align}
   	Define $F_{n} \in \mathcal{B}(K \times K)$ by
    \begin{equation*}
    	F_{n} \coloneqq \{ (x,y) \in K \times K \mid \text{$d(x,y) < \delta(r_n)$ and $(\varphi(x),\varphi(y)) \not= (0,0)$} \},
    \end{equation*} 
    and set 
    \begin{align*}
    	&I_{p,n}^{\bm{k}}(u; \varphi) \\
    	&\coloneqq \int_{F_{n}}\biggl[\abs{u(x) - u(y)}^{p}\varphi(x) + \gamma_{p}\bigl(u(x) - u(y)\bigr) \cdot (\varphi(x) - \varphi(y))u(y) \nonumber \\
        &\quad - \left(\frac{p - 1}{p}\right)^{p - 1}\gamma_{p}\Bigl(\abs{u(x)}^{\frac{p}{p - 1}} - \abs{u(y)}^{\frac{p}{p - 1}}\Bigr) \cdot (\varphi(x) - \varphi(y))\biggr]\,m_{n}(dxdy).
    \end{align*}
    Note that $\lim_{n \to \infty}\bigl(\Psi_{p,n}^{\bm{k}}(u; \varphi) - I_{p,n}^{\bm{k}}(u; \varphi)\bigr) = 0$ by \eqref{e:asylocal} and $\norm{u}_{\sup} \vee \norm{\varphi}_{L^{\infty}(K,m)} < \infty$. 
    Since $\closure{F_{n}}^{K \times K}$ is compact for sufficiently large $n \in \mathbb{N}$ when $\varphi \in B_{p,\infty}^{\bm{k}} \cap \contfunc_{c}(K)$, $u$ is uniformly continuous on $\{ x \in K \mid \text{$(x,y) \in F_{n}$ or $(y,x) \in F_{n}$ for some $y \in K$} \}$ for such $n$. %since K is locally compact.
    By combining this observation with the uniform continuity of $t \mapsto \abs{t}^{1/(p - 1)}\sgn(t)$ on $u(K)$, for any $\varepsilon > 0$, we can find $N \in \mathbb{N}$ such that
    \begin{align}\label{approx.power}
        &\abs{\frac{p - 1}{p}\Bigl(\abs{u(x)}^{\frac{p}{p - 1}} - \abs{u(y)}^{\frac{p}{p - 1}}\Bigr) - \bigl(u(x) - u(y)\bigr)\abs{u(y)}^{\frac{1}{p - 1}}\sgn\bigl(u(y)\bigr)} \nonumber \\
        &= \abs{\int^{u(x)}_{u(y)}\Bigl[\abs{t}^{\frac{1}{p - 1}}\sgn(t) - \abs{u(y)}^{\frac{1}{p - 1}}\sgn\bigl(u(y)\bigr)\Bigr]\,dt}
        \le \varepsilon\abs{u(x) - u(y)}
    \end{align}
    for any $(x,y) \in \bigcup_{n \ge N}F_{n}$.
    Using Lemma \ref{lem.p-1}, \eqref{approx.power} and H\"{o}lder's inequality, we can find $C_{p,u} \in (0,\infty)$ depending only on $p$ and $\norm{u}_{\sup}$ such that
    \begin{align*}
        &\sup_{n \ge N}\Biggl|\int_{F_{n}}\biggl[\gamma_{p}\bigl(u(x) - u(y)\bigr) \cdot (\varphi(x) - \varphi(y))u(y) \\
        &\quad - \left(\frac{p - 1}{p}\right)^{p - 1}\gamma_{p}\Bigl(\abs{u(x)}^{\frac{p}{p - 1}} - \abs{u(y)}^{\frac{p}{p - 1}}\Bigr) \cdot (\varphi(x) - \varphi(y))\biggr]\,m_{n}(dxdy)\Biggr| \\
        &\le C_{p,u}\varepsilon^{(p - 1) \wedge 1}\mathcal{E}_{p}^{\bm{k}}(u)^{\frac{(p - 1) \wedge 1}{p}}\mathcal{E}_{p}^{\bm{k}}(\varphi)^{\frac{1}{p}} \eqqcolon C_{p,u,\varphi}\varepsilon^{(p - 1) \wedge 1}.
    \end{align*}
    Therefore, \eqref{emfunctional} implies that for any $n \ge N$,
    \begin{align*}
        &\biggl\lvert \Psi_{p,n}^{\bm{k}}(u; \varphi) - \int_{K \times K}\lvert u(x) - u(y)\rvert^{p}\varphi(x)\,m_{n}(dxdy)\biggr\rvert \nonumber \\
        &\le \bigl\lvert\Psi_{p,n}^{\bm{k}}(u; \varphi) - I_{p,n}^{\bm{k}}(u; \varphi)\bigr\rvert + \int_{F_{n}^{c}}\lvert u(x) - u(y)\rvert^{p}\varphi(x)\,m_{n}(dxdy) + C_{p,u,\varphi}\varepsilon^{(p - 1) \wedge 1},
    \end{align*}
    which together with $\lim_{n \to \infty}\Psi_{p,n}^{\bm{k}}(u; \varphi) = \Psi_{p}^{\bm{k}}(u; \varphi)$ and \eqref{e:asylocal} yields the first equality in \eqref{emfunc.expression}. 
    The second equality in \eqref{emfunc.expression} can be shown similarly by using the expression 
    \begin{align*} 
        \Psi_{p,n}^{\bm{k}}(u; \varphi) 
        &= \int_{K \times K}\biggl[\abs{u(x) - u(y)}^{p}\varphi(y) + \gamma_{p}\bigl(u(x) - u(y)\bigr) \cdot (\varphi(x) - \varphi(y))u(x) \nonumber \\
        &\, - \left(\frac{p - 1}{p}\right)^{p - 1}\gamma_{p}\Bigl(\abs{u(x)}^{\frac{p}{p - 1}} - \abs{u(y)}^{\frac{p}{p - 1}}\Bigr) \cdot (\varphi(x) - \varphi(y))\biggr]\,m_{n}(dxdy)
    \end{align*}
    instead of \eqref{emfunctional}. 
    Now the estimate \eqref{KSpem.bdd} is clear from \eqref{emfunc.expression}.
\end{proof}

By Theorem \ref{thm.KSfunctional}, we can associate to the functional $\Psi_{p}^{\bm{k}}(u; \,\cdot\,)$ a unique Radon measure $\KSem\langle u \rangle$ on $K$ under the additional assumption that $\KS \cap \contfunc_{c}(K)$ is dense in $(\contfunc_{c}(K), \norm{\,\cdot\,}_{\sup})$, as follows. 
\begin{thm}\label{thm.KSpem-exist}
	Suppose that $B_{p,\infty}^{\bm{k}} \cap \contfunc_{c}(K)$ is dense in $(\contfunc_{c}(K), \norm{\,\cdot\,}_{\sup})$. 
	Let $u \in B_{p,\infty}^{\bm{k}} \cap \contfunc_{b}(K)$. 
	Then there exists a unique positive Radon measure $\Gamma_{p}^{\bm{k}}\langle u \rangle$ on $K$ such that for any $\varphi \in \KS \cap \contfunc_{c}(K)$, 
    \begin{equation}\label{KSpem.characterize}
        \int_{K}\varphi\,d\KSem\langle u \rangle
        = \KSform(u; u\varphi) - \left(\frac{p - 1}{p}\right)^{p - 1}\KSform\bigl(\abs{u}^{\frac{p}{p - 1}}; \varphi\bigr). 
    \end{equation}
    Moreover, $\KSem\langle u \rangle(K) \le \KSform(u) < \infty$, and for any $\varphi \in \contfunc_{0}(K)$, 
    \begin{equation}\label{KSpem.limext}
    	\int_{K}\varphi\,d\Gamma_{p}^{\bm{k}}\langle u \rangle 
    	= \lim_{n \to \infty}\int_{K}\int_{K}\abs{u(x) - u(y)}^{p}\varphi(x)k_{r_{n}}(x,y)\,m(dy)m(dx).
    \end{equation}
\end{thm}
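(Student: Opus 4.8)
The plan is to realize $\KSem\langle u \rangle$ via the Riesz--Markov--Kakutani representation theorem applied to a suitable positive linear functional on $\contfunc_c(K)$, using Theorem \ref{thm.KSfunctional} as the main input. First I would observe that for every $\varphi \in \KS \cap \contfunc_c(K)$ the pair $(u,\varphi)$ satisfies the hypotheses of Theorem \ref{thm.KSfunctional} (since $\varphi \in \contfunc_c(K)$), so that $\Psi_{p}^{\bm{k}}(u; \varphi) \ge 0$ whenever $\varphi \ge 0$ and $\abs{\Psi_{p}^{\bm{k}}(u; \varphi)} \le \norm{\varphi}_{L^{\infty}(K,m)}\mathcal{E}_{p}^{\bm{k}}(u) = \norm{\varphi}_{\sup}\mathcal{E}_{p}^{\bm{k}}(u)$, where the last equality holds because $\varphi$ is continuous and $m$ has full topological support. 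Thus $L \coloneqq \Psi_{p}^{\bm{k}}(u; \,\cdot\,)$ is a positive linear functional on $\KS \cap \contfunc_c(K)$ that is Lipschitz with respect to $\norm{\,\cdot\,}_{\sup}$, with Lipschitz constant $\mathcal{E}_{p}^{\bm{k}}(u)$.

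Next I would extend $L$ to a positive linear functional on all of $\contfunc_c(K)$. Since $\KS \cap \contfunc_c(K)$ is dense in $(\contfunc_c(K),\norm{\,\cdot\,}_{\sup})$ by hypothesis and $L$ is $\norm{\,\cdot\,}_{\sup}$-Lipschitz, it extends uniquely to a bounded linear functional $\overline{L}$ on the $\norm{\,\cdot\,}_{\sup}$-closure $\contfunc_0(K)$, still satisfying $\abs{\overline{L}(\varphi)} \le \mathcal{E}_{p}^{\bm{k}}(u)\norm{\varphi}_{\sup}$. The delicate point is that \emph{positivity} must survive this extension, since the natural approximants need not be nonnegative: given $\varphi \in \contfunc_c(K)$ with $\varphi \ge 0$ and $\psi_j \in \KS \cap \contfunc_c(K)$ with $\psi_j \to \varphi$ uniformly, I would replace $\psi_j$ by $\psi_j^+ = \psi_j \vee 0$. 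These still lie in $\KS \cap \contfunc_c(K)$ (by Theorem \ref{thm.KS-energy}-\ref{KS.GC} and Proposition \ref{prop.GClist}-\ref{GC.lip} applied to the $1$-Lipschitz map $t \mapsto t^+$, noting $\supp_{K}[\psi_j^+] \subseteq \supp_{K}[\psi_j]$), are nonnegative, and still converge to $\varphi^+ = \varphi$ uniformly because $t \mapsto t^+$ is $1$-Lipschitz; hence $\overline{L}(\varphi) = \lim_{j}L(\psi_j^+) \ge 0$.

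With $\overline{L}$ a positive linear functional on $\contfunc_c(K)$, I would invoke the Riesz--Markov--Kakutani representation theorem. Since $(K,d)$ is separable it is second countable, and being also locally compact Hausdorff it is $\sigma$-compact, so the representing positive Radon measure $\KSem\langle u \rangle$ is unique and satisfies $\overline{L}(\varphi) = \int_{K}\varphi\,d\KSem\langle u \rangle$ for all $\varphi \in \contfunc_c(K)$. The characterization \eqref{KSpem.characterize} is then immediate from $\overline{L}(\varphi) = L(\varphi) = \Psi_{p}^{\bm{k}}(u; \varphi)$ for $\varphi \in \KS \cap \contfunc_c(K)$, and the total mass bound follows from $\KSem\langle u \rangle(K) = \sup\{ \overline{L}(\varphi) \mid \varphi \in \contfunc_c(K),\ 0 \le \varphi \le 1 \} \le \mathcal{E}_{p}^{\bm{k}}(u) < \infty$.

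Finally, for the limit formula \eqref{KSpem.limext} I would set $\Lambda_n(\varphi) \coloneqq \int_{K}\int_{K}\abs{u(x) - u(y)}^{p}\varphi(x)k_{r_n}(x,y)\,m(dy)m(dx)$ and record the uniform bound $\abs{\Lambda_n(\varphi)} \le \norm{\varphi}_{\sup}J_{p,r_n}^{\bm{k}}(u) \le C\mathcal{E}_{p}^{\bm{k}}(u)\norm{\varphi}_{\sup}$, where I used $\sup_{r > 0}J_{p,r}^{\bm{k}}(u) \le C\mathcal{E}_{p}^{\bm{k}}(u)$ from \eqref{KSene-comp}. For $\varphi \in \KS \cap \contfunc_c(K)$, Theorem \ref{thm.KSfunctional} (the first equality in \eqref{emfunc.expression}) together with \eqref{KSpem.characterize} yields $\Lambda_n(\varphi) \to \int_{K}\varphi\,d\KSem\langle u \rangle$. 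An $\varepsilon/3$ argument, combining this convergence on the dense subset $\KS \cap \contfunc_c(K)$, the uniform bound on $\Lambda_n$, and the estimate $\abs{\int_{K}\varphi\,d\KSem\langle u \rangle} \le \mathcal{E}_{p}^{\bm{k}}(u)\norm{\varphi}_{\sup}$, then extends the identity to all $\varphi \in \contfunc_0(K)$. I expect the main obstacle to be the careful preservation of positivity under the density extension in the second step, which is exactly where the lattice property furnished by the generalized $p$-contraction property is essential in order to make the Riesz--Markov--Kakutani theorem applicable.
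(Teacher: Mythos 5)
Your existence construction, the positivity-preserving extension via the positive parts $\psi_{j}^{+}$, the application of the Riesz--Markov--Kakutani theorem, the total mass bound, and the $\varepsilon/3$ argument for \eqref{KSpem.limext} all coincide step by step with the paper's own proof, and these parts are correct.

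There is, however, a genuine gap in the uniqueness part. The theorem asserts that $\KSem\langle u \rangle$ is the \emph{only} positive Radon measure satisfying \eqref{KSpem.characterize} for all $\varphi \in \KS \cap \contfunc_{c}(K)$, whereas what you prove is the weaker statement that the measure representing the extended functional $\overline{L}$ on $\contfunc_{c}(K)$ is unique (that is exactly what Riesz--Markov--Kakutani gives). If $\mu$ is an arbitrary positive Radon measure with $\int_{K}\varphi\,d\mu = \Psi_{p}^{\bm{k}}(u;\varphi)$ for every $\varphi \in \KS \cap \contfunc_{c}(K)$, you cannot immediately conclude that $\int_{K}\varphi\,d\mu = \overline{L}(\varphi)$ for every $\varphi \in \contfunc_{c}(K)$: passing to uniform limits $\varphi_{j} \to \varphi$ with $\varphi_{j} \in \KS \cap \contfunc_{c}(K)$ requires controlling $\abs{\int_{K}(\varphi_{j}-\varphi)\,d\mu} \le \norm{\varphi_{j}-\varphi}_{\sup}\,\mu\bigl(\supp_{K}[\varphi_{j}-\varphi]\bigr)$, and neither do the supports of the $\varphi_{j}$ stay inside a fixed compact set nor is $\mu$ known a priori to be a finite measure, so this term is not under control. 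The paper closes precisely this gap before invoking uniqueness: for every compact $F \subseteq K$, the density of $B_{p,\infty}^{\bm{k}} \cap \contfunc_{c}(K)$ in $(\contfunc_{c}(K),\norm{\,\cdot\,}_{\sup})$ together with the truncation stability coming from \ref{GC} yields some $\varphi \in B_{p,\infty}^{\bm{k}} \cap \contfunc_{c}(K)$ with $\indicator{F} \le \varphi \le \indicator{K}$, whence $\mu(F) \le \int_{K}\varphi\,d\mu = \Psi_{p}^{\bm{k}}(u;\varphi) \le \mathcal{E}_{p}^{\bm{k}}(u)$ by \eqref{KSpem.bdd}, and therefore $\mu(K) \le \mathcal{E}_{p}^{\bm{k}}(u) < \infty$. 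Once $\mu$ is known to be finite, $\psi \mapsto \int_{K}\psi\,d\mu$ is a sup-norm bounded linear functional on $\contfunc_{0}(K)$ agreeing with $\Psi_{p}^{\bm{k}}(u;\,\cdot\,)$ on a dense subspace, hence with $\widetilde{\Psi}_{p}^{\bm{k}}(u;\,\cdot\,) = \overline{L}$ everywhere, and only then does Riesz--Markov--Kakutani uniqueness give $\mu = \KSem\langle u \rangle$. Your proof needs this supplementary finiteness argument; without it the uniqueness claim of the theorem is not established.
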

\begin{defn}[$p$-Energy measure associated with a $\bm{k}$-Korevaar--Schoen $p$-energy form $(\KSform,\KS)$]\label{d:KSpem}
	Suppose that $\KS \cap \contfunc_{c}(K)$ is dense in $(\contfunc_{c}(K),\norm{\,\cdot\,}_{\sup})$, and let $u \in \KS \cap \contfunc_{b}(K)$.
	The positive Radon measure $\KSem\langle u \rangle$ on $K$ as in Theorem \ref{thm.KSpem-exist} is called the \emph{$p$-energy measure of $u$ associated with $(\KSform,\KS)$}.  
\end{defn}
\begin{proof}[Proof of Theorem \ref{thm.KSpem-exist}]
	By virtue of \eqref{KSpem.bdd}, we can extend $\Psi_{p}^{\bm{k}}(u;\,\cdot\,)$ to a bounded linear functional on $\contfunc_{0}(K)$ in a standard way as follows. 
	Let $u \in B_{p,\infty}^{\bm{k}} \cap \contfunc_{b}(K)$, let $\varphi \in \contfunc_{0}(K)$ and choose $\{ \varphi_{j} \}_{j \in \mathbb{N}} \subseteq B_{p,\infty}^{\bm{k}} \cap \contfunc_{c}(K)$ so that $\lim_{j \to \infty}\norm{\varphi - \varphi_{j}}_{\sup} = 0$. 
	Then $\{ \Psi_{p}^{\bm{k}}(u; \varphi_{j}) \}_{j \in \mathbb{N}}$ is a Cauchy sequence in $\mathbb{R}$ since $\abs{\Psi_{p}^{\bm{k}}(u; \varphi_{j}) - \Psi_{p}^{\bm{k}}(u; \varphi_{j'})} \le \norm{\varphi_{j} - \varphi_{j'}}_{\sup}\mathcal{E}_{p}^{\bm{k}}(u)$ for any $j,j' \in \mathbb{N}$ by \eqref{KSpem.bdd}. 
	Now we define $\widetilde{\Psi}_{p}^{\bm{k}}(u; \varphi) \coloneqq \lim_{j \to \infty}\Psi_{p}^{\bm{k}}(u; \varphi_{j})$, which does not depend on the choice of $\{ \varphi_{j} \}_{j \in \mathbb{N}}$. 
	Clearly, we have $\abs{\widetilde{\Psi}_{p}^{\bm{k}}(u; \varphi)} \le \norm{\varphi}_{\sup}\mathcal{E}_{p}^{\bm{k}}(u)$. 
	If $\varphi \ge 0$, then we obtain $\widetilde{\Psi}_{p}^{\bm{k}}(u; \varphi) \ge 0$ by considering $\{ \varphi_{j}^{+} \}_{j}$ instead of $\{ \varphi_{j} \}_{j}$. 
	By applying the Riesz--Markov--Kakutani representation theorem (see, e.g., \cite[Theorems 2.14 and 2.18]{Rud}), there exists a unique positive Radon measure $\Gamma_{p}^{\bm{k}}\langle u \rangle$ on $K$ satisfying 
	\begin{equation}\label{KSpem.defn}
		\widetilde{\Psi}_{p}^{\bm{k}}(u; \psi) = \int_{K}\psi\,d\Gamma_{p}^{\bm{k}}\langle u \rangle \quad \text{for any $\psi \in \contfunc_{c}(K)$.}
	\end{equation}
	In particular, $\KSem\langle u \rangle$ satisfies \eqref{KSpem.characterize} for any $\varphi \in \KS \cap \contfunc_{c}(K)$ by \eqref{KSpem.defn} and \eqref{emfunc}. 
	
	Next, to show the claimed uniqueness of $\Gamma_{p}^{\bm{k}}\langle u \rangle$ and
	$\Gamma_{p}^{\bm{k}}\langle u \rangle(K) \le \mathcal{E}_{p}^{\bm{k}}(u)$,
	let $\mu$ be a positive Radon measure on $K$ satisfying \eqref{KSpem.characterize}
	with $\mu$ in place of $\Gamma_{p}^{\bm{k}}\langle u \rangle$ for any $\varphi \in B_{p,\infty}^{\bm{k}} \cap \contfunc_{c}(K)$.
	Then for any compact subset $F$ of $K$, noting \eqref{Bpinftyk} and the assumption that $B_{p,\infty}^{\bm{k}} \cap \contfunc_{c}(K)$ is dense in $(\contfunc_{c}(K),\norm{\,\cdot\,}_{\sup})$,
	we can choose $\varphi \in B_{p,\infty}^{\bm{k}} \cap \contfunc_{c}(K)$ so that $\indicator{F}\leq\varphi\leq\indicator{K}$ on $K$,
	hence $\mu(F)\leq\int_{K}\varphi\,d\mu=\Psi_{p}^{\bm{k}}(u; \varphi)\leq\mathcal{E}_{p}^{\bm{k}}(u)$
	by \eqref{KSpem.bdd} and thus $\mu(K)\leq\mathcal{E}_{p}^{\bm{k}}(u)<\infty$.
	In particular, $\contfunc_{0}(K)\ni\psi\mapsto\int_{K}\psi\,d\mu$ is a
	bounded linear functional on $C_{0}(K)$ which coincides with $\Psi_{p}^{\bm{k}}(u;\,\cdot\,)$
	on $B_{p,\infty}^{\bm{k}} \cap \contfunc_{c}(K)$ and thus
	with $\widetilde{\Psi}_{p}^{\bm{k}}(u;\,\cdot\,)$ on $C_{0}(K)$, and
	therefore $\mu=\Gamma_{p}^{\bm{k}}\langle u \rangle$ by the uniqueness
	of a positive Radon measure on $K$ satisfying \eqref{KSpem.defn}.
	
	Lastly, we shall prove \eqref{KSpem.limext}. 
	Note that \eqref{KSpem.limext} is true for $\varphi \in \KS \cap \contfunc_{c}(K)$ by \eqref{emfunc.expression} in Theorem \ref{thm.KSfunctional}. 
	As in the first paragraph of this proof, let $\varphi \in \contfunc_{0}(K)$ and choose $\{ \varphi_{j} \}_{j \in \mathbb{N}} \subseteq B_{p,\infty}^{\bm{k}} \cap \contfunc_{c}(K)$ so that $\lim_{j \to \infty}\norm{\varphi - \varphi_{j}}_{\sup} = 0$.
	Let $\varepsilon > 0$ and choose $N \in \mathbb{N}$ so that $\norm{\varphi - \varphi_{j}}_{\sup}\mathcal{E}_{p}^{\bm{k}}(u) < \varepsilon$ for any $j \ge N$.  
	Then, for any $n \in \mathbb{N}$ and any $j \ge N$,  
	\begin{align*}
		&\abs{\int_{K}\varphi\,d\Gamma_{p}^{\bm{k}}\langle u \rangle - \int_{K \times K}\abs{u(x) - u(y)}^{p}\varphi(x)\,m_{n}(dxdy)} \\
		&\le \abs{\widetilde{\Psi}_{p}^{\bm{k}}(u; \varphi) - \widetilde{\Psi}_{p}^{\bm{k}}(u; \varphi_{j})} + \abs{\widetilde{\Psi}_{p}^{\bm{k}}(u; \varphi_{j}) - \Psi_{p,n}^{\bm{k}}(u; \varphi_{j})} \\
		&\quad + \norm{\varphi - \varphi_{j}}_{\sup}\int_{K \times K}\abs{u(x) - u(y)}^{p}\,m_{n}(dxdy) \\
		&\le 2\varepsilon + \abs{\Psi_{p}^{\bm{k}}(u; \varphi_{j}) - \Psi_{p,n}^{\bm{k}}(u; \varphi_{j})}, 
	\end{align*}
	where $\Psi_{p,n}^{\bm{k}}(u;\,\cdot\,)$ is the same as in \eqref{emfunctional}. 
	Hence we have 
	\[
	\limsup_{n \to \infty}\abs{\int_{K}\varphi\,d\Gamma_{p}^{\bm{k}}\langle u \rangle - \int_{K \times K}\abs{u(x) - u(y)}^{p}\varphi(x)\,m_{n}(dxdy)} 
	\le 2\varepsilon, 
	\]
	which proves \eqref{KSpem.limext}. 
\end{proof}

In the rest of this section, we always suppose in addition that $B_ {p,\infty}^{\bm{k}} \cap \contfunc_{c}(K)$ is dense in $(\contfunc_{c}(K),\norm{\,\cdot\,}_{\sup})$. 
%Then, for each $u \in B_{p,\infty}^{\bm{k}} \cap \contfunc_{b}(K)$, there exists a unique positive Radon measure $\Gamma_{p}^{\bm{k}}\langle u \rangle$ on $K$ satisfying \eqref{KSpem.characterize}. 
%We call $\Gamma_{p}^{\bm{k}}\langle u \rangle$ the \emph{$p$-energy measure of $u$ associated with the $p$-energy form $\bigl(\mathcal{E}_{p}^{\bm{k}}, B_{p,\infty}^{\bm{k}}\bigr)$}. 

Note that both the boundedness and the continuity of $u$ are essential in Theorem \ref{thm.KSpem-exist}; the former is required for the right-hand side of \eqref{KSpem.characterize} to make sense, and the latter has been used heavily in the proof of Theorem \ref{thm.KSfunctional} above.
%The function $u$ is assumed to be a bounded continuous function in order to get the $p$-energy measure $\Gamma_{p}^{\bm{k}}\langle u \rangle$ associated with $(\KSform,\KS)$ in Theorem \ref{thm.KSpem-exist}.
Next we would like to extend $\Gamma_{p}^{\bm{k}}\langle u \rangle$ to a wider range of $u$. 
Let us use the following notation for simplicity. 
\begin{defn}
	We define closed linear subspaces $\bclosureKS$ and $\cclosureKS$ of $B_{p,\infty}^{\bm{k}}$ by
	\begin{equation}
	\bclosureKS \coloneqq \closure{B_{p,\infty}^{\bm{k}} \cap \contfunc_{b}(K)}^{B_{p,\infty}^{\bm{k}}}
	\quad \text{and} \quad
	\cclosureKS \coloneqq \closure{B_{p,\infty}^{\bm{k}} \cap \contfunc_{c}(K)}^{B_{p,\infty}^{\bm{k}}}. 
	\end{equation}
\end{defn}

By virtue of the expression \eqref{emfunc.expression}, we can show the generalized $p$-contraction property \ref{GC} for $(\int_{K}\varphi\,d\Gamma_{p}^{\bm{k}}\langle \,\cdot\, \rangle,B_{p,\infty}^{\bm{k}} \cap \contfunc_{b}(K))$ for any $\varphi \in \contfunc_{c}(K)$ with $\varphi \geq 0$, which further allows us to extend $\Gamma_{p}^{\bm{k}}\langle u \rangle$ canonically to $u \in \bclosureKS$. 
%%%On a Polish space, a finite Borel measure is a Radon measure: \cite[Propositions 3.3.37 and 3,3,44]{HKST}
\begin{thm}\label{thm.KSpem-GC}
	For any $u \in \bclosureKS$, there exists a unique positive Radon measure $\Gamma_{p}^{\bm{k}}\langle u \rangle$ on $K$ such that for any $\{ u_{n} \}_{n \in \mathbb{N}} \subseteq B_{p,\infty}^{\bm{k}} \cap \contfunc_{b}(K)$ with $\lim_{n \to \infty}\mathcal{E}_{p}^{\bm{k}}(u - u_{n}) = 0$ and any Borel measurable function $\varphi \colon K \to [0,\infty)$ with $\norm{\varphi}_{\sup} < \infty$, 
	\begin{equation}\label{KSpem.extension}
		\int_{K}\varphi\,d\Gamma_{p}^{\bm{k}}\langle u \rangle
		= \lim_{n \to \infty}\int_{K}\varphi\,d\Gamma_{p}^{\bm{k}}\langle u_{n} \rangle, 
	\end{equation}
	and $\KSem\langle u \rangle$ further satisfies $\KSem\langle u \rangle(K) \le \KSform(u)$. 
	Moreover, for each such $\varphi$, $(\int_{K}\varphi\,d\Gamma_{p}^{\bm{k}}\langle \,\cdot\, \rangle,\bclosureKS)$ is a $p$-energy form on $(K,m)$ satisfying \ref{GC}.
\end{thm}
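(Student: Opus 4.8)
The plan is to transport the generalized $p$-contraction property \ref{GC} from the Besov functionals to the measure-valued functionals $\varphi \mapsto \int_{K}\varphi\,d\KSem\langle\,\cdot\,\rangle$, extending in two independent directions: from $\KS \cap \contfunc_{b}(K)$ to its closure $\bclosureKS$, and from continuous to bounded Borel $\varphi$. First I would fix $\varphi \in \contfunc_{c}(K)$ with $\varphi \ge 0$ and consider, for each $n$,
\[
J_{p,r_{n}}^{\bm{k},\varphi}(f) \coloneqq \int_{K}\int_{K}\abs{f(x) - f(y)}^{p}\varphi(x)\,k_{r_{n}}(x,y)\,m(dy)m(dx), \quad f \in \KS.
\]
Since $(x,y) \mapsto \varphi(x)k_{r_{n}}(x,y)$ is a nonnegative Borel kernel and symmetry of the kernel is never used in the proof of Proposition \ref{prop.Besov-GC}, the computation \eqref{KSpre.GC} shows verbatim that $(J_{p,r_{n}}^{\bm{k},\varphi},\KS \cap \contfunc_{b}(K))$ is a $p$-energy form satisfying \ref{GC}; here $\KS \cap \contfunc_{b}(K)$ is stable under the contraction operations $T$ by $T(0) = 0$, the continuity of $T$, and Theorem~\ref{thm.KS-energy}-\ref{KS.GC}. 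By \eqref{KSpem.limext}, $J_{p,r_{n}}^{\bm{k},\varphi}(f) \to \int_{K}\varphi\,d\KSem\langle f \rangle$ for every $f \in \KS \cap \contfunc_{b}(K)$, and as $p$-homogeneity, the triangle inequality and \ref{GC} all pass to pointwise limits, $(\int_{K}\varphi\,d\KSem\langle\,\cdot\,\rangle,\KS \cap \contfunc_{b}(K))$ is a $p$-energy form satisfying \ref{GC} for every such $\varphi$.

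Next I would define $\KSem\langle u \rangle$ for $u \in \bclosureKS$. Pick $\{ u_{n} \} \subseteq \KS \cap \contfunc_{b}(K)$ with $\norm{u - u_{n}}_{\KS} \to 0$, so $\KSform(u - u_{n}) \to 0$ by \eqref{KSene-comp}. For $\varphi \in \contfunc_{c}(K)$ with $0 \le \varphi \le 1$, the triangle inequality from the previous step and $\KSem\langle w \rangle(K) \le \KSform(w)$ (Theorem \ref{thm.KSpem-exist}) give $\bigl|(\int_{K}\varphi\,d\KSem\langle u_{n}\rangle)^{1/p} - (\int_{K}\varphi\,d\KSem\langle u_{m}\rangle)^{1/p}\bigr| \le \KSform(u_{n} - u_{m})^{1/p}$, so $\varphi \mapsto \lim_{n}\int_{K}\varphi\,d\KSem\langle u_{n}\rangle$ is a well-defined positive linear functional on $\contfunc_{c}(K)$, independent of $\{ u_{n} \}$ by the same estimate; the Riesz--Markov--Kakutani theorem then produces a positive Radon measure $\KSem\langle u \rangle$ with $\KSem\langle u \rangle(K) \le \KSform(u)$. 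Combining the displayed estimate with $\abs{a^{p} - b^{p}} \le p(a \vee b)^{p - 1}\abs{a - b}$ and the uniform mass bound shows that $\{ \KSem\langle u_{n}\rangle \}$ is Cauchy in total variation; its limit is a positive Radon measure agreeing with $\KSem\langle u \rangle$ on $\contfunc_{c}(K)$, hence equal to it. Thus $\KSem\langle u_{n}\rangle \to \KSem\langle u \rangle$ in total variation, which immediately yields \eqref{KSpem.extension} for every bounded nonnegative Borel $\varphi$ and every approximating sequence.

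Finally, for \ref{GC} on $\bclosureKS$ I would first treat $\varphi \in \contfunc_{c}(K)$, $\varphi \ge 0$: approximating each $u_{k} \in \bclosureKS$ by $u_{k}^{(n)} \in \KS \cap \contfunc_{b}(K)$, I apply the second paragraph to $\bm{u}^{(n)}$ and let $n \to \infty$, using that $T_{l}(\bm{u}^{(n)}) \to T_{l}(\bm{u})$ in $\KS$ (so that $T_{l}(\bm{u}) \in \bclosureKS$) by the continuity of the contraction operations, a consequence of \ref{GC} for the ambient form, together with the total-variation convergence just established. It then remains to pass, for the fixed finite measures $\KSem\langle T_{l}(\bm{u})\rangle$ and $\KSem\langle u_{k}\rangle$, from continuous to bounded nonnegative Borel $\varphi$: for $\varphi = \indicator{B}$ with $B$ Borel I approximate $B$ by open sets from outside, controlled by the finite dominating measure $\sum_{l}\KSem\langle T_{l}(\bm{u})\rangle + \sum_{k}\KSem\langle u_{k}\rangle$, and the open sets by functions in $\contfunc_{c}(K)$ from inside, so that the \ref{GC} inequality survives; for simple $\varphi \ge 0$ I combine the inequalities for the $\indicator{B_{i}}$ through the reverse-Minkowski/Minkowski computation of \eqref{KSpre.GC}, now with the finite index set as the integration variable; and for general bounded $\varphi \ge 0$ by monotone convergence. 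Taking $T(x,y) = x + y$ gives the triangle inequality, and with $p$-homogeneity this makes $(\int_{K}\varphi\,d\KSem\langle\,\cdot\,\rangle,\bclosureKS)$ a $p$-energy form.

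The step I expect to be the main obstacle is the passage from continuous to Borel test functions. The only tool connecting $\KSem\langle\,\cdot\,\rangle$ to explicit double integrals, namely \eqref{emfunc.expression}--\eqref{KSpem.limext}, is available only for continuous $\varphi$; consequently both the limit formula \eqref{KSpem.extension} and \ref{GC} for Borel $\varphi$ must be forced through purely measure-theoretically, via total-variation convergence and via outer regularity combined with the generalized Minkowski mechanism of \eqref{KSpre.GC}. A secondary point requiring care is the continuity of $\bm{u} \mapsto T_{l}(\bm{u})$ on $\KS$, which is what makes $\bclosureKS$ stable under the operations $T$ appearing in \ref{GC}.
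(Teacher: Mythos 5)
Your first two paragraphs are correct, and the second one takes a genuinely different (and arguably cleaner) route than the paper: instead of hand-constructing $\KSem\langle u \rangle$ for $u \in \bclosureKS$ by defining $I_{u}(\indicator{A})$ and verifying countable additivity through the uniform estimate, you upgrade the Cauchy estimate for $\bigl(\int_{K}\varphi\,d\KSem\langle u_{n}\rangle\bigr)^{1/p}$ to a Cauchy estimate in total variation (via $\abs{a^{p}-b^{p}}\le p(a\vee b)^{p-1}\abs{a-b}$ and the duality $\norm{\mu-\nu}_{\mathrm{TV}}\asymp\sup\{\int\varphi\,d(\mu-\nu)\mid\varphi\in\contfunc_{c}(K),\,0\le\varphi\le 1\}$) and invoke completeness of the space of finite signed measures; this yields \eqref{KSpem.extension} for all bounded Borel $\varphi$ at once, with no separate regularity argument needed at that stage.

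However, your third paragraph has a genuine gap: you assert that $T_{l}(\bm{u}^{(n)}) \to T_{l}(\bm{u})$ \emph{strongly} in $\KS$ "by the continuity of the contraction operations, a consequence of \ref{GC} for the ambient form." This is not a consequence of \ref{GC}. What \ref{GC} gives is \emph{boundedness}, $\mathcal{E}_{p}^{\bm{k}}(T_{l}(\bm{u}^{(n)}))^{1/p} \le \lVert(\mathcal{E}_{p}^{\bm{k}}(u_{k}^{(n)})^{1/p})_{k}\rVert_{\ell^{q_{1}}}$, together with Hölder-type continuity of the two-variable form \eqref{eq:form-nonlinear-Hoelder-GC}; it gives no control on $\mathcal{E}_{p}^{\bm{k}}\bigl(T_{l}(\bm{u}^{(n)}) - T_{l}(\bm{u})\bigr)$, since for a nonlinear $1$-Lipschitz $T$ the difference $T(x)-T(y)-(T(x')-T(y'))$ admits no bound vanishing as $x-x'\to 0$, $y-y'\to 0$ in the increment sense relevant here. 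Norm-continuity of nonlinear contractions on abstract $p$-energy forms is precisely the kind of statement this paper never has at its disposal — which is why its own proof (and likewise the proof of Theorem \ref{thm.EIDP}) routes around it: one uses reflexivity of $B_{p,\infty}^{\bm{k}}$ (Theorem \ref{thm.banach}) to extract a subsequence with $T_{l}(\bm{u}^{(n_{j})}) \rightharpoonup T_{l}(\bm{u})$ weakly (the weak limit being identified through the continuous embedding into $L^{p}(K,m)$, and $T_{l}(\bm{u}) \in \bclosureKS$ because a closed subspace is weakly closed), and then proves a Fatou-type lower semicontinuity \eqref{fatou.pem} of $\int_{K}\varphi\,d\Gamma_{p}^{\bm{k}}\langle\,\cdot\,\rangle$ along weakly convergent sequences via Mazur's lemma. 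That lower semicontinuity is exactly what is needed and all that is needed: the left-hand side of the \ref{GC} inequality only requires $\liminf$ control, while the right-hand side converges because $u_{k}^{(n)}\to u_{k}$ strongly (there your total-variation convergence does apply). Without this Mazur/Fatou step, both the membership $T_{l}(\bm{u})\in\bclosureKS$ and the passage of \eqref{KSpem.GCeach} to $\bclosureKS$ remain unproven in your proposal; note that your own closing remark identifies this continuity as the delicate point, but the resolution is to avoid it, not to derive it from \ref{GC}.
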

\begin{proof}
	First, for any $\varphi \in \contfunc_{c}(K)$ with $\varphi \ge 0$, we will show that $(\int_{K}\varphi\,d\Gamma_{p}^{\bm{k}}\langle \,\cdot\, \rangle,B_{p,\infty}^{\bm{k}} \cap \contfunc_{b}(K))$ satisfies \ref{GC}.
	Throughout this proof, we fix $n_{1},n_{2} \in \mathbb{N}$, $q_{1} \in (0,p]$, $q_{2} \in [p,\infty]$ and $T = (T_{1},\dots,T_{n_{2}}) \colon \mathbb{R}^{n_{1}} \to \mathbb{R}^{n_{2}}$ satisfying \eqref{GC-cond}. 
	Let us consider the case $q_{2} < \infty$ since the proof for the case $q_{2} = \infty$ is similar.
	Let $\bm{u} = (u_1,\dots,u_{n_{1}}) \in \bigl(B_{p,\infty}^{\bm{k}} \cap \contfunc_{b}(K)\bigr)^{n_{1}}$. 
    Note that $T_{l}(\bm{u}) \in B_{p,\infty}^{\bm{k}} \cap \contfunc_{b}(K)$ for each $l \in \{ 1,\dots,n_{2} \}$.
    For any $n \in \mathbb{N}$, we see that
    \begin{align*}
        &\sum_{l = 1}^{n_{2}}\left(\int_{K \times K}\abs{T_{l}(\bm{u}(x)) - T_{l}(\bm{u}(y))}^{p}\varphi(x)\,m_{n}(dxdy)\right)^{q_{2}/p} \\
        &\overset{\eqref{reverse}}{\le} \left(\int_{K \times K}\Biggl[\sum_{l = 1}^{n_{2}}\abs{T_{l}(\bm{u}(x)) - T_{l}(\bm{u}(y))}^{q_{2}}\Biggr]^{p/q_{2}}\varphi(x)\,m_{n}(dxdy)\right)^{q_{2}/p} \\
        &\overset{\eqref{GC-cond}}{\le} \left(\int_{K \times K}\Biggl[\sum_{k = 1}^{n_{1}}\abs{u_{k}(x) - u_{k}(y)}^{q_{1}}\Biggr]^{p/q_{1}}\varphi(x)\,m_{n}(dxdy)\right)^{q_{2}/p} \\
        &\overset{\text{($\ast$)}}{\le} \left(\sum_{k = 1}^{n_{1}}\left(\int_{K \times K}\abs{u_{k}(x) - u_{k}(y)}^{p}\varphi(x)\,m_{n}(dxdy)\right)^{q_{1}/p}\right)^{q_{2}/q_{1}},
    \end{align*}
    where we used the triangle inequality for the norm of $L^{p/q_{1}}(K \times K, m_{n})$ in ($\ast$).
    By letting $n \to \infty$, we obtain from \eqref{KSpem.limext} that 
    \begin{equation}\label{KSpem.GCeach}
    	\norm{\left(\left(\int_{K}\varphi\,d\Gamma_{p}^{\bm{k}}\langle T_{l}(\bm{u}) \rangle\right)^{1/p}\right)_{l = 1}^{n_{2}}}_{\ell^{q_{2}}} 
    	\le \norm{\left(\left(\int_{K}\varphi\,d\Gamma_{p}^{\bm{k}}\langle u_{k} \rangle\right)^{1/p}\right)_{k = 1}^{n_{1}}}_{\ell^{q_{1}}}. 
    \end{equation}
    
    Next we will extend \eqref{KSpem.GCeach} to any Borel measurable function $\varphi \colon K \to [0,\infty]$. 
    Let us start with the case $\varphi = \indicator{A}$, where $A \in \mathcal{B}(K)$. 
    By \cite[Theorem 2.18]{Rud}, there exist sequences $\{ K_{n} \}_{n \in \mathbb{N}}$ and $\{ U_{n} \}_{n \in \mathbb{N}}$ such that $K_{n} \subseteq A \subseteq U_{n}$, $K_{n}$ is compact, $U_{n}$ is open and $\lim_{n \to \infty}\max_{v \in \{ T_{l}(\bm{u}) \}_{l} \cup \{ u_{k} \}_{k}}\Gamma_{p}^{\bm{k}}\langle v \rangle(U_{n} \setminus K_{n}) = 0$. 
    By Urysohn's lemma, we can pick $\varphi_{n} \in \contfunc_{c}(K)$ so that $0 \le \varphi_{n} \le 1$, $\varphi_{n}|_{K_{n}} = 1$ and $\supp_{K}[\varphi_{n}] \subseteq U_{n}$. 
    Applying \eqref{KSpem.GCeach} for $\varphi_{n}$, we obtain 
    \[
    \norm{\left(\Gamma_{p}^{\bm{k}}\langle T_{l}(\bm{u}) \rangle(K_{n})^{1/p}\right)_{l = 1}^{n_{2}}}_{\ell^{q_{2}}} 
    \le \norm{\left(\Gamma_{p}^{\bm{k}}\langle u_{k} \rangle(U_{n})^{1/p}\right)_{k = 1}^{n_{1}}}_{\ell^{q_{1}}}.
    \]
    By letting $n \to \infty$, we get \eqref{KSpem.GCeach} with $\varphi = \indicator{A}$. 
    Using the reverse Minkowski inequality on $\ell^{q_{1}/p}$ and the Minkowski inequality on $\ell^{q_{2}/p}$ (see also \cite[Proof of Proposition 2.9-(a)]{KS.gc}, where \ref{GC} is shown to be stable under addition), we see that \eqref{KSpem.GCeach} holds also for any non-negative Borel measurable simple function $\varphi$ on $K$. 
    We get the desired extension, \eqref{KSpem.GCeach} for any Borel measurable function $\varphi \colon K \to [0,\infty]$, by the monotone convergence theorem. 
   
    Now let us extend $p$-energy measures. 
    In the rest of this proof, let $\varphi \colon K \to [0,\infty)$ be a Borel measurable function such that $\norm{\varphi}_{\sup} < \infty$. 
    Let $u \in \bclosureKS$ and $\{ u_{n} \}_{n \in \mathbb{N}} \subseteq B_{p,\infty}^{\bm{k}} \cap \contfunc_{b}(K)$ satisfy $\lim_{n \to \infty}\mathcal{E}_{p}^{\bm{k}}(u - u_{n}) = 0$. 
    By Proposition \ref{prop.GClist}-\ref{GC.tri} for $(\int_{K}\varphi\,d\Gamma_{p}^{\bm{k}}\langle \,\cdot\, \rangle, B_{p,\infty}^{\bm{k}} \cap \contfunc_{b}(K))$, for any $n, n' \in \mathbb{N}$,
    \[
    \abs{\left(\int_{K}\varphi\,d\Gamma_{p}^{\bm{k}}\langle u_{n} \rangle\right)^{1/p} - \left(\int_{K}\varphi\,d\Gamma_{p}^{\bm{k}}\langle u_{n'} \rangle\right)^{1/p}}
    \le \norm{\varphi}_{\sup}^{1/p}\mathcal{E}_{p}^{\bm{k}}(u_{n} - u_{n'})^{1/p}, 
    \]
    which implies that the limit $\lim_{n \to \infty}\int_{K}\varphi\,d\Gamma_{p}^{\bm{k}}\langle u_{n} \rangle \eqqcolon I_{u}(\varphi)$ exists in $\mathbb{R}$ and it is independent of the choice of $\{ u_{n} \}_{n}$.
    In addition, by letting $n' \to \infty$ in the estimate above, we have that 
    \begin{equation}\label{KSpemint.unif}
    	\abs{\left(\int_{K}\varphi\,d\Gamma_{p}^{\bm{k}}\langle u_{n} \rangle\right)^{1/p} - I_{u}(\varphi)^{1/p}}
    	\le \norm{\varphi}_{\sup}^{1/p}\mathcal{E}_{p}^{\bm{k}}(u_{n} - u)^{1/p}.   
    \end{equation}
    Also, it is clear that $0 \le I_{u}(\varphi) \le \norm{\varphi}_{\sup}\mathcal{E}_{p}^{\bm{k}}(u)$ and that $I_{n}$ is linear in the sense that $I_{u}\bigl(\sum_{k = 1}^{N}a_{k}\varphi_{k}\bigr) = \sum_{k = 1}^{N}a_{k}I_{u}(\varphi_{k})$ for any $N \in \mathbb{N}$, $(a_{k})_{k = 1}^{N} \subseteq [0,\infty)$ and Borel measurable functions $\varphi_{k} \colon K \to [0,\infty)$ with $\norm{\varphi_{k}}_{\sup} < \infty$, $k \in \{ 1,\dots, N \}$.  
    Now we define $\Gamma_{p}^{\bm{k}}\langle u \rangle(A) \coloneqq I_{u}(\indicator{A}) \in [0,\infty)$ for $A \in \mathcal{B}(K)$, and show that $\Gamma_{p}^{\bm{k}}\langle u \rangle$ is a finite Borel measure on $K$.
    Clearly, $\Gamma_{p}^{\bm{k}}\langle u \rangle$ is finitely additive and $\Gamma_{p}^{\bm{k}}\langle u \rangle(K) \le \mathcal{E}_{p}^{\bm{k}}(u) < \infty$.  
    Hence it suffices to prove the countable additivity of $\Gamma_{p}^{\bm{k}}\langle u \rangle$. 
	By \eqref{KSpemint.unif}, for any $\varepsilon > 0$ there exists $N_{0} \in \mathbb{N}$ such that $\sup_{A \in \mathcal{B}(K)}\abs{\Gamma_{p}^{\bm{k}}\langle u \rangle(A)^{1/p} - \Gamma_{p}^{\bm{k}}\langle u_{n} \rangle(A)^{1/p}} < \varepsilon$ for any $n \ge N_{0}$. 
    Let $\{ A_{k} \}_{k \in \mathbb{N}} \subseteq \mathcal{B}(K)$ be a sequence of disjoint Borel sets, and set $B_{N} \coloneqq \bigcup_{k = N +1}^{\infty}A_{k}$ for each $N \in \mathbb{N}$.
    Then we see that for any $N \in \mathbb{N}$ and any $n \ge N_{0}$, 
    \[
    \abs{\Gamma_{p}^{\bm{k}}\langle u \rangle\left(\bigcup_{k \in \mathbb{N}}A_{k}\right) - \sum_{k = 1}^{N}\Gamma_{p}^{\bm{k}}\langle u \rangle(A_{k})}^{1/p}
    = \Gamma_{p}^{\bm{k}}\langle u \rangle(B_{N})^{1/p} 
        \le \varepsilon + \Gamma_{p}^{\bm{k}}\langle u_{n} \rangle(B_{N})^{1/p}, 
    \]
    whence $\lim_{N \to \infty} \abs{\Gamma_{p}^{\bm{k}}\langle u \rangle\left(\bigcup_{k \in \mathbb{N}}A_{k}\right) - \sum_{k = 1}^{N}\Gamma_{p}^{\bm{k}}\langle u \rangle(A_{k})} = 0$, proving the desired countable additivity. 
    
    Before showing \eqref{KSpem.extension}, i.e., $I_{u}(\varphi) = \int_{K}\varphi\,d\Gamma_{p}^{\bm{k}}\langle u \rangle$, we will extend \eqref{KSpem.GCeach} to the pair $(\int_{K}\varphi\,d\Gamma_{p}^{\bm{k}}\langle \,\cdot\, \rangle, \bclosureKS)$.
    To this end, we need to show that for any $\{ u_{n} \}_{n \in \mathbb{N}} \subseteq B_{p,\infty}^{\bm{k}} \cap \contfunc_{b}(K)$ converging weakly in $B_{p,\infty}^{\bm{k}}$ to $u \in \bclosureKS$ as $n \to \infty$, 
    \begin{equation}\label{fatou.pem}
    	\int_{K}\varphi\,d\Gamma_{p}^{\bm{k}}\langle u \rangle \le \liminf_{n \to \infty}\int_{K}\varphi\,d\Gamma_{p}^{\bm{k}}\langle u_{n} \rangle. 
    \end{equation}
    By extracting a subsequence of $\{ u_{n} \}_{n}$ if necessary, we can assume that the limit $\lim_{n \to \infty}\int_{K}\varphi\,d\Gamma_{p}^{\bm{k}}\langle u_{n} \rangle$ exists. 
    By Mazur's lemma (see, e.g., \cite[p. 19]{HKST}), there exist $N(n) \in \mathbb{N}$ and $\{ \alpha_{n,k} \}_{k = n}^{N(n)} \subseteq [0,1]$ with $N(n) > n$ and $\sum_{k = n}^{N(n)}\alpha_{n,k} = 1$ for each $n \in \mathbb{N}$ such that $v_{n} \coloneqq \sum_{k = n}^{N(n)}\alpha_{n,k}u_{k}$ converges to $u$ in $B_{p,\infty}^{\bm{k}}$ as $n \to \infty$. 
    We see from \ref{GC} and Proposition \ref{prop.GClist}-\ref{GC.tri} for $(\int_{K}\varphi\,d\Gamma_{p}^{\bm{k}}\langle \,\cdot\, \rangle, B_{p,\infty}^{\bm{k}} \cap \contfunc_{b}(K))$ that 
    \[
    \left(\int_{K}\varphi\,d\Gamma_{p}^{\bm{k}}\langle v_{n} \rangle\right)^{1/p} 
    \le \sum_{k = n}^{N(n)}\alpha_{n,k}\left(\int_{K}\varphi\,d\Gamma_{p}^{\bm{k}}\langle u_{k} \rangle\right)^{1/p},  
    \]
    which implies \eqref{fatou.pem} by letting $n \to \infty$.
    With this preparation, let us show that the pair $(\int_{K}\varphi\,d\Gamma_{p}^{\bm{k}}\langle \,\cdot\, \rangle, \bclosureKS)$ satisfies \ref{GC}. 
    Let $\bm{u} = (u_{1},\dots,u_{n_{1}}) \in \bigl(\bclosureKS\bigr)^{n_{1}}$.
    For each $k \in \{ 1,\dots,n_{1} \}$, fix $\{ u_{k,n} \}_{n \in \mathbb{N}} \subseteq B_{p,\infty}^{\bm{k}} \cap \contfunc_{b}(K)$ so that $\lim_{n \to \infty}\norm{u_{k} - u_{k,n}}_{B_{p,\infty}^{\bm{k}}} = 0$. 
    Set $\bm{u}_{n} \coloneqq (u_{1,n},\dots,u_{n_{1},n})$. 
    By \ref{GC} for $(\mathcal{E}_{p}^{\bm{k}},B_{p,\infty}^{\bm{k}})$ (see Theorem \ref{thm.KS-energy}-\ref{KS.GC}) and \eqref{GC-cond}, we know that $\{ T_{l}(\bm{u}_{n}) \}_{n}$ is bounded in $B_{p,\infty}^{\bm{k}}$ and that $\lim_{n \to \infty}\norm{T_{l}(\bm{u}_{n}) - T_{l}(\bm{u})}_{L^{p}} = 0$. 
    Since $B_{p,\infty}^{\bm{k}}$ is reflexive (see Theorem \ref{thm.banach}) and $B_{p,\infty}^{\bm{k}}$ is continuously embedded in $L^{p}(K,m)$, we see that $T_{l}(\bm{u}) \in \bclosureKS$ and that there exists a subsequence $\{ T_{l}(\bm{u}_{n_{j}}) \}_{j}$ such that $T_{l}(\bm{u}_{n_{j}})$ weakly converges to $T_{l}(\bm{u})$ in $B_{p,\infty}^{\bm{k}}$ as $j \to \infty$ for any $l \in \{ 1,\dots,n_{2} \}$. 
    By \eqref{fatou.pem}, we see that 
    \begin{align*}
    	\norm{\left(\left(\int_{K}\varphi\,d\Gamma_{p}^{\bm{k}}\langle T_{l}(\bm{u}) \rangle\right)^{1/p}\right)_{l = 1}^{n_{2}}}_{\ell^{q_{2}}} 
    	&\le \left(\sum_{l = 1}^{n_{2}}\liminf_{j \to \infty}\left(\int_{K}\varphi\,d\Gamma_{p}^{\bm{k}}\langle T_{l}(\bm{u}_{n_{j}}) \rangle\right)^{1/p}\right)^{1/q_{2}} \\
    	&\le \liminf_{j \to \infty}\left(\sum_{l = 1}^{n_{2}}\left(\int_{K}\varphi\,d\Gamma_{p}^{\bm{k}}\langle T_{l}(\bm{u}_{n_{j}}) \rangle\right)^{1/p}\right)^{1/q_{2}} \\
    	&\le  \liminf_{j \to \infty}\left(\sum_{k = 1}^{n_{1}}\left(\int_{K}\varphi\,d\Gamma_{p}^{\bm{k}}\langle u_{k,n_{j}} \rangle\right)^{1/p}\right)^{1/q_{1}} \\
    	&= \norm{\left(\left(\int_{K}\varphi\,d\Gamma_{p}^{\bm{k}}\langle u_{k} \rangle\right)^{1/p}\right)_{k = 1}^{n_{1}}}_{\ell^{q_{1}}}
    \end{align*}
    if $q_{2} < \infty$. 
    The case $q_{2} = \infty$ is similar, so $(\int_{K}\varphi\,d\Gamma_{p}^{\bm{k}}\langle \,\cdot\, \rangle, \bclosureKS)$ satisfies \ref{GC}. 
    
    Finally, we can prove \eqref{KSpem.extension}. 
    Let $\{ u_{n} \}_{n \in \mathbb{N}} \subseteq B_{p,\infty}^{\bm{k}} \cap \contfunc_{b}(K)$ be a sequence satisfying $\lim_{n \to \infty}\KSform(u - u_{n}) = 0$. 
    By Proposition \ref{prop.GClist}-\ref{GC.tri} for $(\int_{K}\varphi\,d\Gamma_{p}^{\bm{k}}\langle \,\cdot\, \rangle, \bclosureKS)$, we have 
    \[
    \abs{\left(\int_{K}\varphi\,d\Gamma_{p}^{\bm{k}}\langle u \rangle\right)^{1/p} - \left(\int_{K}\varphi\,d\Gamma_{p}^{\bm{k}}\langle u_{n} \rangle\right)^{1/p}}
    \le \norm{\varphi}_{\sup}^{1/p}\mathcal{E}_{p}^{\bm{k}}(u - u_{n})^{1/p}, 
    \]
    which together with \eqref{KSpemint.unif} implies that 
    \begin{align*}
    	&\abs{I_{u}(\varphi)^{1/p} - \left(\int_{K}\varphi\,d\Gamma_{p}^{\bm{k}}\langle u \rangle\right)^{1/p}} \\
    	&\le \abs{I_{u}(\varphi)^{1/p} - \left(\int_{K}\varphi\,d\Gamma_{p}^{\bm{k}}\langle u_{n} \rangle\right)^{1/p}} + \abs{\left(\int_{K}\varphi\,d\Gamma_{p}^{\bm{k}}\langle u_{n} \rangle\right)^{1/p} - \left(\int_{K}\varphi\,d\Gamma_{p}^{\bm{k}}\langle u \rangle\right)^{1/p}} \\
    	&\le 2\norm{\varphi}_{\sup}^{1/p}\mathcal{E}_{p}^{\bm{k}}(u - u_{n})^{1/p} 
    	\xrightarrow[n \to \infty]{} 0.  
    \end{align*}
    Hence we obtain \eqref{KSpem.extension}. 
\end{proof}

Thanks to Proposition \ref{prop.GClist}-\ref{GC.Cp} for $(\int_{K}\varphi\,d\Gamma_{p}^{\bm{k}}\langle \,\cdot\, \rangle, \bclosureKS)$, we can show the next result.
See \cite[Theorem 4.5 and Proposition 4.6]{KS.gc} for further details on $\Gamma_{p}^{\bm{k}}\langle u; v \rangle$ in the theorem below. 
\begin{thm}\label{thm.KSpem-diffble}
    Let $u,v \in \bclosureKS$. 
    Define $\Gamma_{p}^{\bm{k}}\langle u;v \rangle \colon \mathcal{B}(K) \to \mathbb{R}$ by 
    \begin{equation}\label{e:defn-KSpemtwo}
    	\Gamma_{p}^{\bm{k}}\langle u; v \rangle(A) \coloneqq \frac{1}{p}\left.\frac{d}{dt}\Gamma_{p}^{\bm{k}}\langle u + tv \rangle(A)\right|_{t = 0} \quad \text{for $A \in \mathcal{B}(K)$.}
    \end{equation}
    Then $\Gamma_{p}^{\bm{k}}\langle u; v \rangle$ is a signed Borel measure on $K$ and satisfies $\Gamma_{p}^{\bm{k}}\langle u; u \rangle = \Gamma_{p}^{\bm{k}}\langle u \rangle$.
    Moreover, for any $u,v \in \bclosureKS$ and any Borel measurable functions $\varphi, \psi \colon K \to [0,\infty]$,
    \begin{equation}\label{KSpem.diffble}
    	\int_{K}\varphi\,d\Gamma_{p}^{\bm{k}}\langle u; \,\cdot\, \rangle \colon \bclosureKS \to \mathbb{R} \text{ is the Fr\'{e}chet derivative of }
    	\frac{1}{p}\int_{K}\varphi\,d\Gamma_{p}^{\bm{k}}\langle \,\cdot\, \rangle \text{ at $u$}
	\end{equation}
	provided $\norm{\varphi}_{\sup} < \infty$, and
	\begin{equation}\label{KSpem.holder}
	\int_{K}\varphi\psi\,d\abs{\Gamma_{p}^{\bm{k}}\langle u; v \rangle} \le \left(\int_{K}\varphi^{\frac{p}{p - 1}}\,d\Gamma_{p}^{\bm{k}}\langle u \rangle\right)^{\frac{p - 1}{p}}\left(\int_{K}\psi^{p}\,d\Gamma_{p}^{\bm{k}}\langle v \rangle\right)^{\frac{1}{p}}.
    \end{equation}
\end{thm}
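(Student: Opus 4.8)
The plan is to reduce everything to the one-variable $p$-energy measures already built in Theorem \ref{thm.KSpem-GC} together with the structural facts about $p$-energy forms satisfying \ref{GC}. The key observation is that for each bounded Borel measurable $\varphi \colon K \to [0,\infty)$ the pair $\bigl(\int_{K}\varphi\,d\Gamma_{p}^{\bm{k}}\langle \,\cdot\, \rangle, \bclosureKS\bigr)$ is a $p$-energy form on $(K,m)$ satisfying \ref{GC} by Theorem \ref{thm.KSpem-GC}, so Propositions \ref{prop.c-diff} and \ref{prop.form-basic} apply to it. In particular, for $A \in \mathcal{B}(K)$ the map $t \mapsto \Gamma_{p}^{\bm{k}}\langle u + tv \rangle(A) = \int_{K}\indicator{A}\,d\Gamma_{p}^{\bm{k}}\langle u + tv \rangle$ is differentiable by Proposition \ref{prop.c-diff}, so \eqref{e:defn-KSpemtwo} is well-defined and $\Gamma_{p}^{\bm{k}}\langle u; v \rangle(A)$ equals the two-variable form $\bigl(\int_{K}\indicator{A}\,d\Gamma_{p}^{\bm{k}}\langle \,\cdot\, \rangle\bigr)(u; v)$ given by \eqref{e:defn-pform-two}. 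Then $\Gamma_{p}^{\bm{k}}\langle u; u \rangle = \Gamma_{p}^{\bm{k}}\langle u \rangle$ is immediate from $\mathcal{E}(u; u) = \mathcal{E}(u)$ in Proposition \ref{prop.form-basic}, while \eqref{e:form-holder} furnishes the single quantitative estimate driving the whole argument,
\begin{equation*}
    \abs{\Gamma_{p}^{\bm{k}}\langle u; v \rangle(A)} \le \Gamma_{p}^{\bm{k}}\langle u \rangle(A)^{(p - 1)/p}\Gamma_{p}^{\bm{k}}\langle v \rangle(A)^{1/p}, \quad A \in \mathcal{B}(K).
\end{equation*}

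To prove that $\Gamma_{p}^{\bm{k}}\langle u; v \rangle$ is a signed measure I would first get finite additivity for free: each $\Gamma_{p}^{\bm{k}}\langle u + tv \rangle$ is finitely additive and differentiation in $t$ is linear, so $\Gamma_{p}^{\bm{k}}\langle u; v \rangle(A_{1} \cup A_{2}) = \Gamma_{p}^{\bm{k}}\langle u; v \rangle(A_{1}) + \Gamma_{p}^{\bm{k}}\langle u; v \rangle(A_{2})$ for disjoint $A_{1},A_{2}$. For countable additivity, given disjoint $\{ A_{k} \}_{k \in \mathbb{N}}$ I would control the tail $B_{N} \coloneqq \bigcup_{k > N}A_{k}$ by the pointwise bound, $\abs{\Gamma_{p}^{\bm{k}}\langle u; v \rangle(B_{N})} \le \Gamma_{p}^{\bm{k}}\langle u \rangle(B_{N})^{(p - 1)/p}\Gamma_{p}^{\bm{k}}\langle v \rangle(B_{N})^{1/p} \to 0$ as $N \to \infty$, since $\Gamma_{p}^{\bm{k}}\langle u \rangle$ and $\Gamma_{p}^{\bm{k}}\langle v \rangle$ are finite measures and $B_{N} \downarrow \emptyset$.

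For the Fréchet derivative claim I would first establish the identity $\int_{K}\varphi\,d\Gamma_{p}^{\bm{k}}\langle u; v \rangle = \bigl(\int_{K}\varphi\,d\Gamma_{p}^{\bm{k}}\langle \,\cdot\, \rangle\bigr)(u; v)$ for every bounded Borel $\varphi \ge 0$. Differentiating in $t$ at $t = 0$ the $\varphi$-linearity of $\varphi \mapsto \int_{K}\varphi\,d\Gamma_{p}^{\bm{k}}\langle u + tv \rangle$ gives the identity for nonnegative simple $\varphi$; I then pass to general $\varphi$ by monotone approximation $\varphi_{j} \uparrow \varphi$, where the left-hand side converges by dominated convergence against the finite total variation $\abs{\Gamma_{p}^{\bm{k}}\langle u; v \rangle}$ and the right-hand side converges because additivity in $\varphi$ and the pointwise bound yield $\bigl\lvert\bigl(\int_{K}(\varphi - \varphi_{j})\,d\Gamma_{p}^{\bm{k}}\langle \,\cdot\, \rangle\bigr)(u; v)\bigr\rvert \le \bigl(\int_{K}(\varphi - \varphi_{j})\,d\Gamma_{p}^{\bm{k}}\langle u \rangle\bigr)^{(p - 1)/p}\bigl(\int_{K}(\varphi - \varphi_{j})\,d\Gamma_{p}^{\bm{k}}\langle v \rangle\bigr)^{1/p} \to 0$. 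With this identity, \eqref{KSpem.diffble} is precisely the Fréchet differentiability \eqref{e:frechet} of $\int_{K}\varphi\,d\Gamma_{p}^{\bm{k}}\langle \,\cdot\, \rangle$ at $u$ provided by Proposition \ref{prop.c-diff}.

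Finally, for the Hölder inequality \eqref{KSpem.holder} I would upgrade the pointwise bound to a total-variation bound: for any finite Borel partition $\{ A_{j} \}_{j}$ of $A$, discrete Hölder with exponents $\frac{p}{p - 1}$ and $p$ applied to $\sum_{j}\Gamma_{p}^{\bm{k}}\langle u \rangle(A_{j})^{(p - 1)/p}\Gamma_{p}^{\bm{k}}\langle v \rangle(A_{j})^{1/p}$ yields $\abs{\Gamma_{p}^{\bm{k}}\langle u; v \rangle}(A) \le \Gamma_{p}^{\bm{k}}\langle u \rangle(A)^{(p - 1)/p}\Gamma_{p}^{\bm{k}}\langle v \rangle(A)^{1/p}$. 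The same discrete Hölder estimate applied to nonnegative simple $\varphi,\psi$ sharing a common partition proves \eqref{KSpem.holder} in the simple case, and the general case follows by monotone approximation and the monotone convergence theorem on all three integrals. I expect no single deep obstacle here; the main care goes into justifying the repeated interchanges of the $t$-derivative with set operations and with the $\varphi$-approximations, each of which is tamed by the uniform estimate \eqref{e:form-holder} for the forms $\int_{K}\varphi\,d\Gamma_{p}^{\bm{k}}\langle \,\cdot\, \rangle$.
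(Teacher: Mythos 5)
Your proposal is correct and takes essentially the same approach as the paper: the paper's proof merely cites \cite[Theorem 4.5, Propositions 4.6 and 4.8]{KS.gc}, which are the general versions, for $p$-energy forms satisfying \ref{GC}, of exactly the arguments you give, and the key input in both cases is that Theorem \ref{thm.KSpem-GC} makes $\bigl(\int_{K}\varphi\,d\Gamma_{p}^{\bm{k}}\langle \,\cdot\, \rangle,\bclosureKS\bigr)$ such a form for every bounded non-negative Borel $\varphi$ (in particular $\varphi=\indicator{A}$). Your reconstruction---set-wise differentiation for finite additivity, countable additivity and the total-variation bound obtained from \eqref{e:form-holder} together with discrete H\"{o}lder, and the Fr\'{e}chet claim obtained from \eqref{e:frechet} after identifying $\int_{K}\varphi\,d\Gamma_{p}^{\bm{k}}\langle u;v\rangle$ with the two-variable form of $\int_{K}\varphi\,d\Gamma_{p}^{\bm{k}}\langle \,\cdot\, \rangle$ via simple-function approximation---is sound and simply fills in the details the paper delegates to the citation.
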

\begin{proof}
	It is proved in \cite[Theorem 4.5]{KS.gc} that $\Gamma_{p}^{\bm{k}}\langle u; v \rangle$ is a signed measure. 
%	The property \eqref{KSpem.diffble} and the estimate \eqref{KSpem.holder} follow  from \cite[Propositions 4.6 and 4.8]{KS.gc} respectively. 
	The statements \eqref{KSpem.diffble} and \eqref{KSpem.holder} follow from \cite[Propositions 4.6 and 4.8]{KS.gc}.
\end{proof}

As an important consequence of the strong locality of $(\KSform,\KS)$ obtained in Theorem \ref{thm.KS-energy}-\ref{KS.slbdd}, the inequality $\KSem\langle u \rangle(K) \leq \KSform(u)$ in Theorems \ref{thm.KSpem-exist} and \ref{thm.KSpem-GC} turns out to be an equality as long as $u \in \cclosureKS$.
Namely, we have the following proposition, which is the counterpart for $(\KSform,\KS)$ of the well-known equality \cite[Lemma 3.2.3]{FOT} for the strongly local part of a regular symmetric Dirichlet form.

\begin{prop}\label{prop.totalmass}
	If $u,v \in \cclosureKS$, then $\KSem\langle u; v \rangle(K) = \KSform(u; v)$. 
\end{prop}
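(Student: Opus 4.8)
The plan is to reduce the polarized identity to the diagonal statement $\KSem\langle w \rangle(K) = \KSform(w)$ for all $w \in \cclosureKS$, and then recover the general case by differentiation. Since Theorem \ref{thm.KSpem-exist} (and Theorem \ref{thm.KSpem-GC}) already gives $\KSem\langle w \rangle(K) \le \KSform(w)$, the content of the diagonal statement is the reverse inequality. I would first establish it for $w \in \KS \cap \contfunc_{c}(K)$. Setting $A \coloneqq \supp_{K}[w]$, which is compact, I would use the local compactness of $(K,d)$ and Urysohn's lemma to choose $\varphi \in \contfunc_{c}(K)$ with $0 \le \varphi \le 1$ and $\varphi \equiv 1$ on $(A)_{d,\varepsilon}$ for some $\varepsilon > 0$. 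Applying \eqref{KSpem.limext} to this $\varphi \in \contfunc_{0}(K)$ expresses $\int_{K}\varphi\,d\KSem\langle w \rangle$ as $\lim_{n}\int_{K}\int_{K}\abs{w(x) - w(y)}^{p}\varphi(x)\,k_{r_{n}}(x,y)\,m(dy)m(dx)$, and I would compare this with $J_{p,r_{n}}^{\bm{k}}(w)$.

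The key observation is that the difference of these two integrands is $\abs{w(x) - w(y)}^{p}(\varphi(x) - 1)$, whose support is constrained: $\varphi(x) - 1 \neq 0$ forces $x \notin (A)_{d,\varepsilon}$, hence $w(x) = 0$, and then $\abs{w(x) - w(y)}^{p} \neq 0$ forces $y \in A$, so that $d(x,y) \ge \dist_{d}(x,A) \ge \varepsilon$. Using $\norm{w}_{\sup} < \infty$, the absolute value of this difference integral is therefore at most $(2\norm{w}_{\sup})^{p}\int_{K}\int_{K \setminus B_{d}(x,\varepsilon)}k_{r_{n}}(x,y)\,m(dy)m(dx)$, which tends to $0$ as $n \to \infty$ by the asymptotic locality \eqref{e:asylocal} (since $\delta(r_{n}) < \varepsilon$ for all large $n$). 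Hence $\int_{K}\varphi\,d\KSem\langle w \rangle = \lim_{n}J_{p,r_{n}}^{\bm{k}}(w) = \KSform(w)$ by \eqref{e:defn-KSform}, and since $0 \le \varphi \le \indicator{K}$ this forces $\KSform(w) \le \KSem\langle w \rangle(K)$, giving equality. I would then extend to arbitrary $w \in \cclosureKS$ by approximation: choosing $\{ w_{n} \} \subseteq \KS \cap \contfunc_{c}(K)$ with $w_{n} \to w$ in $\KS$, the seminorm continuity of $\KSform^{1/p}$ yields $\KSform(w_{n}) \to \KSform(w)$, while \eqref{KSpem.extension} applied with $\varphi = \indicator{K}$ gives $\KSem\langle w_{n} \rangle(K) \to \KSem\langle w \rangle(K)$; combining these with the identity already proved on $\KS \cap \contfunc_{c}(K)$ yields $\KSem\langle w \rangle(K) = \KSform(w)$ for all $w \in \cclosureKS$.

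Finally, for $u,v \in \cclosureKS$, the fact that $\cclosureKS$ is a linear subspace gives $u + tv \in \cclosureKS$ for every $t \in \mathbb{R}$, so the diagonal identity yields $\KSem\langle u + tv \rangle(K) = \KSform(u + tv)$ as an identity of functions of $t$. Both sides are differentiable in $t$ (the right-hand side by Proposition \ref{prop.c-diff}, the left-hand side by Theorem \ref{thm.KSpem-diffble}), so differentiating at $t = 0$ and dividing by $p$ gives $\KSem\langle u; v \rangle(K) = \KSform(u; v)$ by the definitions \eqref{e:defn-KSpemtwo} and \eqref{e:defn-pform-two}.

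The main obstacle is the first step: because $\indicator{K} \notin \contfunc_{0}(K)$ when $K$ is non-compact, one cannot feed $\varphi = \indicator{K}$ directly into \eqref{KSpem.limext}, and instead must show that a compactly supported cutoff $\varphi$ equal to $1$ on a neighborhood of $\supp_{K}[w]$ captures the full mass in the limit. The delicate point is precisely the support analysis of the integrand $\abs{w(x) - w(y)}^{p}(\varphi(x) - 1)$ together with the boundedness of $w$, which together with asymptotic locality confine the error to the off-diagonal region and send it to zero.
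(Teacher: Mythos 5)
Your proof is correct, and while it follows the same overall skeleton as the paper's (diagonal identity for compactly supported functions first, then polarization by differentiating in $t \mapsto u + tv$), the execution of both steps is genuinely different. For the diagonal case, the paper chooses a cutoff $\varphi \in \KS \cap \contfunc_{c}(K)$ with $\varphi \equiv 1$ on a neighborhood of $\supp_{K}[u]$ (using the density of $\KS \cap \contfunc_{c}(K)$ in $\contfunc_{c}(K)$ and Proposition \ref{prop.GClist}-\ref{GC.lip}), observes $\supp_{K}[u] \cap \supp_{m}[\varphi - \indicator{K}] = \emptyset$, and invokes the strong locality of the form (Theorem \ref{thm.KS-energy}-\ref{KS.slbdd}) to get $\KSform(u; u\varphi) = \KSform(u)$ and $\KSform\bigl(\abs{u}^{\frac{p}{p-1}}; \varphi\bigr) = 0$, so that the defining formula \eqref{KSpem.characterize} yields $\int_{K}\varphi\,d\KSem\langle u \rangle = \KSform(u)$; you instead take a cutoff that is merely in $\contfunc_{c}(K)$, feed it into the kernel-limit representation \eqref{KSpem.limext}, and kill the error term $\abs{w(x)-w(y)}^{p}(\varphi(x)-1)$ by hand through the support analysis and asymptotic locality \eqref{e:asylocal} --- in effect re-proving the needed instance of strong locality at the level of the kernels rather than citing it. For the polarization, the paper first reduces the two-variable identity to $u,v \in \KS \cap \contfunc_{c}(K)$ via the H\"{o}lder-type continuity estimates \eqref{e:form-holder} and \eqref{eq:form-nonlinear-Hoelder-GC} and then passes to the limit of difference quotients using \eqref{KS-compatible}; you extend the diagonal identity to all of $\cclosureKS$ by approximation (triangle inequality for $\KSform(\,\cdot\,)^{1/p}$ plus \eqref{KSpem.extension} with $\varphi = \indicator{K}$) and then differentiate once on $\cclosureKS$, which is legitimate since the two functions of $t$ coincide identically and both derivatives exist by Theorem \ref{thm.KSpem-diffble} and Proposition \ref{prop.c-diff}. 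What your route buys is self-containedness: the cutoff need not lie in $\KS$, and the nonlinear H\"{o}lder estimate \eqref{eq:form-nonlinear-Hoelder-GC} is never needed. What the paper's route buys is brevity and conceptual transparency: it exhibits the total-mass identity as a formal consequence of the already-established strong locality, exactly parallel to the Dirichlet-form case $p = 2$.
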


\begin{proof}
	Since $(\Gamma_{p}^{\bm{k}}\langle \,\cdot\, \rangle(K),\bclosureKS)$ and $(\KSform,\KS)$ satisfy \ref{GC} by Theorems \ref{thm.KSpem-GC} and \ref{thm.KS-energy}-\ref{KS.GC}, thanks to the linearity of $\mathcal{E}(u; \,\cdot\,)$, \eqref{e:form-holder} and \eqref{eq:form-nonlinear-Hoelder-GC} from Proposition \ref{prop.form-basic} for $(\mathcal{E},\mathcal{F}) = (\Gamma_{p}^{\bm{k}}\langle \,\cdot\, \rangle(K),\bclosureKS),(\KSform,\KS)$ it suffices to consider the case $u,v \in \KS \cap \contfunc_{c}(K)$. 
	We first show that $\KSem\langle u \rangle(K) = \KSform(u)$ for any $u \in \KS \cap \contfunc_{c}(K)$. 
	Since $K$ is locally compact and we assume that $\KS \cap \contfunc_{c}(K)$ is dense in $(\contfunc_{c}(K),\norm{\,\cdot\,}_{\sup})$, by using Proposition \ref{prop.GClist}-\ref{GC.lip}, we can find an open neighborhood $U$ of the compact subset $\supp_{K}[u]$ of $K$ and $\varphi \in \KS \cap \contfunc_{c}(K)$ so that $0 \le \varphi \le 1$ and $\varphi(x) = 1$ for any $x \in U$. 
	Then $\supp_{K}[u] \cap \supp_{K}[\varphi - \indicator{K}] = \emptyset$.  
	By Theorem \ref{thm.KS-energy}-\ref{KS.slbdd}, we have $\KSform(u; u\varphi - u) = 0$ and $\KSform\bigl(\abs{u}^{\frac{p}{p - 1}}; \varphi\bigr) = 0$. 
	In particular, by \eqref{KSpem.characterize}, 
	\[
	\KSem\langle u \rangle(K) \ge \int_{K}\varphi\,\KSem\langle u \rangle = \KSform(u), 
	\]
	whence we have $\KSem\langle u \rangle(K) = \KSform(u)$. 
	
	Next let $u,v \in \KS \cap \contfunc_{c}(K)$.
	The argument in the previous paragraph implies that for any $t \in (0,1)$, 
	\[
	\frac{\KSem\langle u + tv \rangle(K) - \KSem\langle u \rangle(K)}{t}
	= \frac{\KSform(u + tv) - \KSform(u)}{t}. 
	\] 
	By letting $t \downarrow 0$ in this equality, we have $\KSem\langle u; v \rangle(K) = \KSform(u;v)$ by \eqref{e:defn-KSpemtwo} and \eqref{KS-compatible}. 
\end{proof}

We also have the following expression of $\int_{K}\varphi\,d\Gamma_{p}^{\bm{k}}\langle u; v \rangle$ if $\varphi \in \contfunc_{c}(K)$. 
In particular, we can deduce the analogues of Theorem \ref{thm.KS-energy}-\ref{it:difffunc},\ref{KS.conti} for $(\int_{K}\varphi\,d\Gamma_{p}^{\bm{k}}\langle \,\cdot\, \rangle, \bclosureKS)$.%
\begin{thm}\label{thm.KSpem-two}
	For any $u,v \in \bclosureKS$ and any $\varphi \in \contfunc_{c}(K)$, 
    \begin{align}
        &\int_{K}\varphi\,d\Gamma_{p}^{\bm{k}}\langle u; v \rangle \nonumber \\
        &= \lim_{n \to \infty}\int_{K}\int_{K}\gamma_{p}\bigl(u(x) - u(v)\bigr)(v(x) - v(y))\varphi(x)k_{r_{n}}(x,y)\,m(dy)m(dx) \label{KSpem.twoexp} \\
        &= \lim_{n \to \infty}\int_{K}\int_{K}\gamma_{p}\bigl(u(x) - u(v)\bigr)(v(x) - v(y))\varphi(y)k_{r_{n}}(x,y)\,m(dy)m(dx) \label{KSpem.twoexp.sym}.
    \end{align}
    In particular, the following hold: 
    \begin{enumerate}[label=\textup{(\alph*)},align=left,leftmargin=*,topsep=2pt,parsep=0pt,itemsep=2pt]
    	\item\label{it:KSpem.difffunc} Let $n_{1},n_{2} \in \mathbb{N}$, $q_{1} \in [1,p]$, $q_{2} \in [p,\infty]$, $\bm{u} = (u_{1},\dots,u_{n_{1}}) \in \bigl(\bclosureKS\bigr)^{n_{1}}$, $\bm{v} = (v_{1},\dots,v_{n_{2}}) \in L^{0 }(K,m)^{n_{2}}$, and let $\psi \colon K \to [0,\infty]$ be Borel measurable. If there exist $m$-versions of $\bm{u}$ and $\bm{v}$ such that $\norm{\bm{v}(x)}_{\ell^{q_{2}}} \le \norm{\bm{u}(x)}_{\ell^{q_{1}}}$ and $\norm{\bm{v}(x) - \bm{v}(y)}_{\ell^{q_{2}}} \le \norm{\bm{u}(x) - \bm{u}(y)}_{\ell^{q_{1}}}$ for any $(x,y) \in K \times K$, then $\bm{v} \in \bigl(\bclosureKS\bigr)^{n_{2}}$ and 
        \begin{equation}\label{KSpem.difffunc2}
        	\norm{\left(\left(\int_{K}\psi\,d\Gamma_{p}^{\bm{k}}\langle v_{l}\rangle\right)^{1/p}\right)_{l = 1}^{n_{2}}}_{\ell^{q_{2}}} 
        	\le \norm{\left(\left(\int_{K}\psi\,d\Gamma_{p}^{\bm{k}}\langle u_{k}\rangle\right)^{1/p}\right)_{k = 1}^{n_{1}}}_{\ell^{q_{1}}}.
        \end{equation}
    	\item\label{it:KSpem.conti} For any $u_{1},u_{2},v \in \bclosureKS$ and any Borel measurable function $\psi \colon K \to [0,\infty)$ with $\norm{\psi}_{\sup} < \infty$,
        \begin{align}\label{KSpem.conti}
    		&\mspace{-30mu}\abs{\int_{K}\psi\,d\Gamma_{p}^{\bm{k}}\langle u_{1}; v \rangle - \int_{K}\psi\,d\Gamma_{p}^{\bm{k}}\langle u_{2}; v \rangle} \nonumber \\
    		&\mspace{-30mu}\le C_{p}\biggl[\max_{i \in \{ 1,2 \}}\int_{K}\psi\,d\Gamma_{p}^{\bm{k}}\langle u_{i} \rangle\biggr]^{\frac{(p - 2)^{+}}{p}}\left(\int_{K}\psi\,d\Gamma_{p}^{\bm{k}}\langle u_{1} - u_{2} \rangle\right)^{\frac{(p - 1) \wedge 1}{p}}\left(\int_{K}\psi\,d\Gamma_{p}^{\bm{k}}\langle v \rangle\right)^{\frac{1}{p}}, 
    	\end{align}
    	where $C_{p}$ is the constant in Theorem \ref{thm.KS-energy}.
	\end{enumerate}
\end{thm}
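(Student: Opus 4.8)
The plan is to deduce everything from a weighted version of the Besov functional. For $\varphi \in \contfunc_c(K)$ and $n\in\mathbb{N}$, set $\Phi_{n,\varphi}(f) \coloneqq \int_K\int_K \abs{f(x)-f(y)}^p\varphi(x)\,m_n(dxdy)$ for $f \in B_{p,\infty}^{\bm{k}}$. When $\varphi \ge 0$, the measure $\varphi(x)\,m_n(dxdy)$ is a nonnegative Borel measure on $K\times K$, so the verbatim computation of \eqref{KSpre.GC} shows that $(\Phi_{n,\varphi}, B_{p,\infty}^{\bm{k}})$ is a $p$-energy form satisfying \ref{GC}, with two-variable form $\Phi_{n,\varphi}(f;g) = \int_K\int_K \gamma_p(f(x)-f(y))(g(x)-g(y))\varphi(x)\,m_n(dxdy)$ by the argument for \eqref{e:Besov-compatible}. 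Moreover $\Phi_{n,\varphi}(f) \le \norm{\varphi}_{\sup}J_{p,r_n}^{\bm{k}}(f)$, so $\sup_n\Phi_{n,\varphi}(f) \le \norm{\varphi}_{\sup}C\mathcal{E}_p^{\bm{k}}(f)$ by \eqref{KSene-comp}.

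First I would prove \eqref{KSpem.twoexp} for $u,v\in B_{p,\infty}^{\bm{k}}\cap\contfunc_b(K)$ and $\varphi\in\contfunc_c(K)$ with $\varphi\ge0$. For such $u,v$ we have $u+tv \in B_{p,\infty}^{\bm{k}}\cap\contfunc_b(K)$, so \eqref{KSpem.limext} gives $\int_K\varphi\,d\Gamma_p^{\bm{k}}\langle u+tv\rangle = \lim_n\Phi_{n,\varphi}(u+tv)$. Applying \eqref{c-diff} to the $p$-energy form $\Phi_{n,\varphi}$ exactly as in the proof of \eqref{KS-compatible} yields a bound on $\abs{t^{-1}(\Phi_{n,\varphi}(u+tv)-\Phi_{n,\varphi}(u)) - p\Phi_{n,\varphi}(u;v)}$ that is uniform in $n$ and vanishes as $t\downarrow0$; this lets me interchange $\frac{d}{dt}\big|_{t=0}$ with $\lim_n$ and conclude $\frac1p\frac{d}{dt}\big|_{t=0}\int_K\varphi\,d\Gamma_p^{\bm{k}}\langle u+tv\rangle = \lim_n\Phi_{n,\varphi}(u;v)$. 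The left-hand side equals $\int_K\varphi\,d\Gamma_p^{\bm{k}}\langle u;v\rangle$ by \eqref{KSpem.diffble} in Theorem \ref{thm.KSpem-diffble}, which gives \eqref{KSpem.twoexp} in this case. To pass to general $u,v\in\bclosureKS$ I approximate by $u_m,v_m\in B_{p,\infty}^{\bm{k}}\cap\contfunc_b(K)$ with $\mathcal{E}_p^{\bm{k}}(u-u_m)+\mathcal{E}_p^{\bm{k}}(v-v_m)\to0$; using \eqref{e:form-holder} and \eqref{eq:form-nonlinear-Hoelder-GC} for the forms $(\int_K\varphi\,d\Gamma_p^{\bm{k}}\langle\,\cdot\,\rangle,\bclosureKS)$ (Theorem \ref{thm.KSpem-GC}) and, uniformly in $n$, for $\Phi_{n,\varphi}$, both sides of \eqref{KSpem.twoexp} for $(u_m,v_m)$ converge to the corresponding quantities for $(u,v)$, the convergence of the right-hand side being uniform in $n$; a standard double-limit argument then transfers the identity. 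Finally \eqref{KSpem.twoexp} for signed $\varphi\in\contfunc_c(K)$ follows by writing $\varphi=\varphi^+-\varphi^-$ and using linearity of both sides in $\varphi$ (the left-hand side because $\Gamma_p^{\bm{k}}\langle u;v\rangle$ is a signed measure by Theorem \ref{thm.KSpem-diffble}), and \eqref{KSpem.twoexp.sym} follows by the same argument with the weight $\varphi(y)$ in place of $\varphi(x)$, using the second equality in \eqref{emfunc.expression}.

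For part \ref{it:KSpem.conti}, I would first prove \eqref{KSpem.conti} for $\psi=\varphi\in\contfunc_c(K)$ with $\varphi\ge0$. Starting from \eqref{KSpem.twoexp}, the difference $\int_K\varphi\,d\Gamma_p^{\bm{k}}\langle u_1;v\rangle-\int_K\varphi\,d\Gamma_p^{\bm{k}}\langle u_2;v\rangle$ is the limit of the integrals of $[\gamma_p(u_1(x)-u_1(y))-\gamma_p(u_2(x)-u_2(y))](v(x)-v(y))\varphi(x)$ against $m_n$; applying Lemma \ref{lem.p-1} and H\"older's inequality with respect to $\varphi(x)\,m_n(dxdy)$—exactly as \eqref{KS-conti1} is deduced from \eqref{e:Besov-compatible}, Lemma \ref{lem.p-1} and \eqref{KS-compatible}—gives \eqref{KSpem.conti} with $\int_K\varphi\,d\Gamma_p^{\bm{k}}\langle\,\cdot\,\rangle$ in the roles of $\mathcal{E}_p^{\bm{k}}(\,\cdot\,)$. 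The extension to an arbitrary bounded Borel $\psi\ge0$ is then obtained by monotone-class arguments: one first passes to indicators of open and then of arbitrary Borel sets using the outer/inner regularity of the finite Radon measures $\Gamma_p^{\bm{k}}\langle u_i\rangle,\Gamma_p^{\bm{k}}\langle u_1-u_2\rangle,\Gamma_p^{\bm{k}}\langle v\rangle$ and of the total variations $\abs{\Gamma_p^{\bm{k}}\langle u_i;v\rangle}$, then to simple functions by the (reverse) Minkowski inequalities used in Theorem \ref{thm.KSpem-GC}, and finally to bounded Borel $\psi$ by monotone convergence.

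For part \ref{it:KSpem.difffunc}, the inequality \eqref{KSpem.difffunc2} and the membership $\bm{v}\in(\bclosureKS)^{n_2}$ are treated separately. Granting membership, the reverse-Minkowski/Minkowski chain of \eqref{KSpre.GC}, applied to $\Phi_{n,\psi}$ (for $\psi\in\contfunc_c(K)$, $\psi\ge0$) under the hypothesis $\norm{\bm{v}(x)-\bm{v}(y)}_{\ell^{q_2}}\le\norm{\bm{u}(x)-\bm{u}(y)}_{\ell^{q_1}}$, gives $\sum_l\Phi_{n,\psi}(v_l)^{q_2/p}\le(\sum_k\Phi_{n,\psi}(u_k)^{q_1/p})^{q_2/q_1}$; letting $n\to\infty$ and using the Borel extension of \eqref{KSpem.limext} yields \eqref{KSpem.difffunc2} for continuous $\psi$, which extends to bounded Borel $\psi$ as in the previous paragraph and then to $[0,\infty]$-valued $\psi$ by monotone convergence. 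The main obstacle is the membership $\bm{v}\in(\bclosureKS)^{n_2}$: the difference condition together with the \eqref{KSpre.GC}-estimate only gives $v_l\in B_{p,\infty}^{\bm{k}}$, and one must upgrade this to the smaller space $\bclosureKS$. The key observation is that when $\bm{u}\in(B_{p,\infty}^{\bm{k}}\cap\contfunc_b(K))^{n_1}$, the difference condition forces $\bm{v}$ to be continuous (as $y\to x$ the right-hand side tends to $0$) and the value condition $\norm{\bm{v}(x)}_{\ell^{q_2}}\le\norm{\bm{u}(x)}_{\ell^{q_1}}$ forces $\bm{v}$ to be bounded, so $v_l\in B_{p,\infty}^{\bm{k}}\cap\contfunc_b(K)\subseteq\bclosureKS$; this settles the base case. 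For general $\bm{u}\in(\bclosureKS)^{n_1}$ I would approximate $\bm{u}$ by bounded continuous $\bm{u}_m\to\bm{u}$ in $B_{p,\infty}^{\bm{k}}$, exhibit a corresponding bounded continuous $\bm{v}_m$ satisfying the two pointwise constraints relative to $\bm{u}_m$ and converging to $\bm{v}$ in $L^p(K,m)$ with $\sup_m\mathcal{E}_p^{\bm{k}}(v_{l,m})<\infty$, and then conclude $v_l\in\bclosureKS$ from the reflexivity of $B_{p,\infty}^{\bm{k}}$ (Theorem \ref{thm.banach}) together with Mazur's lemma, since $\bclosureKS$ is a closed, hence weakly closed, convex subspace—exactly the device already used in the proof of Theorem \ref{thm.KSpem-GC}. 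Producing this clamped sequence $\bm{v}_m$ (continuous, constraint-compatible with $\bm{u}_m$, and $L^p$-convergent to $\bm{v}$) is the delicate point I expect to require the most care.
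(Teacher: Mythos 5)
Your treatment of the representation formulas \eqref{KSpem.twoexp} and \eqref{KSpem.twoexp.sym} is correct and essentially coincides with the paper's proof (the paper settles the diagonal case $u=v$ on all of $\bclosureKS$ by approximation and then runs the convexity sandwich from \eqref{c-diff}; you run the sandwich on $B_{p,\infty}^{\bm{k}} \cap \contfunc_{b}(K)$ first via \eqref{KSpem.limext} and \eqref{KSpem.diffble} and then approximate — same ingredients, same uniformity in $n$). The genuine gap is in part \ref{it:KSpem.difffunc}, and you have flagged it yourself: for general $\bm{u} \in \bigl(\bclosureKS\bigr)^{n_{1}}$ your plan requires a bounded continuous ``clamped'' sequence $\bm{v}_{m}$ satisfying the two pointwise constraints relative to $\bm{u}_{m}$, and you give no construction of it; there is none available from the hypotheses alone. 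The missing idea is a Lipschitz factorization: the two constraints say exactly that the map $\bm{u}(x) \mapsto v_{l}(x)$ is well defined on $\bm{u}(K) \cup \{0\} \subseteq \mathbb{R}^{n_{1}}$ (if $\bm{u}(x) = \bm{u}(y)$ then $\bm{v}(x) = \bm{v}(y)$), is $1$-Lipschitz there for $\norm{\,\cdot\,}_{\ell^{q_{1}}}$, and sends $0$ to $0$; since $q_{1} \ge 1$ (this is precisely why part \ref{it:KSpem.difffunc} assumes $q_{1} \in [1,p]$ rather than $q_{1} \in (0,p]$), a McShane extension produces $T_{l} \colon (\mathbb{R}^{n_{1}},\norm{\,\cdot\,}_{\ell^{q_{1}}}) \to \mathbb{R}$ with $T_{l}(0) = 0$, $1$-Lipschitz, and $v_{l} = T_{l}(\bm{u})$ $m$-a.e. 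Then \ref{GC} for $(\int_{K}\varphi\,d\Gamma_{p}^{\bm{k}}\langle \,\cdot\, \rangle, \bclosureKS)$, already established in Theorem \ref{thm.KSpem-GC} — whose proof contains exactly the reflexivity-plus-Mazur approximation you were attempting to redo — yields $v_{l} = T_{l}(\bm{u}) \in \bclosureKS$ in one stroke; and if you prefer your scheme, $\bm{v}_{m} \coloneqq (T_{l}(\bm{u}_{m}))_{l}$ is the clamped sequence you were seeking. Without $T_{l}$, part \ref{it:KSpem.difffunc} remains unproved.

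There is also a quantitative flaw in your extension argument for part \ref{it:KSpem.conti}: the passage from indicators to simple functions fails to preserve the constant, because the right-hand side of \eqref{KSpem.conti} is not additive in $\psi$ due to the term $\max_{i \in \{1,2\}}\int_{K}\psi\,d\Gamma_{p}^{\bm{k}}\langle u_{i} \rangle$. For disjoint $A_{j}$ and $\psi = \sum_{j}a_{j}\indicator{A_{j}}$, the triangle inequality on the left combined with the generalized H\"{o}lder inequality (the three exponents sum to $1$) only bounds the left-hand side by $C_{p}\bigl(\sum_{j}a_{j}\max_{i}\Gamma_{p}^{\bm{k}}\langle u_{i}\rangle(A_{j})\bigr)^{(p-2)^{+}/p}(\cdots)$, and $\sum_{j}a_{j}\max_{i}\Gamma_{p}^{\bm{k}}\langle u_{i}\rangle(A_{j})$ can exceed $\max_{i}\int_{K}\psi\,d\Gamma_{p}^{\bm{k}}\langle u_{i}\rangle$ (it is only bounded by twice it), so for $p > 2$ you obtain the estimate with constant $2^{(p-2)/p}C_{p}$ rather than $C_{p}$. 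The clean fix — and the paper's argument — is to skip simple functions entirely: all six measures $\Gamma_{p}^{\bm{k}}\langle u_{i}\rangle$, $\Gamma_{p}^{\bm{k}}\langle u_{1}-u_{2}\rangle$, $\Gamma_{p}^{\bm{k}}\langle v\rangle$, $\abs{\Gamma_{p}^{\bm{k}}\langle u_{i};v\rangle}$ are finite and dominated by their sum $\mu$, $\contfunc_{c}(K)$ is dense in $L^{1}(K,\mu)$, and every integral appearing in \eqref{KSpem.conti} is continuous along $L^{1}(\mu)$-convergent sequences, so the inequality passes directly from continuous $\psi$ to bounded Borel $\psi$ with the constant intact.
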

\begin{proof}
	Throughout this proof, we fix $\varphi \in \contfunc_{c}(K)$. 
    We first show \eqref{KSpem.twoexp} in the case $u = v$. 
    Define 
    \[
    \mathcal{I}_{\varphi}^{n}\langle f \rangle \coloneqq \int_{K \times K}\abs{f(x) - f(y)}^{p}\varphi(x)\,m_{n}(dxdy) \quad \text{for $n \in \mathbb{N}$ and $f \in \bclosureKS$.}
    \]
    Fix $\{ u_{k} \}_{k \in \mathbb{N}} \subseteq B_{p,\infty}^{\bm{k}} \cap \contfunc_{b}(K)$ satisfying $\lim_{k \to \infty}\norm{u - u_{k}}_{B_{p,\infty}^{\bm{k}}} = 0$. 
    We easily have $\abs{\mathcal{I}_{\varphi}^{n}\langle u \rangle^{1/p} - \mathcal{I}_{\varphi}^{n}\langle u_{k} \rangle^{1/p}} \le \mathcal{I}_{\varphi}^{n}\langle u - u_{k} \rangle^{1/p} \le C^{1/p}\norm{\varphi}_{\sup}\mathcal{E}_{p}^{\bm{k}}(u - u_{k})^{1/p}$, where $C \in (0,\infty)$ is the constant in \eqref{KSene-comp}. 
    By \eqref{KSpem.extension} and Proposition \ref{prop.GClist}-\ref{GC.tri} for $(\int_{K}\varphi\,d\Gamma_{p}^{\bm{k}}\langle \,\cdot\, \rangle,\bclosureKS)$, we see that for any $n,k \in \mathbb{N}$,  
    \begin{align*}
    	&\abs{\left(\int_{K}\varphi\,d\Gamma_{p}^{\bm{k}}\langle u \rangle\right)^{1/p} - \mathcal{I}_{\varphi}^{n}\langle u \rangle^{1/p}} \\
    	&\le \abs{\left(\int_{K}\varphi\,d\Gamma_{p}^{\bm{k}}\langle u \rangle\right)^{1/p} - \left(\int_{K}\varphi\,d\Gamma_{p}^{\bm{k}}\langle u_{k} \rangle\right)^{1/p}} + \abs{\left(\int_{K}\varphi\,d\Gamma_{p}^{\bm{k}}\langle u_{k} \rangle\right)^{1/p} - \mathcal{I}_{\varphi}^{n}\langle u_{k} \rangle^{1/p}} \\
    	&\quad+ \abs{\mathcal{I}_{\varphi}^{n}\langle u \rangle^{1/p} - \mathcal{I}_{\varphi}^{n}\langle u_{k} \rangle^{1/p}} \\
    	&\le (1 + C^{1/p})\norm{\varphi}_{\sup}^{1/p}\mathcal{E}_{p}^{\bm{k}}(u - u_{k})^{1/p} + \abs{\left(\int_{K}\varphi\,d\Gamma_{p}^{\bm{k}}\langle u_{k} \rangle\right)^{1/p} - \mathcal{I}_{\varphi}^{n}\langle u_{k} \rangle^{1/p}}. 
    \end{align*} 
    Since $\lim_{n \to \infty}\abs{\bigl(\int_{K}\varphi\,d\Gamma_{p}^{\bm{k}}\langle u_{k} \rangle\bigr)^{1/p} - \mathcal{I}_{\varphi}^{n}\langle u_{k} \rangle^{1/p}} = 0$ by \eqref{emfunc.expression} and $k \in \mathbb{N}$ is arbitrary, we conclude that $\lim_{n \to \infty}\mathcal{I}_{\varphi}^{n}\langle u \rangle = \int_{K}\varphi\,d\Gamma_{p}^{\bm{k}}\langle u \rangle$. 
    
    Next we consider the general case $u \neq v$. 
    By Proposition \ref{prop.c-diff} and the convexity of $t \mapsto \mathcal{I}_{\varphi}^{n}\langle u + tv \rangle$, for any $t \in (0,1)$ and any $n \in \mathbb{N}$, 
    \begin{equation}\label{KSpem.two.c-diff}
        \frac{\mathcal{I}_{\varphi}^{n}\langle u + tv \rangle - \mathcal{I}_{\varphi}^{n}\langle u \rangle}{t} - \norm{\varphi}_{\sup}O_{t}(u; v) 
		\le \frac{d}{ds}\mathcal{I}_{\varphi}^{n}\langle u + sv \rangle\bigg\vert_{s = 0} 
        \le \frac{\mathcal{I}_{\varphi}^{n}\langle u + tv \rangle - \mathcal{I}_{\varphi}^{n}\langle u \rangle}{t}, 
    \end{equation}
    where $O_{t}(u; v) = C_{p,u,v}t^{(p - 1) \wedge \frac{1}{p - 1}}$ for some constant $C_{p,u,v} \in (0,\infty)$ which depends only on $p$, $\mathcal{E}_{p}^{\bm{k}}(u)$ and $\mathcal{E}_{p}^{\bm{k}}(v)$.
    Now we obtain \eqref{KSpem.twoexp} by noting that 
    \[
    \frac{d}{ds}\mathcal{I}_{\varphi}^{n}\langle u + sv \rangle\bigg\vert_{s = 0} = \int_{K \times K}\gamma_{p}\bigl(u(x) - u(v)\bigr)(v(x) - v(y))\varphi(x)\,m_{n}(dxdy)
    \]
    and letting first $n\to\infty$ in \eqref{KSpem.two.c-diff} and then $t\downarrow 0$ by using \eqref{KSpem.diffble}. 
    The equality \eqref{KSpem.twoexp.sym} can be shown similarly by considering 
    \[
    \widehat{\mathcal{I}}_{\varphi}^{n}\langle f \rangle \coloneqq \int_{K \times K}\abs{f(x) - f(y)}^{p}\varphi(y)\,m_{n}(dxdy) 
    \]
    instead of $\mathcal{I}_{\varphi}^{n}\langle f \rangle$ in the above arguments.  
    
    Lastly, let us show \ref{it:KSpem.difffunc} and \ref{it:KSpem.conti}. 
    
    \ref{it:KSpem.difffunc}: 
    By Theorem \ref{thm.KSpem-GC}, $(\KSform,\bclosureKS)$ is a $p$-energy form on $(K,m)$ satisfying \ref{GC}. 
    For each $l \in \{ 1,\dots,n_{2} \}$, by the argument in \cite[Proof of Corollary 2.4-(c)]{KS.gc}, we can find a $1$-Lipschitz map $T_{l} \colon (\mathbb{R}^{n_{1}},\norm{\,\cdot\,}_{\ell^{q_{1}}}) \to \mathbb{R}$ satisfying $T_{l}(0) = 0$ and $T_{l}(\bm{u}(x)) = v_{l}(x)$ for any $x \in K$. 
    By applying \ref{GC}, we have $v_{l} \in \bclosureKS$ and hence $\bm{v} \in \bigl(\bclosureKS\bigr)^{n_{2}}$. 
    Then the inequality \eqref{KSpem.difffunc2} in the case $\psi \in \contfunc_{c}(K)$ is immediate from \eqref{KSpem.twoexp}, and we can further extend \eqref{KSpem.difffunc2} to general $\psi$ in exactly the same way as the second paragraph of the proof of Theorem \ref{thm.KSpem-GC}.
    
    \ref{it:KSpem.conti}: 
    The estimate \eqref{KSpem.conti} in the case $\psi \in \contfunc_{c}(K)$ is immediate from \eqref{KSpem.twoexp}. 
    We can easily extend it to the desired case since $\contfunc_{c}(K)$ is dense in $L^{1}(K,\mu)$ for the finite Borel measure $\mu$ on $K$ given by
    \[
    \mu \coloneqq \abs{\KSem\langle u_{1}; v \rangle} + \abs{\KSem\langle u_{2}; v \rangle} + \KSem\langle u_{1} \rangle + \KSem\langle u_{2} \rangle + \KSem\langle u_{1} - u_{2} \rangle + \KSem\langle v \rangle.
    \qedhere\]
\end{proof}

The next theorem states the chain rule for our $p$-energy measures.
\begin{thm}[Chain rule]\label{thm.KSpem-chain}
    Let $n \in \mathbb{N}$, $u \in B_{p,\infty}^{\bm{k}} \cap \contfunc_{b}(K)$, $\bm{v} = (v_{1},\dots,v_{n}) \in \bigl(B_{p,\infty}^{\bm{k}} \cap \contfunc_{b}(K)\bigr)^{n}$, $\Phi \in C^{1}(\mathbb{R})$, $\Psi \in C^{1}(\mathbb{R}^{n})$ and suppose that $\Phi(0) = \Psi(0) = 0$.
    Then $\Phi(u),\Psi(\bm{v}) \in \KS \cap \contfunc_{b}(K)$ and 
    \begin{equation}\label{KSpem-chain}
        d\Gamma_{p}^{\bm{k}}\langle \Phi(u); \Psi(\bm{v}) \rangle
        = \sum_{k = 1}^{n}\gamma_{p}\bigl(\Phi'(u)\bigr)\partial_{k}\Psi(\bm{v})\,d\Gamma_{p}^{\bm{k}}\langle u; v_{k} \rangle.
    \end{equation}
\end{thm}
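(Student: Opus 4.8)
The plan is to prove the identity \eqref{KSpem-chain} of signed measures by testing it against an arbitrary $\varphi \in \contfunc_{c}(K)$ and identifying both sides with limits of Besov-type bilinear expressions via Theorem \ref{thm.KSpem-two}. Before that, for the membership claim, note that $\Phi(u)$ and $\Psi(\bm{v})$ are clearly in $\contfunc_{b}(K)$ since $u,v_{1},\dots,v_{n}$ are bounded and continuous and $\Phi,\Psi$ are continuous. For the Besov membership, $\Phi(u) \in \KS$ follows from Proposition \ref{prop.GClist}-\ref{GC.lip} applied to $(\KSform,\KS)$, which satisfies \ref{GC} by Theorem \ref{thm.KS-energy}-\ref{KS.GC}. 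For $\Psi(\bm{v})$, since $\bm{v}$ takes values in the compact box $\prod_{k=1}^{n}[-\norm{v_{k}}_{\sup},\norm{v_{k}}_{\sup}]$ on which $\Psi$ is Lipschitz with some constant $L$, we have the pointwise bound $\abs{\Psi(\bm{v}(x)) - \Psi(\bm{v}(y))} \le L\sum_{k=1}^{n}\abs{v_{k}(x) - v_{k}(y)}$, and invoking the function-wise generalized $p$-contraction property (Theorem \ref{thm.KS-energy}-\ref{it:difffunc}) with $q_{1} = 1$, $q_{2} = p$ and $\bm{u} = (Lv_{1},\dots,Lv_{n})$ gives $\Psi(\bm{v}) \in \KS$. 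In particular both functions lie in $\bclosureKS$, so the terms in \eqref{KSpem-chain} are well defined.

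Both $\KSem\langle \Phi(u); \Psi(\bm{v}) \rangle$ (by Theorem \ref{thm.KSpem-diffble}) and the candidate right-hand side $\nu \coloneqq \sum_{k=1}^{n}\gamma_{p}(\Phi'(u))\partial_{k}\Psi(\bm{v})\,d\KSem\langle u; v_{k} \rangle$ are finite signed Borel measures on $K$, the latter because $\gamma_{p}(\Phi'(u))$ and $\partial_{k}\Psi(\bm{v})$ are bounded continuous functions (the arguments range over compact sets). As these are Radon measures on the locally compact separable space $K$, it suffices to show $\int_{K}\varphi\,d\KSem\langle \Phi(u); \Psi(\bm{v}) \rangle = \int_{K}\varphi\,d\nu$ for every $\varphi \in \contfunc_{c}(K)$. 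By Theorem \ref{thm.KSpem-two} the left-hand side equals $\lim_{n\to\infty}\int_{K}\int_{K}\gamma_{p}\bigl(\Phi(u(x)) - \Phi(u(y))\bigr)\bigl(\Psi(\bm{v}(x)) - \Psi(\bm{v}(y))\bigr)\varphi(x)\,m_{n}(dxdy)$, while applying the same theorem to each $\KSem\langle u; v_{k} \rangle$ with the test function $\psi_{k} \coloneqq \varphi\,\gamma_{p}(\Phi'(u))\partial_{k}\Psi(\bm{v}) \in \contfunc_{c}(K)$ shows $\int_{K}\varphi\,d\nu = \sum_{k=1}^{n}\lim_{n\to\infty}\int_{K}\int_{K}\gamma_{p}\bigl(u(x) - u(y)\bigr)(v_{k}(x) - v_{k}(y))\psi_{k}(x)\,m_{n}(dxdy)$.

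It thus remains to compare the two integrands in the limit. By asymptotic locality \eqref{e:asylocal} and $\norm{u}_{\sup}, \norm{v_{k}}_{\sup}, \norm{\varphi}_{\sup} < \infty$, the mass concentrates on $\{ d(x,y) < \delta(r_{n}) \}$, and on a compact set $L$ containing $(\supp_{K}[\varphi])_{d,\delta(r_{1})}$ the functions $u,\bm{v},\Phi',\partial_{k}\Psi$ are uniformly continuous. Hence, given $\varepsilon > 0$, the $C^{1}$ expansions $\Phi(u(x)) - \Phi(u(y)) = \Phi'(u(x))(u(x)-u(y)) + R_{1}$ and $\Psi(\bm{v}(x)) - \Psi(\bm{v}(y)) = \sum_{k}\partial_{k}\Psi(\bm{v}(x))(v_{k}(x)-v_{k}(y)) + R_{2}$ hold on $\{ d(x,y) < \delta(r_{n}) \}$ for large $n$, with $\abs{R_{1}} \le \varepsilon\abs{u(x)-u(y)}$ and $\abs{R_{2}} \le \varepsilon\sum_{k}\abs{v_{k}(x)-v_{k}(y)}$ (these are estimated exactly as \eqref{approx.power}, via the fundamental theorem of calculus and uniform continuity of $\Phi'$ and $\nabla\Psi$). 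Using the multiplicativity $\gamma_{p}(st) = \gamma_{p}(s)\gamma_{p}(t)$ together with Lemma \ref{lem.p-1}, one gets $\gamma_{p}(\Phi(u(x)) - \Phi(u(y))) = \gamma_{p}(\Phi'(u(x)))\gamma_{p}(u(x)-u(y)) + \widetilde{R}_{1}$ with $\lvert\widetilde{R}_{1}\rvert \le C\varepsilon^{(p-1)\wedge 1}\abs{u(x)-u(y)}^{p-1}$. Expanding the product of the two expansions, the main term is precisely $\sum_{k}\psi_{k}(x)\gamma_{p}(u(x)-u(y))(v_{k}(x)-v_{k}(y))$, matching the right-hand side integrand, whereas every cross term and remainder carries a factor $\varepsilon$ and is bounded uniformly in $n$ by Hölder's inequality with respect to $m_{n}$ (splitting exponents $p$ and $p/(p-1)$) combined with $\sup_{n}J_{p,r_{n}}^{\bm{k}}(w) < \infty$ for $w \in \{ u, v_{1},\dots,v_{n} \}$. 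Letting $n \to \infty$ and then $\varepsilon \downarrow 0$ yields equality of the two limits, hence $\int_{K}\varphi\,d\KSem\langle \Phi(u); \Psi(\bm{v}) \rangle = \int_{K}\varphi\,d\nu$, which proves \eqref{KSpem-chain}.

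I expect the core limit computation in the last paragraph to be the main obstacle: one must expand the product of the nonlinear $C^{1}$ approximation of $\gamma_{p}(\Phi(u(x))-\Phi(u(y)))$ and the multivariate $C^{1}$ approximation of $\Psi(\bm{v}(x))-\Psi(\bm{v}(y))$ simultaneously, control the nonlinearity through Lemma \ref{lem.p-1}, and verify that all cross and remainder terms vanish in the $n \to \infty$ limit via Hölder estimates uniformly bounded by the Besov energies — the bookkeeping here being the delicate part, entirely analogous in spirit to the proof of Theorem \ref{thm.KSfunctional} but heavier.
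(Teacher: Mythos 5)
Your proposal is correct and follows essentially the same route as the paper's proof: reduce to testing against $\varphi \in \contfunc_{c}(K)$, express both sides as limits of Besov-type bilinear expressions via Theorem \ref{thm.KSpem-two}, then use asymptotic locality, uniform continuity on compact sets, the $C^{1}$ expansions with $\varepsilon$-errors as in \eqref{approx.power}, Lemma \ref{lem.p-1} with the multiplicativity of $\gamma_{p}$, and H\"{o}lder bounds uniform in $n$. The only differences are organizational (the paper inserts an explicit intermediate quantity and compares in two steps rather than expanding the product at once, and handles the membership of $\Psi(\bm{v})$ by citing \ref{GC} directly instead of your function-wise contraction argument), which do not change the substance.
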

\begin{proof}
	It is immediate from Theorem \ref{thm.KS-energy}-\ref{KS.GC} (see also Proposition \ref{prop.GClist}-\ref{GC.lip}) that $\Phi(u),\Psi(\bm{v}) \in \KS \cap \contfunc_{b}(K)$.
	Note that 
	\[
	d\mu \coloneqq d\abs{\Gamma_{p}^{\bm{k}}\langle \Phi(u); \Psi(\bm{v}) \rangle} + \sum_{k = 1}^{n}\abs{\gamma_{p}\bigl(\Phi'(u)\bigr)\partial_{k}\Psi(\bm{v})}\,d\abs{\Gamma_{p}^{\bm{k}}\langle u; v_{k} \rangle}  	
	\]
	defines a finite Borel measure $\mu$ on $K$ by \eqref{KSpem.holder}. 
	Since $\contfunc_{c}(K)$ is dense in $L^{1}(K,\mu)$, it suffices to prove that for any $\varphi \in \contfunc_{c}(K)$,
	\[
	\int_{K}\varphi\,d\Gamma_{p}^{\bm{k}}\langle \Phi(u); \Psi(v) \rangle
    = \sum_{k = 1}^{n}\int_{K}\varphi\gamma_{p}\bigl(\Phi'(u)\bigr)\partial_{k}\Psi(v)\,d\Gamma_{p}^{\bm{k}}\langle u; v_{k} \rangle. 
	\]
	Let $\varphi \in \contfunc_{c}(K)$ and define $F_{n} \in \mathcal{B}(K \times K)$, $n \in \mathbb{N}$, by
    \[
    F_{n} \coloneqq \{ (x,y) \in K \times K \mid d(x,y) < \delta(r_n), \varphi(x) \neq 0 \}. 
    \]
    Note that $\closure{F_{n}}^{K \times K}$ is a compact subset of $K \times K$ for sufficiently large $n \in \mathbb{N}$ since $\varphi \in \contfunc_{c}(K)$, $\lim_{n \to 0}\delta(r_{n}) = 0$ and $(K,d)$ is locally compact. 
	Set 
    \[
    a_{n} \coloneqq \int_{F_{n}}\gamma_{p}\bigl(\Phi(u(x)) - \Phi(u(v))\bigr)(\Psi(v(x)) - \Psi(v(y)))\varphi(x)\,m_{n}(dxdy)
    \]
    and
    \[
    b_{n} \coloneqq \sum_{k = 1}^{n}\int_{F_{n}}\gamma_{p}\bigl(\Phi'(u(x))\bigr)\partial_{k}\Psi(v(x))\cdot\gamma_{p}\bigl(u(x) - u(y)\bigr)(v(x) - v(y))\varphi(x)\,m_{n}(dxdy).
    \]
    By Theorem \ref{thm.KSpem-two} and \eqref{e:asylocal}, it suffices to show $\lim_{n \to \infty}\abs{a_{n} - b_{n}} = 0$.
    To estimate $\abs{a_{n} - b_{n}}$, we introduce
    \[
    c_{n} \coloneqq \int_{F_{n}}\gamma_{p}\bigl(\Phi'(u(x))\bigr)\cdot\gamma_{p}\bigl(u(x) - u(y)\bigr)(v(x) - v(y))\varphi(x)\,m_{n}(dxdy). 
    \]
    We will show that $\lim_{n \to \infty}\abs{a_{n} - c_{n}} = \lim_{n \to \infty}\abs{b_{n} - c_{n}} = 0$.
    Note that
    \[
    \Phi(u(y)) - \Phi(u(x))
    = \bigl[u(y) - u(x)\bigr]\bigl(\Phi'(u(x)) + e_{\Phi,u}(x,y)\bigr),
    \]
    where we set $e_{\Phi,u}(x,y) \coloneqq \int_{0}^{1}\bigl[\Phi'\bigl(u(x) + t(u(y) - u(x))\bigr) - \Phi'(u(x))\bigr]\,dt$.
    Let $\varepsilon > 0$.
    Since $\Phi'$ is continuous, $\norm{u}_{\sup} < \infty$ and $u$ is uniformly continuous on $F_{n}$ for large enough $n \in \mathbb{N}$, we can find $N_{1} \in \mathbb{N}$ so that $\abs{e_{\Phi,u}(x,y)} < \varepsilon$ for any $(x,y) \in \bigcup_{n \ge N_{1}}F_{n}$. 
    By Lemma \ref{lem.p-1}, there exists $C_{p} \in (0,\infty)$ depending only on $p$ such that for any $n \ge N_{1}$ and $(x,y) \in \bigcup_{n \ge N_{1}}F_{n}$,
    \begin{align*}
        &\abs{\gamma_{p}\bigl(\Phi(u(x)) - \Phi(u(y))\bigr) - \gamma_{p}\bigl(\Phi'(u(x))\bigr)\cdot\gamma_{p}\bigl(u(x) - u(y)\bigr)} \\
        &\le C_{p}\varepsilon^{(p - 1) \wedge 1}A_{u,\Phi}(x,y)^{(p - 2)^{+}}\abs{u(x) - u(y)}^{(p - 1) \wedge 1},
    \end{align*}
    where $A_{u,\Phi}(x,y) \coloneqq \abs{\Phi(u(y)) - \Phi(u(x))} \vee \abs{\Phi'(u(x))(u(y) - u(x))}$. 
    By H\"{o}lder's inequality, we have 
    \begin{align*}
        &\sup_{n \ge N_{1}}\abs{a_{n} - c_{n}} \\
        &\le C_{p}\varepsilon^{(p - 1) \wedge 1}\Bigl[C_{\Phi,u}\bigl(\norm{\Phi(u)}_{B_{p,\infty}^{\bm{k}}} + \norm{u}_{B_{p,\infty}^{\bm{k}}}\bigr)\Bigr]^{(p - 2)^{+}}\norm{u}_{B_{p,\infty}^{\bm{k}}}^{(p - 1) \wedge 1}\norm{v}_{B_{p,\infty}^{\bm{k}}}, 
    \end{align*}
    where $C_{\Phi,u} \coloneqq 1 + \norm{\Phi'}_{\sup,[-\norm{u}_{\sup},\norm{u}_{\sup}]}$. In particular, we get $\lim_{n \to \infty}\abs{a_{n} - c_{n}} = 0$.
    
    Similarly, we can find $N_{2} \in \mathbb{N}$ so that for any $(x,y) \in \bigcup_{n \ge N_{2}}F_{n}$, 
    \[
    \abs{\bigl(\Psi(v(x)) - \Psi(v(y))\bigr) - \sum_{k = 1}^{n}\partial_{k}\Psi(v(x))(v(x) - v(y))}
    \le \varepsilon\abs{v(x) - v(y)}.
    \]
    Then we easily see that
    \[
    \sup_{n \ge N_{2}}\abs{b_{n} - c_{n}} \\
    \le \varepsilon\norm{\Phi'}_{\sup,[-\norm{u}_{\sup},\norm{u}_{\sup}]}^{p - 1}\norm{u}_{B_{p,\infty}^{\bm{k}}}^{p - 1}\norm{v}_{B_{p,\infty}^{\bm{k}}},
    \]
    whence $\lim_{n \to \infty}\abs{b_{n} - c_{n}} = 0$.
\end{proof}

The following \emph{image density property} of $p$-energy measures is a consequence of the chain rule. We note that the proof below does not rely on specific representations of $\KSem$ like \eqref{KSpem.limext} and \eqref{KSpem.twoexp}.
\begin{thm}[Image density property]\label{thm.EIDP}
	For any $u \in B_{p,\infty}^{\bm{k}} \cap \contfunc_{b}(K)$, the Borel measure $\Gamma_{p}^{\bm{k}}\langle u \rangle \circ u^{-1}$ on $\mathbb{R}$ defined by $\Gamma_{p}^{\bm{k}}\langle u \rangle \circ u^{-1}(A) \coloneqq \Gamma_{p}^{\bm{k}}\langle u \rangle(u^{-1}(A))$, $A \in \mathcal{B}(\mathbb{R})$, is absolutely continuous with respect to the $1$-dimensional Lebesgue measure on $\mathbb{R}$.
\end{thm}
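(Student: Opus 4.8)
The plan is to prove $\nu(B)=0$ for every Borel set $B\subseteq\mathbb{R}$ with $\mathrm{Leb}(B)=0$, where $\nu\coloneqq\KSem\langle u\rangle\circ u^{-1}$. Since $u\in\contfunc_{b}(K)$, the measure $\nu$ is finite and concentrated on $[-\norm{u}_{\sup},\norm{u}_{\sup}]$, with $\nu(\mathbb{R})=\KSem\langle u\rangle(K)<\infty$; by inner regularity of $\nu$ it suffices to treat the case where $B$ is compact, because any compact subset of a Lebesgue-null set is again Lebesgue-null. The main tool is a pointwise chain rule obtained by specializing Theorem \ref{thm.KSpem-chain}: taking $n=1$, $\bm{v}=(u)$ and $\Psi=\Phi$ for $\Phi\in\contfunc^{1}(\mathbb{R})$ with $\Phi(0)=0$, and using $\KSem\langle w;w\rangle=\KSem\langle w\rangle$ from Theorem \ref{thm.KSpem-diffble} together with $\gamma_{p}(a)a=\abs{a}^{p}$, I get $d\KSem\langle\Phi(u)\rangle=\abs{\Phi'(u)}^{p}\,d\KSem\langle u\rangle$. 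Taking total mass and pushing forward by $u$ yields the identity $\KSem\langle\Phi(u)\rangle(K)=\int_{\mathbb{R}}\abs{\Phi'}^{p}\,d\nu$, valid for every such $\Phi$ (note $\Phi(u)\in\KS\cap\contfunc_{b}(K)$).

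Next I would build smooth approximations adapted to $B$. Choosing decreasing open sets $U_{n}$ with $U_{1}$ bounded, $\bigcap_{n}U_{n}=B$ and $\mathrm{Leb}(U_{n})\to\mathrm{Leb}(B)=0$, Urysohn's lemma provides continuous $g_{n}$ with $0\le g_{n}\le 1$, $g_{n}=0$ on $B$ and $g_{n}=1$ on $U_{n}^{c}$; then $\Phi_{n}(t)\coloneqq\int_{0}^{t}g_{n}(s)\,ds$ lies in $\contfunc^{1}(\mathbb{R})$ with $\Phi_{n}(0)=0$, $\Phi_{n}'=g_{n}$ and $0\le\Phi_{n}'\le 1$. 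Because $\bigcap_{n}U_{n}=B$, we have $g_{n}\to\indicator{B^{c}}$ pointwise on all of $\mathbb{R}$ (at $t\in B$ every $g_{n}(t)=0$, and at $t\notin B$ eventually $t\in U_{n}^{c}$ so $g_{n}(t)=1$), while $\abs{\Phi_{n}(t)-t}\le\mathrm{Leb}(U_{n})\to 0$ uniformly on compact sets, so $\Phi_{n}(u)\to u$ uniformly and hence in $L^{p}(K,m)$.

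The decisive step combines weak lower semicontinuity with this construction. Since each $\Phi_{n}$ is $1$-Lipschitz with $\Phi_{n}(0)=0$, Proposition \ref{prop.GClist}-\ref{GC.lip} gives $\KSform(\Phi_{n}(u))\le\KSform(u)$, so $\{\Phi_{n}(u)\}_{n}$ is bounded in $\KS$; as $\KS$ is reflexive (Theorem \ref{thm.banach}) and embeds continuously into $L^{p}(K,m)$, the $L^{p}$-limit forces $\Phi_{n}(u)\to u$ weakly in $\KS$. Applying the weak lower semicontinuity \eqref{fatou.pem} (established in the proof of Theorem \ref{thm.KSpem-GC}) with the constant weight $\varphi=\indicator{K}$ yields $\KSem\langle u\rangle(K)\le\liminf_{n\to\infty}\KSem\langle\Phi_{n}(u)\rangle(K)$. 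On the other hand, the chain-rule identity gives $\KSem\langle\Phi_{n}(u)\rangle(K)=\int_{\mathbb{R}}g_{n}^{p}\,d\nu$, and since $g_{n}^{p}\to\indicator{B^{c}}$ pointwise with $0\le g_{n}\le 1$ and $\nu$ finite, dominated convergence gives $\KSem\langle\Phi_{n}(u)\rangle(K)\to\nu(B^{c})=\nu(\mathbb{R})-\nu(B)$. Combining these with $\KSem\langle u\rangle(K)=\nu(\mathbb{R})$ produces $\nu(\mathbb{R})\le\nu(\mathbb{R})-\nu(B)$, whence $\nu(B)=0$.

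The hard part is precisely this last mechanism. The chain rule is only licensed for genuine $\contfunc^{1}$ derivatives, so one cannot simply insert $\Phi=\id$ with the almost-everywhere representative $\Phi'=\indicator{B^{c}}$ (which would give the result in one line but is not justified, since the true derivative of $\id$ is $1$); bridging this gap is the entire content of the statement. The role of the smooth cutoffs $g_{n}$ together with the weak lower semicontinuity is exactly to perform this passage and to upgrade the mere absence of atoms (a soft consequence of strong locality) to full absolute continuity. The points requiring care are arranging $\bigcap_{n}U_{n}=B$ so that $g_{n}\to\indicator{B^{c}}$ holds \emph{everywhere} — ensuring the dominated convergence limit is exactly $\nu(B^{c})$ rather than something larger — and confirming that $\Phi_{n}(u)$ converges to $u$ weakly (not strongly, which would fail whenever $\nu(B)>0$) so that \eqref{fatou.pem} can be invoked.
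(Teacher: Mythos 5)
Your argument is correct in substance, but it takes a genuinely different route from the paper's proof, so let me compare the two. The paper approximates the indicator of the null set itself: it picks $\varphi_{n}\in\contfunc_{c}(\mathbb{R})$ with $\abs{\varphi_{n}}\le 1$ converging pointwise to $\indicator{F}$, sets $\Phi_{n}(x)=\int_{0}^{x}\varphi_{n}(t)\,dt$, notes $\Phi_{n}(u)\to 0$ in $L^{p}(K,m)$, and then uses reflexivity of $\KS$ together with an \emph{explicit} application of Mazur's lemma to produce convex combinations $\Psi_{l}$ of the $\Phi_{n_{k}}$ with $\Psi_{l}(u)\to 0$ \emph{strongly} in $\KS$; Fatou's lemma with respect to $\nu\coloneqq\KSem\langle u\rangle\circ u^{-1}$, the chain rule (Theorem \ref{thm.KSpem-chain}) and the total-mass identity of Proposition \ref{prop.totalmass} then give $\nu(F)\le\liminf_{l}\KSform(\Psi_{l}(u))=0$ directly. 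You instead approximate the indicator of the \emph{complement} $B^{c}$, show $\Phi_{n}(u)\to u$ \emph{weakly} in $\KS$, and invoke the weak lower semicontinuity \eqref{fatou.pem} with $\varphi=\indicator{K}$ to get $\nu(\mathbb{R})\le\liminf_{n}\int_{\mathbb{R}}g_{n}^{p}\,d\nu=\nu(B^{c})$ (dominated convergence instead of Fatou), whence $\nu(B)=0$. Both proofs rest on the same three pillars---the chain rule, reflexivity of $\KS$, and a Mazur-type compactness step (in your case hidden inside the paper's proof of \eqref{fatou.pem})---but your version replaces the paper's strong-convergence construction and Proposition \ref{prop.totalmass} by a single citation of the already-established lower semicontinuity; this is a clean economization, at the cost of relying on an unnumbered intermediate inequality from inside the proof of Theorem \ref{thm.KSpem-GC}. (Alternatively, you could justify that lower semicontinuity yourself: $v\mapsto\KSem\langle v\rangle(K)^{1/p}$ is a seminorm on $\bclosureKS$ by Theorem \ref{thm.KSpem-GC} which is strongly continuous since $\KSem\langle v-w\rangle(K)\le\KSform(v-w)$, hence weakly lower semicontinuous by convexity.)

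One justification needs repair. You deduce $\Phi_{n}(u)\to u$ in $L^{p}(K,m)$ from the uniform bound $\abs{\Phi_{n}(t)-t}\le\mathscr{L}^{1}(U_{n})\to 0$, ``uniformly and hence in $L^{p}$.'' In Section \ref{sec.pEMKS} the measure $m$ is only assumed $\sigma$-finite, so $m(K)=\infty$ is allowed, and uniform convergence alone does not imply $L^{p}$-convergence. The fix is immediate: since $\abs{g_{n}-1}\le 1$ and $g_{n}-1$ vanishes off $U_{n}$, one has $\abs{\Phi_{n}(u)-u}\le\min\bigl\{\mathscr{L}^{1}(U_{n}),\abs{u}\bigr\}$ pointwise, so dominated convergence with dominating function $\abs{u}^{p}\in L^{1}(K,m)$ gives the $L^{p}$-convergence (and the same bound $\abs{\Phi_{n}(t)}\le\abs{t}$ shows $\Phi_{n}(u)\in L^{p}(K,m)$ and keeps the sequence $L^{p}$-bounded). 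With this repair, the remaining steps---boundedness in $\KS$ via Proposition \ref{prop.GClist}-\ref{GC.lip} and \eqref{KSene-comp}, identification of the weak limit through the continuous embedding $\KS\hookrightarrow L^{p}(K,m)$, the chain-rule identity $\KSem\langle\Phi_{n}(u)\rangle(K)=\int_{\mathbb{R}}g_{n}^{p}\,d\nu$, and the passage to the limit---go through exactly as you wrote them.
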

\begin{proof}
	This is proved, on the basis of Theorem \ref{thm.KSpem-chain}, in exactly the same way as \cite[Proposition 7.6]{Shi24}, which is a simple adaptation of \cite[Theorem 4.3.8]{CF}, but we present the details because in \cite{Shi24} the underlying topological space $K$ is assumed to be a generalized Sierpi\'{n}ski carpet, a self-similar compact set in the Euclidean space. 
    It suffices to prove that $\KSem\langle u \rangle \circ u^{-1}(F) = 0$ for any $u \in \KS \cap \contfunc_{b}(K)$ and any compact subset $F$ of $\mathbb{R}$ such that $\mathscr{L}^{1}(F) = 0$, where $\mathscr{L}^{1}$ denotes the $1$-dimensional Lebesgue measure on $\mathbb{R}$.
    Let $\{ \varphi_{n} \}_{n \in \mathbb{N}} \subseteq \contfunc_{c}(\mathbb{R})$ satisfy $\abs{\varphi_{n}} \le 1$, $\lim_{n \to \infty}\varphi_{n}(x) = \indicator{F}(x)$ for any $x \in \mathbb{R}$ and
    \[
    \int_{0}^{\infty}\varphi_{n}(t)\,dt = \int_{-\infty}^{0}\varphi_{n}(t)\,dt = 0 \quad \text{for any $n \in \mathbb{N}$.}
    \]
    We define $\Phi_{n}(x) \coloneqq \int_{0}^{x}\varphi_{n}(t)\,dt$, $x \in \mathbb{R}$, and $u_{n} \coloneqq \Phi_{n} \circ u$ for any $n \in \mathbb{N}$.
    Then we easily see that $\Phi_{n} \in \contfunc^{1}(\mathbb{R}) \cap \contfunc_{c}(\mathbb{R})$, $\Phi_{n}(0) = 0$, and $\Phi_{n}' = \varphi_{n}$ for any $n \in \mathbb{N}$.
    Also, $u_{n}$ converges to $0$ in $L^{p}(K,m)$ as $n \to \infty$ by the dominated convergence theorem. 
    By Proposition \ref{prop.GClist}-\ref{GC.lip}, we deduce that $\{ u_{n} \}_{n \in \mathbb{N}}$ is bounded in $\KS$.  
    Since $\KS$ is reflexive by Theorem \ref{thm.banach} and $\KS$ is continuously embedded in $L^{p}(K,m)$, there exists a subsequence $\{ u_{n_{k}} \}_{k \in \mathbb{N}}$ weakly converging to $0$ in $\KS$. 
    By Mazur's lemma, there exist $N(l) \in \mathbb{N}$ and $\{ a_{l,k} \}_{k = l}^{N(l)} \subseteq [0,1]$ with $N(l) > l$ and $\sum_{k = l}^{N(l)}a_{l,k} = 1$ for each $l \in \mathbb{N}$ such that $\sum_{k = l}^{N(l)}a_{l,k}u_{n_{k}}$ converges to $0$ in $\KS$ as $l \to \infty$. 
    Let us define $\Psi_{l} \in \contfunc^{1}(\mathbb{R})$ by $\Psi_{l} \coloneqq \sum_{k = l}^{N(l)}a_{l,k}\Phi_{n_k}$. 
    Then $\Psi_{l}(0) = 0$, $\Psi_{l}' \to \indicator{F}$ and, by Fatou's lemma, Theorem \ref{thm.KSpem-chain} and Proposition \ref{prop.totalmass}, 
    \begin{align*}
        \KSem\langle u \rangle \circ u^{-1}(F)
        &= \int_{\mathbb{R}}\lim_{l \to \infty}\abs{\Psi_{l}'(t)}^{p}\,\bigl(\KSem\langle u \rangle \circ u^{-1}\bigr)(dt) \\
        &\le \liminf_{l \to \infty}\int_{K}\abs{\Psi_{l}'(u(x))}^{p}\,\KSem\langle u \rangle(dx) \\
        &= \liminf_{l \to \infty}\KSem\langle \Psi_{l}(u) \rangle(K)
        = \liminf_{l \to \infty}\KSform\bigl(\Psi_{l}(u)\bigr) = 0, 
    \end{align*}
    which completes the proof. 
\end{proof}

Now we can obtain the strongest possible forms of the strong locality of $\Gamma_{p}^{\bm{k}}\langle \,\cdot\, ; \,\cdot\, \rangle$ as in the following theorem, which is an easy consequence of Theorem \ref{thm.EIDP}, the triangle inequality for $\Gamma_{p}^{\bm{k}}\langle \,\cdot\, \rangle^{1/p}$ and \eqref{e:defn-KSpemtwo}; see \cite[Theorem 4.17]{KS.gc} for a proof.

\begin{thm}[Strong locality of $p$-energy measures]\label{thm.KSpem-sl}
	Let $u,u_{1},u_{2},v \in \KS \cap \contfunc_{b}(K)$, $a,a_{1},a_{2},b \in \mathbb{R}$ and $A \in \mathcal{B}(K)$. 
    \begin{enumerate}[label=\textup{(\alph*)},align=left,leftmargin=*,topsep=2pt,parsep=0pt,itemsep=2pt]
        \item\label{it:KSpem.slconst} If $A \subseteq u^{-1}(a)$, then $\KSem\langle u \rangle(A) = 0$.
        \item\label{it:KSpem.slbasic} If $A \subseteq (u - v)^{-1}(a)$, then $\KSem\langle u \rangle(A) = \KSem\langle v \rangle(A)$.
        \item\label{it:KSpem.sl1} If $A \subseteq u_{1}^{-1}(a_{1}) \cup u_{2}^{-1}(a_{2})$, then
			\begin{align}\label{e:KSpem-sl1}
			\Gamma_{p}^{\bm{k}}\langle u_1 + u_2 + v \rangle(A) + \Gamma_{p}^{\bm{k}}\langle v \rangle(A) &= \Gamma_{p}^{\bm{k}}\langle u_1 + v \rangle(A) + \Gamma_{p}^{\bm{k}}\langle u_2 + v \rangle(A), \\
			\KSem\langle u_{1} + u_{2}; v \rangle(A) &= \KSem\langle u_{1}; v \rangle(A) + \KSem\langle u_{2}; v \rangle(A). 
			\label{e:KSpem-sl1-cor}
			\end{align}
        \item\label{it:KSpem.sl2} If $A \subseteq (u_1 - u_2)^{-1}(a) \cup v^{-1}(b)$, then
			\begin{equation}\label{e:KSpem-sl2}
			\Gamma_{p}^{\bm{k}}\langle u_1; v \rangle(A) = \Gamma_{p}^{\bm{k}}\langle u_2; v \rangle(A)
			\quad\text{and}\quad
			\Gamma_{p}^{\bm{k}}\langle v; u_1 \rangle(A) = \Gamma_{p}^{\bm{k}}\langle v; u_2 \rangle(A).
			\end{equation}
    \end{enumerate}
\end{thm}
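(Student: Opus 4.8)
The plan is to derive \ref{it:KSpem.slconst} directly from the image density property and then bootstrap the remaining three assertions from it, following \cite[Theorem 4.17]{KS.gc}. For \ref{it:KSpem.slconst}, since $A \subseteq u^{-1}(\{a\})$ and $\mathscr{L}^{1}(\{a\}) = 0$, Theorem \ref{thm.EIDP} gives $\bigl(\KSem\langle u \rangle \circ u^{-1}\bigr)(\{a\}) = 0$, i.e.\ $\KSem\langle u \rangle(u^{-1}(\{a\})) = 0$, and monotonicity of $\KSem\langle u \rangle$ yields $\KSem\langle u \rangle(A) = 0$.

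For \ref{it:KSpem.slbasic}, I would first observe that, for each fixed $A \in \mathcal{B}(K)$, the functional $\bclosureKS \ni w \mapsto \KSem\langle w \rangle(A)$ is a $p$-energy form on $(K,m)$ satisfying \ref{GC}---this is Theorem \ref{thm.KSpem-GC} applied to the bounded non-negative Borel function $\indicator{A}$---so that $\KSem\langle \,\cdot\, \rangle(A)^{1/p}$ satisfies the triangle inequality by Proposition \ref{prop.GClist}-\ref{GC.tri}. If $A \subseteq (u - v)^{-1}(a)$, then \ref{it:KSpem.slconst} applied to $u - v$ gives $\KSem\langle u - v \rangle(A) = 0$, and since $\KSem\langle v - u \rangle(A) = \KSem\langle u - v \rangle(A)$ by $p$-homogeneity, writing $u = (u - v) + v$ and $v = u - (u - v)$ and applying the triangle inequality twice yields $\KSem\langle u \rangle(A) = \KSem\langle v \rangle(A)$.

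For \ref{it:KSpem.sl1} and \ref{it:KSpem.sl2}, the idea is to split $A$ into disjoint Borel pieces on which the relevant function is constant, apply \ref{it:KSpem.slbasic} on each piece, and then pass from the measures $\KSem\langle \,\cdot\, \rangle(A)$ to the signed measures $\KSem\langle \,\cdot\, ; \,\cdot\, \rangle(A)$ by differentiating at $t = 0$ as in \eqref{e:defn-KSpemtwo}. For \ref{it:KSpem.sl1}, write $A = A_{1} \sqcup A_{2}$ with $A_{1} \coloneqq A \cap u_{1}^{-1}(a_{1})$ and $A_{2} \coloneqq A \setminus A_{1} \subseteq u_{2}^{-1}(a_{2})$; on $A_{1}$, where $u_{1} \equiv a_{1}$, part \ref{it:KSpem.slbasic} gives $\KSem\langle u_{1} + u_{2} + v \rangle(A_{1}) = \KSem\langle u_{2} + v \rangle(A_{1})$ and $\KSem\langle u_{1} + v \rangle(A_{1}) = \KSem\langle v \rangle(A_{1})$, which together establish \eqref{e:KSpem-sl1} restricted to $A_{1}$, and the symmetric computation on $A_{2}$ then yields \eqref{e:KSpem-sl1} on all of $A$. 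Replacing $v$ by $tv$ (which preserves the hypothesis), differentiating \eqref{e:KSpem-sl1} at $t = 0$, dividing by $p$, and using $\frac{d}{dt}\KSem\langle tv \rangle(A)\big|_{t = 0} = \frac{d}{dt}\bigl(\abs{t}^{p}\KSem\langle v \rangle(A)\bigr)\big|_{t = 0} = 0$ for $p > 1$ then gives \eqref{e:KSpem-sl1-cor}. For \ref{it:KSpem.sl2}, I would split $A = B_{1} \sqcup B_{2}$ with $B_{1} \coloneqq A \cap (u_{1} - u_{2})^{-1}(a)$ and $B_{2} \coloneqq A \setminus B_{1} \subseteq v^{-1}(b)$: on $B_{1}$ the functions $u_{1} + tv$ and $u_{2} + tv$ differ by the constant $a$, so \ref{it:KSpem.slbasic} and differentiation give $\KSem\langle u_{1}; v \rangle(B_{1}) = \KSem\langle u_{2}; v \rangle(B_{1})$, while on $B_{2}$, where $v$ is constant, $\KSem\langle u_{i} + tv \rangle(B_{2}) = \KSem\langle u_{i} \rangle(B_{2})$ is independent of $t$, forcing $\KSem\langle u_{i}; v \rangle(B_{2}) = 0$; summing proves the first equality in \eqref{e:KSpem-sl2}, and the second follows identically upon applying the same splitting to $\KSem\langle v + tu_{i} \rangle$ and again using $p > 1$ to discard the $B_{2}$-contribution.

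The argument is largely bookkeeping once \ref{it:KSpem.slconst} is available, so the only genuinely nontrivial input is the image density property of Theorem \ref{thm.EIDP}. I expect the main care to be needed in justifying the termwise differentiation at $t = 0$ of the identities produced by \ref{it:KSpem.slbasic}---which is licensed by the differentiability of $t \mapsto \KSem\langle w + tv \rangle(A)$ underlying the definition \eqref{e:defn-KSpemtwo} and established in Theorem \ref{thm.KSpem-diffble}---and in confirming that the homogeneous remainder terms $\abs{t}^{p}\KSem\langle \,\cdot\, \rangle(A)$ have vanishing derivative at $t = 0$ precisely because $p > 1$.
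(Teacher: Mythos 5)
Your proposal is correct and follows exactly the route the paper indicates: part \ref{it:KSpem.slconst} from the image density property (Theorem \ref{thm.EIDP}), part \ref{it:KSpem.slbasic} from the triangle inequality for $\Gamma_{p}^{\bm{k}}\langle \,\cdot\, \rangle(A)^{1/p}$ supplied by Theorem \ref{thm.KSpem-GC}, and parts \ref{it:KSpem.sl1}, \ref{it:KSpem.sl2} by decomposing $A$ and differentiating at $t=0$ via \eqref{e:defn-KSpemtwo}, with $p>1$ killing the homogeneous remainder terms. This is precisely the argument of the cited reference \cite[Theorem 4.17]{KS.gc}, so no further comparison is needed.
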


Using Theorem \ref{thm.KSpem-sl}, we can extend Proposition \ref{prop.totalmass} as follows. 
\begin{cor}\label{cor.totalmass-ext}
	Let $u,v \in \bclosureKS$. 
	If $\{ u,v \} \cap \cclosureKS \neq \emptyset$, then $\KSem\langle u; v \rangle(K) = \KSform(u; v)$. 
\end{cor}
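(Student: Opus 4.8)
The plan is to deduce the corollary from Proposition~\ref{prop.totalmass} by a reduction to continuous representatives followed by a cutoff argument based on strong locality. Suppose first that $v \in \cclosureKS$ and $u \in \bclosureKS$ (the case $u \in \cclosureKS$, $v \in \bclosureKS$ is symmetric, as explained at the end). Since both $(\KSem\langle\,\cdot\,\rangle(K),\bclosureKS)$ and $(\KSform,\KS)$ are $p$-energy forms satisfying \ref{GC} by Theorems~\ref{thm.KSpem-GC} and \ref{thm.KS-energy}-\ref{KS.GC}, their associated two-variable forms $\KSem\langle u;v\rangle(K)$ and $\KSform(u;v)$ are linear in the second argument and obey the bounds \eqref{e:form-holder} and \eqref{eq:form-nonlinear-Hoelder-GC} of Proposition~\ref{prop.form-basic}. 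Using $\KSem\langle w\rangle(K) \le \KSform(w)$ (Theorem~\ref{thm.KSpem-GC}) together with $\KSform(w) \lesssim \norm{w}_{B_{p,\infty}^{\bm{k}}}^{p}$, both quantities are continuous in each argument with respect to convergence in $B_{p,\infty}^{\bm{k}}$. Exactly as in the first sentence of the proof of Proposition~\ref{prop.totalmass}, it therefore suffices to prove the identity for representatives $u \in B_{p,\infty}^{\bm{k}} \cap \contfunc_{b}(K)$ and $v \in B_{p,\infty}^{\bm{k}} \cap \contfunc_{c}(K)$, which are dense in $\bclosureKS$ and $\cclosureKS$ respectively.

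For such $u$ and $v$, set $F \coloneqq \supp_{K}[v]$, a compact set. As in the proof of Proposition~\ref{prop.totalmass}, the local compactness of $K$ and the density of $\KS \cap \contfunc_{c}(K)$ in $(\contfunc_{c}(K),\norm{\,\cdot\,}_{\sup})$ furnish a cutoff $\chi \in B_{p,\infty}^{\bm{k}} \cap \contfunc_{c}(K)$ with $0 \le \chi \le 1$ and $\chi = 1$ on an open neighborhood $U$ of $F$. Put $w \coloneqq u\chi$, which lies in $B_{p,\infty}^{\bm{k}} \cap \contfunc_{c}(K) \subseteq \cclosureKS$ by the Leibniz rule (Proposition~\ref{prop.GClist}-\ref{GC.leipniz}).

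I would then replace $u$ by $w$ on both sides using strong locality. On $U$ we have $u = w$, so $(u-w)^{-1}(0) \supseteq U \supseteq F$, while $v^{-1}(0) \supseteq K \setminus F$; hence $K = (u-w)^{-1}(0) \cup v^{-1}(0)$ and Theorem~\ref{thm.KSpem-sl}\ref{it:KSpem.sl2} (with $A = K$, $a = b = 0$) gives $\KSem\langle u; v\rangle(K) = \KSem\langle w; v\rangle(K)$. On the other side, $u - w = u(1-\chi)$ vanishes on $U$, so $\supp_{m}[u - w] \subseteq K \setminus U$ is disjoint from the compact set $\supp_{m}[v] \subseteq F \subseteq U$, and Theorem~\ref{thm.KS-energy}-\ref{KS.slbdd}\ref{it:KS.sl2} yields $\KSform(u; v) = \KSform(w; v)$. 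Since $w, v \in \cclosureKS$, Proposition~\ref{prop.totalmass} gives $\KSem\langle w; v\rangle(K) = \KSform(w; v)$, and chaining these three equalities proves the claim. In the case $u \in \cclosureKS$, $v \in \bclosureKS$, one instead keeps the (compactly supported) first slot $u$ fixed, takes $\chi = 1$ on a neighborhood of $\supp_{K}[u]$, replaces $v$ by $v\chi$, and applies the \emph{second} equalities in Theorem~\ref{thm.KSpem-sl}\ref{it:KSpem.sl2} and in Theorem~\ref{thm.KS-energy}-\ref{KS.slbdd}\ref{it:KS.sl2}.

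The main obstacle is bookkeeping rather than any deep new estimate: because $\KSem\langle\,\cdot\,;\,\cdot\,\rangle$ is not symmetric for $p \neq 2$, one must carefully track which argument carries the compact support and invoke the correct half of the strong-locality statements, and one must verify the level-set and support inclusions needed to apply Theorems~\ref{thm.KSpem-sl} and \ref{thm.KS-energy}-\ref{KS.slbdd}. The preliminary reduction to continuous representatives is essential precisely because Theorem~\ref{thm.KSpem-sl} is only available for $u \in \KS \cap \contfunc_{b}(K)$, so the cutoff $u\chi$ must be formed from a genuinely continuous function.
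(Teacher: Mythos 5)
Your proof is correct and takes essentially the same route as the paper's: both arguments reduce to continuous representatives via the H\"{o}lder-type continuity bounds of Proposition \ref{prop.form-basic}, multiply the non-compactly-supported function by a continuous compactly supported cutoff equal to $1$ on a neighborhood of the support of the other, invoke the strong locality statements (Theorem \ref{thm.KS-energy}-\ref{KS.slbdd} for $\KSform$ and Theorem \ref{thm.KSpem-sl} for $\KSem$) to show that this substitution changes neither side, and finish with Proposition \ref{prop.totalmass}. The only cosmetic difference is that the paper handles both cases at once with its $f,g$ notation, whereas you treat them as two symmetric cases.
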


\begin{proof}
	Similar to the proof of Proposotion \ref{prop.totalmass}, it suffices to consider the case $u,v \in \KS \cap \contfunc_{b}(K)$ with $\{ u,v \} \cap \KS \cap \contfunc_{c}(K) \neq \emptyset$.
	Let $f,g \in \{ u,v \}$ satisfy $\{ f,g \} = \{ u,v \}$ and $f \in \KS \cap \contfunc_{c}(K)$. 
	Similar to the proof of Proposition \ref{prop.totalmass}, we can find an open neighborhood $U$ of the compact subset $\supp_{K}[f]$  of $K$ and $\varphi \in \KS \cap \contfunc_{c}(K)$ so that $0 \le \varphi \le 1$ and $\varphi(x) = 1$ for any $x \in U$. 
	Then $\supp_{K}[f] \cap \supp_{K}[g(\varphi - \indicator{K})] = \emptyset$, so we have 
	\[
	\KSform(u; v) = 
	\begin{cases}
		\KSform(f; g\varphi) \quad &\text{if $f = u$,} \\
		\KSform(g\varphi; f) \quad &\text{if $f = v$,}
	\end{cases}
	\]
	by Theorem \ref{thm.KS-energy}-\ref{KS.slbdd} and 
	\[
	\KSem\langle u; v\rangle(K) = 
	\begin{cases}
		\KSem\langle f; g\varphi \rangle(K) \quad &\text{if $f = u$,} \\
		\KSem\langle g\varphi; f\rangle(K) \quad &\text{if $f = v$,}
	\end{cases}
	\]
	by Theorem \ref{thm.KSpem-sl}-\ref{it:KSpem.sl1},\ref{it:KSpem.sl2}.
	Since $f, g\varphi \in \KS \cap \contfunc_{c}(K)$, we obtain $\KSem\langle u; v\rangle(K) = \KSform(u; v)$ by Proposition \ref{prop.totalmass}. 
\end{proof}

%===== Kigami's framework ======
\section{$p$-Energy forms on $p$-conductively homogeneous spaces}\label{sec.Kig}
\setcounter{equation}{0}
%%%
In this section, we verify \ref{KSwm} for the family of kernels $\bm{k}=\bm{k}^{s_{p}}$ defined by \eqref{KSkernel} and \eqref{Lp-Besov} on \emph{$p$-conductively homogeneous} compact metric spaces equipped with Ahlfors regular measures.
We also show some estimates on localized versions of Korevaar--Schoen $p$-energy forms, and construct, on the basis of Korevaar--Schoen $p$-energy forms, self-similar $p$-energy forms on $p$-conductively homogeneous self-similar sets as well.
We refer to \cite[Sections 4.3--4.6]{Kig23} for many concrete examples covered by this framework. 

%----- framework -----
\subsection{$p$-Conductively homogeneous spaces}
%%%
Les us recall the notation and terminology in \cite{Kig20,Kig23} by following \cite[Section 8.1]{KS.gc}. 
We fix a locally finite (non-directed) infinite tree $(T,E_{T})$ in the usual sense (see \cite[Definition 2.1]{Kig23} for example), and fix a \emph{root} $\phi \in T$ of $T$.
(Here $T$ is the set of vertices and $E_{T}$ is the set of edges.)
For any $w \in T \setminus \{ \phi \}$, we use $\overline{\phi w}$ to denote the unique simple path in $T$ from $\phi$ to $w$. 
%\begin{defn}[{\cite[Definition 2.1]{Kig23}}]
%    Let $(T, E_{T})$ be a (non-directed) infinite graph, that is, $T$ is a countably infinite set and $E_{T} \subseteq (T \times T) \setminus \{ \{x,x\} \mid x \in T\}$.
%    \begin{enumerate}[label=\textup{(\arabic*)},align=left,leftmargin=*,topsep=2pt,parsep=0pt,itemsep=2pt]
%        \item The graph $(T, E_{T})$ is said to be \emph{locally finite} if $\#\{ \{x,y\} \in E_{T} \mid y \in T \} < \infty$ for any $x \in T$.
%        \item A sequence $(w_{0},w_{1},\dots,w_{n})$, $w_{i} \in T$, is said to be a \emph{path between $w_{0}$ and $w_{n}$} if $\{ w_{i - 1}, w_{i} \} \in E_{T}$ for each $i \in \{ 1,\dots,n \}$. A path $(w_{0},w_{1},\dots,w_{n})$ is called \emph{simple} if $w_{i} \neq w_{j}$ for any distinct $i,j \in \{ 0,1,\dots,n \}$.
%        \item The graph $(T, E_{T})$ is said to be a \emph{tree} if there exists a uniqus simple path between $v$ and $w$ for any $v,w \in T$ with $v \neq w$. Such the unique path is denoted by $\overline{vw}$.
%    \end{enumerate}
%\end{defn}
\begin{defn}[{\cite[Definition 2.2]{Kig23}}]\label{defn.tree-notation}
	\begin{enumerate}[label=\textup{(\arabic*)},align=left,leftmargin=*,topsep=2pt,parsep=0pt,itemsep=2pt]
		\item For $w \in T$, define $\pi \colon T \to T$ by
		\begin{equation*}\label{defn.pi}
			\pi(w) \coloneqq 
			\begin{cases}
				w_{n - 1} \quad &\text{if $w \neq \phi$ and $\overline{\phi w} = (w_{0}, \dots, w_{n})$,} \\
				\phi \quad &\text{if $w = \phi$.}
			\end{cases}
		\end{equation*}
		Set $S(w) \coloneqq \{ v \in T \mid \pi(v) = w \} \setminus \{ w \}$.
%		\begin{equation*}\label{defn.children}
%			S(w) = \{ v \in T \mid \pi(v) = w \} \setminus \{ w \},
%		\end{equation*}
%		and $N_{\ast} \coloneqq \sup_{w \in T}\#S(w)$.
		Moreover, for $k \in \mathbb{N}$, we define $S^{k}(w)$ inductively as
		\[
		S^{k + 1}(w) = \bigcup_{v \in S(w)}S^{k}(v).
		\]
		For $A \subseteq T$, define $S^{k}(A) \coloneqq \bigcup_{w \in A}S^{k}(A)$.
		\item For $w \in T$ and $n \in \mathbb{N} \cup \{ 0 \}$, define $\abs{w} \coloneqq \min\{ n \ge 0 \mid \pi^{n}(w) = \phi \}$ and $T_{n} \coloneqq \{ w \in T \mid \abs{w} = n \}$.
%		We also use $\abs{w}$ to denote $\abs{w}_{T}$ if no confusion may occur.
%		\item For $w \in T$, define
%		\begin{equation*}\label{defn.descendant}
%			T(w) = \{ v \in T \mid \text{there exists $n \ge 0$ such that $\pi^{n}(v) = w$} \}.
%		\end{equation*}
%		For $A \subseteq T$, define $T(A) \coloneqq \bigcup_{w \in A}T(w)$.
		\item Define $\Sigma \coloneqq \{ (\omega_{n})_{n \ge 0} \mid \text{$\omega_{n} \in T_{n}$ and $\omega_{n} = \pi(\omega_{n + 1})$ for all $n \in \mathbb{N} \cup \{ 0 \}$} \}$. 
		For $\omega = (\omega_{n})_{n \ge 0} \in \Sigma$, we write $[\omega]_{n}$ for $\omega_{n} \in T_{n}$.
		For $w \in T$, define $\Sigma_{w} \coloneqq \{ (\omega_{n})_{n \ge 0} \in \Sigma \mid \text{$\omega_{\abs{w}} = w$} \}$. 
		For $A \subseteq T$, define $\Sigma_{A} \coloneqq \bigcup_{w \in A}\Sigma_{w}$.
%		We also use $\Sigma$, $\Sigma_{w}$, $\Sigma_{A}$ to denote $\Sigma(T)$, $\Sigma_{w}(T)$ and $\Sigma_{A}(T)$ respectively when no confusion may occur.
	\end{enumerate}
\end{defn}

We introduce a partition parametrized by a rooted tree (see \cite[Definition 2.2.1]{Kig20} and \cite[Lemma 3.6]{Sas23}).

\begin{defn}[Partition parametrized by a tree]\label{defn.partition}
	Let $K$ be a compact metrizable topological space without isolated points. 
	A family of non-empty compact subsets $\{ K_{w} \}_{w \in T}$ of $K$ is called a \emph{partition of $K$ parametrized by the rooted tree $(T, E_{T}, \phi)$} if and only if it satisfies the following conditions:
	\begin{enumerate}[label=\textup{(P\arabic*)},align=left,leftmargin=*,topsep=2pt,parsep=0pt,itemsep=2pt]
		\item $K_{\phi} = K$ and for any $w \in T$, $\#K_{w} \ge 2$ and $K_{w} = \bigcup_{v \in S(w)}K_{v}$.
		\item For any $w \in \Sigma$, $\bigcap_{n \ge 0}K_{[\omega]_{n}}$ is a single point.
	\end{enumerate}
\end{defn}

In the rest of this section, we fix a compact metrizable topological space without isolated points $K$, a locally finite rooted tree $(T, E_{T}, \phi)$ satisfying $\#\{ v \in T \mid \{v,w\} \in E_{T} \} \ge 2$ for any $w \in T$, a partition $\{ K_{w} \}_{w \in T}$ parametrized by $(T,E_{T},\phi)$, a metric $d$ on $K$ with $\diam(K, d) = 1$, and a Borel probability measure $m$ on $K$.
In the following definition, we collect some basic pieces of the notation used in \cite{Kig20,Kig23}.
\begin{defn}
%	Let $K$ be a compact metrizable topological space without isolated points and let $\{ K_{w} \}_{w \in T}$ be a partition of $K$ parametrized by the rooted tree $(T, E_{T}, \phi)$.
	For $n \in \mathbb{N} \cup \{ 0 \}$ and $A \subseteq T_{n}$, define
	\begin{equation*}\label{defn.h-edge}
		E_{n}^{\ast} \coloneqq \bigl\{ \{ v, w \} \bigm| v, w \in T_{n}, v \neq w, K_{v} \cap K_{w} \neq \emptyset \bigr\}, 
	\end{equation*}
	and $E_{n}^{\ast}(A) = \bigl\{ \{ v, w \} \in E_{n}^{\ast} \bigm| v, w \in A \bigr\}$. 
	Let $d_{n}$ be the graph distance of $(T_{n}, E_{n}^{\ast})$.
	For $M \in \mathbb{N} \cup \{ 0 \}$, $w \in T_{n}$ and $x \in K$, define
	\begin{equation*}\label{defn.h-nbd}
		\Gamma_{M}(w) \coloneqq \{ v \in T_{n} \mid d_{n}(v, w) \le M \} \quad \text{and} \quad U_{M}(x; n) \coloneqq \bigcup_{w \in T_{n}; x \in K_{w}}\bigcup_{v \in \Gamma_{M}(w)}K_{v}.
	\end{equation*}
\end{defn}

To state geometric assumptions in \cite{Kig23}, we need the following definition (see \cite[Definitions 2.2.1 and 3.1.15]{Kig20}.) 
\begin{defn}
	\begin{enumerate}[label=\textup{(\arabic*)},align=left,leftmargin=*,topsep=2pt,parsep=0pt,itemsep=2pt]
        \item The partition $\{ K_{w} \}_{w \in T}$ is said to be \emph{minimal} if and only if $K_{w} \setminus \bigcup_{v \in T_{\abs{w}} \setminus \{ w \}} \neq \emptyset$ for any $w \in T$.
        \item The partition $\{ K_{w} \}_{w \in T}$ is said to be \emph{uniformly finite} if and only if $\sup_{w \in T}\#\Gamma_{1}(w) < \infty$. 
    \end{enumerate}
\end{defn}

We also use the following notation for simplicity.
\begin{defn}\label{d:nbd}
	For $n \in \mathbb{N} \cup \{ 0 \}$ and $U \subseteq K$, define $T_{n}[U] \coloneqq \{ w \in T_{n} \mid K_{w} \cap U \neq \emptyset \}$.
\end{defn}

Now we describe basic geometric conditions in \cite{Kig23}. 
The conditions \ref{BF1}, \ref{BF2} and \eqref{BF-nooverlap} in \ref{BF3} below are important to follow the rest of this paper. 
\begin{assum}[{\cite[Assumption 2.15]{Kig23}}]\label{assum.BF}
	Let $(K, \mathcal{O})$ be a connected compact metrizable space, $\{ K_{w} \}_{w \in T}$ a partition parametrized by the rooted tree $(T, \phi)$, $d$ a metric on $K$ that is compatible with the topology $\mathcal{O}$ and $\diam(K, d) = 1$ and $\measure$ a Borel probability measure on $K$. 
	There exist $M_{\ast} \in \mathbb{N}$ and $r_{\ast} \in (0, 1)$ such that the following conditions (1)--(5) hold.
	\begin{enumerate}[label=\textup{(\arabic*)},align=left,leftmargin=*,topsep=2pt,parsep=0pt,itemsep=2pt]
		\item\label{BF1} $K_{w}$ is connected for any $w \in T$, $\{ K_{w} \}_{w \in T}$ is minimal and uniformly finite, and $\inf_{m \ge 0}\min_{w \in T_{m}}\#S(w) \ge 2$.
		\item\label{BF2} There exist $c_{i} > 0$, $i \in \{ 1,\dots,5 \}$, such that the following conditions (2A)--(2C) are true.
		\begin{itemize}
			\item [(2A)]\label{BF2A} For any $w \in T$,
			\begin{equation}\label{BF.bi-Lip}
				c_{1}r_{\ast}^{\abs{w}} \le \diam(K_{w}, \metric) \le c_{2}r_{\ast}^{\abs{w}}.
			\end{equation}
			\item [(2B)]\label{BF2B} For any $n \in \mathbb{N}$ and $x \in K$,
			\begin{equation}\label{BF.adapted}
				B_{d}(x, c_{3}r_{\ast}^{n}) \subseteq U_{M_{\ast}}(x; n) \subseteq B_{d}(x, c_{4}r_{\ast}^{n}).
			\end{equation}
			\item [(2C)]\label{BF2C} For any $n \in \mathbb{N}$ and $w \in T_{n}$, there exists $x \in K_{w}$ satisfying
			\begin{equation}\label{BF.thick}
				K_{w} \supseteq B_{d}(x, c_{5}r_{\ast}^{n}).
			\end{equation}
		\end{itemize}
		\item\label{BF3} There exist $m_{1} \in \mathbb{N}$, $\gamma_{1} \in (0, 1)$ and $\gamma \in (0, 1)$ such that
		\begin{equation}\label{BF.super-exp}
			\measure(K_{w}) \ge \gamma\measure(K_{\pi(w)}) \quad \text{for any $w \in T$,}
		\end{equation}
		and
		\begin{equation}\label{BF.sub-exp}
			\measure(K_{v}) \le \gamma_{1}\measure(K_{w}) \quad \text{for any $w \in T$ and $v \in S^{m_{1}}(w)$.}
		\end{equation}
		Furthermore, $\measure$ is volume doubling with respect to $\metric$ and  
		\begin{equation}\label{BF-nooverlap}
			\measure(K_{w}) = \sum_{v \in S(w)}\measure(K_{v}) \quad \text{for any $w \in T$.}
		\end{equation}
		\item\label{BF4} There exists $M_{0} \ge M_{\ast}$ such that for any $w \in T$, $k \ge 1$ and any $v \in S^{k}(w)$,
		\[
		\Gamma_{M_{\ast}}(v) \cap S^{k}(w) \subseteq \biggl\{ v' \in T_{\abs{v}} \biggm| 
		\begin{minipage}{190pt}
            there exist $l \le M_{0}$ and $(v_{0},\dots,v_{l}) \in S^{k}(w)^{l + 1}$
            such that $(v_{j - 1},v_{j}) \in E_{\abs{v}}^{\ast}$ for any $j \in \{ 1,\dots,l \}$
        \end{minipage}
		\biggr\}. 
		\]
		\item\label{BF5} For any $w \in T$, $\pi(\Gamma_{M_{\ast} + 1}(w)) \subseteq \Gamma_{M_{\ast}}(\pi(w))$.
	\end{enumerate}
\end{assum}

Note that if a Borel probability measure $m$ on $K$ satisfies \eqref{BF-nooverlap}, then we have 
\begin{equation}\label{BF.nomass-intersection}
	m(K_{v} \cap K_{w}) = 0 \quad \text{for any $v,w \in T$ with $v \neq w$ and $\abs{v} = \abs{w}$;} 
\end{equation}
see \cite[Proposition 8.7]{KS.gc} for a proof of this fact. 

Next we introduce conductance, neighbor disparity constants and the notion of $p$-conductive homogeneity in Definitions \ref{defn.nei-const}, \ref{defn.con-const} and \ref{defn.pCH}.
We also recall the notion of a covering system in Definition \ref{defn.covering}, which is used in the definition of neighbor disparity constants. 
See \cite[Sections 2.2, 2.3 and 3.3]{Kig23} for further details on these topics.
In the rest of this section, we fix $p \in (1,\infty)$ unless otherwise stated. We will state some definitions and statements below for any $p \in [1,\infty)$, but on each such occasion we will explicitly declare that we let $p \in [1,\infty)$. 
\begin{defn}[{\cite[Definitions 2.17 and 3.4]{Kig23}}]\label{defn.con-const}
	Let $p \in [1,\infty)$,  $n \in \mathbb{N} \cup \{ 0 \}$ and $A \subseteq T_{n}$.
    \begin{enumerate}[label=\textup{(\arabic*)},align=left,leftmargin=*,topsep=2pt,parsep=0pt,itemsep=2pt]
        \item Define $\mathcal{E}_{p,A}^{n} \colon \mathbb{R}^{A} \to [0,\infty)$ by
        \[
        \mathcal{E}_{p,A}^{n}(f) \coloneqq \sum_{\{u,v\} \in E_{n}^{\ast}(A)}\abs{f(u) - f(v)}^{p}, \quad f \in \mathbb{R}^{A}.
        \]
        We write $\mathcal{E}_{p}^{n}(f)$ for $\mathcal{E}_{p,T_{n}}^{n}(f)$.
        \item For $A_{0},A_{1} \subseteq A$, define $\mathrm{cap}_{p}^{n}(A_{0},A_{1};A)$ by
        \[
        \mathrm{cap}_{p}^{n}(A_{0},A_{1};A) \coloneqq \inf\bigl\{ \mathcal{E}_{p,A}^{n}(f) \bigm| f \in \mathbb{R}^{A}, \text{$f|_{A_{i}} = i$ for $i \in \{ 0,1 \}$} \bigr\}.
        \]
        \item (Conductance constant) For $A_{1},A_{2} \subseteq A$ and $k \in \mathbb{N} \cup \{ 0 \}$, define 
        \[
        \mathcal{E}_{p,k}(A_{1},A_{2},A) \coloneqq \mathrm{cap}_{p}^{n + k}\bigl(S^{k}(A_{1}), S^{k}(A_{2}); S^{k}(A)\bigr).
        \]
%        Also, for $w \in A$ and $M \in \mathbb{N}$, define
%    	\begin{equation*}\label{defn.conductance-local}
%    		\mathcal{E}_{M, p, k}(w, A) \coloneqq \mathcal{E}_{p,k}(\{w\},A \setminus \Gamma_{M}^{A}(w),A),
%    	\end{equation*}
%        which is called the \emph{$p$-conductance constant} of $w$ in $A$ at level $k$.
    	For $M \in \mathbb{N}$, define $\mathcal{E}_{M, p, k} \coloneqq \sup_{w \in T}\mathcal{E}_{p,k}(\{w\},T_{\abs{w}} \setminus \Gamma_{M}(w),T_{\abs{w}})$.
    \end{enumerate}
\end{defn}

\begin{defn}[{\cite[Definitions 2.26-(3) and 2.29]{Kig23}}]\label{defn.covering}
	Let $N_{T},N_{E} \in \mathbb{N}$. 
	\begin{enumerate}[label=\textup{(\arabic*)},align=left,leftmargin=*,topsep=2pt,parsep=0pt,itemsep=2pt]
	\item Let $n \in \mathbb{N} \cup \{ 0 \}$ and $A \subseteq T_{n}$. A collection $\{ G_{i} \}_{i = 1}^{k}$ with $G_{i} \subseteq T_{n}$ is called a \emph{covering of $(A,E_{n}^{\ast}(A))$ with covering numbers $(N_T,N_E)$} if and only if $A = \bigcup_{i = 1}^{k}G_{k}$, $\max_{x \in A}\#\{ i \mid x \in G_{i} \} \le N_T$ and for any $(u,v) \in E_{n}^{\ast}(A)$, there exists $l \le N_E$ and $\{ w(1),\dots,w(l + 1)\} \subseteq A$ such that $w(1) = u$, $w(l + 1) = v$ and $(w(i),w(i + 1)) \in \bigcup_{j = 1}^{k}E_{n}^{\ast}(G_{j})$ for any $i \in \{ 1,\dots,l \}$. 
	\item Let $\mathscr{J} \subseteq \bigcup_{n \in \mathbb{N} \cup \{ 0 \}}\{ A \mid A \subseteq T_{n} \}$. The collection $\mathscr{J}$ is called a \emph{covering system with covering number $(N_T,N_E)$} if and only if the following conditions are satisfied: 
		\begin{enumerate}[label=\textup{(\roman*)},align=left,leftmargin=*,topsep=2pt,parsep=0pt,itemsep=2pt]
       		\item $\sup_{A \in \mathscr{J}}\#A < \infty$. 
        	\item For any $w \in T$ and $k \in \mathbb{N}$, there exists a finite subset $\mathscr{N} \subseteq \mathscr{J} \cap T_{\abs{w} + k}$ such that $\mathscr{N}$ is a covering of $\bigl(S^{k}(w),E_{\abs{w} + k}^{\ast}(S^{k}(w))\bigr)$ with covering numbers $(N_T,N_E)$. 
        	\item For any $G \in \mathscr{J}$ and $k \in \mathbb{N} \cup \{ 0 \}$, if $G \subseteq T_{n}$, then there exists a finite subset $\mathscr{N} \subseteq \mathscr{J} \cap T_{n + k}$ such that $\mathscr{N}$ is a covering of $\bigl(S^{k}(G),E_{n + k}^{\ast}(S^{k}(G))\bigr)$ with covering numbers $(N_T,N_E)$.
    	\end{enumerate}
    	The collection $\mathscr{J}$ is simply said to be a \emph{covering system} if $\mathscr{J}$ is a covering system with covering numbers $(N_T,N_E)$ for some $(N_T, N_E) \in \mathbb{N}^{2}$. 
    \end{enumerate}
\end{defn}

\begin{defn}[{\cite[Definitions 2.26 and 2.29]{Kig23}}]\label{defn.nei-const}
    Let $p \in [1,\infty)$, $n \in \mathbb{N}$ and $A \subseteq T_{n}$.
    \begin{enumerate}[label=\textup{(\arabic*)},align=left,leftmargin=*,topsep=2pt,parsep=0pt,itemsep=2pt]
        \item For $k \in \mathbb{N} \cup \{ 0 \}$ and $f \colon T_{n + k} \to \mathbb{R}$, define $P_{n,k}f \colon T_{n} \to \mathbb{R}$ by
        	\[
        	(P_{n,k}f)(w) \coloneqq \frac{1}{\sum_{v \in S^{k}(w)}\measure(K_{v})}\sum_{v \in S^{k}(w)}f(v)\measure(K_{v}), \quad w \in T_{n}.
        	\]
        (Note that $P_{n,k}f$ depends on the measure $\measure$.)
        \item (Neighbor disparity constant) For $k \in \mathbb{N} \cup \{ 0 \}$, define
        \[
        \sigma_{p,k}(A) \coloneqq \sup_{f \colon S^{k}(A) \to \mathbb{R}}\frac{\mathcal{E}_{p,A}^{n}(P_{n,k}f)}{\mathcal{E}_{p,S^{k}(A)}^{n + k}(f)}. 
        \]
        \item Let $\mathscr{J} \subseteq \bigcup_{n \ge 0}\{ A \mid A \subseteq T_{n}\}$ be a covering system. Define
        \[
        \sigma_{p,k,n}^{\mathscr{J}} \coloneqq \max\{ \sigma_{p,k}(A) \mid A \in \mathscr{J}, A \subseteq T_{n} \} \quad \text{and} \quad \sigma_{p,k}^{\mathscr{J}} \coloneqq \sup_{n \in \mathbb{N} \cup \{ 0 \}}\sigma_{p,k,n}^{\mathscr{J}}.
        \]
    \end{enumerate}
\end{defn}

\begin{defn}[{\cite[Definition 3.4]{Kig23}}]\label{defn.pCH}
    Let $p \in [1,\infty)$.
    The compact metric space $K$ (with a partition $\{ K_{w} \}_{w \in T}$ and a measure $\measure$) is said to be \emph{$p$-conductively homogeneous} if and only if there exists a covering system $\mathscr{J}$ such that
    \begin{equation}\label{d:pCH}
        \sup_{k \in \mathbb{N} \cup \{ 0 \}}\sigma_{p,k}^{\mathscr{J}}\mathcal{E}_{M_{\ast},p,k} < \infty.
    \end{equation}
\end{defn}

\begin{thm}[A part of {\cite[Theorem 3.30]{Kig23}}]\label{t:pCH}
    Let $p \in [1,\infty)$ and suppose that Assumption \ref{assum.BF} holds.
    If $K$ is $p$-conductively homogeneous, then there exist $c_{1},c_{2},\sigma_{p} \in (0,\infty)$ and a covering system $\mathscr{J}$ such that for any $k \in \mathbb{N} \cup \{ 0 \}$, 
    \begin{equation}\label{pCH.1}
        c_{1}\sigma_{p}^{-k} \le \mathcal{E}_{M_{\ast},p,k} \le c_{2}\sigma_{p}^{-k} \quad \text{and} \quad c_{1}\sigma_{p}^{k} \le \sigma^{\mathscr{J}}_{p,k} \le c_{2}\sigma_{p}^{k}. 
    \end{equation}
\end{thm}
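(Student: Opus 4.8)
The plan is to deduce both two-sided estimates in \eqref{pCH.1} from three ingredients: submultiplicativity (up to multiplicative constants) of the neighbor disparity constants $\sigma_{p,k}^{\mathscr{J}}$, the analogous submultiplicativity of the conductance constants $\mathcal{E}_{M_{\ast},p,k}$, and a \emph{duality} relation asserting that the product $\sigma_{p,k}^{\mathscr{J}}\,\mathcal{E}_{M_{\ast},p,k}$ is bounded above and below by positive constants uniformly in $k$. Once these are in hand, the common exponential rate $\sigma_{p}$ is extracted by Fekete's subadditivity lemma, and the four inequalities in \eqref{pCH.1} follow by elementary bookkeeping.

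First I would establish the submultiplicativity of the neighbor disparity constants. The averaging operators of Definition \ref{defn.nei-const}-(1) satisfy the tower identity $P_{n,k+l}=P_{n,k}\circ P_{n+k,l}$, which uses precisely the additivity of masses \eqref{BF-nooverlap} (together with $\bigcup_{v\in S^{k}(w)}S^{l}(v)=S^{k+l}(w)$). Hence, for $A\subseteq T_{n}$ and $f\colon S^{k+l}(A)\to\mathbb{R}$,
\[
\mathcal{E}_{p,A}^{n}(P_{n,k+l}f)=\mathcal{E}_{p,A}^{n}\bigl(P_{n,k}(P_{n+k,l}f)\bigr)\le \sigma_{p,k}(A)\,\mathcal{E}_{p,S^{k}(A)}^{n+k}(P_{n+k,l}f).
\]
Covering $S^{k}(A)$ by finitely many elements of $\mathscr{J}$ with covering numbers $(N_{T},N_{E})$ and summing the bounds $\mathcal{E}_{p,G}^{n+k}(P_{n+k,l}f)\le\sigma_{p,l}(G)\,\mathcal{E}_{p,S^{l}(G)}^{n+k+l}(f)$ over the cover (the overlaps being controlled by $N_{T},N_{E}$) yields $\sigma_{p,k+l}^{\mathscr{J}}\lesssim\sigma_{p,k}^{\mathscr{J}}\,\sigma_{p,l}^{\mathscr{J}}$. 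Symmetrically, the series/parallel laws for the discrete $p$-energies $\mathcal{E}_{p,A}^{n}$ underlying the capacities $\mathrm{cap}_{p}^{n}$ of Definition \ref{defn.con-const}, combined with Assumption \ref{assum.BF}-\ref{BF4},\ref{BF5} (which control how the combinatorial neighborhoods $\Gamma_{M_{\ast}}(\cdot)$ propagate under the refinement $S^{k}$), give $\mathcal{E}_{M_{\ast},p,k+l}\lesssim\mathcal{E}_{M_{\ast},p,k}\,\mathcal{E}_{M_{\ast},p,l}$. I would cite \cite{Kig23} for the detailed combinatorics behind both of these estimates.

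Applying Fekete's subadditivity lemma to $\log(C\sigma_{p,k}^{\mathscr{J}})$ and $\log(C'\mathcal{E}_{M_{\ast},p,k})$, where $C,C'$ absorb the implicit constants, the limits $\sigma_{p}\coloneqq\lim_{k\to\infty}(\sigma_{p,k}^{\mathscr{J}})^{1/k}$ and $\tau\coloneqq\lim_{k\to\infty}(\mathcal{E}_{M_{\ast},p,k})^{1/k}$ exist, and submultiplicativity furnishes the lower bounds $\sigma_{p,k}^{\mathscr{J}}\gtrsim\sigma_{p}^{k}$ and $\mathcal{E}_{M_{\ast},p,k}\gtrsim\tau^{k}$ for all $k$. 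The decisive input is then the duality relation
\[
\sigma_{p,k}^{\mathscr{J}}\,\mathcal{E}_{M_{\ast},p,k}\asymp 1\qquad\text{uniformly in }k,
\]
whose upper half is exactly the $p$-conductive homogeneity \eqref{d:pCH} and whose lower half is a Poincaré-type inequality valid under Assumption \ref{assum.BF} (again from \cite{Kig23}). In particular $\sigma_{p},\tau\in(0,\infty)$, and taking $k$-th roots and letting $k\to\infty$ in this relation forces $\sigma_{p}\tau=1$, i.e.\ $\tau=\sigma_{p}^{-1}$.

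Finally I would close the estimates: the duality relation gives $\mathcal{E}_{M_{\ast},p,k}\asymp(\sigma_{p,k}^{\mathscr{J}})^{-1}$, so the lower bound $\sigma_{p,k}^{\mathscr{J}}\gtrsim\sigma_{p}^{k}$ upgrades to $\mathcal{E}_{M_{\ast},p,k}\lesssim\sigma_{p}^{-k}$, while the lower bound $\mathcal{E}_{M_{\ast},p,k}\gtrsim\tau^{k}=\sigma_{p}^{-k}$ upgrades to $\sigma_{p,k}^{\mathscr{J}}\lesssim\sigma_{p}^{k}$; together these are \eqref{pCH.1}. The main obstacle is the geometric trio consisting of the two submultiplicativity estimates and the lower half of the duality relation: all three rest on the full strength of Assumption \ref{assum.BF} (minimality and uniform finiteness of the partition, the volume-regularity bounds \eqref{BF.super-exp}--\eqref{BF-nooverlap}, and the neighborhood-compatibility conditions \ref{BF4},\ref{BF5}) and on the combinatorics of the covering system $\mathscr{J}$, whereas the Fekete extraction and the final bookkeeping are routine once these are available.
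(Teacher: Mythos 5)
Your proposal is correct, and it reconstructs the argument along exactly the lines of the original source: the paper itself gives no proof of Theorem \ref{t:pCH} (it is quoted from \cite[Theorem 3.30]{Kig23}), and Kigami's proof there is precisely your combination of submultiplicativity of $\sigma_{p,k}^{\mathscr{J}}$ (via the tower identity $P_{n,k+l}=P_{n,k}\circ P_{n+k,l}$ and the covering system), submultiplicativity of $\mathcal{E}_{M_{\ast},p,k}$, the general lower bound $\sigma_{p,k}^{\mathscr{J}}\mathcal{E}_{M_{\ast},p,k}\gtrsim 1$ together with the conductive-homogeneity upper bound, and Fekete's lemma to extract the common rate $\sigma_{p}$ with $\tau=\sigma_{p}^{-1}$. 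Since the approach coincides with the cited proof and the bookkeeping deducing \eqref{pCH.1} is carried out correctly, there is nothing further to fix.
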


The following weak monotonicity is a key consequence of the $p$-conductive homogeneity.

\begin{lem}[Weak monotonicity]\label{lem.Kig-wm}
	Let $p \in [1,\infty)$ and suppose that Assumption \ref{assum.BF} holds.
    If $K$ is $p$-conductively homogeneous, then there exists $C \in (0,\infty)$ such that for any $k,l \in \mathbb{N}$, any $A \subseteq T_{k}$ and any $f \in L^{1}(K,m)$, 
    \begin{equation}\label{e:Kig-wm}
    	\sigma_{p}^{k}\mathcal{E}_{p,A}^{k}(P_{k}f) \le C\sigma_{p}^{k + l}\mathcal{E}_{p,S^{l}(A)}^{k + l}(P_{k + l}f), 
    \end{equation}
    where $\sigma_{p}$ is the constant in \eqref{pCH.1}. 
\end{lem}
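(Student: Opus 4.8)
The plan is to combine a measure-theoretic consistency identity for the projections $P_{k}$ with the definition of the neighbor disparity constant, and then to extract the exponential bound on the latter from the $p$-conductive homogeneity via Theorem \ref{t:pCH}. Here, for $f \in L^{1}(K,m)$ and $w \in T_{k}$, I write $P_{k}f(w) \coloneqq m(K_{w})^{-1}\int_{K_{w}}f\,dm$. The first step is the identity $P_{k,l}(P_{k+l}f) = P_{k}f$ on $T_{k}$: for $w \in T_{k}$ and $v \in S^{l}(w)$ one has $P_{k+l}f(v)\,m(K_{v}) = \int_{K_{v}}f\,dm$, and since $K_{w} = \bigcup_{v \in S^{l}(w)}K_{v}$ with $\sum_{v \in S^{l}(w)}m(K_{v}) = m(K_{w})$ by iterating \eqref{BF-nooverlap} and with $m(K_{v} \cap K_{v'}) = 0$ for distinct $v,v'$ by \eqref{BF.nomass-intersection}, the weighted average defining $P_{k,l}(P_{k+l}f)(w)$ telescopes to $m(K_{w})^{-1}\int_{K_{w}}f\,dm = P_{k}f(w)$.

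With this identity in hand, I would apply the definition of $\sigma_{p,l}(A)$ (Definition \ref{defn.nei-const}, with base level $k$ and descendant parameter $l$) to the test function $g \coloneqq P_{k+l}f$ restricted to $S^{l}(A)$. Since $\mathcal{E}_{p,A}^{k}$ and the restriction of $P_{k,l}g$ to $A$ depend only on the values on $A$ and on $S^{l}(A)$, the identity of the previous step gives
\[
\mathcal{E}_{p,A}^{k}(P_{k}f) = \mathcal{E}_{p,A}^{k}(P_{k,l}g) \le \sigma_{p,l}(A)\,\mathcal{E}_{p,S^{l}(A)}^{k+l}(P_{k+l}f).
\]
Multiplying by $\sigma_{p}^{k}$ and using $\sigma_{p}^{k}\sigma_{p}^{l} = \sigma_{p}^{k+l}$, the asserted estimate \eqref{e:Kig-wm} reduces to the uniform bound
\[
\sigma_{p,l}(A) \le C\sigma_{p}^{l} \qquad \text{for all } k,l \in \mathbb{N} \text{ and all } A \subseteq T_{k},
\]
with $C$ independent of $k$, $l$, and $A$.

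This uniform bound for \emph{arbitrary} subsets $A$ is the crux and the main obstacle, because Theorem \ref{t:pCH} controls only $\sigma_{p,l}^{\mathscr{J}} = \sup_{n}\max\{\sigma_{p,l}(G) \mid G \in \mathscr{J},\ G \subseteq T_{n}\}$, i.e.\ the neighbor disparity restricted to the covering system $\mathscr{J}$, yielding $\sigma_{p,l}^{\mathscr{J}} \le c_{2}\sigma_{p}^{l}$. To pass from $\mathscr{J}$-subsets to a general $A$, I would invoke the covering-system machinery: using Definition \ref{defn.covering} one covers the relevant fine-scale set by finitely many elements of $\mathscr{J}$ with covering numbers $(N_{T},N_{E})$ depending only on $\mathscr{J}$, decomposes $\mathcal{E}_{p,A}^{k}(P_{k,l}g)$ into contributions localized to these covering elements, and estimates each by $\sigma_{p,l}^{\mathscr{J}}$. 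The bounded overlap $N_{T}$ controls the splitting through the reverse Minkowski inequality (Proposition \ref{prop.reverse}), while the connectivity bound $N_{E}$ handles the edges crossing between distinct covering elements by a telescoping-along-paths argument combined with H\"{o}lder's inequality; both are absorbed into a constant $C = C(N_{T},N_{E},c_{2})$. Organizing this decomposition so that neither the overlaps nor the inter-element edges introduce any dependence on $k$, $l$, or $A$ is precisely the delicate point, and is the reason the full strength of the covering system (rather than just Theorem \ref{t:pCH}) is needed here.
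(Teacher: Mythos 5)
Your proposal is correct and takes essentially the same route as the paper: the paper's proof is a one-line combination of \cite[Lemma 2.27]{Kig23} with \eqref{pCH.1}, and the uniform bound $\sigma_{p,l}(A) \le C\sigma_{p}^{l}$ for \emph{arbitrary} $A$ that you correctly single out as the crux --- and sketch via bounded overlap, path-telescoping and H\"{o}lder --- is precisely the content of that cited lemma, while your consistency identity $P_{k,l}(P_{k+l}f) = P_{k}f$ (via \eqref{BF-nooverlap}) is the implicit remaining ingredient. The only difference is that you reconstruct the covering-system argument that the paper outsources to Kigami's memoir.
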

\begin{proof}
This follows immediately by combining \cite[Lemma 2.27]{Kig23} and \eqref{pCH.1}.
\end{proof}

We also recall the ``Sobolev space'' $\mathcal{W}^{p}$ introduced in \cite[Lemma 3.13]{Kig23}.
\begin{defn}\label{d:Kig-sob}
	Let $p \in [1,\infty)$.
	Suppose that Assumption \ref{assum.BF} holds and that $K$ is $p$-conductively homogeneous.
    Let $\sigma_{p}$ be the constant in \eqref{pCH.1}.
    \begin{enumerate}[label=\textup{(\arabic*)},align=left,leftmargin=*,topsep=2pt,parsep=0pt,itemsep=2pt]
    	\item For $n \in \mathbb{N} \cup \{ 0 \}$, $w \in T_{n}$, $E \in \mathcal{B}(K)$ with $E \supseteq K_{w}$ and $f \in L^{1}(E, m|_{E})$, define $P_{n}f(w) \coloneqq \fint_{K_{w}}f\,dm$.  
        \item We define $\mathcal{N}_{p} \colon L^{p}(K,m) \to [0,\infty]$ and $\mathcal{W}^{p} \subseteq L^{p}(K,m)$ by
        \begin{align*}
            \mathcal{N}_{p}(f) &\coloneqq \left(\sup_{n \in \mathbb{N} \cup \{ 0 \}}\sigma_{p}^{n}\mathcal{E}_{p}^{n}(P_{n}f)\right)^{1/p}, \quad f \in L^{p}(K,m), \\
            \mathcal{W}^{p} &\coloneqq \bigl\{ f \in L^{p}(K,\measure) \bigm| \mathcal{N}_{p}(f) < \infty \bigr\}.
        \end{align*}
        Note that $\mathcal{N}_{p}(f) = 0$ if and only if $f$ is constant on $K$ (see \cite[Section 8.1]{KS.gc} for details).
        We also equip $\mathcal{W}^{p}$ with the norm $\norm{\,\cdot\,}_{\mathcal{W}^{p}}$ defined by 
        \[
        \norm{f}_{\mathcal{W}^{p}} \coloneqq \left(\norm{f}_{L^{p}(K,m)}^{p} + \mathcal{N}_{p}(f)^{p}\right)^{1/p}, \quad f \in \mathcal{W}^{p}. 
        \]
%        \item Define
%        \[
%            \mathcal{U} \coloneqq \biggl\{ \mathscr{E} \colon \mathcal{W}^{p} \to [0,\infty) \biggm|
%                \begin{array}{c}
%                    \text{$\mathscr{E}^{1/p}$ is a seminorm on $\mathcal{W}^{p}$, there exist $c_{1},c_{2} > 0$} \\
%                    \text{such that $c_{1}\mathcal{N}_{p}(f) \le \mathscr{E}(f)^{1/p} \le c_{2}\mathcal{N}_{p}(f)$ for any $f \in \mathcal{W}^{p}$}
%                \end{array}
%            \biggr\}.
%        \]
        \item For $n \in \mathbb{N} \cup \{ 0 \}$, $A \subseteq T_{n}$, $E \in \mathcal{B}(K)$ with $E \supseteq \bigcup_{w \in A}K_{w}$ and $f \in L^{1}(E,m|_{E})$, we define
        \begin{equation*}
            \widetilde{\mathcal{E}}_{p,A}^{n}(f) \coloneqq \sigma_{p}^{n}\mathcal{E}_{p,A}^{n}(P_{n}f). 
        \end{equation*}
        We also set $\widetilde{\mathcal{E}}_{p}^{n}(f) \coloneqq \widetilde{\mathcal{E}}_{p,T_{n}}^{n}(f)$ for $f \in L^{1}(K,m)$.
    \end{enumerate}
\end{defn}

Now we can introduce a framework to construct a $p$-resistance form on $K$. 
\begin{assum}\label{assum.CH}
	Let $(K,d,\{ K_{w} \}_{w \in T},m)$ satisfy Assumption \ref{assum.BF}. 
    In addition, $(K,d,\{ K_{w} \}_{w \in T},m,p)$ satisfies the following conditions:
    \begin{enumerate}[label=\textup{(\arabic*)},align=left,leftmargin=*,topsep=2pt,parsep=0pt,itemsep=2pt]
        \item \label{BA-AR} The measure $m$ is Ahlfors regular with respect to $d$. (Recall \eqref{AR}.)
        \item \label{BA-pch} $K$ is $p$-conductively homogeneous.
        \item \label{BA-ARC} $\sigma_{p} > 1$, where $\sigma_{p}$ is the constant in \eqref{pCH.1}. 
    \end{enumerate}
\end{assum}
\begin{rmk}\label{rmk.assm-pch}
	\begin{enumerate}[label=\textup{(\arabic*)},align=left,leftmargin=*,topsep=2pt,parsep=0pt,itemsep=2pt]
		\item\label{it:rmk-ARC} By \cite[Theorem 4.6.9]{Kig20}, Assumption \ref{assum.CH}-\ref{BA-ARC} is equivalent to $p > \dim_{\mathrm{ARC}}(K,d)$, where $\dim_{\mathrm{ARC}}(K,d)$ denotes the Ahlfors regular conformal dimension of $(K,d)$. (See, e.g., \cite[(1.1)]{Kig23} for the definition of $\dim_{\mathrm{ARC}}(K,d)$.)
		\item\label{it:rmk-pCH} It is highly non-trivial in general to verify that a given compact metric space $K$ is $p$-conductively homogeneous. In \cite[Sections 4.3--4.6]{Kig23} and \cite{KO+}, the $p$-conductive homogeneity for $p > \dim_{\mathrm{ARC}}(K,d)$ has been proved for various large classes of self-similar sets $K$ in $\mathbb{R}^{n}$ equipped with the Euclidean metric $d$.
	\end{enumerate}
\end{rmk}

In the following theorem, we recall a fundamental result on $\mathcal{W}^{p}$. 
\begin{thm}[{\cite[Lemmas 3.16, 3.19, 3.24 and Theorem 3.22]{Kig23}}, {\cite[Theorem 8.16]{KS.gc}}]\label{thm.Wp}
    Let $p \in [1,\infty)$.
    Suppose that $(K,d,\{ K_{w} \}_{w \in T},m)$ satisfies Assumption \ref{assum.BF} and that $K$ is $p$-conductively homogeneous.
    Then $\mathcal{W}^{p}$ equipped with the norm $\norm{\,\cdot\,}_{\mathcal{W}^{p}}$ is a Banach space.
    If $p > 1$, then $\mathcal{W}^{p}$ is reflexive and separable. 
    If $p > \dim_{\mathrm{ARC}}(K,d)$, or equivalently $\sigma_{p} > 1$, then $\mathcal{W}^{p} \subseteq \contfunc(K)$ and $\mathcal{W}^{p}$ is dense in $(\contfunc(K),\norm{\,\cdot\,}_{\sup})$.
\end{thm}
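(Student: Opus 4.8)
The plan is to note that each assertion is contained in the cited works \cite{Kig23,KS.gc}, and to indicate how it follows; the functional-analytic parts run in close parallel to the arguments already given in Section~\ref{sec.Kslimit}.

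First I would check that $\mathcal{N}_{p}$ is a seminorm on $\mathcal{W}^{p}$ (equivalently, that $(\mathcal{N}_{p}^{p},\mathcal{W}^{p})$ is a $p$-energy form) and that $\mathcal{W}^{p}$ is complete. For each fixed $n$, the map $f \mapsto \mathcal{E}_{p}^{n}(P_{n}f)^{1/p}$ is a seminorm on $L^{1}(K,m)$, since $P_{n}$ is linear and $\mathcal{E}_{p}^{n}(\,\cdot\,)^{1/p}$ is a seminorm on $\mathbb{R}^{T_{n}}$; taking the supremum over $n$ after scaling by $\sigma_{p}^{n/p}$ shows that $\mathcal{N}_{p}$ is a seminorm, vanishing precisely on the constants. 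Completeness then follows exactly as in the proof of Theorem~\ref{thm.banach}: a Cauchy sequence $\{ f_{j} \}_{j \in \mathbb{N}}$ in $\mathcal{W}^{p}$ has an $L^{p}$-limit $f$, and since $T_{n}$ is finite and $P_{n}$ is $L^{p}$-continuous we have $P_{n}f_{j}(w) \to P_{n}f(w)$ for every $w \in T_{n}$, so applying Fatou's lemma level by level gives $\mathcal{N}_{p}(f - f_{j}) \le \liminf_{k \to \infty}\mathcal{N}_{p}(f_{k} - f_{j})$, which tends to $0$ as $j \to \infty$ by the Cauchy property; hence $f \in \mathcal{W}^{p}$ and $f_{j} \to f$ in $\mathcal{W}^{p}$.

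For $p > 1$, separability is immediate from the continuous injection $\mathcal{W}^{p} \hookrightarrow L^{p}(K,m)$ together with \cite[Proposition 4.1]{AHM23}, exactly as in Theorem~\ref{thm.banach}. Reflexivity I would obtain from the Milman--Pettis theorem by exhibiting an equivalent uniformly convex norm. The weak monotonicity estimate of Lemma~\ref{lem.Kig-wm} shows that $\trinorm{f}_{\mathcal{W}^{p}} \coloneqq \bigl(\norm{f}_{L^{p}(K,m)}^{p} + \limsup_{n \to \infty}\sigma_{p}^{n}\mathcal{E}_{p}^{n}(P_{n}f)\bigr)^{1/p}$ is equivalent to $\norm{\,\cdot\,}_{\mathcal{W}^{p}}$; and since each $(\mathcal{E}_{p}^{n},\mathbb{R}^{T_{n}})$ is a $p$-energy form satisfying \ref{GC} and $P_{n}$ is linear, the Clarkson-type inequalities of Proposition~\ref{prop.GClist}-\ref{GC.Cp} transfer to $f \mapsto \sigma_{p}^{n}\mathcal{E}_{p}^{n}(P_{n}f)$, yielding the uniform convexity of $\trinorm{\,\cdot\,}_{\mathcal{W}^{p}}$ by the same computation as in the proof of Theorem~\ref{thm.banach}.

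The substantive part, and the main obstacle, is the last assertion under $\sigma_{p} > 1$, where one must use the fine geometry of Assumption~\ref{assum.BF} rather than soft arguments. The inclusion $\mathcal{W}^{p} \subseteq \contfunc(K)$ rests on a Morrey-type estimate: telescoping the differences $P_{n}f(w) - P_{n+1}f(v)$ for $v \in S(w)$ and summing the resulting series, which converges precisely because $\sigma_{p}^{-1} < 1$, shows that the piecewise averages converge uniformly and produces a H\"older bound $\abs{f(x) - f(y)} \lesssim d(x,y)^{\theta}\mathcal{N}_{p}(f)$ for a suitable $\theta > 0$; this is the content of \cite[Lemma 3.24 and Theorem 3.22]{Kig23}. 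Density of $\mathcal{W}^{p}$ in $(\contfunc(K),\norm{\,\cdot\,}_{\sup})$ then follows from the Stone--Weierstrass theorem, as $\mathcal{W}^{p}$ contains the constants, is a subalgebra of $\contfunc(K)$, and separates the points of $K$, these properties being established in \cite[Lemmas 3.16 and 3.19]{Kig23} and \cite[Theorem 8.16]{KS.gc}. The delicate point throughout is that $P_{n}$ does not commute with nonlinear contractions, so the algebra and contraction properties of $\mathcal{W}^{p}$ cannot be read off directly from those of the discrete energies and genuinely require the quantitative control supplied by $p$-conductive homogeneity.
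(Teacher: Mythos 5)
The paper itself gives no proof of Theorem \ref{thm.Wp}: it is recalled verbatim from \cite{Kig23} and \cite{KS.gc}, so your proposal has to be measured against those sources and against the parallel arguments this paper does contain, namely Theorem \ref{thm.banach} (functional-analytic part) and Lemma \ref{lem.unity-Kig}/Corollary \ref{cor.reg-Kig} (density). Your first two paragraphs are correct and are essentially Theorem \ref{thm.banach} transplanted to the discrete energies: completeness via level-by-level limits and the Cauchy property, separability via the continuous injection into $L^{p}(K,m)$ and \cite[Proposition 4.1]{AHM23}, and reflexivity via Milman--Pettis applied to the equivalent $\limsup$-norm, whose equivalence with $\norm{\,\cdot\,}_{\mathcal{W}^{p}}$ is exactly Lemma \ref{lem.Kig-wm}. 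One remark worth making explicit: transferring Proposition \ref{prop.GClist}-\ref{GC.Cp} through $P_{n}$ is legitimate precisely because Clarkson's inequalities involve only the linear maps $(u,v)\mapsto u\pm v$, which do commute with $P_{n}$; so the non-commutation caveat in your closing sentence, while correct, does not threaten this step.

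The gap is in the density argument. Stone--Weierstrass requires $\mathcal{W}^{p}$ to be a subalgebra (or at least a sublattice) of $\contfunc(K)$, and this is not established by anything you cite: the cited lemmas of \cite{Kig23} are invoked by the paper for the Banach-space structure, the H\"{o}lder embedding and the density itself, not for any algebra-closure statement, while \cite[Theorem 8.16]{KS.gc} \emph{is} the statement being proved, so invoking it for the subalgebra property is circular. Nor is that property a soft fact: since $P_{n}(fg)\neq(P_{n}f)(P_{n}g)$, any estimate of $\mathcal{E}_{p}^{n}(P_{n}(fg))$ produces oscillation terms $\fint_{K_{w}}\abs{f-f_{K_{w}}}^{p}\,dm$, and controlling these requires a Poincar\'{e}-type inequality such as \eqref{e:prePI} --- i.e.\ the same quantitative input supplied by $p$-conductive homogeneity that the theorem itself rests on, which is exactly the obstruction you flag. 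The route actually used in \cite{Kig23}, in \cite{MS+}, and in this paper under Assumption \ref{assum.CH2} (Lemma \ref{lem.unity-Kig} and Corollary \ref{cor.reg-Kig}) bypasses the issue entirely: the conductance/capacity estimates yield cutoff functions, hence for each $\varepsilon$-net $V$ of $(K,d)$ a partition of unity $\{\psi_{z}\}_{z\in V}\subseteq\mathcal{W}^{p}\cap\contfunc(K)$ with controlled energy, and then $\sum_{z\in V}f(z)\psi_{z}\to f$ uniformly as $\varepsilon\downarrow 0$ for every $f\in\contfunc(K)$ by uniform continuity. You should replace the Stone--Weierstrass step by this partition-of-unity argument (or else first prove product closure via the Poincar\'{e} inequality, which is more work for no gain); note that the point-separation you would need for Stone--Weierstrass anyway comes from these same cutoff functions, so nothing is saved by your route.
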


Let us introduce an important exponent, which we call the $p$-walk dimension, to describe the main result in this section. 
\begin{defn}\label{d:values}
%    Let $p \in (1,\infty)$.
    Suppose that Assumption \ref{assum.BF} holds, that $m$ is Ahlfors regular and that $K$ is $p$-conductively homogeneous.
    Let $r_{\ast} \in (0,1)$ be the constant in \eqref{BF.bi-Lip}, let $\sigma_{p}$ be the constant in \eqref{pCH.1} and let $\hdim$ be the Hausdorff dimension of $(K,d)$.
    Define 
    \begin{equation}
        \pwalk \coloneqq \hdim + \frac{\log{\sigma_{p}}}{\log{r_{\ast}^{-1}}}.
    \end{equation}
    We call $\pwalk$ the \emph{$p$-walk dimension} of $(K,d,\{ K_{w} \}_{w \in T},m)$.
\end{defn}

The next proposition states a suitable capacity upper bound in this framework. 

\begin{prop}[{\cite[Proposition 8.21]{KS.gc}}]\label{prop.capu-CH}
%    Let $p \in (1,\infty)$ and suppose that $(K,d,\{ K_{w} \}_{w \in T},m,p)$ satisfies Assumption \ref{assum.CH}.
    Suppose that $(K,d,\{ K_{w} \}_{w \in T},m,p)$ satisfies Assumption \ref{assum.CH}.
    Then there exists $C \in (0,\infty)$ such that for any $(x,s) \in K \times (0,1]$,
    \begin{equation}\label{Kig-capu}
        \inf\Bigl\{ \mathcal{N}_{p}(f)^{p} \Bigm| f \in \mathcal{W}^{p}, f|_{B_{d}(x,s)} = 1, \supp_{K}[f] \subseteq B_{d}(x,2s) \Bigr\} \le Cs^{\hdim - \pwalk}.
    \end{equation}
\end{prop}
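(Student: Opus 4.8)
The plan is to reduce the continuous capacity on the left-hand side of \eqref{Kig-capu} to a discrete conductance at the tree level matching the scale $s$, estimate that conductance via the $p$-conductive homogeneity, and then exhibit an admissible function in $\mathcal{W}^{p}$ whose $\mathcal{N}_{p}$-energy is controlled uniformly across all levels. Throughout I would freely use that $\sigma_{p}^{n} \asymp r_{\ast}^{-n(\pwalk - \hdim)}$ by the definition of $\pwalk$, so that with $r_{\ast}^{n} \asymp s$ and the Ahlfors regularity of $m$ one has $\sigma_{p}^{n} \asymp s^{\hdim - \pwalk}$; the entire task is therefore to produce an admissible $f$ with $\mathcal{N}_{p}(f)^{p} \lesssim \sigma_{p}^{n}$.

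First I would fix the scale: given $(x,s) \in K \times (0,1]$, choose $n \in \mathbb{N} \cup \{0\}$ with $r_{\ast}^{n} \asymp s$, say so that $c_{4}r_{\ast}^{n} \le s$ while $r_{\ast}^{n} \gtrsim s$, where $c_{3},c_{4}$ are the constants in \eqref{BF.adapted}. Using the sandwich $B_{d}(x,c_{3}r_{\ast}^{n}) \subseteq U_{M_{\ast}}(x;n) \subseteq B_{d}(x,c_{4}r_{\ast}^{n})$ from Assumption \ref{assum.BF}-\ref{BF2} together with \eqref{BF.bi-Lip}, I would fix $w \in T_{n}$ with $x \in K_{w}$ and identify an ``inner'' family of level-$n$ cells meeting $B_{d}(x,s)$ and an ``outer'' family of level-$n$ cells not meeting $B_{d}(x,\tfrac{3}{2}s)$. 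Since $\dist_{d}$ between $B_{d}(x,s)$ and $K \setminus B_{d}(x,\tfrac{3}{2}s)$ is $\gtrsim s \asymp r_{\ast}^{n}$, these two families sit at bounded combinatorial distance, i.e.\ inside $\Gamma_{M_{1}}(w)$ and outside $\Gamma_{M_{2}}(w)$ respectively for constants $M_{1} < M_{2}$ comparable to $M_{\ast}$ and $c_{4}/c_{3}$.

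Next, for each $k \in \mathbb{N}$ I would invoke the conductance constant. By Definition \ref{defn.con-const} and Theorem \ref{t:pCH}, the level-$(n+k)$ discrete capacity between the $k$-th refinements of the inner and outer families is $\lesssim \mathcal{E}_{M_{\ast},p,k} \le c_{2}\sigma_{p}^{-k}$, after a chaining/covering argument (using uniform finiteness from Assumption \ref{assum.BF}-\ref{BF1}) to pass from the fixed radius $M_{\ast}$ to the radii $M_{1},M_{2}$. This yields a discrete minimizer $g_{k}$ on $T_{n+k}$, equal to $1$ near $x$ and $0$ beyond $B_{d}(x,\tfrac{3}{2}s)$, with $\sigma_{p}^{n+k}\mathcal{E}_{p}^{n+k}(g_{k}) \lesssim \sigma_{p}^{n}$. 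I would then realize $g_{k}$ as the level-$(n+k)$ cell averages $P_{n+k}f_{k}$ of a continuous $f_{k} \in \mathcal{W}^{p}$ satisfying $f_{k}|_{B_{d}(x,s)} = 1$ and $\supp_{K}[f_{k}] \subseteq B_{d}(x,2s)$, extending $g_{k}$ to finer levels by discrete $p$-minimizers inside cells and bounding the resulting energies $\sigma_{p}^{m}\mathcal{E}_{p}^{m}(P_{m}f_{k})$ for $m > n+k$ by the neighbor-disparity half of the homogeneity condition \eqref{d:pCH}, and for $m \le n+k$ by the weak monotonicity of Lemma \ref{lem.Kig-wm}; this gives $\mathcal{N}_{p}(f_{k})^{p} \lesssim \sigma_{p}^{n}$. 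Finally, since $\mathcal{W}^{p}$ is reflexive by Theorem \ref{thm.Wp}, a weakly convergent subsequence of $\{f_{k}\}$ produces $f \in \mathcal{W}^{p}$ with $\mathcal{N}_{p}(f)^{p} \le \liminf_{k}\mathcal{N}_{p}(f_{k})^{p} \lesssim \sigma_{p}^{n}$ by lower semicontinuity; weak convergence in $\mathcal{W}^{p}$ forces $L^{p}$-, hence $m$-a.e.-convergence along a further subsequence, so the two constraints pass to $f$ using its continuity and the full support of $m$. The bound $\sigma_{p}^{n} \asymp s^{\hdim - \pwalk}$ then completes the proof.

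The main obstacle is the lifting step: producing a genuine $\mathcal{W}^{p}$ function whose rescaled graph energies $\sigma_{p}^{m}\mathcal{E}_{p}^{m}(P_{m}f)$ stay bounded by $\sigma_{p}^{n}$ uniformly in $m$, including as $m \to \infty$. A naive locally-constant extension of $g_{k}$ below level $n+k$ fails, since it is discontinuous and, on carpet-type spaces where interfaces between adjacent cells refine into unboundedly many edges, it makes $\sigma_{p}^{m}\mathcal{E}_{p}^{m}$ diverge; one must instead extend by cell-wise discrete minimizers and invoke the full coupling of conductance constants with neighbor-disparity constants that defines $p$-conductive homogeneity. It is worth noting that by Lemma \ref{lem.Kig-wm} it suffices to control $\liminf_{m \to \infty}\sigma_{p}^{m}\mathcal{E}_{p}^{m}(P_{m}f)$, which is precisely the quantity that the estimate $\mathcal{E}_{M_{\ast},p,k} \lesssim \sigma_{p}^{-k}$ is designed to dominate in the limit $k \to \infty$; this observation is what makes the limiting construction feasible.
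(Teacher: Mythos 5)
First, note that the paper never proves this proposition itself: it is quoted verbatim from \cite[Proposition 8.21]{KS.gc}, so your attempt can only be compared against that external argument (or a correct reconstruction of it). Your skeleton is the right one: the identity $\sigma_{p}^{n}=r_{\ast}^{-n(\pwalk-\hdim)}$ reducing everything to exhibiting one admissible $f$ with $\mathcal{N}_{p}(f)^{p}\lesssim\sigma_{p}^{n}$ for $r_{\ast}^{n}\asymp s$, the discrete capacitary minimizers $g_{k}$ on $T_{n+k}$ with $\sigma_{p}^{n+k}\mathcal{E}_{p}^{n+k}(g_{k})\lesssim\sigma_{p}^{n}$ supplied by Theorem \ref{t:pCH} (plus bounded cardinality of the inner family, which requires the Ahlfors regularity of $m$), and a compactness argument. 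But the lifting step, which you yourself flag as the main obstacle, is where the proposal genuinely breaks. Two problems: (a) the attribution is backwards --- you propose to control $\sigma_{p}^{m}\mathcal{E}_{p}^{m}(P_{m}f_{k})$ at the \emph{fine} levels $m>n+k$ by ``the neighbor-disparity half'' of \eqref{d:pCH} and the coarse levels by Lemma \ref{lem.Kig-wm}, but Lemma \ref{lem.Kig-wm} \emph{is} the neighbor-disparity half (it is \cite[Lemma 2.27]{Kig23} combined with \eqref{pCH.1}), and neighbor disparity controls coarsening, i.e.\ precisely the levels $m\le n+k$; low-energy \emph{refinements} would have to come from the conductance half, via capacitary partitions of unity. (b) More seriously, the hierarchical extension ``by discrete $p$-minimizers inside cells'' does not visibly produce an element of $\mathcal{W}^{p}$ at all: extending one level at a time loses a multiplicative constant per level (after $j$ levels the bound degenerates to $C^{j}\sigma_{p}^{n}$), while one-shot extensions from level $n+k$ to depth $j$, which do carry uniform constants, are mutually inconsistent for different $j$, so no single function has all of them as its cell averages; converting such discrete data into a function requires an additional limiting argument (e.g.\ uniform H\"{o}lder estimates, available only because $\sigma_{p}>1$) that you never supply. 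A structural symptom of the confusion: if each $f_{k}$ really did satisfy the two constraints with $\mathcal{N}_{p}(f_{k})^{p}\lesssim\sigma_{p}^{n}$, then $f_{1}$ alone would finish the proof, and your terminal weak-compactness step would be superfluous.

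The repair is to skip the lifting entirely and run your compactness step on objects that are \emph{not} in $\mathcal{W}^{p}$: the piecewise-constant realizations $h_{k}\coloneqq\sum_{v\in T_{n+k}}g_{k}(v)\indicator{K_{v}}$, well defined $m$-a.e.\ thanks to \eqref{BF-nooverlap}. For every $m\le n+k$ one has $P_{m}h_{k}=P_{m,n+k-m}g_{k}$ exactly, so Lemma \ref{lem.Kig-wm} gives $\sigma_{p}^{m}\mathcal{E}_{p}^{m}(P_{m}h_{k})\le C\sigma_{p}^{n+k}\mathcal{E}_{p}^{n+k}(g_{k})\lesssim\sigma_{p}^{n}$; no estimate at levels finer than $n+k$ is ever needed, because for each \emph{fixed} $m$ one only uses $k\ge m-n$. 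Since $0\le h_{k}\le 1$ and $m(K)=1$, a subsequence converges weakly in $L^{p}(K,m)$ to some $f$; cell averages pass to weak limits because $\indicator{K_{w}}/m(K_{w})\in L^{p/(p-1)}(K,m)$, so $\sigma_{p}^{m}\mathcal{E}_{p}^{m}(P_{m}f)=\lim_{k\to\infty}\sigma_{p}^{m}\mathcal{E}_{p}^{m}(P_{m}h_{k})\lesssim\sigma_{p}^{n}$ for every $m$, i.e.\ $f\in\mathcal{W}^{p}$ and $\mathcal{N}_{p}(f)^{p}\lesssim\sigma_{p}^{n}\asymp s^{\hdim-\pwalk}$. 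The constraints also survive: $h_{k}=1$ $m$-a.e.\ on the inner cells covering $B_{d}(x,s)$ and $h_{k}=0$ $m$-a.e.\ outside a fixed compact union of level-$n$ cells contained in $B_{d}(x,2s)$, both properties pass to the weak limit by testing against $L^{p/(p-1)}$ functions supported there, and since $\sigma_{p}>1$ yields $\mathcal{W}^{p}\subseteq\contfunc(K)$ (Theorem \ref{thm.Wp}), they upgrade to $f|_{B_{d}(x,s)}=1$ and $\supp_{K}[f]\subseteq B_{d}(x,2s)$. Your closing observation that Lemma \ref{lem.Kig-wm} reduces matters to a liminf of level energies is correct and points in exactly this direction, but it has to be applied to the $h_{k}$ \emph{before} any $\mathcal{W}^{p}$ function exists, rather than used to argue that the (unnecessary) lifting is feasible.
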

%
%\begin{proof}
%    Let $r_{\ast} \in (0,1)$ and $M_{\ast} \in \mathbb{N}$ be the constants in Assumption \ref{assum.BF}.
%    Let $s > 0$ and let $n \in \mathbb{N}$ satisfy 
%    \[
%    c_{2}(M_{\ast} + 1)r_{\ast}^{n} < 2s \le c_{2}(M_{\ast} + 1)r_{\ast}^{n - 1}, 
%    \]
%    where $c_{2}$ is the constant in \eqref{BF.bi-Lip}. 
%    Let $x \in K$ and set $T_{n}(x,s) \coloneqq T_{n}[B_{d}(x,s)]$ for simplicity.
%    Thanks to the metric doubling property of $(K,d)$, we have $\#T_{n}(x,s) \lesssim 1$.
%    By \cite[Lemma 3.18]{Kig23} and its proof, for any $w \in T_{n}(x,s)$ there exists $h_{M_{\ast},w} \in \mathcal{W}^{p}$ such that $h_{M_{\ast},w}|_{K_{w}} \equiv 1$, $\supp_{K}[h_{M_{\ast},w}] \subseteq U_{M_{\ast}}(w)$ and $\mathcal{N}_{p}(h_{M_{\ast},w})^{p} \lesssim \sigma_{p}^{n}$.
%    Let $\psi_{x,s} \coloneqq \sum_{w \in T_{n}(x,s)}h_{M_{\ast},w} \in \mathcal{W}^{p}$.
%    Then $\psi_{x,s}|_{B_{d}(x,s)} \ge 1$, $\supp_{K}[\psi_{x,s}] \subseteq B_{d}(x,2s)$ and $\mathcal{N}_{p}(\psi_{x,s})^{p} \lesssim \sigma_{p}^{n} = r_{\ast}^{n(\hdim - \pwalk)} \lesssim s^{\hdim - \pwalk}$.
%    Since $\mathcal{N}_{p}(\psi_{x,s} \wedge 1)^{p} \lesssim \mathcal{N}_{p}(\psi_{x,s})^{p}$ by \cite[Theorem 3,21]{Kig23}, we obtain \eqref{Kig-capu}.
%\end{proof}

We also consider the following setting to deal with the case $p \le \dim_{\mathrm{ARC}}(K,d)$. 
\begin{assum}\label{assum.CH2}
%    Let $p \in (1,\infty)$ and let $(K,d,\{ K_{w} \}_{w \in T},m)$ satisfy Assumption \ref{assum.BF}. 
    Let $(K,d,\{ K_{w} \}_{w \in T},m)$ satisfy Assumption \ref{assum.BF}. 
    In addition, $(K,d,\{ K_{w} \}_{w \in T},m,p)$ satisfies the following conditions:
    \begin{enumerate}[label=\textup{(\arabic*)},align=left,leftmargin=*,topsep=2pt,parsep=0pt,itemsep=2pt]
        \item \label{BA2-AR} The measure $m$ is Ahlfors regular with respect to $d$.
        \item \label{BA2-pch} $K$ is $p$-conductively homogeneous.
        \item \label{BA2-ARC} There exists $C \in (0,\infty)$ such that for any $(x,s) \in K \times (0,1]$,
   		\begin{align}\label{BF.capu}
        	&\inf\Bigl\{ \mathcal{N}_{p}(f)^{p} \Bigm| f \in \mathcal{W}^{p} \cap \contfunc(K), f|_{B_{d}(x,s)} = 1, \supp_{K}[f] \subseteq B_{d}(x,2s) \Bigr\} \nonumber \\
        	&\le Cs^{\hdim - \pwalk}.
    	\end{align}
    \end{enumerate}
\end{assum}
Note that Assumption \ref{assum.CH} implies Assumption \ref{assum.CH2} by Proposition \ref{prop.capu-CH}. 

The same argument as in \cite[Lemma 6.26]{MS+} yields a good partition of unity under Assumption \ref{assum.CH2} as given in Lemma \ref{lem.unity-Kig} and thus we obtain the regularity of $\mathcal{W}^{p}$ in Corollary \ref{cor.reg-Kig}.
\begin{lem}\label{lem.unity-Kig}
    Suppose that Assumption \ref{assum.CH2} holds.
    Let $\varepsilon \in (0, 1)$ and let $V$ be a maximal $\varepsilon$-net of $(K, d)$.
    Then there exists a family of functions $\{ \psi_z \}_{z \in V}$ that satisfies the following properties:
    \begin{enumerate}[label=\textup{(\roman*)},align=left,leftmargin=*,topsep=2pt,parsep=0pt,itemsep=2pt]
        \item $\sum_{z \in V}\psi_z \equiv 1$. 
        \item $\psi_z \in \mathcal{W}^{p} \cap \contfunc(K)$, $0 \le \psi_z \le 1$, $\psi_{z}|_{B_{\metric}(z, \varepsilon/4)} \equiv 1$ and $\supp_{K}[\psi_z] \subseteq B_{\metric}(z, 5\varepsilon/4)$ for any $z \in V$. 
        \item If $z \in V$ and $z' \in V \setminus \{ z \}$, then $\psi_{z'}|_{B_{\metric}(z, \varepsilon/4)} \equiv 0$.
        \item There exists $C \in (0,\infty)$ such that $\mathcal{N}_{p}(\psi_z)^{p} \le C\varepsilon^{\hdim - \pwalk}$ for any $z \in V$.
    \end{enumerate}
\end{lem}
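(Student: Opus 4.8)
The plan is to build the $\psi_z$'s from two families of cutoff functions supplied by the capacity bound \eqref{BF.capu}, combined through the algebraic and lattice operations governed by \ref{GC}. Throughout I use that $(\mathcal{N}_{p}^{\,p},\mathcal{W}^{p})$ is a $p$-energy form satisfying \ref{GC} (see \cite[Section 8.1]{KS.gc}, cf.\ Theorem \ref{thm.Wp}), so that Proposition \ref{prop.GClist} applies to $\mathcal{N}_{p}$. Two geometric facts are used repeatedly. Since $(K,d)$ is compact and $V$ is $\varepsilon$-separated, $V$ is finite, so all sums and products below are finite. Moreover, by the Ahlfors regularity of $m$ (Assumption \ref{assum.CH2}-\ref{BA2-AR}), there is $N \in \mathbb{N}$, depending only on the constant in \eqref{AR}, such that every ball $B_{d}(x,3\varepsilon)$ contains at most $N$ points of $V$ (the balls $B_{d}(z,\varepsilon/2)$, $z\in V$, are disjoint, each of measure $\gtrsim\varepsilon^{\hdim}$, and fit inside $B_{d}(x,7\varepsilon/2)$ of measure $\lesssim\varepsilon^{\hdim}$). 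This bounded multiplicity is what keeps the energy under control.

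First I would produce the cutoffs. For each $z\in V$, applying \eqref{BF.capu} at $(z,\varepsilon/4)$ and truncating to $[0,1]$ via $0\vee(\,\cdot\,\wedge 1)$ (a $1$-Lipschitz map fixing $0$, so that Proposition \ref{prop.GClist}-\ref{GC.lip} does not increase $\mathcal{N}_{p}$), I obtain a \emph{core} $\phi_{z}\in\mathcal{W}^{p}\cap\contfunc(K)$ with $0\le\phi_{z}\le 1$, $\phi_{z}|_{B_{d}(z,\varepsilon/4)}\equiv 1$, $\supp_{K}[\phi_{z}]\subseteq B_{d}(z,\varepsilon/2)$ and $\mathcal{N}_{p}(\phi_{z})^{p}\lesssim\varepsilon^{\hdim-\pwalk}$. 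Since $V$ is $\varepsilon$-separated, the balls $B_{d}(z,\varepsilon/2)$ are pairwise disjoint, so the cores have pairwise disjoint supports. Similarly I need a cutoff $g_{z}$ with $g_{z}|_{B_{d}(z,\varepsilon)}\equiv 1$, $\supp_{K}[g_{z}]\subseteq B_{d}(z,5\varepsilon/4)$ and $\mathcal{N}_{p}(g_{z})^{p}\lesssim\varepsilon^{\hdim-\pwalk}$; as the ratio $5/4$ is smaller than the built-in ratio $2$ of \eqref{BF.capu}, such $g_{z}$ is obtained by chaining a fixed finite number of ratio-$2$ cutoffs at scale $\asymp\varepsilon$, combined through Proposition \ref{prop.GClist}-\ref{GC.sadd}, each step changing the implicit constant only by a bounded factor.

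Next I would assemble the partition. Enumerate $V$, set $\eta_{z}\coloneqq g_{z}\prod_{z'\prec z}(1-g_{z'})$ and $\Phi\coloneqq\sum_{z\in V}\phi_{z}$. A telescoping identity gives $\sum_{z\in V}\eta_{z}=1-\prod_{z\in V}(1-g_{z})$; since $V$ is a maximal $\varepsilon$-net, $\{g_{z}=1\}\supseteq B_{d}(z,\varepsilon)$ cover $K$, the product vanishes, and $\sum_{z}\eta_{z}\equiv 1$, with $0\le\eta_{z}\le 1$ and $\supp_{K}[\eta_{z}]\subseteq B_{d}(z,5\varepsilon/4)$. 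Now define
\[
\psi_{z}\coloneqq\phi_{z}+(1-\Phi)\eta_{z}.
\]
Then $\sum_{z}\psi_{z}=\Phi+(1-\Phi)\sum_{z}\eta_{z}=1$, giving (i). Because the cores have disjoint supports, $0\le\Phi\le 1$, and on $B_{d}(z,\varepsilon/4)$ one has $\phi_{z}\equiv 1$ while $\phi_{z'}\equiv 0$ for $z'\ne z$ (as $d(z,z')\ge\varepsilon$ forces $B_{d}(z,\varepsilon/4)\cap B_{d}(z',\varepsilon/2)=\emptyset$), whence $\Phi\equiv 1$ there; this yields $\psi_{z}\equiv 1$ and $\psi_{z'}\equiv 0$ on $B_{d}(z,\varepsilon/4)$ for $z'\ne z$, which is (iii) together with the pointwise-$1$ part of (ii). The bounds $0\le\psi_{z}\le 1$ follow from $\phi_{z}\le\Phi\le 1$ and $0\le\eta_{z}\le 1$, the support bound from $\supp_{K}[\phi_{z}]\cup\supp_{K}[\eta_{z}]\subseteq B_{d}(z,5\varepsilon/4)$, and $\psi_{z}\in\mathcal{W}^{p}\cap\contfunc(K)$ from Proposition \ref{prop.GClist}-\ref{GC.leipniz},\ref{GC.lip}; this completes (ii).

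The main work is the energy estimate (iv), where bounded multiplicity is essential. Since $\eta_{z}$ is supported in $B_{d}(z,5\varepsilon/4)$, any factor $1-g_{z'}$ whose $\supp_{K}[g_{z'}]$ is disjoint from $B_{d}(z,5\varepsilon/4)$ equals $1$ on $\supp_{K}[\eta_{z}]$ and may be dropped, so $\eta_{z}=g_{z}\prod_{z'\in\mathcal{B}_{z}}(1-g_{z'})$ globally, where $\mathcal{B}_{z}\coloneqq\{z'\prec z:d(z,z')<5\varepsilon/2\}$ satisfies $\#\mathcal{B}_{z}\le N$; likewise $(1-\Phi)\eta_{z}=\bigl(1-\sum_{z'\in\mathcal{A}_{z}}\phi_{z'}\bigr)\eta_{z}$ with $\#\mathcal{A}_{z}\le N$. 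Using the triangle inequality (Proposition \ref{prop.GClist}-\ref{GC.tri}) and iterating the Leibniz estimate \eqref{leibniz} over these boundedly many factors, all bounded by $1$ in supremum norm, I get $\mathcal{N}_{p}(\eta_{z})\lesssim\mathcal{N}_{p}(g_{z})+\sum_{z'\in\mathcal{B}_{z}}\mathcal{N}_{p}(g_{z'})$ and $\mathcal{N}_{p}(\psi_{z})\lesssim\mathcal{N}_{p}(\phi_{z})+\mathcal{N}_{p}(\eta_{z})+\sum_{z'\in\mathcal{A}_{z}}\mathcal{N}_{p}(\phi_{z'})$, each summand $\lesssim\varepsilon^{(\hdim-\pwalk)/p}$, whence $\mathcal{N}_{p}(\psi_{z})^{p}\lesssim(1+N)^{p}\varepsilon^{\hdim-\pwalk}\lesssim\varepsilon^{\hdim-\pwalk}$. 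The points I expect to absorb most of the effort are (a) the chaining construction of the ratio-$5/4$ cutoff $g_{z}$ with the correct energy scaling, and (b) keeping the iterated Leibniz bound uniform in $z$ and $\varepsilon$; both reduce to the bounded-multiplicity constant $N$ furnished by Ahlfors regularity, so the genuine obstacle is organizing the nonlinear energy of products so that only locally finitely many cutoffs ever interact.
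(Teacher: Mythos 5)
Your geometric construction is sound, and it is in the same spirit as the argument the paper actually invokes (the paper gives no details and simply defers to \cite[Lemma 6.26]{MS+}): capacity test functions from \eqref{BF.capu}, a bounded-multiplicity count from Ahlfors regularity, a telescoping product to force $\sum_z\psi_z\equiv 1$, and a disjointly-supported family of inner cores to enforce (ii)--(iii). I checked the radii arithmetic ($\varepsilon/4+\varepsilon/2<\varepsilon$, $9\varepsilon/8<5\varepsilon/4$, $5\varepsilon/2<3\varepsilon$), the pointwise identities $(1-\Phi)\eta_z=(1-\sum_{z'\in\mathcal{A}_z}\phi_{z'})\eta_z$ and $\eta_z=g_z\prod_{z'\in\mathcal{B}_z}(1-g_{z'})$, and the assembly of (i)--(iv); all of that is correct, and the two-scale cutoff idea (cores at scale $\varepsilon/4$ with disjoint supports, outer cutoffs at scale $\varepsilon$ built by chaining ratio-$2$ capacity functions) is a legitimate way to reconcile the covering requirement with property (iii).

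There is, however, one genuine gap, and it sits under every energy estimate in your proof: the opening claim that $(\mathcal{N}_p^{\,p},\mathcal{W}^p)$ is a $p$-energy form satisfying \ref{GC}, so that Proposition \ref{prop.GClist} applies to $\mathcal{N}_p$. This is not established anywhere in the paper, and it is almost certainly false as stated, because \ref{GC} demands the inequalities with constant $1$ while $\mathcal{N}_p$ is defined through the cell averages $P_nf$, and averaging does not commute with the nonlinear operations in \ref{GC}: for instance, if $f$ oscillates with mean zero on a cell $K_v$ and vanishes on an adjacent cell $K_w$, then $P_nf(v)=P_nf(w)=0$ while $P_n(f\vee 0)(v)>0=P_n(f\vee 0)(w)$, so truncation can strictly increase a discrete increment. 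What the cited sources (\cite[Lemmas 3.16, 3.19, 3.24]{Kig23}, \cite[Section 8.1]{KS.gc}, \cite[Section 6]{MS+}, i.e.\ the results behind Theorem \ref{thm.Wp}) actually provide is weaker but sufficient: either a $p$-energy form satisfying \ref{GC} that is merely \emph{comparable} to $\mathcal{N}_p^{\,p}$, or, equivalently, contraction, lattice and Leibniz estimates for $\mathcal{N}_p$ with uniform multiplicative constants (proved there at the discrete level and transferred via the weak monotonicity of Lemma \ref{lem.Kig-wm}). Your argument survives this weakening, precisely because every use you make is an up-to-constant estimate and you apply only boundedly many operations (bounded by the multiplicity constant $N$), but the proof must be rewritten to route through those constant-bearing estimates rather than through Proposition \ref{prop.GClist} applied to $\mathcal{N}_p$ itself. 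One warning for the repair: you cannot obtain the missing calculus from the Korevaar--Schoen identification \eqref{KSfull} in Theorem \ref{thm.Kig-KS} (which would give \ref{GC}-with-constants immediately via Proposition \ref{prop.Besov-GC}), because Theorem \ref{thm.Kig-KS} is proved using Proposition \ref{prop.lower}, whose proof uses the present lemma; that route is circular.
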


\begin{cor}\label{cor.reg-Kig}
	Suppose that Assumption \ref{assum.CH2} holds.
	Then $\mathcal{W}^{p} \cap \contfunc(K)$ is dense in $(\contfunc(K),\norm{\,\cdot\,}_{\sup})$.
\end{cor}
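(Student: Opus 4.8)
The plan is to deduce this directly from the partition of unity furnished by Lemma \ref{lem.unity-Kig}. Fix $f \in \contfunc(K)$ and $\eta > 0$; I will construct an element of $\mathcal{W}^{p} \cap \contfunc(K)$ that is within $\eta$ of $f$ in the uniform norm. First I would exploit the compactness of $K$: since $(K,d)$ is compact, it is totally bounded, so any maximal $\varepsilon$-net $V$ of $(K,d)$ is a \emph{finite} set. Invoking Lemma \ref{lem.unity-Kig} for this $V$, I obtain a finite family $\{ \psi_{z} \}_{z \in V} \subseteq \mathcal{W}^{p} \cap \contfunc(K)$ with $\sum_{z \in V}\psi_{z} \equiv 1$, $0 \le \psi_{z} \le 1$, and $\supp_{K}[\psi_{z}] \subseteq B_{\metric}(z, 5\varepsilon/4)$. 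I then set
\[
f_{\varepsilon} \coloneqq \sum_{z \in V}f(z)\psi_{z}.
\]

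Because $V$ is finite and $\mathcal{W}^{p} \cap \contfunc(K)$ is a linear subspace of $\contfunc(K)$ (the space $\mathcal{W}^{p}$ being a Banach space by Theorem \ref{thm.Wp}, hence in particular a vector space, and each $\psi_{z}$ being continuous), the function $f_{\varepsilon}$ lies in $\mathcal{W}^{p} \cap \contfunc(K)$. It therefore only remains to estimate $\norm{f - f_{\varepsilon}}_{\sup}$, and here the key is to use the first property $\sum_{z \in V}\psi_{z} \equiv 1$ together with the localization provided by the support condition. For any $x \in K$ I would write
\[
f(x) - f_{\varepsilon}(x) = \sum_{z \in V}\bigl(f(x) - f(z)\bigr)\psi_{z}(x),
\]
where the only nonzero terms are those with $\psi_{z}(x) \neq 0$, which forces $x \in \supp_{K}[\psi_{z}] \subseteq B_{\metric}(z, 5\varepsilon/4)$ and hence $d(x,z) < 5\varepsilon/4$.

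Finally I would invoke the uniform continuity of $f$ on the compact space $K$: choosing $\varepsilon$ small enough that $d(x,z) < 5\varepsilon/4$ implies $\abs{f(x) - f(z)} < \eta$, the displayed identity and $0 \le \psi_{z} \le 1$ with $\sum_{z}\psi_{z} \equiv 1$ give
\[
\abs{f(x) - f_{\varepsilon}(x)} \le \sum_{z \in V}\abs{f(x) - f(z)}\psi_{z}(x) \le \eta\sum_{z \in V}\psi_{z}(x) = \eta
\]
for every $x \in K$, so that $\norm{f - f_{\varepsilon}}_{\sup} \le \eta$. Letting $\eta \downarrow 0$ proves the density of $\mathcal{W}^{p} \cap \contfunc(K)$ in $(\contfunc(K), \norm{\,\cdot\,}_{\sup})$. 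I expect no genuine obstacle in this argument: all of the substantive work — producing a partition of unity whose members lie in $\mathcal{W}^{p}$ with the stated support and energy bounds — is already absorbed into Lemma \ref{lem.unity-Kig}, and the remaining steps are the standard ``finite partition of unity plus uniform continuity'' approximation. The only points demanding a line of care are the finiteness of $V$ (from compactness) and the observation that the support condition restricts the sum to nearby net points, which is exactly what makes the uniform-continuity estimate work.
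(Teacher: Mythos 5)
Your proof is correct and is exactly the argument the paper intends: the paper derives Corollary \ref{cor.reg-Kig} directly from the partition of unity of Lemma \ref{lem.unity-Kig} via the standard ``finite partition of unity plus uniform continuity'' approximation, just as you do (the paper's operator $A_{r}f=\sum_{z}f_{B_{d}(z,r/4)}\psi_{z,r}$ in the proof of Proposition \ref{prop.lower} uses ball averages in place of your point values $f(z)$, an immaterial difference here).
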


%----- localized energy -----
\subsection{Localized energy estimates}
%%%
In this subsection, we show localized energy estimates on Korevaar--Schoen $p$-energy forms, which will imply \ref{KSwm} with the family of kernels $\bm{k}^{s_p}$ (recall \eqref{KSkernel}) and the equality $s_{p} = \pwalk/p$.
Estimates in this subsection are very similar to \cite[Section 7]{MS+} although the setting of ``partitions'' in \cite{MS+} is slightly different from ours.

We start with the following lemma giving a Poincar\'{e}-type estimate.

\begin{lem}[{\cite[Lemma 8.22]{KS.gc}}]\label{lem.prePI}
	Suppose that Assumption \ref{assum.CH2} holds.
	Then there exists a constant $C \in (0,\infty)$ such that for any $f \in L^{p}(K,m)$ and any $w \in T$, 
	\begin{equation}\label{e:prePI}
		\int_{K_{w}}\abs{f(x) - f_{K_{w}}}^{p}\,m(dx)
        \le Cr_{\ast}^{\abs{w}\pwalk}\liminf_{n \to \infty}\widetilde{\mathcal{E}}_{p,S^{n}(w)}^{n + \abs{w}}(f).
	\end{equation}
\end{lem}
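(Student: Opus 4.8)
The plan is to prove \eqref{e:prePI} by a multiscale (telescoping) decomposition of $f$ along the partition refining $K_{w}$, bounding each scale increment by the discrete graph energy at that scale and then summing a geometric series whose ratio is controlled by weak monotonicity. Throughout, by \eqref{BF.nomass-intersection} the families $\{K_{v}\}_{v\in T_{n}}$ are partitions of $K$ modulo $m$-null sets, so the averaging operators $P_{n}$ coincide with the conditional expectations onto the $\sigma$-algebras generated by $\{K_{v}\}_{v\in T_{n}}$. First I would define, for $n\ge|w|$, the approximation $f_{n}\coloneqq\sum_{v\in S^{n-|w|}(w)}P_{n}f(v)\,\indicator{K_{v}}$ on $K_{w}$; by the martingale convergence theorem (using condition (P2) and $p>1$) $f_{n}\to f$ both $m$-a.e.\ on $K_{w}$ and in $L^{p}(K_{w},m|_{K_{w}})$, and $f_{|w|}=f_{K_{w}}\indicator{K_{w}}$. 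The triangle inequality in $L^{p}(K_{w})$ then gives $\norm{f-f_{K_{w}}}_{L^{p}(K_{w})}\le\sum_{n=|w|}^{\infty}\norm{f_{n+1}-f_{n}}_{L^{p}(K_{w})}$, so everything reduces to estimating each increment (and we may assume the right-hand side of \eqref{e:prePI} is finite).

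The key per-scale estimate is the following. Writing the increment as a sum over $u\in S^{n-|w|}(w)$ of the contributions of the children $v\in S(u)$, I would compute
\[
\norm{f_{n+1}-f_{n}}_{L^{p}(K_{w})}^{p}=\sum_{u\in S^{n-|w|}(w)}\sum_{v\in S(u)}\abs{P_{n+1}f(v)-P_{n}f(u)}^{p}\,m(K_{v}),
\]
and note that $P_{n}f(u)$ is the $m(K_{v})$-weighted average of $\{P_{n+1}f(v)\}_{v\in S(u)}$ by \eqref{BF-nooverlap}. By \eqref{BF.super-exp} and \eqref{BF-nooverlap} one has $m(K_{v})\asymp m(K_{u})$ for $v\in S(u)$, and by \ref{BF1} the cell $K_{u}=\bigcup_{v\in S(u)}K_{v}$ is connected, which forces the graph $(S(u),E_{n+1}^{\ast}(S(u)))$ to be connected; together with the uniform bound $\sup_{u}\#S(u)<\infty$ (a consequence of the uniform finiteness in \ref{BF1}), a finite-graph Poincar\'e inequality yields $\sum_{v\in S(u)}\abs{P_{n+1}f(v)-P_{n}f(u)}^{p}\lesssim\mathcal{E}_{p,S(u)}^{n+1}(P_{n+1}f)$. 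Summing over $u$ (the internal edges of the clusters $S(u)$ are all edges of $S^{n+1-|w|}(w)$) and using Ahlfors regularity together with \eqref{BF.bi-Lip} and \eqref{BF.thick} to get $m(K_{u})\asymp r_{\ast}^{n\hdim}$, I obtain
\[
\norm{f_{n+1}-f_{n}}_{L^{p}(K_{w})}^{p}\lesssim r_{\ast}^{n\hdim}\,\mathcal{E}_{p,S^{n+1-|w|}(w)}^{n+1}(P_{n+1}f)=r_{\ast}^{n\hdim}\sigma_{p}^{-(n+1)}\,\widetilde{\mathcal{E}}_{p,S^{n+1-|w|}(w)}^{n+1}(f).
\]
Since $\pwalk=\hdim+\log\sigma_{p}/\log r_{\ast}^{-1}$ gives $\sigma_{p}^{-1}=r_{\ast}^{\pwalk-\hdim}$, the scale factor simplifies to $r_{\ast}^{n\hdim}\sigma_{p}^{-n}\asymp r_{\ast}^{n\pwalk}$.

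Finally I would invoke weak monotonicity (Lemma \ref{lem.Kig-wm}) in the form $\widetilde{\mathcal{E}}_{p,A}^{k}(f)\le C\,\widetilde{\mathcal{E}}_{p,S^{l}(A)}^{k+l}(f)$ with $A=S^{n+1-|w|}(w)\subseteq T_{n+1}$; letting $l\to\infty$ bounds every term $\widetilde{\mathcal{E}}_{p,S^{n+1-|w|}(w)}^{n+1}(f)$ by $C\liminf_{j\to\infty}\widetilde{\mathcal{E}}_{p,S^{j}(w)}^{|w|+j}(f)$, the liminf appearing in \eqref{e:prePI}. Substituting and summing the resulting geometric series $\sum_{n\ge|w|}r_{\ast}^{n\pwalk/p}\lesssim r_{\ast}^{|w|\pwalk/p}$ (convergent since $r_{\ast}\in(0,1)$ and $\pwalk>0$) gives $\norm{f-f_{K_{w}}}_{L^{p}(K_{w})}\lesssim r_{\ast}^{|w|\pwalk/p}\bigl(\liminf_{j\to\infty}\widetilde{\mathcal{E}}_{p,S^{j}(w)}^{|w|+j}(f)\bigr)^{1/p}$, which is \eqref{e:prePI} after raising to the $p$-th power. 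I expect the main obstacle to be the per-scale estimate, namely converting the measure-weighted oscillation of the averages over the children $S(u)$ into the edge energy $\mathcal{E}_{p,S(u)}^{n+1}$: this requires combining the graph-connectedness of $S(u)$, the comparability $m(K_{v})\asymp m(K_{u})$, and Ahlfors regularity so as to produce \emph{exactly} the scale factor $r_{\ast}^{n\pwalk}$ matching the normalization $\sigma_{p}^{n}$ built into $\widetilde{\mathcal{E}}$. Once this is in place, the telescoping and the geometric summation are routine, with weak monotonicity supplying the uniform comparison across scales.
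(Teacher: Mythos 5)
The paper does not actually prove this lemma — it quotes it from \cite[Lemma 8.22]{KS.gc} — so your argument can only be judged on its own merits; it is the standard multiscale telescoping proof for estimates of this type (compare \cite[Lemma 3.20]{Kig23} and \cite[Section 7]{MS+}), and its skeleton is sound: the martingale approximations $f_{n}$ converge to $f$ in $L^{p}(K_{w},m|_{K_{w}})$ because the level-$n$ cells cover $K_{w}$ with $m$-null overlaps by \eqref{BF.nomass-intersection} and shrink by (P2) and \eqref{BF.bi-Lip}; the per-scale increment is controlled by a finite-graph Poincar\'e inequality on $(S(u),E_{n+1}^{\ast}(S(u)))$, which is indeed connected since $K_{u}$ is a connected finite union of the compact connected sets $K_{v}$, $v \in S(u)$, by \ref{BF1}; the identity $r_{\ast}^{\hdim}\sigma_{p}^{-1} = r_{\ast}^{\pwalk}$ converts the scale factor correctly; and Lemma \ref{lem.Kig-wm} (applied with $A = S^{n+1-\abs{w}}(w)$, $l \to \infty$) legitimately upgrades every scale to the liminf on the right-hand side of \eqref{e:prePI}. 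One small repair: the uniform bound $\sup_{u}\#S(u) < \infty$ is \emph{not} a consequence of the ``uniformly finite'' condition in \ref{BF1}, which bounds $\#\Gamma_{1}(w)$ (same-level neighbours), not the number of children; it follows instead from \eqref{BF.super-exp} and \eqref{BF-nooverlap}, since $m(K_{u}) = \sum_{v \in S(u)}m(K_{v}) \ge \#S(u)\,\gamma\,m(K_{u})$ gives $\#S(u) \le \gamma^{-1}$.

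The one genuine gap is your closing claim that the series $\sum_{n \ge \abs{w}}r_{\ast}^{n\pwalk/p}$ converges ``since $r_{\ast} \in (0,1)$ and $\pwalk > 0$''. Positivity of $\pwalk$ is exactly what your entire proof hinges on, and it is \emph{not} among the hypotheses of Assumption \ref{assum.CH2}: unlike Assumption \ref{assum.CH}, this assumption does not require $\sigma_{p} > 1$, and $\pwalk > 0$ is equivalent to the nontrivial strict inequality $\sigma_{p} > r_{\ast}^{\hdim}$; if it failed, the telescoping sum could diverge even when the right-hand side of \eqref{e:prePI} is finite. The inequality is in fact true — one even has $\pwalk \ge p$ — but it requires an argument or a citation: bounding the conductance $\mathcal{E}_{M,p,k}$ of Definition \ref{defn.con-const} from above by testing with $v \mapsto \bigl(1 - d_{\mathrm{graph}}(v,S^{k}(w))/L\bigr)^{+}$, where $L \gtrsim r_{\ast}^{-k}$ is the graph distance in $(T_{\abs{w}+k},E_{\abs{w}+k}^{\ast})$ between $S^{k}(w)$ and the descendants of cells metrically separated from $K_{w}$ (a consequence of \eqref{BF.bi-Lip} and \eqref{BF.adapted}), and counting at most $Cr_{\ast}^{-k\hdim}$ participating edges (Ahlfors regularity, \eqref{BF.thick}, \eqref{BF.nomass-intersection} and uniform finiteness), yields $\mathcal{E}_{M,p,k} \le Cr_{\ast}^{k(p - \hdim)}$; combined with \eqref{pCH.1} and the comparability of conductance constants over $M \ge M_{\ast}$ from \cite{Kig23}, this gives $\sigma_{p} \ge r_{\ast}^{\hdim - p}$, i.e.\ $\pwalk \ge p > 0$. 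As written, your proof silently assumes this known but non-obvious fact, and you must either prove it or cite it before the geometric summation — and with it the factor $r_{\ast}^{\abs{w}\pwalk}$ in \eqref{e:prePI} — is justified.
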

%
%\begin{proof}
%	Let $f \in L^{p}(K,m)$, $w \in T$ and set $k \coloneqq \abs{w}$.
%	For $n \in \mathbb{N}$, define $Q_{n}f \in L^{p}(K,m)$ by 
%	\[
%	Q_{n}f \coloneqq \sum_{v \in T_{n}}P_{n}f(v)\indicator{K_{v}}. 
%	\] 
%	Then we can see that $\norm{Q_{n}f - f}_{L^{p}(K,m)} \to 0$ as $n \to \infty$ (see \cite[Proof of Theorem 8.19]{KS.gc}).
%	For any $n \in \mathbb{N}$, we see from \eqref{BF.nomass-intersection} and $\norm{Q_{n}f - f}_{L^{p}(K,m)} \to 0$ that 
%	\begin{align}\label{e:lambda-limit}
%		&\frac{1}{m(K_{w})}\sum_{v \in S^{n}(w)}\abs{f_{K_{v}} - f_{K_{w}}}^{p}m(K_{v}) \nonumber \\ 
%		&= \frac{1}{m(K_{w})}\sum_{v \in S^{n}(w)}\int_{K_{v}}\abs{Q_{n + k}f(x) - f_{K_{w}}}^{p}\,m(dx) \nonumber \\
%		&= \fint_{K_{w}}\abs{Q_{n + k}f(x) - f_{K_{w}}}^{p}\,m(dx) 
%		\xrightarrow[n \to \infty]{} \fint_{K_{w}}\abs{f(x) - f_{K_{w}}}^{p}\,m(dx). 
%	\end{align}
%	By \cite[(5.11) in Theorem 5.11]{Kig23} and \eqref{pCH.1}, there exists $C \in (0,\infty)$ which is independent of $f$ and $n$ such that 
%	\begin{equation}\label{e:lambda}
%		\frac{1}{m(K_{w})}\sum_{v \in S^{n}(w)}\abs{f_{K_{v}} - f_{K_{w}}}^{p}m(K_{v}) 
		%\le C\sigma_{p}^{n}\mathcal{E}_{p,S^{n}(w)}^{n + k}(f)
%		\le Cr_{\ast}^{k(\pwalk - \hdim)}\widetilde{\mathcal{E}}_{p,S^{n}(w)}^{n + k}(f). 
%	\end{equation}
%	We obtain \eqref{e:prePI} by combining \eqref{e:lambda-limit}, \eqref{e:lambda}, \eqref{BF.thick} and the Ahlfors regularity of $m$. 
%\end{proof}

The next proposition shows an upper bound on localized Korevaar--Schoen energy functionals.

\begin{prop}\label{prop.upper}
    Suppose that Assumption \ref{assum.CH2} holds.
    Then there exists $C \in (0,\infty)$ such that for any $E \in \mathcal{B}(K)$, any open neighborhood $E'$ of $\closure{E}^{K}$ and any $f \in L^{p}(E', \measure|_{E'})$,
	\begin{align}\label{KSu.local}
	    \limsup_{r \downarrow 0}&\int_{E}\fint_{B_{\metric}(x, r)}\frac{\abs{f(x) - f(y)}^{p}}{r^{\pwalk}}\,\measure(dy)\measure(dx) \nonumber \\
        &\mspace{150mu}\le C\limsup_{r \downarrow 0}\liminf_{n \to \infty}\widetilde{\mathcal{E}}_{p, T_{n}[(E)_{d,r}]}^{n}(f),
	\end{align} 
	Furthermore, with $C \in (0,\infty)$ the same as in \eqref{KSu.local}, for any $f \in L^{p}(K,m)$, 
	\begin{equation}\label{KSu.global}
		\sup_{r > 0}\int_{K}\fint_{B_{\metric}(x, r)}\frac{\abs{f(x) - f(y)}^{p}}{r^{\pwalk}}\,\measure(dy)\measure(dx)
        \le C\mathcal{N}_{p}(f)^{p}.
	\end{equation}
\end{prop}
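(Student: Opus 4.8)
The plan is to match the continuous scale $r$ with a discrete level $n=n(r)$ chosen so that $r_{\ast}^{n}\asymp r$, and to compare the kernel integral at scale $r$ with the discrete energy $\widetilde{\mathcal{E}}_{p}^{n}$ at the matched level. Fix such an $n$. For $x\in K_{w}$ with $w\in T_{n}$ and $y\in B_{d}(x,r)$, the cell $K_{v}$ (with $v\in T_{n}$) containing $y$ lies within bounded graph-distance of $w$: using \eqref{BF.bi-Lip} and \eqref{BF.adapted} one checks that $B_{d}(x,r)\subseteq\bigcup_{v\in\Gamma_{M}(w)}K_{v}$ for some $M\in\mathbb{N}$ independent of $n$ and $x$. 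Writing $P_{n}f(w)=\fint_{K_{w}}f\,dm$, I would split
\[
\abs{f(x)-f(y)}^{p}\le 3^{p-1}\bigl(\abs{f(x)-P_{n}f(w)}^{p}+\abs{P_{n}f(w)-P_{n}f(v)}^{p}+\abs{f(y)-P_{n}f(v)}^{p}\bigr),
\]
reducing the task to one ``between-cell'' term and two ``oscillation'' terms.

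For the between-cell term, Ahlfors regularity \eqref{AR} together with \eqref{BF.bi-Lip} and \eqref{BF.thick} gives $m(B_{d}(x,r))\asymp r^{\hdim}$ and $m(K_{w})\asymp r_{\ast}^{n\hdim}$, and the definition of $\pwalk$ yields the key identity $\sigma_{p}^{n}=r_{\ast}^{-n(\pwalk-\hdim)}\asymp r^{\hdim-\pwalk}$. Hence, after integrating out $x\in K_{w}$ and $y\in K_{v}$ and using $m(B_{d}(x,r)\cap K_{v})\le m(B_{d}(x,r))$, the between-cell contribution is bounded by $C\sigma_{p}^{n}\sum_{w}\sum_{v\in\Gamma_{M}(w)}\abs{P_{n}f(w)-P_{n}f(v)}^{p}$, with $w$ ranging over $T_{n}[(E)_{d,r}]$. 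The $M$-neighbor energy is comparable to the nearest-neighbor energy: joining $v\in\Gamma_{M}(w)$ to $w$ by a path of length $\le M$ in $(T_{n},E_{n}^{\ast})$, applying the elementary $\ell^{p}$-bound, and invoking the uniform finiteness of the partition gives $\sum_{w}\sum_{v\in\Gamma_{M}(w)}\abs{P_{n}f(w)-P_{n}f(v)}^{p}\le C\mathcal{E}_{p,T_{n}[(E)_{d,Cr}]}^{n}(P_{n}f)$, so this term is $\le C\widetilde{\mathcal{E}}_{p,T_{n}[(E)_{d,Cr}]}^{n}(f)$.

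For the oscillation terms, the Poincar\'e-type estimate of Lemma \ref{lem.prePI} bounds $\int_{K_{w}}\abs{f-P_{n}f(w)}^{p}\,dm$ by $Cr_{\ast}^{n\pwalk}\liminf_{k\to\infty}\widetilde{\mathcal{E}}_{p,S^{k}(w)}^{n+k}(f)$; since the measure prefactors again produce $r_{\ast}^{n\pwalk}/r^{\pwalk}\asymp 1$, and since the descendant subtrees $S^{k}(w)$ are disjoint across $w\in T_{n}$ (so their internal edge sets are disjoint), summing over $w\in T_{n}[(E)_{d,r}]$ and using superadditivity of $\liminf$ yields a bound by $C\liminf_{k\to\infty}\widetilde{\mathcal{E}}_{p,S^{k}(T_{n}[(E)_{d,r}])}^{n+k}(f)$. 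Because $\diam(K_{w},d)\le c_{2}r_{\ast}^{n}\asymp r$, any cell meeting $(E)_{d,r}$ is contained in $(E)_{d,Cr}$, whence $S^{k}(T_{n}[(E)_{d,r}])\subseteq T_{n+k}[(E)_{d,Cr}]$ and the bound becomes $\le C\liminf_{m\to\infty}\widetilde{\mathcal{E}}_{p,T_{m}[(E)_{d,Cr}]}^{m}(f)$. Finally I would lift the between-cell term to the same form via the weak monotonicity of Lemma \ref{lem.Kig-wm}: $\widetilde{\mathcal{E}}_{p,T_{n}[(E)_{d,Cr}]}^{n}(f)\le C\liminf_{m\to\infty}\widetilde{\mathcal{E}}_{p,S^{m-n}(T_{n}[(E)_{d,Cr}])}^{m}(f)\le C\liminf_{m\to\infty}\widetilde{\mathcal{E}}_{p,T_{m}[(E)_{d,C'r}]}^{m}(f)$. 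Combining the three contributions and taking $\limsup_{r\downarrow0}$, together with the substitution $r'=C'r$ (which leaves the limit unchanged), gives \eqref{KSu.local}. The global estimate \eqref{KSu.global} follows by taking $E=K$, so that $T_{n}[(E)_{d,r}]=T_{n}$ and $\widetilde{\mathcal{E}}_{p,T_{n}}^{n}(f)\le\mathcal{N}_{p}(f)^{p}$ for every $n$, with the remaining range $r\ge\diam(K,d)$ handled directly by the global Poincar\'e inequality (Lemma \ref{lem.prePI} with $w=\phi$).

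The main obstacle I anticipate is the geometric bookkeeping rather than any single inequality: one must (i) verify, uniformly in $n$ and $x$, that balls of radius $\asymp r_{\ast}^{n}$ are swallowed by bounded graph-neighborhoods $\bigcup_{v\in\Gamma_{M}(w)}K_{v}$, which rests on \eqref{BF.adapted} and a covering argument via metric doubling, and (ii) keep precise track of the localizing vertex sets so that every error is absorbed into some $T_{m}[(E)_{d,Cr}]$ with constants independent of $r$ and $f$, ensuring that the final substitution $r'=C'r$ legitimately recovers the stated right-hand side. These are exactly the points treated in \cite[Section 7]{MS+} in a slightly different partition setup, and I would adapt that bookkeeping to the present notation.
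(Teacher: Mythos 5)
Your proposal is correct and takes essentially the same route as the paper's proof: the same scale matching $r \asymp r_{\ast}^{n(r)}$ via \eqref{BF.adapted}, the same three-term splitting through cell averages, Lemma \ref{lem.prePI} with Ahlfors regularity and $\sigma_{p}^{n}\asymp r^{\hdim-\pwalk}$ for the oscillation terms, superadditivity of $\liminf$ plus uniform finiteness for the bookkeeping, and Lemma \ref{lem.Kig-wm} to lift the level-$n(r)$ between-cell energy to a $\liminf$ over levels, followed by the harmless substitution $r' = C'r$. Your two explicit additions, the $\ell^{p}$-chaining within $\Gamma_{M}(w)$ and the treatment of large $r$ in \eqref{KSu.global} via the global Poincar\'e inequality, are points the paper leaves implicit, and both are handled correctly.
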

\begin{proof}
    Let $r_{\ast} \in (0,1)$ and $M_{\ast} \in \mathbb{N}$ be the constants in Assumption \ref{assum.BF}.
    Let $r > 0$ and choose $n(r) \in \mathbb{N}$ satisfying $c_{3}r_{\ast}^{n(r) + 1} < r \le c_{3}r_{\ast}^{n(r)}$, where $c_{3}$ is the constant in \eqref{BF.adapted}.
    Then, for any $w \in T_{n(r)}$ and $x \in K_{w}$, we have $B_{d}(x,r) \subseteq U_{M_{\ast}}(x;n(r)) \subseteq U_{M_{\ast} + 1}(w)$.  
    Let $f \in L^{p}(E', m|_{E'})$, where $E'$ is an open neighborhood of $\closure{E}^{K}$. 
    Set $c \coloneqq (M_{\ast} + 2)c_{2}(c_{3}r_{\ast})^{-1} \in (0,\infty)$, where $c_{2}$ is the constant in \eqref{BF.bi-Lip}. 
    Then, by \eqref{BF.bi-Lip}, $\bigcup_{w \in T_{n(r)}[E]}S^{k}(\Gamma_{M_{\ast} + 1}(w)) \subseteq T_{k + n(r)}[(E)_{d,cr}]$ for any $k \in \mathbb{N}$ and there exists $r_{0} \in (0,\infty)$ such that $(E)_{d,cr} \subseteq E'$ for any $r \in (0,r_0)$.
    By using $\abs{f(x) - f(y)}^{p} \lesssim \abs{f(x) - f_{K_{w}}}^{p} + \abs{f(y) - f_{K_{v}}}^{p} + \abs{f_{K_{v}} - f_{K_{w}}}^{p}$ and Lemma \ref{lem.prePI}, we see that for any $r \in (0,r_0)$,
    \begin{align}\label{KSu.main}
        &\int_{E}\fint_{B_{\metric}(x, r)}\frac{\abs{f(x) - f(y)}^{p}}{r^{\pwalk}}\,\measure(dy)\measure(dx) \nonumber \\
        &\le r_{\ast}^{-n(r)\hdim}\sum_{w \in T_{n(r)}[E], v \in \Gamma_{M_{\ast} + 1}(w)}\int_{K_{w}}\int_{K_{v}}\frac{\abs{f(x) - f(y)}^{p}}{r^{\pwalk}}\,\measure(dy)\measure(dx) \nonumber \\
        &\lesssim \sum_{w \in T_{n(r)}[E], v \in \Gamma_{M_{\ast} + 1}(w)}\biggl(\liminf_{k \to \infty}\widetilde{\mathcal{E}}_{p,S^{k}(v)}^{k + n(r)}(f) + \liminf_{k \to \infty}\widetilde{\mathcal{E}}_{p,S^{k}(w)}^{k + n(r)}(f)\biggr) \nonumber \\
        &\hspace*{130pt}+ \sum_{w \in T_{n(r)}[E], v \in \Gamma_{M_{\ast} + 1}(w)}\sigma_{p}^{n(r)}\abs{f_{K_{v}} - f_{K_{w}}}^{p} \nonumber \\
        &\lesssim \sum_{w \in T_{n(r)}[E]}\liminf_{k \to \infty}\widetilde{\mathcal{E}}_{p,S^{k}(\Gamma_{M_{\ast} + 1}(w))}^{k + n(r)}(f) + \sum_{w \in T_{n(r)}[E]}\widetilde{\mathcal{E}}_{p,\Gamma_{M_{\ast} + 1}(w)}^{n(r)}(f).
    \end{align}
%    For any $w \in T_{n(r)}[E]$ and $v \in S^{n}(\Gamma_{M_{\ast} + 1}(w))$, we easily show that $K_{v} \cap (E)_{d,C_{2}r} \neq \emptyset$ for some constant $C_{2} > 0$ depending only on the constants associated with Assumption \ref{assum.BF}.
    Since the partition $\{ K_{w} \}_{w \in T}$ is uniformly finite, we have
    \begin{align}\label{KSu.1}
        \sum_{w \in T_{n(r)}[E]}\liminf_{k \to \infty}\widetilde{\mathcal{E}}_{p,S^{k}(\Gamma_{M_{\ast} + 1}(w))}^{k + n(r)}(f)
        &\le \liminf_{k \to \infty}\sum_{w \in T_{n(r)}[E]}\widetilde{\mathcal{E}}_{p,S^{k}(\Gamma_{M_{\ast} + 1}(w))}^{n + n(r)}(f) \nonumber \\
        &\lesssim \liminf_{k \to \infty}\widetilde{\mathcal{E}}_{p,T_{k}[(E)_{d,cr}]}^{k}(f).
    \end{align}
    We also have from Lemma \ref{lem.Kig-wm} that
    \begin{equation}\label{KSu.2}
        \sum_{w \in T_{n(r)}[E]}\widetilde{\mathcal{E}}_{p,\Gamma_{M_{\ast} + 1}(w)}^{n(r)}(f)
        \lesssim \widetilde{\mathcal{E}}_{p,T_{n(r)}[(E)_{d,cr}]}^{n(r)}(f)
        \lesssim \liminf_{n \to \infty}\widetilde{\mathcal{E}}_{p,T_{n}[(E)_{d,cr}]}^{n}(f).
    \end{equation}
    By \eqref{KSu.main}, \eqref{KSu.1} and \eqref{KSu.2}, there exists $C \in (0,\infty)$ (depending only on the constants associated with Assumption \ref{assum.BF}) such that for any $r \in (0,r_0)$,
    \begin{equation}\label{e:KSu.local.pre}
    \int_{E}\fint_{B_{\metric}(x, r)}\frac{\abs{f(x) - f(y)}^{p}}{r^{\pwalk}}\,\measure(dy)\measure(dx)
    \le C\liminf_{n \to \infty}\widetilde{\mathcal{E}}_{p,T_{n}[(E)_{d,cr}]}^{n}(f), 
    \end{equation}
    whence we obtain \eqref{KSu.local} by letting $r\downarrow 0$ in \eqref{e:KSu.local.pre}.
    If $f \in L^{p}(K,m)$, then we have \eqref{KSu.global} by letting $E \coloneqq K$ in \eqref{e:KSu.local.pre}. 
\end{proof}

Before proving inequalities in the converse direction matching \eqref{KSu.local} and \eqref{KSu.global}, let us introduce a localized version of $\mathcal{W}^{p}$.
\begin{defn}\label{defn.localKig}
    Let $U$ be a non-empty open subset of $K$.
    We define a linear subspace $\mathcal{W}^{p}_{\mathrm{loc}}(U)$ of $L^{0}(U,m|_{U})$ by 
    \begin{equation*}
        \mathcal{W}^{p}_{\mathrm{loc}}(U) \coloneqq
        \biggl\{ f \in L^{0}(U,m|_{U}) \biggm|
        \begin{minipage}{170pt}
		$f = f^{\#}$ $m$-a.e.\ on $V$ for some $f^{\#} \in \mathcal{W}^{p}$ for each relatively compact open subset $V$ of $U$
        \end{minipage}
        \biggr\}.
    \end{equation*}
\end{defn}

The following lower bound on localized Korevaar--Schoen energy functionals can be shown in a similar way as \cite[Theorem 5.2]{Bau22+}.
\begin{prop}\label{prop.lower}
    Suppose that Assumption \ref{assum.CH2} holds.
	Then there exists $C \in (0,\infty)$ such that for any $E \subseteq K$, any open neighborhood $E'$ of $\overline{E}^{K}$ and any $u \in \mathcal{W}^{p}_{\mathrm{loc}}(E')$,
	\begin{equation}\label{KSl.local}
		\limsup_{n \to \infty}\widetilde{\mathcal{E}}_{p, T_{n}[E]}^{n}(u) \le C\lim_{\delta \downarrow 0}\liminf_{r \downarrow 0}\int_{(E)_{d,\delta}}\fint_{B_{\metric}(x, r)}\frac{\abs{u(x) - u(y)}^{p}}{r^{\pwalk}}\,\measure(dy)\measure(dx).
	\end{equation} 
	Furthermore, with $C \in (0,\infty)$ the same as in \eqref{KSl.local}, for any $f \in L^{p}(K, \measure)$,
	\begin{equation}\label{KSl.global}
		\mathcal{N}_{p}(f)^{p} \le C\liminf_{r \downarrow 0}\int_{K}\fint_{B_{\metric}(x, r)}\frac{\abs{f(x) - f(y)}^{p}}{r^{\pwalk}}\,\measure(dy)\measure(dx).
	\end{equation}
\end{prop}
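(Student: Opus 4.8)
The plan is to derive both \eqref{KSl.local} and \eqref{KSl.global} from a single cellwise estimate, which is then upgraded to the desired limits by means of the discrete weak monotonicity in Lemma \ref{lem.Kig-wm}.

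First I would prove the following \emph{cellwise estimate}: for any $D \subseteq K$, any level $m \in \mathbb{N}$, and any radius $r$ with $2c_{2}r_{\ast}^{m} \le r$ and $r \asymp r_{\ast}^{m}$,
\begin{equation*}
\widetilde{\mathcal{E}}_{p,T_{m}[D]}^{m}(u) \lesssim \int_{(D)_{d,c_{2}r_{\ast}^{m}}}\fint_{B_{d}(x,r)}\frac{\abs{u(x) - u(y)}^{p}}{r^{\pwalk}}\,m(dy)m(dx),
\end{equation*}
with a constant depending only on the data in Assumptions \ref{assum.BF} and \ref{assum.CH2}. For a single edge $\{v,w\} \in E_{m}^{\ast}$, Jensen's inequality gives $\abs{P_{m}u(v) - P_{m}u(w)}^{p} \le \fint_{K_{v}}\fint_{K_{w}}\abs{u(x)-u(y)}^{p}\,m(dy)m(dx)$; since $K_{v} \cap K_{w} \neq \emptyset$ and $\diam(K_{v},d) \vee \diam(K_{w},d) \le c_{2}r_{\ast}^{m} \le r/2$ by \eqref{BF.bi-Lip}, we have $K_{w} \subseteq B_{d}(x,r)$ for every $x \in K_{v}$. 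Combining the Ahlfors regularity $m(K_{v}) \asymp m(K_{w}) \asymp m(B_{d}(x,r)) \asymp r_{\ast}^{m\hdim}$ with the identity $\sigma_{p}^{m} = r_{\ast}^{-m(\pwalk - \hdim)}$ from Definition \ref{d:values} and enlarging $K_{w}$ to $B_{d}(x,r)$, each edge is controlled by $\int_{K_{v}}r^{-\pwalk}\fint_{B_{d}(x,r)}\abs{u(x)-u(y)}^{p}\,m(dy)m(dx)$. Summing over $E_{m}^{\ast}(T_{m}[D])$, the uniform finiteness of $\{K_{w}\}_{w \in T}$ and the $m$-essential disjointness \eqref{BF.nomass-intersection} give the claim, with $\bigcup_{v \in T_{m}[D]}K_{v} \subseteq (D)_{d,c_{2}r_{\ast}^{m}}$. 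Note this step uses only Jensen and the Ahlfors/geometric hypotheses, not any Poincar\'{e} inequality.

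Next I would bridge the gap between a single level and the $\liminf$ over $r$. Given small $r$, set $m = m(r) \coloneqq n(r) + j_{0}$, with $n(r)$ as in the proof of Proposition \ref{prop.upper} and $j_{0}$ a fixed integer chosen so that $2c_{2}r_{\ast}^{m(r)} \le r$; then $r_{\ast}^{m(r)} \asymp r$. Fixing $n$ and applying Lemma \ref{lem.Kig-wm} to the $\mathcal{W}^{p}$-representative of $u$ on a relatively compact neighborhood of $\closure{E}^{K}$ inside $E'$ (this is where $u \in \mathcal{W}^{p}_{\mathrm{loc}}(E')$ is used), I obtain $\widetilde{\mathcal{E}}_{p,T_{n}[E]}^{n}(u) \le C\widetilde{\mathcal{E}}_{p,S^{m-n}(T_{n}[E])}^{m}(u)$; since $S^{m-n}(T_{n}[E]) \subseteq T_{m}[(E)_{d,\eta_{n}}]$ with $\eta_{n} \coloneqq c_{2}r_{\ast}^{n}$, monotonicity in the edge set yields $\widetilde{\mathcal{E}}_{p,T_{n}[E]}^{n}(u) \le C\widetilde{\mathcal{E}}_{p,T_{m}[(E)_{d,\eta_{n}}]}^{m}(u)$. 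Feeding this into the cellwise estimate with $D = (E)_{d,\eta_{n}}$ and the matched radius $r$, and then taking $\liminf_{r \downarrow 0}$, I get $\widetilde{\mathcal{E}}_{p,T_{n}[E]}^{n}(u) \lesssim \liminf_{r \downarrow 0}\int_{(E)_{d,2\eta_{n}}}\fint_{B_{d}(x,r)}r^{-\pwalk}\abs{u(x)-u(y)}^{p}\,m(dy)m(dx)$, with a constant uniform in $n$.

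Finally, taking $\limsup_{n \to \infty}$ and using that $2\eta_{n} = 2c_{2}r_{\ast}^{n} \downarrow 0$ together with the monotonicity of $\delta \mapsto \liminf_{r \downarrow 0}\int_{(E)_{d,\delta}}(\cdots)$, the right-hand side tends to $\lim_{\delta \downarrow 0}\liminf_{r \downarrow 0}\int_{(E)_{d,\delta}}(\cdots)$, proving \eqref{KSl.local}. For \eqref{KSl.global} I would specialize to $E = K$, where $T_{n}[K] = T_{n}$, $(K)_{d,\delta} = K$ and $\mathcal{N}_{p}(f)^{p} = \sup_{n}\widetilde{\mathcal{E}}_{p}^{n}(f)$; Lemma \ref{lem.Kig-wm} (with $S^{m-n}(T_{n}) = T_{m}$) shows $\sup_{n}\widetilde{\mathcal{E}}_{p}^{n}(f) \le C\limsup_{n}\widetilde{\mathcal{E}}_{p}^{n}(f)$, and \eqref{KSl.local} with $E = K$ bounds the latter by the desired $\liminf_{r \downarrow 0}\int_{K}(\cdots)$ (the case of an infinite right-hand side being trivial, so that $f \in \mathcal{W}^{p}$ whenever it is finite). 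The hard part will be the interchange of the three limiting operations: the cellwise estimate alone only matches one level $m$ to one scale $r \asymp r_{\ast}^{m}$ and would merely bound a $\limsup$ by a $\limsup$, so it is precisely the weak monotonicity of Lemma \ref{lem.Kig-wm}---the one substantial input, encoding $p$-conductive homogeneity---that lets the coarse-level energy be dominated by arbitrarily fine-level energies and hence evaluated along the radii realizing $\liminf_{r}$; keeping the nested neighborhoods $(E)_{d,\eta_{n}}$, the order of the limits, and the localization to $E'$ consistent is where the care is needed.
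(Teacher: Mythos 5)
Your proof is correct, but it takes a genuinely different route from the paper's. The paper never bounds the discrete energy of $u$ itself at matched scales; instead it introduces the smoothing operator $A_{r}f = \sum_{z \in N_{r}} f_{B_{d}(z,r/4)}\psi_{z,r}$ built from the partition of unity of Lemma \ref{lem.unity-Kig} (whose construction requires the capacity upper bound, Assumption \ref{assum.CH2}-\ref{BA2-ARC}), proves the analogue of your cellwise estimate for $A_{r}f$ in \eqref{KSl.2} --- the point being that the discrete energy of $A_{r}f$ at \emph{every} sufficiently fine level is controlled by the Korevaar--Schoen functional of $f$ at the single scale $r$, which is the paper's way of decoupling level from scale --- and then transfers the bound from $A_{r_{k}}u^{\#}$ back to $u^{\#}$ by functional analysis: boundedness of $\{A_{r_{k}}u^{\#}\}_{k}$ in $\mathcal{W}^{p}$ (which invokes \eqref{KSu.global} of Proposition \ref{prop.upper}, hence the Poincar\'{e}-type Lemma \ref{lem.prePI}), reflexivity of $\mathcal{W}^{p}$ (Theorem \ref{thm.Wp}), weak compactness, and Mazur's lemma. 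You achieve the same decoupling purely combinatorially: Jensen's inequality bounds the level-$m$ discrete energy of $u$ itself by the Korevaar--Schoen functional at the matched scale $r \asymp r_{\ast}^{m}$, and the weak monotonicity of Lemma \ref{lem.Kig-wm} then dominates a fixed coarse level $n$ by arbitrary fine levels $m(r)$, which you evaluate along the radii realizing the $\liminf$. Your route is leaner: it avoids the partition of unity (and hence Assumption \ref{assum.CH2}-\ref{BA2-ARC}), avoids reflexivity and Mazur, and does not rely on Proposition \ref{prop.upper}; conductive homogeneity enters only through Lemma \ref{lem.Kig-wm}. What the paper's heavier machinery buys is reusability: the operator $A_{r}$ and the estimate \eqref{KSl.2} are needed again in Theorem \ref{thm.Kig-KS} (to identify $s_{p}$ via \eqref{diverge} and to prove density of $\mathcal{W}^{p}\cap\contfunc(K)$ in $\mathcal{W}^{p}$ via \eqref{mazur.approx}), so the paper must build that apparatus in any case.

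Two small points. First, your handling of \eqref{KSl.global}: the parenthetical ``so that $f \in \mathcal{W}^{p}$ whenever it is finite'' reads circular, since applying \eqref{KSl.local} with $E = K$ presupposes $f \in \mathcal{W}^{p}_{\mathrm{loc}}(K) = \mathcal{W}^{p}$, which is part of what \eqref{KSl.global} asserts. This is harmless, because your argument for \eqref{KSl.local} uses the hypothesis $u \in \mathcal{W}^{p}_{\mathrm{loc}}(E')$ only to produce a globally defined $L^{p}$ representative (Lemma \ref{lem.Kig-wm} and the cellwise estimate hold for any $f \in L^{p}(K,m)$), and this step is vacuous when $E = E' = K$; so you should simply state that the cellwise estimate plus Lemma \ref{lem.Kig-wm} give $\sup_{n}\widetilde{\mathcal{E}}_{p}^{n}(f) \lesssim \liminf_{r \downarrow 0}\int_{K}\fint_{B_{d}(x,r)}\frac{\abs{f(x)-f(y)}^{p}}{r^{\pwalk}}\,m(dy)m(dx)$ directly for every $f \in L^{p}(K,m)$, rather than routing through \eqref{KSl.local}. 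Second, a few inclusions need strict-inequality room: $\diam(K_{v},d)\vee\diam(K_{w},d)\le r/2$ only gives $K_{w}$ inside the \emph{closed} ball of radius $r$ about $x \in K_{v}$, and $S^{m-n}(T_{n}[E]) \subseteq T_{m}[(E)_{d,\eta_{n}}]$ needs $\eta_{n} > c_{2}r_{\ast}^{n}$; doubling the relevant constants fixes both and affects nothing downstream.
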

\begin{proof}
    Let $r \in (0, 1)$, let $N_{r}$ be a maximal $r$-net of $(K, d)$, and let $\{ \psi_{z,r} \}_{z \in N_{r}}$ be a partition of unity as given in Lemma \ref{lem.unity-Kig}.
    Define $A_{r} \colon L^{p}(K,m) \to \mathcal{W}^{p} \cap \contfunc(K)$ by $A_{r}f \coloneqq \sum_{z \in N_{r}}f_{B_{d}(z, r/4)}\psi_{z,r}$ for $f \in L^{p}(K,m)$.
    Then we can easily see that $\lim_{r \to 0}\norm{A_{r}f - f}_{L^{p}(K,m)} = 0$ and $\sup_{r > 0}\norm{A_{r}}_{L^{p}(K,m) \to L^{p}(K,m)} < \infty$.
    For any large $n \in \mathbb{N}$ so that $4c_{2}r_{\ast}^{n} < r$, where $c_{2}$ is the constant in \eqref{BF.bi-Lip}, a similar argument as in \cite[Lemma 7.4]{MS+} shows that there exists $C_{1} > 0$ depending only on the constants associated with Assumption \ref{assum.BF} such that
    \begin{align}\label{KSl.1}
        &\widetilde{\mathcal{E}}_{p, T_{n}[B_{\metric}(z, 5r/4)]}^{n}(A_{r}f) \nonumber \nonumber \\
        &\le C_{1}\sum_{w \in N_{r} \cap B_{\metric}(z, 11r/4)}\int_{B_{\metric}(w, 3r)}\fint_{B_{\metric}(x, 9r)}\frac{\abs{f(x) - f(y)}^{p}}{r^{\pwalk}}\,\measure(dy)\measure(dx).
    \end{align}
    Let us fix $\delta > 0$ and define $N_{r}(E) \coloneqq \{ z \in N_{r} \mid E \cap B_{d}(z,r) \neq \emptyset \}$.
    Then, for any small enough $r > 0$ so that $r < \delta/7$, we have $E \subseteq \bigcup_{z \in N_{r}(E)}B_{d}(z,5r/4)$ and
    \[
    \bigcup_{z \in N_{r}(E)}\bigcup_{w \in N_{r} \cap B_{d}(z,11r/4)}B_{d}(w,3r) \subseteq (E)_{d,\delta},
    \]
    whence we see that for any $f \in L^{p}(K,m)$,
    \begin{align}\label{KSl.2}
        &\widetilde{\mathcal{E}}_{p, T_{n}[E]}^{n}(A_{r}f) \nonumber \\
        &\le \sum_{z \in N_{r}(E)}\widetilde{\mathcal{E}}_{p, T_{n}[B_{\metric}(z, 5r/4)]}^{n}(A_{r}f) \nonumber \\
        &\overset{\eqref{KSl.1}}{\le} C_{1}\sum_{z \in N_{r}(E)}\sum_{w \in N_{r} \cap B_{\metric}(z, 11r/4)}\int_{B_{\metric}(w, 3r)}\fint_{B_{\metric}(x, 9r)}\frac{\abs{f(x) - f(y)}^{p}}{r^{\pwalk}}\,\measure(dy)\measure(dx) \nonumber \\
        &\lesssim \int_{(E)_{d,\delta}}\fint_{B_{\metric}(x, 9r)}\frac{\abs{f(x) - f(y)}^{p}}{r^{\pwalk}}\,\measure(dy)\measure(dx),
    \end{align}
    where we used the metric doubling property of $(K,d)$ in the last inequality.
    (Here, we consider small enough $r > 0$ so that $r < \delta/7$ and large enough $n \in \mathbb{N}$ so that $4c_{2}r_{\ast}^{n} < r$.)

    To prove the desired estimate \eqref{KSl.local} for $u \in \mathcal{W}^{p}_{\mathrm{loc}}(E')$, we fix a relatively compact open subset $V$ of $E'$ and $u^{\#} \in \mathcal{W}^{p}$ satisfying $V \supseteq \closure{E}^{K}$, $u^{\#} \in \mathcal{W}^{p}$ and $u = u^{\#}$ $m$-a.e.\ on $V$.
    Also, fix a sequence $\{ r_{k} \}_{k \in \mathbb{N}} \subseteq (0,\infty)$ such that $r_{k} \downarrow 0$ as $k \to \infty$ and
    \[
    \lim_{k \to \infty}r_{k}^{-\pwalk}J_{p,r_{k}}(u^{\#}\:\vert\:(E)_{d,\delta})
    = \liminf_{r \downarrow 0}r^{-\pwalk}J_{p,r}(u^{\#}\:\vert\:(E)_{d,\delta}) \le \mathcal{N}_{p}(u^{\#}) < \infty, 
    \]
    where $J_{p,r}(g\:\vert\:A) \coloneqq \int_{A}\fint_{B_{d}(x,r)}\frac{\abs{g(x) - g(y)}^{p}}{r^{\pwalk}}\,m(dy)m(dx)$ for $g \in L^{p}(K,m)$ and $A \in \mathcal{B}(K)$. 
    Set $u_{k} \coloneqq A_{r_{k}/9}u^{\#}$ for each $k \in \mathbb{N}$.
    By combining \eqref{KSl.2} with $E = K$ and \eqref{KSu.global}, for all large $k \in \mathbb{N}$, we have 
    \begin{equation}\label{mazur.approx}
        \mathcal{N}_{p}(u_{k})^{p}
        \lesssim \int_{K}\fint_{B_{\metric}(x, r_{k})}\frac{\abs{u^{\#}(x) - u^{\#}(y)}^{p}}{r_{k}^{\pwalk}}\,\measure(dy)\measure(dx)
        \lesssim \mathcal{N}_{p}(u^{\#})^{p} < \infty, 
    \end{equation}
    which implies that $\{ u_{k} \}_{k \in \mathbb{N}}$ is bounded in $\mathcal{W}^{p}$.
    Since $\mathcal{W}^{p}$ is reflexive by Theorem \ref{thm.Wp}, we can assume that $u_{k}$ converges weakly in $\mathcal{W}^{p}$ to some function $u_{\infty} \in \mathcal{W}^{p}$ as $k \to \infty$.
    Since $\mathcal{W}^{p}$ is continuously embedded in $L^{p}(K, \measure)$, we have $u_{\infty} = u^{\#}$.
    Hence, by Mazur's lemma and \eqref{KSl.2}, we obtain 
    \begin{equation}\label{e:mazur.after}
    	\limsup_{n \to \infty}\widetilde{\mathcal{E}}_{p, T_{n}[E]}^{n}(u^{\#}) 
    	\le \lim_{\delta \downarrow 0}\liminf_{r \downarrow 0}r^{-\pwalk}J_{p,r}(u^{\#}\:\vert\:(E)_{d,\delta}). 
    \end{equation}
    Note that, by \eqref{BF.bi-Lip}, $\bigcup_{w \in T_{n}[E]}K_{w} \subseteq V$ for all large enough $n \in \mathbb{N}$ and $(E)_{d,r + \delta} \subseteq V$ for all small enough $\delta,r \in (0,\infty)$. 
    For such $n$, $\delta$ and $r$, we have $\widetilde{\mathcal{E}}_{p, T_{n}[E]}^{n}(u^{\#}) = \widetilde{\mathcal{E}}_{p, T_{n}[E]}^{n}(u)$ and $J_{p,r}(u^{\#}\:\vert\:(E)_{d,\delta}) = J_{p,r}(u\:\vert\:(E)_{d,\delta})$, whence we obtain \eqref{KSl.local}.

    We next consider the case $E = K$. 
    Let $f \in L^{p}(K,m)$ and set $J_{p,r}(f) \coloneqq J_{p,r}(f\:\vert\:K)$ for $r > 0$.
    Similar to the previous case, we assume that $\{ r_{k} \}_{k \in \mathbb{N}}$ is a sequence of positive numbers such that $r_{k} \downarrow 0$ as $k \to \infty$ and
    \[
    \lim_{k \to \infty}r_{k}^{-\pwalk}J_{p,r_{k}}(f)
    = \liminf_{r \downarrow 0}r^{-\pwalk}J_{p,r}(f) < \infty,
    \]
    which together with \eqref{KSl.2} implies that $\{ A_{r_{k}/9}f \}_{k \in \mathbb{N}}$ is a bounded sequence in $\mathcal{W}^{p}$.
    A similar argument using Mazur's lemma as in the previous paragraph yields \eqref{KSl.global}.
\end{proof}

%----- main results -----
\subsection{Weak monotonicity and Poincar\'{e} inequality}
%%%
Now we can prove the main theorem of this section, which verifies \ref{KSwm} for the family of kernels $\bm{k}=\bm{k}^{s_{p}}$ defined by \eqref{KSkernel} and \eqref{Lp-Besov} for the first time in the setting of a $p$-conductively homogeneous compact metric space equipped with an Ahlfors regular measure.
This also solves a part of \cite[Section 6.3, Problem 4]{Kig23}.

\begin{thm}\label{thm.Kig-KS}
%    Let $p \in (1,\infty)$ and suppose that $(K,d,\{ K_{w} \}_{w \in T},m,p)$ satisfies Assumption \ref{assum.CH2}.
    Suppose that $(K,d,\{ K_{w} \}_{w \in T},m,p)$ satisfies Assumption \ref{assum.CH2}, and let $s_{p}$, $\bm{k} \coloneqq \bm{k}^{s_p}$ and $\mathrm{KS}^{1,p}$ be as defined in Example \ref{ex.KS}. 
    Then $s_{p} = \pwalk/p$, $\mathcal{W}^{p} = \mathrm{KS}^{1,p}$, $\mathcal{W}^{p} \cap \contfunc(K)$ is dense in $\mathcal{W}^{p}$, and \ref{KSwm} holds.
    Moreover, there exists $C \in [1,\infty)$ such that 
    \begin{equation}\label{KSfull}
    	C^{-1}\sup_{r > 0}J_{p,r}^{\bm{k}}(f) \le \mathcal{N}_{p}(f)^{p} \le C\liminf_{r \downarrow 0}J_{p,r}^{\bm{k}}(f) \quad \text{for any $f \in L^{p}(K,m)$.}
    \end{equation}
\end{thm}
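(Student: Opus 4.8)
The plan is to reduce the entire theorem to the two-sided comparison between the Korevaar--Schoen functionals and $\mathcal{N}_{p}(\,\cdot\,)^{p}$ furnished by Propositions \ref{prop.upper} and \ref{prop.lower}, and then to extract the remaining assertions by elementary scaling. Set $t \coloneqq \pwalk/p$ and recall from \eqref{KSkernel} that
\begin{equation}\label{e:Kig-KS-comp}
    J_{p,r}^{\bm{k}^{t}}(f) = \int_{K}\fint_{B_{d}(x,r)}\frac{\abs{f(x)-f(y)}^{p}}{r^{\pwalk}}\,m(dy)m(dx),
\end{equation}
which is exactly the functional appearing in \eqref{KSu.global} and \eqref{KSl.global}. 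Taking $C \in [1,\infty)$ to be the larger of the two constants in those estimates therefore yields, for every $f \in L^{p}(K,m)$,
\begin{equation}\label{e:Kig-KS-star}
    C^{-1}\sup_{r > 0}J_{p,r}^{\bm{k}^{t}}(f) \le \mathcal{N}_{p}(f)^{p} \le C\liminf_{r \downarrow 0}J_{p,r}^{\bm{k}^{t}}(f).
\end{equation}
This is \eqref{KSfull} once $s_{p} = t$ is known (so that $\bm{k}^{s_{p}} = \bm{k}^{t}$); moreover \eqref{e:Kig-KS-star} shows $B_{p,\infty}^{\bm{k}^{t}} = \mathcal{W}^{p}$ with comparable norms, and chaining its two halves gives $\sup_{r>0}J_{p,r}^{\bm{k}^{t}}(f) \le C^{2}\liminf_{r\downarrow 0}J_{p,r}^{\bm{k}^{t}}(f)$, i.e.\ \ref{KSwm} for $\bm{k} = \bm{k}^{t}$ with constant $C^{2}$.

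Next I would identify the critical exponent via the scaling identity $k_{r}^{s}(x,y) = r^{\pwalk - ps}k_{r}^{t}(x,y)$, which gives $J_{p,r}^{\bm{k}^{s}}(f) = r^{\pwalk - ps}J_{p,r}^{\bm{k}^{t}}(f)$. Since $\mathcal{W}^{p} = B_{p,\infty}^{\bm{k}^{t}}$ contains a non-constant function --- because $\mathcal{W}^{p}\cap\contfunc(K)$ is dense in $(\contfunc(K),\norm{\,\cdot\,}_{\sup})$ by Corollary \ref{cor.reg-Kig} while $\#K \ge 2$, so $\mathcal{W}^{p}$ cannot reduce to constants --- the definition \eqref{Lp-Besov} gives $s_{p} \ge t$. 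Conversely, if $s > t$ and $f \in B_{p,\infty}^{\bm{k}^{s}}$, then $\sup_{r>0}r^{\pwalk - ps}J_{p,r}^{\bm{k}^{t}}(f) < \infty$ with $\pwalk - ps < 0$ forces $J_{p,r}^{\bm{k}^{t}}(f) \lesssim r^{p(s-t)} \to 0$ as $r\downarrow 0$, whence $\liminf_{r\downarrow 0}J_{p,r}^{\bm{k}^{t}}(f) = 0$ and $\mathcal{N}_{p}(f) = 0$ by \eqref{e:Kig-KS-star}; by the characterization recorded in Definition \ref{d:Kig-sob}, $f$ is constant. Thus $B_{p,\infty}^{\bm{k}^{s}} = \mathbb{R}\indicator{K}$ for all $s > t$, giving $s_{p} \le t$. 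Hence $s_{p} = t = \pwalk/p$ and $\mathrm{KS}^{1,p} = B_{p,\infty}^{\bm{k}^{s_{p}}} = \mathcal{W}^{p}$.

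It remains to show that $\mathcal{W}^{p}\cap\contfunc(K)$ is dense in $\mathcal{W}^{p}$, for which I would reuse the approximation operators $A_{r}f = \sum_{z\in N_{r}}f_{B_{d}(z,r/4)}\psi_{z,r}$ from the proof of Proposition \ref{prop.lower}, where $N_{r}$ is a maximal $r$-net of $(K,d)$ and $\{\psi_{z,r}\}$ is the partition of unity from Lemma \ref{lem.unity-Kig}. For $f \in \mathcal{W}^{p}$ the function $A_{r}f$ is a finite linear combination of the continuous functions $\psi_{z,r} \in \mathcal{W}^{p}\cap\contfunc(K)$, hence lies in the linear subspace $\mathcal{W}^{p}\cap\contfunc(K)$; moreover $\lim_{r\downarrow 0}\norm{A_{r}f - f}_{L^{p}(K,m)} = 0$ and, by the computation yielding \eqref{mazur.approx} together with \eqref{e:Kig-KS-star}, one has $\mathcal{N}_{p}(A_{r}f)^{p} \lesssim \mathcal{N}_{p}(f)^{p}$ uniformly for all small $r$. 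Since $\mathcal{W}^{p}$ is reflexive (Theorem \ref{thm.Wp}) and embeds continuously into $L^{p}(K,m)$, along a suitable sequence $A_{r_{k}}f$ converges weakly in $\mathcal{W}^{p}$, necessarily to $f$; Mazur's lemma then produces convex combinations of the $A_{r_{k}}f$ --- still members of $\mathcal{W}^{p}\cap\contfunc(K)$ --- converging to $f$ in the norm of $\mathcal{W}^{p}$, which is the density claim.

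The genuinely hard analytic input is entirely contained in the already-established Propositions \ref{prop.upper} and \ref{prop.lower}, so the principal remaining obstacle is careful bookkeeping: one must keep the two normalizations $r^{ps}$ and $r^{\pwalk}$ straight through the scaling identity, and, for the density statement, verify that the net-based approximants $A_{r}f$ stay bounded in $\mathcal{W}^{p}$ uniformly in $r$ (rather than only along the special sequence chosen in the proof of Proposition \ref{prop.lower}), which is exactly where the global case $E = K$ of that estimate is used. Some attention is also needed to ensure the constant $C$ in \eqref{KSfull} is uniform over all of $L^{p}(K,m)$, which is immediate since \eqref{e:Kig-KS-star} already holds for every $f \in L^{p}(K,m)$.
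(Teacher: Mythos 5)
Your proposal is correct, and it follows the paper's overall skeleton (reduce everything to the two-sided comparison supplied by Propositions \ref{prop.upper} and \ref{prop.lower}, then identify the critical exponent, then get density via the approximants $A_{r}$) — but it takes a genuinely different, and simpler, route at the one substantive step, the upper bound $s_{p} \le \pwalk/p$. The paper argues as in Kigami's proof of \cite[Theorem 3.21]{Kig23}: it fixes a non-constant $f \in \mathcal{W}^{p}$, bounds the discrete energies of the mollifications via \eqref{diverge} (i.e.\ \eqref{KSl.2} with $E = K$), passes to a $\Gamma$-convergent subsequence $\{ \widetilde{\mathcal{E}}_{p}^{n_{k}} \}_{k}$ with $\Gamma$-limit $\widehat{\mathcal{E}}_{p}$, and combines the $L^{p}$-lower semicontinuity of $\widehat{\mathcal{E}}_{p}$ with the comparability $\widehat{\mathcal{E}}_{p} \asymp \mathcal{N}_{p}(\,\cdot\,)^{p}$ (cited from \cite[pp.\ 45--46]{Kig23}) to conclude $f \notin B_{p,\infty}^{s}$ for $s > \pwalk/p$. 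You instead use the scaling identity $J_{p,r}^{\bm{k}^{s}}(f) = r^{\pwalk - ps}J_{p,r}^{\bm{k}^{t}}(f)$ with $t = \pwalk/p$: if $f \in B_{p,\infty}^{\bm{k}^{s}}$ with $s > t$, then $J_{p,r}^{\bm{k}^{t}}(f) \lesssim r^{p(s - t)} \to 0$, so $\liminf_{r \downarrow 0}J_{p,r}^{\bm{k}^{t}}(f) = 0$, and \eqref{KSl.global} — which is stated for \emph{every} $f \in L^{p}(K,m)$, with no a priori regularity — gives $\mathcal{N}_{p}(f) = 0$, hence $f$ constant by the remark in Definition \ref{d:Kig-sob}. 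This is valid and strictly more economical: it bypasses the approximation operators, the $\Gamma$-convergence machinery and the external comparison $\widehat{\mathcal{E}}_{p} \asymp \mathcal{N}_{p}(\,\cdot\,)^{p}$ altogether, at no extra cost, since \eqref{KSl.global} is already established in Proposition \ref{prop.lower} and is consumed by the paper anyway in deriving \eqref{KSfull}. (A small structural bonus of your version: since \eqref{KSl.global} applies to all of $L^{p}(K,m)$, you never need the implicit containment $B_{p,\infty}^{\bm{k}^{s}} \subseteq \mathcal{W}^{p}$ on which the paper's contrapositive formulation tacitly relies.)

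The remaining steps match the paper's: \eqref{KSfull} and $\mathcal{W}^{p} = B_{p,\infty}^{\bm{k}^{t}}$ from \eqref{KSu.global} and \eqref{KSl.global}, \ref{KSwm} by chaining the two halves, $s_{p} \ge \pwalk/p$ from the existence of a non-constant element of $\mathcal{W}^{p}$ (you deduce this from Corollary \ref{cor.reg-Kig} and $\#K \ge 2$, the paper from the capacity bound — both are fine under Assumption \ref{assum.CH2}), and the density of $\mathcal{W}^{p} \cap \contfunc(K)$ in $\mathcal{W}^{p}$ via the uniform-in-$r$ bound \eqref{mazur.approx} (which, as you correctly note, holds for all small $r$ because \eqref{KSu.global} is a supremum bound), reflexivity of $\mathcal{W}^{p}$, weak compactness and Mazur's lemma.
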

\begin{proof}
    By \eqref{KSu.global} and \eqref{KSl.global}, we have $\mathcal{W}^{p} = B_{p,\infty}^{\pwalk/p}$ and \eqref{KSfull}.
    (Recall Example \ref{ex.KS} for the definition of $B_{p,\infty}^{s}$.)
    In particular, $s_{p} \ge \pwalk/p$.
    To show the opposite inequality, let $s > \pwalk/p$ and let $f \in \mathcal{W}^{p} \setminus \mathbb{R}\indicator{K}$.
    (Note that $\mathcal{W}^{p}$ contains a non-constant function by \eqref{Kig-capu}.)
    Let $A_{r} \colon L^{p}(K,m) \to \mathcal{W}^{p} \cap \contfunc(K)$ be the same operator as in the proof of Proposition \ref{prop.lower} for each $r \in (0,1)$.
    Then, by \eqref{KSl.2} with $E = K$, 
    \begin{equation}\label{diverge}
        \frac{r^{\pwalk}}{r^{sp}}\widetilde{\mathcal{E}}_{p}^{n}(A_{r}f) \le C\int_{K}\fint_{B_{d}(x,9r)}\frac{\abs{f(x) - f(y)}^{p}}{r^{sp}}\,m(dy)m(dx)
    \end{equation}
    for any $n \in \mathbb{N}$ and $r \in(0,1)$ with $4c_{2}r_{\ast}^{n} < r$, where $c_{2}$ is the constant in \eqref{BF.bi-Lip} and $C > 0$ is a constant independent of $f$, $r$, and $n$. 
    As in the proof of \cite[Theorem 3.21]{Kig23}, let $\{ \widetilde{\mathcal{E}}_{p}^{n_k} \}_{k \in \mathbb{N}}$ be a $\Gamma$-converging subsequence of $\{ \widetilde{\mathcal{E}}_{p}^{n} \}_{n \in \mathbb{N}}$ and define $\widehat{\mathcal{E}}_{p}$ as its $\Gamma$-limit.
    Since $\widehat{\mathcal{E}}_{p}$ is lower semicontinuous with respect to the $L^{p}(K,m)$-topology (see \cite[Proposition 6.8]{Dal}) and $\widehat{\mathcal{E}}_{p} \asymp \mathcal{N}_{p}(\,\cdot\,)^{p}$ (see \cite[pp.\ 45--46]{Kig23}), we see that
    \[
    0 < \mathcal{N}_{p}(f)^{p} \lesssim \widehat{\mathcal{E}}_{p}(f) \le \liminf_{r \downarrow 0}\widehat{\mathcal{E}}_{p}(A_{r}f) \le \liminf_{r \downarrow 0}\liminf_{k \to \infty}\widetilde{\mathcal{E}}_{p}^{n_k}(A_{r}f),
    \]
    which together with \eqref{diverge} and $\lim_{r \to 0}r^{\pwalk - sp} = \infty$ implies that $f \not\in B_{p,\infty}^{s}$.
    Since $s > \pwalk/p$ is arbitrary, we conclude that $\pwalk/p \ge s_{p}$.
    In particular, we obtain $\mathcal{W}^{p} = \mathrm{KS}^{1,p}$ and \ref{KSwm}.
    The inclusion $\mathcal{W}^{p} \subseteq \closure{\mathcal{W}^{p} \cap \contfunc(K)}^{\mathcal{W}^{p}}$ follows from \eqref{mazur.approx} and Mazur's lemma, so we complete the proof.
\end{proof}

\begin{cor}
%    Let $p \in (1,\infty)$ and suppose that $(K,d,\{ K_{w} \}_{w \in T},m,p)$ satisfies Assumption \ref{assum.CH}.
	Suppose that $(K,d,\{ K_{w} \}_{w \in T},m,p)$ satisfies Assumption \ref{assum.CH}.
	Then any Korevaar--Schoen $p$-energy form $(\mathcal{E}_{p}^{\mathrm{KS}},\mathcal{W}^{p})$ on $(K,d,m)$, which exists by Theorems \ref{thm.Kig-KS} and \ref{thm.KS-energy} (recall Example \ref{ex.KS}), is a $p$-resistance form on $K$, and there exist $\alpha_{0},\alpha_{1} \in (0,\infty)$ such that for any such $(\mathcal{E}_{p}^{\mathrm{KS}},\mathcal{W}^{p})$,
    \begin{equation}\label{e:pRM.diam.Kig}
    	\alpha_{0}d(x,y)^{\pwalk - \hdim} \le R_{\mathcal{E}_{p}^{\mathrm{KS}}}(x,y) \le \alpha_{1}d(x,y)^{\pwalk - \hdim} \quad \text{for any $x,y \in K$.}
    \end{equation}
\end{cor}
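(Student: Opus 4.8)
The plan is to deduce this corollary directly from Proposition \ref{prop.KS-RF}, applied with the identifications $Q \coloneqq \hdim$, $\beta_{p} \coloneqq \pwalk$ and $\bm{k} \coloneqq \bm{k}^{s_{p}}$, so that the task reduces to verifying hypotheses (i)--(v) of that proposition under Assumption \ref{assum.CH}. First I would record that $\beta_{p} > Q$: by Definition \ref{d:values} we have $\pwalk - \hdim = \log\sigma_{p}/\log r_{\ast}^{-1}$, which is strictly positive since $r_{\ast} \in (0,1)$ and $\sigma_{p} > 1$ by Assumption \ref{assum.CH}-\ref{BA-ARC}. I would also note that Assumption \ref{assum.CH} implies Assumption \ref{assum.CH2} (via Proposition \ref{prop.capu-CH}), so that Theorem \ref{thm.Kig-KS} is available and provides $s_{p} = \pwalk/p$, $\mathcal{W}^{p} = \mathrm{KS}^{1,p} = B_{p,\infty}^{\bm{k}^{s_{p}}}$, the comparison \eqref{KSfull}, and \ref{KSwm}.

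Conditions (i)--(iii) follow quickly. For (i): $m$ is a Borel probability measure, hence $m(K) = 1 < \infty$, and its $\hdim$-Ahlfors regularity (Assumption \ref{assum.CH}-\ref{BA-AR}) makes it volume doubling with growth exponent $Q = \hdim$. Condition (ii) is exactly \ref{KSwm} from Theorem \ref{thm.Kig-KS}. For (iii) I would combine \eqref{KSfull} with the fact recorded in Definition \ref{d:Kig-sob} that $\mathcal{N}_{p}(f) = 0$ if and only if $f$ is constant: indeed $\sup_{r>0} J_{p,r}^{\bm{k}}(u) = 0$ forces $\mathcal{N}_{p}(u) = 0$ and hence $u \in \mathbb{R}\indicator{K}$, while the reverse inclusion is trivial.

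For (iv) and (v) I would invoke the heavier inputs. The inclusion $\mathcal{W}^{p} = \mathrm{KS}^{1,p} \subseteq \contfunc(K)$ holds by Theorem \ref{thm.Wp} since $\sigma_{p} > 1$ (equivalently $p > \dim_{\mathrm{ARC}}(K,d)$), and the Morrey-type estimate \eqref{morrey} holds for $\mathrm{KS}^{1,p}$ by the sufficient condition recalled in Example \ref{ex.KS}, whose hypothesis $p s_{p} > Q$ reads $\pwalk > \hdim$ here and was checked above; this gives (iv). For the capacity bound (v) I would start from Proposition \ref{prop.capu-CH}, which controls $\inf \mathcal{N}_{p}(f)^{p}$ over admissible $f \in \mathcal{W}^{p}$ by $C s^{\hdim - \pwalk}$, then replace $\mathcal{N}_{p}(f)^{p}$ by $\sup_{r>0} J_{p,r}^{\bm{k}}(f)$ using \eqref{KSfull}, and finally rewrite $s^{\hdim - \pwalk} \asymp m(B_{d}(x,s))/s^{\pwalk}$ by $\hdim$-Ahlfors regularity. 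The one point requiring care is that the two admissible classes coincide: the infimum in \eqref{capu} ranges over $f \in \contfunc(K)$, but any such $f$ with finite $\sup_{r>0} J_{p,r}^{\bm{k}}(f)$ lies in $B_{p,\infty}^{\bm{k}^{s_{p}}} = \mathcal{W}^{p} \subseteq \contfunc(K)$, so both infima are effectively taken over $\mathcal{W}^{p}$.

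With (i)--(v) in hand, Proposition \ref{prop.KS-RF} shows that every Korevaar--Schoen $p$-energy form $(\mathcal{E}_{p}^{\mathrm{KS}},\mathcal{W}^{p})$ is a $p$-resistance form on $K$; since $m$ is $\hdim$-Ahlfors regular, the second assertion of that proposition yields \eqref{Rp-compl}, which is precisely \eqref{e:pRM.diam.Kig} after substituting $\beta_{p} = \pwalk$ and $Q = \hdim$. I expect no genuine obstacle beyond bookkeeping: the substantive analytic work (weak monotonicity, the Morrey estimate, and the capacity bound) has already been carried out in Theorem \ref{thm.Kig-KS} and Proposition \ref{prop.capu-CH}, so the only delicate steps are the exponent matching and the verification that the test-function classes in \eqref{capu} and Proposition \ref{prop.capu-CH} agree under Assumption \ref{assum.CH}.
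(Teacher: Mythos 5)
Your proposal is correct and follows essentially the same route as the paper's proof: apply Proposition \ref{prop.KS-RF} with $Q = \hdim$, $\beta_{p} = \pwalk$, verifying its hypotheses via Theorem \ref{thm.Kig-KS}, Proposition \ref{prop.capu-CH} and the Morrey estimate recalled in Example \ref{ex.KS} (i.e.\ \cite[Theorem 3.2]{Bau22+}), and then read off \eqref{e:pRM.diam.Kig} from \eqref{Rp-compl} using the $\hdim$-Ahlfors regularity of $m$. You merely spell out the hypothesis checks (including the matching of admissible classes in \eqref{capu}) that the paper leaves implicit.
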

\begin{proof}
	Define $\bm{k} \coloneqq \bm{k}^{s}$ by \eqref{KSkernel} with $s = \pwalk/p$.
	Then by Theorem \ref{thm.Kig-KS}, Proposition \ref{prop.capu-CH} and \cite[Theorem 3.2]{Bau22+}, the assumptions of Proposition \ref{prop.KS-RF} with $\hdim,\pwalk$ in place of $Q,\beta_{p}$ hold under Assumption \ref{assum.CH}, so $(\mathcal{E}_{p}^{\mathrm{KS}}, \mathcal{W}^{p})$ is a $p$-resistance form on $K$. 
    The estimate \eqref{e:pRM.diam.Kig} follows from the $\hdim$-Ahlfors regularity of $m$ and Proposition \ref{prop.KS-RF}. 
\end{proof}

We also have a Poincar\'{e}-type inequality in terms of the localized versions of $(\KSform,\mathcal{W}^{p})$.
(For the Vicsek set, such a Poincar\'{e}-type inequality was proved in \cite[Corollary 4.2]{BC23}.)
\begin{prop}\label{prop.PI-Kig}
%    Let $p \in (1,\infty)$ and suppose that $(K,d,\{ K_{w} \}_{w \in T},m,p)$ satisfies Assumption \ref{assum.CH2}.
    Suppose that $(K,d,\{ K_{w} \}_{w \in T},m,p)$ satisfies Assumption \ref{assum.CH2}.
    Then there exist $C \in (0,\infty)$ and $A \in [1,\infty)$ such that for any $(z,s) \in K \times (0,1]$ and any $f \in \mathcal{W}_{\mathrm{loc}}^{p}(B_{d}(z,As))$,
    \begin{align}\label{PI-Kig}
        &\int_{B_{d}(z,s)}\abs{f(y) - f_{B_{d}(z,s)}}^{p}\,m(dy) \nonumber \\
        &\le Cs^{\pwalk}\liminf_{r \downarrow 0}\int_{B_{d}(z,As)}\fint_{B_{d}(x,r)}\frac{\abs{f(x) - f(y)}^{p}}{r^{\pwalk}}\,m(dy)m(dx). 
    \end{align}
\end{prop}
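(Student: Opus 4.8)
The plan is to reduce the oscillation of $f$ over the ball $B_{d}(z,s)$ to a bounded family of cell oscillations at a scale $n$ with $r_{\ast}^{n} \asymp s$, to control each cell by the localized discrete energies $\widetilde{\mathcal{E}}^{\bullet}_{p,\bullet}$, and then to pass to the Korevaar--Schoen integral via the lower bound \eqref{KSl.local}. First I would fix the scale: choosing $n \in \mathbb{N}$ with $c_{3}r_{\ast}^{n+1} < s \le c_{3}r_{\ast}^{n}$ (with $c_{3}$ as in \eqref{BF.adapted}) gives $r_{\ast}^{n} \asymp s$, hence $r_{\ast}^{n\pwalk} \asymp s^{\pwalk}$. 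By \eqref{BF.adapted}, $B_{d}(z,s) \subseteq B_{d}(z,c_{3}r_{\ast}^{n}) \subseteq U_{M_{\ast}}(z;n) \subseteq B_{d}(z,c_{4}r_{\ast}^{n})$, so setting $V \coloneqq U_{M_{\ast}}(z;n)$ and $W \coloneqq \{v \in T_{n} \mid K_{v} \subseteq V\}$ we obtain $B_{d}(z,s) \subseteq V = \bigcup_{w \in W}K_{w} \subseteq B_{d}(z,As)$ with $A \coloneqq \max\{1,c_{4}/(c_{3}r_{\ast})\}$. The uniform finiteness in Assumption \ref{assum.BF}-\ref{BF1} bounds $\#W$ by a constant, and $W$ is connected in $(T_{n},E_{n}^{\ast})$ with graph diameter at most a constant $L$ depending only on $M_{\ast}$. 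Since $f \in \mathcal{W}^{p}_{\mathrm{loc}}(B_{d}(z,As))$ and $V$ is relatively compact in $B_{d}(z,As)$, I would fix once and for all $f^{\#} \in \mathcal{W}^{p}$ agreeing with $f$ $m$-a.e.\ on a relatively compact open neighborhood of $V$; every cell average, discrete energy, and Korevaar--Schoen integral appearing below depends only on the values of $f$ on $B_{d}(z,As)$, so all of them may be freely computed with $f^{\#}$.

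Next I would carry out the standard two-term splitting. Fixing $w_{0} \in W$ and using $\int_{B_{d}(z,s)}|f - f_{B_{d}(z,s)}|^{p}\,dm \le 2^{p}\sum_{w \in W}\int_{K_{w}}|f - f_{K_{w_{0}}}|^{p}\,dm$ together with $|f - f_{K_{w_0}}|^p \le 2^{p-1}\bigl(|f-f_{K_w}|^p + |f_{K_w}-f_{K_{w_0}}|^p\bigr)$, it suffices to bound, for each $w \in W$, the within-cell term $\int_{K_{w}}|f - f_{K_{w}}|^{p}\,dm$ and the between-cell term $m(K_{w})|f_{K_{w}} - f_{K_{w_{0}}}|^{p}$. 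For the within-cell term I would apply Lemma \ref{lem.prePI} (whose two sides depend only on $f|_{K_{w}}$), obtaining $\int_{K_{w}}|f - f_{K_{w}}|^{p}\,dm \lesssim r_{\ast}^{n\pwalk}\liminf_{k \to \infty}\widetilde{\mathcal{E}}^{k+n}_{p,S^{k}(w)}(f)$. For the between-cell term I would chain $f_{K_{w}} = P_{n}f(w)$ to $P_{n}f(w_{0})$ along a path in $E_{n}^{\ast}(W)$ of length $\le L$, which by the power-mean inequality gives $|f_{K_{w}} - f_{K_{w_{0}}}|^{p} \le L^{p-1}\mathcal{E}_{p,W}^{n}(P_{n}f) = L^{p-1}\sigma_{p}^{-n}\widetilde{\mathcal{E}}^{n}_{p,W}(f)$; combined with $m(K_{w}) \asymp r_{\ast}^{n\hdim}$ (from \eqref{BF.bi-Lip}, \eqref{BF.thick} and the Ahlfors regularity \eqref{AR}) and the identity $r_{\ast}^{n\pwalk} = r_{\ast}^{n\hdim}\sigma_{p}^{-n}$ coming from Definition \ref{d:values}, this yields $m(K_{w})|f_{K_{w}} - f_{K_{w_{0}}}|^{p} \lesssim r_{\ast}^{n\pwalk}\widetilde{\mathcal{E}}^{n}_{p,W}(f)$.

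Finally I would unify the two contributions and take the limit. The weak monotonicity Lemma \ref{lem.Kig-wm} (with $k=n$, $A=W$) upgrades the fixed-level between-cell term to $\widetilde{\mathcal{E}}^{n}_{p,W}(f) \lesssim \liminf_{k \to \infty}\widetilde{\mathcal{E}}^{n+k}_{p,S^{k}(W)}(f)$, which is of the same form as the within-cell term after summing over $w \in W$: using superadditivity of $\liminf$, the disjointness of the vertex sets $S^{k}(w)$ for distinct $w \in T_{n}$, and $\#W < \infty$, one gets $\sum_{w \in W}\liminf_{k}\widetilde{\mathcal{E}}^{k+n}_{p,S^{k}(w)}(f) \le \liminf_{k}\widetilde{\mathcal{E}}^{k+n}_{p,S^{k}(W)}(f)$. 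Since $S^{k}(W) \subseteq T_{n+k}[V]$, both contributions are $\lesssim \liminf_{m \to \infty}\widetilde{\mathcal{E}}^{m}_{p,T_{m}[V]}(f)$. Applying \eqref{KSl.local} with $E = V$ and the open neighborhood $E' = B_{d}(z,As) \supseteq \overline{V}^{K} = V$, then choosing $\delta$ small enough that $(V)_{d,\delta} \subseteq B_{d}(z,As)$, bounds this $\liminf$ by $C\liminf_{r \downarrow 0}\int_{B_{d}(z,As)}\fint_{B_{d}(x,r)}r^{-\pwalk}|f(x)-f(y)|^{p}\,m(dy)m(dx)$, and multiplying through by $r_{\ast}^{n\pwalk} \asymp s^{\pwalk}$ gives \eqref{PI-Kig}. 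I expect the main obstacle to lie in the geometric bookkeeping of the first two paragraphs—verifying from Assumption \ref{assum.BF} that $\#W$ and the chaining length $L$ are bounded by constants and that the enlarged region $V$ stays inside $B_{d}(z,As)$—rather than in any single analytic estimate, since each analytic ingredient (Lemmas \ref{lem.prePI} and \ref{lem.Kig-wm} and the bound \eqref{KSl.local}) is already available.
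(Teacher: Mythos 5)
Your proposal is correct and follows essentially the same route as the paper's proof: the same choice of scale $n$ with $r_{\ast}^{n} \asymp s$, the same splitting into within-cell oscillations (controlled by Lemma \ref{lem.prePI}) and between-cell differences of averages (controlled by chaining along the level-$n$ graph and upgraded via weak monotonicity, Lemma \ref{lem.Kig-wm}), and the same final passage to the Korevaar--Schoen functional through \eqref{KSl.local}. The only differences are cosmetic: you center the oscillation at a fixed cell average $f_{K_{w_{0}}}$ and chain explicitly, whereas the paper centers at $f_{U_{M_{\ast}}(z;n)}$ and invokes the min--max trick together with \cite[(2.17)]{Kig23}, and you treat the $\mathcal{W}^{p}_{\mathrm{loc}}$ reduction at the outset rather than at the end.
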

\begin{proof}
    Throughout this proof, $M_{\ast} \in \mathbb{N}$ and $r_{\ast} \in (0,1)$ are the same constants as in Assumption \ref{assum.BF}.
    We assume that $f \in \mathcal{W}^{p}$ for simplicity.
    Let $(z,s) \in K \times (0,1]$ and choose $n \in \mathbb{N}$ satisfying $c_{3}r_{\ast}^{n} \ge s > c_{3}r_{\ast}^{n + 1}$, where $c_{3}$ is the constant in \eqref{BF.adapted}.
    Let $f \in L^{p}(K,m)$ and set $\Gamma_{M_{\ast}}(z; n) \coloneqq \{ v \in T \mid \text{$v \in \Gamma_{M_{\ast}}(w)$ for some $w \in T_{n}$ with $z \in K_{w}$} \}$.
    Then we see that 
    \begin{align}\label{Kig.var}
%        &\inf_{\alpha \in \mathbb{R}}\int_{B_{d}(z,s)}\abs{f(y) - \alpha}^{p}\,m(dy) \nonumber \\
        &\int_{U_{M_{\ast}}(z; n)}\abs{f(y) - f_{U_{M_{\ast}}(x; n)}}^{p}\,m(dy) \nonumber \\
        &\le \sum_{w \in \Gamma_{M_{\ast}}(z; n)}\int_{K_{w}}\abs{f(y) - f_{U_{M_{\ast}}(x; n)}}^{p}\,m(dy) \nonumber\\
        &\le 2^{p - 1}\sum_{w \in \Gamma_{M_{\ast}}(z; n)}\left(\int_{K_{w}}\abs{f(y) - f_{K_{w}}}^{p}\,m(dy) + m(K_{w})\abs{f_{K_{w}} - f_{U_{M_{\ast}}(x; n)}}^{p}\right)  \nonumber \\
        &\lesssim \sum_{w \in \Gamma_{M_{\ast}}(z; n)}\left(s^{\pwalk}\liminf_{k \to \infty}\widetilde{\mathcal{E}}_{p,S^{k}(w)}^{n + k}(f) + s^{\hdim}\abs{f_{K_{w}} - f_{U_{M_{\ast}}(z; n)}}^{p}\right).
    \end{align}
    Since $\min_{v \in \Gamma_{M_{\ast}}(z; n)}f_{K_{v}} \le f_{U_{M_{\ast}}(z; n)} \le \max_{v \in \Gamma_{M_{\ast}}(z; n)}f_{K_{v}}$, for any $w \in \Gamma_{M_{\ast}}(z; n)$ there exists $w' \in \Gamma_{M_{\ast}}(z; n) \setminus \{ w \}$ such that $\abs{f_{K_{w}} - f_{U_{M_{\ast}}(z; n)}} \le \abs{f_{K_{w}} - f_{K_{w'}}}$, which together with H\"{o}lder's inequality yields that
    \begin{equation}\label{avediff.2}
        \abs{f_{K_{w}} - f_{U_{M_{\ast}}(z; n)}}^{p}
        \lesssim \mathcal{E}_{p,\Gamma_{2M_{\ast}}(w)}^{n}(f)
        \lesssim s^{\pwalk - \hdim}\liminf_{k \to \infty}\widetilde{\mathcal{E}}_{p,S^{k}(\Gamma_{2M_{\ast}}(w))}^{n + k}(f),
    \end{equation}
    where we used \eqref{pCH.1} and \cite[(2.17)]{Kig23} in the last inequality.
    Note that $\sup_{v \in T}\#\Gamma_{M}(w) \le L_{\ast}^{M}$ by \eqref{BF.adapted} and the volume doubling property of $m$. 
    This observation together with \eqref{Kig.var} and \eqref{avediff.2} implies that  
    \begin{align*}
        &\int_{U_{M_{\ast}}(z; n)}\abs{f(y) - f_{U_{M_{\ast}}(x; n)}}^{p}\,m(dy)  \\
        &\lesssim s^{\pwalk}\liminf_{k \to \infty}\sum_{w \in \Gamma_{M_{\ast}}(z; n)}\widetilde{\mathcal{E}}_{p,S^{k}(\Gamma_{2M_{\ast}}(w))}^{n + k}(f) 
        \lesssim s^{\pwalk}\liminf_{k \to \infty}\widetilde{\mathcal{E}}_{p,T_{k}[B_{d}(z,As/2)]}^{k}(f) \\
        &\overset{\eqref{KSl.local}}{\lesssim} s^{\pwalk}\liminf_{r \downarrow 0}\int_{B_{d}(z,As)}\fint_{B_{d}(x,r)}\frac{\abs{f(y) - f(x)}^{p}}{r^{\pwalk}}\,m(dy)m(dx),
    \end{align*}
    which yields \eqref{PI-Kig} in the case $f \in \mathcal{W}^{p}$ since 
    \[
    \int_{U_{M_{\ast}}(z; n)}\abs{f(y) - f_{U_{M_{\ast}}(x; n)}}^{p}\,m(dy) \gtrsim \int_{B_{d}(z,s)}\abs{f(y) - f_{B_{d}(z,s)}}^{p}\,m(dy). 
    \]
    The case $f \in \mathcal{W}^{p}_{\mathrm{loc}}(B_{d}(x,A's))$, where $A' > A$ (set, e.g., $A' = 2A$), is similar. 
\end{proof}

%---- self-similar case -----
\subsection{Self-similar $p$-energy forms based on Korevaar--Schoen $p$-energy forms}
%%%
In this subsection, we construct a self-similar $p$-energy form by improving \cite[Theorem 4.6]{Kig23}.
We need some preparations before constructing such a good self-similar $p$-energy form. 
We first review basic notation and terminology on self-similar structures. 
In particular, we recall the notion of a post-critically finite self-similar structure introduced by Kigami \cite{Kig93}, which is mainly dealt with in the next section. 
See \cite[Section 1]{Kig01} and \cite[Chapter 1]{Kig09} for further details.
Throughout this section, we fix a compact metrizable space $K$, a finite set $S$ with $\#S\geq 2$ and a continuous injective map $F_{i}\colon K\to K$ for each $i\in S$. 
We set $\mathcal{L} \coloneqq (K,S,\{F_{i}\}_{i\in S})$.
\begin{defn}\label{d:shift}
\begin{enumerate}[label=\textup{(\arabic*)},align=left,leftmargin=*,topsep=5pt,parsep=0pt,itemsep=2pt]
\item Let $W_{0} \coloneqq \{\emptyset\}$, where $\emptyset$ is an element
	called the \emph{empty word}, let
	$W_{n} \coloneqq S^{n}=\{w_{1}\dots w_{n}\mid w_{i}\in S\text{ for }i\in\{1,\dots,n\}\}$
	for $n\in\mathbb{N}$ and let $W_{*} \coloneqq \bigcup_{n\in\mathbb{N}\cup\{0\}}W_{n}$.
	For $w\in W_{*}$, the unique $n\in\mathbb{N}\cup\{0\}$ with $w\in W_{n}$
	is denoted by $\lvert w\rvert$ and called the \emph{length of $w$}.
%	For $i\in S$ and $n\in\mathbb{N}\cup\{0\}$ we write $i^{n} \coloneqq i\dots i\in W_{n}$.
\item We set
	$\Sigma \coloneqq S^{\mathbb{N}}=\{\omega_{1}\omega_{2}\omega_{3}\ldots\mid \omega_{i}\in S\text{ for }i\in\mathbb{N}\}$,
	which is always equipped with the product topology of the discrete topology on $S$,
	and define the \emph{shift map} $\sigma\colon\Sigma\to\Sigma$ by
	$\sigma(\omega_{1}\omega_{2}\omega_{3}\dots)\coloneqq\omega_{2}\omega_{3}\omega_{4}\dots$.
	For $i\in S$ we define $\sigma_{i}\colon\Sigma\to\Sigma$  by
	$\sigma_{i}(\omega_{1}\omega_{2}\omega_{3}\dots) \coloneqq i\omega_{1}\omega_{2}\omega_{3}\dots$.
	For $\omega=\omega_{1}\omega_{2}\omega_{3}\ldots\in\Sigma$ and
	$n\in\mathbb{N}\cup\{0\}$, we write $[\omega]_{n} \coloneqq \omega_{1}\dots\omega_{n}\in W_{n}$.
\item For $w=w_{1}\dots w_{n}\in W_{*}$, we set
	$F_{w} \coloneqq F_{w_{1}}\circ\dots\circ F_{w_{n}}$ ($F_{\emptyset} \coloneqq \id_{K}$),
	$K_{w} \coloneqq F_{w}(K)$, $\sigma_{w} \coloneqq \sigma_{w_{1}}\circ\dots\circ \sigma_{w_{n}}$
	($\sigma_{\emptyset} \coloneqq \id_{\Sigma}$) and $\Sigma_{w} \coloneqq \sigma_{w}(\Sigma)$. 
%	and if $w\not=\emptyset$ then $w^{\infty}\in\Sigma$ is defined by $w^{\infty} \coloneqq www\dots$ in the natural manner.
\item Let $w,v \in W_{\ast}$, $w = w_{1} \dots w_{n_{1}}$, $v = v_{1} \dots v_{n_{2}}$. We define $wv \in W_{\ast}$ by $wv \coloneqq w_{1} \dots w_{n_{1}}v_{1} \dots v_{n_{2}} \, (w\emptyset \coloneqq w, \emptyset v \coloneqq v)$. We write $w \le v$ if and only if $w = v\tau$ for some $\tau \in W_{\ast}$. 
        \item A finite subset $\Lambda$ of $W_{\ast}$ is called a \emph{partition} of $\Sigma$ if and only if $\Sigma_{w} \cap \Sigma_{v} = \emptyset$ for any $w,v \in \Lambda$ with $w \neq v$ and $\Sigma = \bigcup_{w \in \Lambda}\Sigma_{w}$.
        \item Let $\Lambda_{1},\Lambda_{2}$ be partitions of $\Sigma$. We say that $\Lambda_{1}$ is a \emph{refinement} of $\Lambda_{2}$, and write $\Lambda_{1} \le \Lambda_{2}$, if and only if for each $w^{1} \in \Lambda_{1}$ there exists $w^{2} \in \Lambda_{2}$ such that $w^{1} \le w^{2}$.
\end{enumerate}
\end{defn}
\begin{defn}\label{d:sss}
$\mathcal{L}=(K,S,\{F_{i}\}_{i\in S})$ is called a \emph{self-similar structure}
if and only if there exists a continuous surjective map $\chi\colon\Sigma\to K$ such that
$F_{i}\circ\chi=\chi\circ\sigma_{i}$ for any $i\in S$.
Note that such $\chi$, if it exists, is unique and satisfies
$\{\chi(\omega)\}=\bigcap_{n\in\mathbb{N}}K_{[\omega]_{n}}$ for any $\omega\in\Sigma$.
\end{defn}

\begin{defn}\label{d:V0Vstar}
Let $\mathcal{L} = (K,S,\{ F_{i} \}_{i \in S})$ be a self-similar structure.
\begin{enumerate}[label=\textup{(\arabic*)},align=left,leftmargin=*,topsep=5pt,parsep=0pt,itemsep=2pt]
\item We define the \emph{critical set} $\mathcal{C}_{\mathcal{L}}$ and the
	\emph{post-critical set} $\mathcal{P}_{\mathcal{L}}$ of $\mathcal{L}$ by
	\begin{equation*}\label{e:C-P}\textstyle
		\mathcal{C}_{\mathcal{L}}\coloneqq\chi^{-1}\bigl(\bigcup_{i,j\in S,\,i\not=j}K_{i}\cap K_{j}\bigr)
		\qquad\text{and}\qquad
		\mathcal{P}_{\mathcal{L}}\coloneqq\bigcup_{n\in\mathbb{N}}\sigma^{n}(\mathcal{C}_{\mathcal{L}}).
	\end{equation*}
	$\mathcal{L}$ is called \emph{post-critically finite}, \emph{p.-c.f.}\ for short,
	if and only if $\mathcal{P}_{\mathcal{L}}$ is a finite set.
\item We set $V_{0}\coloneqq\chi(\mathcal{P}_{\mathcal{L}})$, $V_{n}\coloneqq\bigcup_{w\in W_{n}}F_{w}(V_{0})$
	for $n\in\mathbb{N}$ and $V_{*}\coloneqq\bigcup_{n\in\mathbb{N} \cup \{0\}}V_{n}$.
\end{enumerate}
\end{defn}
The set $V_{0}$ should be considered as the \emph{``boundary"} of the self-similar set $K$;
indeed, $K_{w}\cap K_{v}=F_{w}(V_{0})\cap F_{v}(V_{0})$ for any $w,v\in W_{*}$ with
$\Sigma_{w}\cap\Sigma_{v}=\emptyset$ by \cite[Proposition 1.3.5-(2)]{Kig01}.
According to \cite[Lemma 1.3.11]{Kig01}, $V_{n-1}\subseteq V_{n}$ for any $n\in\mathbb{N}$, and $V_{*}$ is dense in $K$ if $V_{0} \neq \emptyset$.
%(Note that $V_{0} \neq \emptyset$ by \cite[Proposition 1.3.5-(3)]{Kig01}, $\#S \ge 2$ and the connectedness of $K$.) 
%Also note that by \cite[Theorem 1.6.2]{Kig01}, $K$ is connected if and only if for any $i,j\in S$ there exist $n\in\mathbb{N}$ and $\{i_{k}\}_{k=0}^{n}\subset S$ with $i_{0}=i$ and $i_{n}=j$ such that $K_{i_{k-1}}\cap K_{i_{k}}\not=\emptyset$ for any $k\in\{1,\dots,n\}$, and if $K$ is connected then it is arcwise connected.

\begin{defn}[Self-similar measure]
    Let $\mathcal{L} = (K,S,\{ F_{i} \}_{i \in S})$ be a self-similar structure and let $(\theta_{i})_{i \in S} \in (0,1)^{S}$ satisfy $\sum_{i \in S}\theta_{i} = 1$.
    A Borel probability measure $m$ on $K$ is said to be a \emph{self-similar measure on $\mathcal{L}$ with weight $(\theta_{i})_{i \in S}$} if and only if the following equality (of Borel measures on $K$) holds:
    \begin{equation}\label{e:ss-meas}
        m = \sum_{i \in S}\theta_{i}(F_{i})_{\ast}m.
    \end{equation}
\end{defn}

Next we introduce the notion of self-similarity for $p$-energy forms and $p$-resistance forms. 
\begin{defn}[Self-similar $p$-energy/$p$-resistance form]\label{defn.ssform}
	Let $\mathcal{L} = (K,S,\{ F_{i} \}_{i \in S})$ be a self-similar structure and let $m$ be a Radon measure on $K$ with full topological support.
	Let $(\rweight_{p,s})_{s \in S} \in (0,\infty)^{S}$ and define $\rweight_{p,w} \coloneqq \rweight_{p,w_1} \cdots \rweight_{p,w_{n}}$ for each $w = w_{1} \dots w_{n} \in W_{\ast}$. 
	A $p$-energy form $(\mathcal{E}_{p},\mathcal{F}_{p})$ on $(K,m)$ (with $\mathcal{F}_{p} \subseteq L^{p}(K,m)$) is called a \emph{self-similar $p$-energy form on $(\mathcal{L},m)$ with weight $(\rweight_{p,s})_{s \in S}$} if and only if the following hold: 
	\begin{gather}
		\mathcal{F}_{p} \cap \contfunc(K) = \{ u \in \contfunc(K) \mid \text{$u \circ F_{s} \in \mathcal{F}_{p}$ for any $s \in S$} \}, \label{domain.ss}\\
		\mathcal{E}_{p}(u) = \sum_{s \in S}\rweight_{p,s}\mathcal{E}_{p}(u \circ F_{s}) \quad \text{for any $u \in \mathcal{F}_{p} \cap \contfunc(K)$.} \label{form.ss}
	\end{gather}
	If $\mathcal{F}_{p} \subseteq \contfunc(K)$ and $(\mathcal{E}_{p},\mathcal{F}_{p})$ is a $p$-resistance form on $K$ satisfying \eqref{domain.ss} and \eqref{form.ss}, then $(\mathcal{E}_{p},\mathcal{F}_{p})$ is called a \emph{self-similar $p$-resistance form on $\mathcal{L}$ with weight $(\rweight_{p,s})_{s \in S}$}. 
\end{defn}

We will focus on self-similar structures having \emph{rationally related contraction ratios} as in \cite{Kig23}. 
In the next definition, we introduce a good partition parametrized by a rooted tree. 
\begin{defn}[{\cite[Definition 4.2]{Kig23}}]\label{d:scale}
    Let $\mathcal{L} = (K,S,\{ F_{i} \}_{i \in S})$ be a self-similar structure, let $r \in (0,1)$ and let $(j_{s})_{s \in S} \in \mathbb{R}^{S}$.
    Define
    \[
    j(w) \coloneqq \sum_{i = 1}^{n}j_{w_{i}} \quad \text{and} \quad g(w) \coloneqq r^{j(w)} \quad \text{for $w = w_{1}\dots w_{n} \in W_{n}$.}
    \]
    Define $\widetilde{\pi}(w_{1} \cdots w_{n}) \coloneqq w_{1} \cdots w_{n-1}$ for $w = w_{1}\dots w_{n} \in W_{n}$ and
    \[
    \Lambda_{r^{k}}^{g} \coloneqq \{ w = w_{1} \cdots w_{n} \in W_{\ast} \mid g(\widetilde{\pi}(w)) > r^{k} \ge g(w) \}.
    \]
    Set $T_{k}^{(r)} \coloneqq \{ (k,w) \mid w \in \Lambda_{r^{k}}^{g} \}$ and $T^{(r)} \coloneqq \bigcup_{k \in \mathbb{N} \cup \{ 0 \}}T_{k}^{(r)}$.
	% and define $\iota \colon T^{(r)} \to W_{\ast}$ as $\iota(k,w) = w$.
    Moreover, define $E_{T^{(r)}} \subseteq T^{(r)} \times T^{(r)}$ by
    \[
    E_{T^{(r)}} \coloneqq \Bigl\{ ((k,v), (k + 1,w)) \in T_{k}^{(r)} \times T_{k + 1}^{(r)} \Bigm| k \in \mathbb{N} \cup \{ 0 \}, \text{$v = w$ or $v = \widetilde{\pi}(w)$} \Bigr\}.
    \]
\end{defn}

We introduce the following assumption in order to construct a self-similar $p$-energy form on $(\mathcal{L},m)$.
(Recall that we have fixed $p \in (1,\infty)$.)
\begin{assum}\label{assum.CH-ss}
    Let $\mathcal{L} = (K,S,\{ F_{i} \}_{i \in S})$ be a self-similar structure such that $\#S \ge 2$ and $K$ is connected.
    There exist $r_{\ast} \in (0,1)$, $(j_s)_{s \in S} \in \mathbb{N}^{S}$ and a metric $d$ giving the original topology of $K$ with $\diam(K,d) = 1$ such that $(K,d,\{ K_{w} \}_{w \in T^{(r_{\ast})}},m,p)$ satisfies Assumption \ref{assum.CH2}, where $\hdim \in (0,\infty)$ is such that $\sum_{s \in S}r_{\ast}^{j_{s}\hdim} = 1$ and $m$ is the self-similar measure on $K$ with weight $(r_{\ast}^{j_{s}\hdim})_{s \in S}$.
    (The collection $\{ F_{i} \}_{i \in S}$ is said to have rationally related contraction ratios $(r_{\ast}^{j_s})_{s \in S}$.)
\end{assum} 
 
Under Assumption \ref{assum.CH-ss}, we have $V_{0} \neq \emptyset$ since $K$ is connected and $\#S \ge 2$ (see \cite[Proposition 1.3.5-(3)]{Kig01} or \cite[Theorem 1.6.2]{Kig01}). 
Also, we can easily show that $m$ is $\hdim$-Ahlfors regular as stated in the following proposition (see \cite[Proposition 4.5]{Kig23}). 
\begin{prop}\label{prop.ARss}
	Suppose that $\mathcal{L}$ is a self-similar structure and that there exist $r_{\ast} \in (0,1)$, $(j_s)_{s \in S} \in \mathbb{N}^{S}$ and a metric $d$ giving the original topology of $K$ with $\diam(K,d) = 1$ such that $(K,d,\{ K_{w} \}_{w \in T^{(r_{\ast})}},m)$ satisfies Assumption \ref{assum.BF}. 
	Let $\hdim \in (0,\infty)$ be such that $\sum_{s \in S}r_{\ast}^{j_{s}\hdim} = 1$ and let $m$ be the self-similar measure on $K$ with weight $(r_{\ast}^{j_{s}\hdim})_{s \in S}$.
	Then $\hdim$ is the Hausdorff dimension of $(K,d)$ and $m$ is $\hdim$-Ahlfors regular with respect to $d$. 
\end{prop}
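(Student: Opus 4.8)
The plan is to first compute the $m$-measure of each cell exactly, namely to show that $m(K_{w}) = g(w)^{\hdim}$ for every $w \in W_{\ast}$ (equivalently $m(K_{w}) = r_{\ast}^{j(w)\hdim}$), and then to convert this into a two-sided bound on $m(B_{d}(x,s))$ using the geometric estimates recorded in Assumption \ref{assum.BF}. Once $\hdim$-Ahlfors regularity is in hand, the identification of the Hausdorff dimension will follow by a standard argument.

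The crux is the cell-measure identity, and this is the step I expect to be the main obstacle, since it is exactly what pins the exponent down to be precisely $\hdim$. Iterating the self-similarity \eqref{e:ss-meas} $n$ times gives $m(A) = \sum_{u \in W_{n}}\theta_{u}m(F_{u}^{-1}(A))$ for any Borel set $A$, where $\theta_{u} \coloneqq \prod_{i}\theta_{u_{i}} = r_{\ast}^{j(u)\hdim}$. Taking $A = K_{w}$ with $\abs{w} = n$, retaining only the term $u = w$ (which by injectivity of $F_{w}$ satisfies $F_{w}^{-1}(K_{w}) = K$, hence $m(F_{w}^{-1}(K_{w})) = 1$) and discarding the remaining nonnegative terms, I obtain the term-by-term inequality $m(K_{w}) \ge \theta_{w}$. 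On the other hand $\bigcup_{w \in W_{n}}K_{w} = K$, and the pairwise intersections $K_{w} \cap K_{v}$ with $w \neq v$ and $\abs{w} = \abs{v} = n$ are $m$-null by \eqref{BF.nomass-intersection} (valid because \eqref{BF-nooverlap} is part of Assumption \ref{assum.BF}), so $\sum_{w \in W_{n}}m(K_{w}) = m(K) = 1 = \bigl(\sum_{i \in S}\theta_{i}\bigr)^{n} = \sum_{w \in W_{n}}\theta_{w}$, the middle equality holding because $\sum_{s \in S}r_{\ast}^{j_{s}\hdim} = 1$. Comparing the term-by-term inequality with these equal sums forces $m(K_{w}) = \theta_{w} = g(w)^{\hdim}$ for all $w$; note that this relies essentially on the no-overlap property rather than on any quantitative geometry. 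For a tree vertex $(k,w) \in T_{k}^{(r_{\ast})}$ the definition of $\Lambda_{r_{\ast}^{k}}^{g}$ gives $r_{\ast}^{k+J} < g(w) \le r_{\ast}^{k}$ with $J \coloneqq \max_{s \in S}j_{s}$, whence $m(K_{w}) \asymp r_{\ast}^{k\hdim}$ with constants depending only on $J$, $\hdim$ and $r_{\ast}$.

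Next I would prove $\hdim$-Ahlfors regularity. Fix $x \in K$ and $s \in (0,1]$; the range $s \in (1,2)$ is trivial since then $B_{d}(x,s) = K$ and $m(K) = 1 \asymp s^{\hdim}$. For the upper bound, choose $n$ maximal with $c_{3}r_{\ast}^{n} \ge s$, so that $r_{\ast}^{n} \asymp s$; then \eqref{BF.adapted} gives $B_{d}(x,s) \subseteq B_{d}(x,c_{3}r_{\ast}^{n}) \subseteq U_{M_{\ast}}(x;n)$, which is a union of at most $N$ cells in $T_{n}$, the bound on $N$ coming from the uniform finiteness of $\{K_{w}\}_{w \in T}$ in Assumption \ref{assum.BF}-\ref{BF1} (which bounds both $\#\Gamma_{M_{\ast}}(w)$ and the number of $w \in T_{n}$ with $x \in K_{w}$); since each such cell has measure at most $r_{\ast}^{n\hdim}$, we get $m(B_{d}(x,s)) \le N r_{\ast}^{n\hdim} \lesssim s^{\hdim}$. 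For the lower bound, choose $n$ minimal with $c_{2}r_{\ast}^{n} < s$, so again $r_{\ast}^{n} \asymp s$, and pick $w \in T_{n}$ with $x \in K_{w}$; then $\diam(K_{w},d) \le c_{2}r_{\ast}^{n} < s$ by \eqref{BF.bi-Lip} forces $K_{w} \subseteq B_{d}(x,s)$, so $m(B_{d}(x,s)) \ge m(K_{w}) \gtrsim r_{\ast}^{n\hdim} \gtrsim s^{\hdim}$. Together these give \eqref{AR} with $Q = \hdim$.

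Finally, $\hdim$ being the Hausdorff dimension of $(K,d)$ is a standard consequence of $\hdim$-Ahlfors regularity. The upper estimate $m(B_{d}(x,s)) \le C_{\mathrm{AR}}s^{\hdim}$ yields, via the mass distribution principle, $\mathcal{H}^{\hdim}(A) \ge C_{\mathrm{AR}}^{-1}m(A)$ for all $A \in \mathcal{B}(K)$, so $\mathcal{H}^{\hdim}(K) > 0$ and $\dim_{\mathrm{H}}(K,d) \ge \hdim$; the lower estimate $m(B_{d}(x,s)) \ge C_{\mathrm{AR}}^{-1}s^{\hdim}$ together with a covering of $K$ by boundedly overlapping balls of radius $s$, whose cardinality is $\lesssim s^{-\hdim}$, shows $\mathcal{H}^{\hdim}(K) < \infty$ and $\dim_{\mathrm{H}}(K,d) \le \hdim$. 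This completes the proof.
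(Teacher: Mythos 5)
Your proposal is correct in overall strategy, and it is worth noting that the paper itself contains no proof of Proposition \ref{prop.ARss}: the statement is given with a pointer to \cite[Proposition 4.5]{Kig23}. Your self-contained argument is therefore necessarily a different route, and it is the natural one: pin down the cell measures exactly, convert to two-sided ball estimates via \eqref{BF.bi-Lip}, \eqref{BF.adapted} and uniform finiteness, and finish with the mass distribution principle plus a covering count. Your second and third steps are fine as written (the only caveat, which is routine, is that for $s$ comparable to $1$ one should fall back on the trivial bounds $m(B_{d}(x,s)) \le 1$ and the lower bound at a fixed small scale). What your write-up buys over the citation is that it isolates exactly where the self-similarity of $m$ enters: only in the cell-measure identity $m(K_{w}) = \theta_{w}$, $\theta_{w} \coloneqq r_{\ast}^{j(w)\hdim}$.

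There is, however, one genuine (though repairable) slip in precisely that crux step. You invoke \eqref{BF.nomass-intersection} for distinct words $v,w \in W_{n}$, but \eqref{BF.nomass-intersection} --- and the hypothesis \eqref{BF-nooverlap} from which the paper derives it --- concerns cells at a common level of the tree $T^{(r_{\ast})}$, and the levels of $T^{(r_{\ast})}$ are the sets $\Lambda_{r_{\ast}^{k}}^{g}$, not the word spaces $W_{n}$. When the $j_{s}$ are not all equal (the very case the ``rationally related contraction ratios'' framework is designed for), two distinct words of the same length need never lie in a common tree level: with $S = \{1,2\}$, $j_{1} = 1$, $j_{2} = 2$, the word $11$ lies only in $\Lambda_{r_{\ast}^{2}}^{g}$ while $22$ lies only in $\Lambda_{r_{\ast}^{3}}^{g} \cup \Lambda_{r_{\ast}^{4}}^{g}$. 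Two ways to repair this. (i) Bridge: take $k > j(v) \vee j(w)$; since $\Lambda_{r_{\ast}^{k}}^{g}$ is a partition of $\Sigma$ and any $v' \in \Lambda_{r_{\ast}^{k}}^{g}$ whose cylinder meets $\Sigma_{v}$ must satisfy $v' \le v$ (a strict prefix of $v$ would have $j$-value less than $k$, contradicting membership in $\Lambda_{r_{\ast}^{k}}^{g}$), one gets $K_{v} = \bigcup\{K_{v'} \mid v' \in \Lambda_{r_{\ast}^{k}}^{g},\ v' \le v\}$ and similarly for $w$; since $\Sigma_{v} \cap \Sigma_{w} = \emptyset$, all resulting pairs $(v',w')$ are distinct elements of the single tree level $\Lambda_{r_{\ast}^{k}}^{g}$, so \eqref{BF.nomass-intersection} applies and yields $m(K_{v} \cap K_{w}) = 0$. (ii) Cleaner: run your pinching argument on tree levels instead of on $W_{n}$. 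Your term-by-term inequality $m(K_{w}) \ge \theta_{w}$ is derived at word level and holds for every $w \in W_{\ast}$; moreover $\sum_{w \in \Lambda_{r_{\ast}^{k}}^{g}} m(K_{w}) = 1$ by telescoping \eqref{BF-nooverlap} down the tree, and $\sum_{w \in \Lambda_{r_{\ast}^{k}}^{g}} \theta_{w} = 1$ because $\{\Sigma_{w}\}_{w \in \Lambda_{r_{\ast}^{k}}^{g}}$ partitions $\Sigma$ and $(\theta_{i})_{i \in S}$ induces the product measure on $\Sigma$. Pinching then gives $m(K_{w}) = \theta_{w}$ for all $w$ in any tree level, and every $w \in W_{\ast}$ belongs to some level, namely $\Lambda_{r_{\ast}^{k}}^{g}$ for $j(\widetilde{\pi}(w)) < k \le j(w)$. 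With either repair inserted, the rest of your proof goes through as written.
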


To construct a self-similar $p$-energy form, we need to take care of the pre-self-similarity condition (see \cite[Theorem 8.12]{MS+}).
We can easily verify this condition in the case $\sigma_{p} > 1$ by modifying \cite[Proof of Theorem 4.6]{Kig23}; see \cite[Section 8.2]{KS.gc} for details. 
\begin{prop}
	Suppose that Assumption \ref{assum.CH-ss} holds and that $\sigma_{p} > 1$. 
	Then \eqref{domain.ss} with $\mathcal{W}^{p}$ in place of $\mathcal{F}_{p}$ holds and there exists $C \in [1,\infty)$ such that for any $n \in \mathbb{N}$ and any $u \in \mathcal{W}^{p} \subseteq \contfunc(K)$, 
	\[
	C^{-1}\sum_{w \in W_{n}}\sigma_{p}^{j(w)}\mathcal{N}_{p}(u \circ F_{w})^{p} \le \mathcal{N}_{p}(u)^{p} \le C\sum_{w \in W_{n}}\sigma_{p}^{j(w)}\mathcal{N}_{p}(u \circ F_{w})^{p}.
	\]
\end{prop}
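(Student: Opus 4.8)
The plan is to reduce everything to the behaviour of the discrete energies $\widetilde{\mathcal{E}}_{p}^{k}(\,\cdot\,)=\sigma_{p}^{k}\mathcal{E}_{p}^{k}(P_{k}\,\cdot\,)$ under the self-similar refinement of the partition $\{K_{w}\}_{w\in T^{(r_{\ast})}}$, following the strategy of \cite[Proof of Theorem 4.6]{Kig23} and \cite[Section 8.2]{KS.gc}. First I would record, as a consequence of the weak monotonicity \eqref{e:Kig-wm} (Lemma \ref{lem.Kig-wm}) applied with $A=T_{k}^{(r_{\ast})}$ together with \eqref{pCH.1}, that the sequence $\{\widetilde{\mathcal{E}}_{p}^{k}(u)\}_{k}$ is monotone increasing up to a uniform multiplicative constant $C_{0}\in[1,\infty)$, so that
\[
\mathcal{N}_{p}(u)^{p}=\sup_{k}\widetilde{\mathcal{E}}_{p}^{k}(u)\le C_{0}\liminf_{k\to\infty}\widetilde{\mathcal{E}}_{p}^{k}(u)\le C_{0}\limsup_{k\to\infty}\widetilde{\mathcal{E}}_{p}^{k}(u).
\]
The same holds for every $u\circ F_{w}$, so it suffices to compare $\widetilde{\mathcal{E}}_{p}^{k}(u)$ with $\sum_{w\in W_{n}}\sigma_{p}^{j(w)}\widetilde{\mathcal{E}}_{p}^{k-j(w)}(u\circ F_{w})$ for large $k$ and then pass to the limit.

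Second, I would establish the exact self-similar scaling of the discrete energy inside a cell. For $v\in W_{n}$ and $w'\in T_{l}^{(r_{\ast})}$ the word $vw'$ lies in $T_{j(v)+l}^{(r_{\ast})}$, the cells overlap in $m$-null sets (recall \eqref{BF.nomass-intersection}) so that $m|_{K_{v}}=r_{\ast}^{j(v)\hdim}(F_{v})_{\ast}m$, and $F_{v}$ is injective; hence $P_{j(v)+l}u(vw')=P_{l}(u\circ F_{v})(w')$ and $v\mapsto vw'$ maps the interior edges of $E_{j(v)+l}^{\ast}$ in $K_{v}$ bijectively onto $E_{l}^{\ast}$. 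Writing $A_{v,k}\subseteq T_{k}^{(r_{\ast})}$ for the set of level-$k$ vertices whose cells lie in $K_{v}$, I would thereby obtain the identity
\[
\widetilde{\mathcal{E}}_{p,A_{v,k}}^{k}(u)=\sigma_{p}^{j(v)}\,\widetilde{\mathcal{E}}_{p}^{k-j(v)}(u\circ F_{v}),\qquad k>\max_{v\in W_{n}}j(v),
\]
where the factor $\sigma_{p}^{j(v)}$ is precisely the shift $k\mapsto k-j(v)$ in the weight $\sigma_{p}^{k}$. Since $K=\bigcup_{w\in W_{n}}K_{w}$, summing over $v\in W_{n}$ reconstructs $\widetilde{\mathcal{E}}_{p}^{k}(u)$ up to the edges of $E_{k}^{\ast}$ joining two distinct level-$n$ cells; dropping these nonnegative cross terms already yields $\sum_{v\in W_{n}}\sigma_{p}^{j(v)}\widetilde{\mathcal{E}}_{p}^{k-j(v)}(u\circ F_{v})\le\widetilde{\mathcal{E}}_{p}^{k}(u)$, which after letting $k\to\infty$ and using the first paragraph gives the lower bound $C^{-1}\sum_{w}\sigma_{p}^{j(w)}\mathcal{N}_{p}(u\circ F_{w})^{p}\le\mathcal{N}_{p}(u)^{p}$.

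The main obstacle is the opposite bound, which requires controlling the cross terms. Each such edge joins a fine cell inside some $K_{v}$ to a fine cell inside an adjacent $K_{v'}$, and the two cells meet only near the finite set $F_{v}(V_{0})\cap F_{v'}(V_{0})$ (recall $K_{v}\cap K_{v'}=F_{v}(V_{0})\cap F_{v'}(V_{0})$ by \cite[Proposition 1.3.5-(2)]{Kig01} and $V_{0}\neq\emptyset$). I would bound the $p$-th power of the corresponding difference of $P_{k}u$ using the local Poincar\'{e}-type estimate of Lemma \ref{lem.prePI} applied separately in $K_{v}$ and in $K_{v'}$ along a chain of neighbouring cells of length bounded by Assumption \ref{assum.BF}-\ref{BF4} joining the edge to such a boundary point, thereby expressing each cross term through the interior energies $\widetilde{\mathcal{E}}_{p}^{k-j(v)}(u\circ F_{v})$ and $\widetilde{\mathcal{E}}_{p}^{k-j(v')}(u\circ F_{v'})$. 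Since the partition is uniformly finite and $\#V_{0}<\infty$, only boundedly many cross edges meet each boundary point, so the total cross contribution is dominated by $C\sum_{v\in W_{n}}\sigma_{p}^{j(v)}\widetilde{\mathcal{E}}_{p}^{k-j(v)}(u\circ F_{v})$; this is the one genuinely delicate estimate, the rest being bookkeeping built on the scaling identity and the weak monotonicity.

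Combining the two bounds, letting $k\to\infty$, and invoking the first paragraph would yield the asserted two-sided comparison with a constant $C$ independent of $n$ and $u$. The domain identity \eqref{domain.ss} (with $\mathcal{W}^{p}$ in place of $\mathcal{F}_{p}$) then follows from the case $n=1$: by Theorem \ref{thm.Wp} we have $\mathcal{W}^{p}\subseteq\contfunc(K)$ since $\sigma_{p}>1$, and the comparison for $n=1$ shows that $\mathcal{N}_{p}(u)<\infty$ if and only if $\mathcal{N}_{p}(u\circ F_{s})<\infty$ for every $s\in S$, i.e.\ if and only if $u\circ F_{s}\in\mathcal{W}^{p}$ for every $s\in S$, the continuity of $u$ and of each $u\circ F_{s}$ being automatic.
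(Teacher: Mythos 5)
Your first two paragraphs are sound and follow the standard route: the weak monotonicity of Lemma \ref{lem.Kig-wm} reduces the claim to comparing $\widetilde{\mathcal{E}}_{p}^{k}(u)$ with $\sum_{w\in W_{n}}\sigma_{p}^{j(w)}\widetilde{\mathcal{E}}_{p}^{k-j(w)}(u\circ F_{w})$ for large $k$; the bijection $w'\mapsto vw'$ between $T^{(r_{\ast})}_{k-j(v)}$ and the level-$k$ descendants of $v$, together with $m|_{K_{v}}=r_{\ast}^{j(v)\hdim}(F_{v})_{\ast}m$ and \eqref{BF.nomass-intersection}, gives the exact scaling identity, and dropping the cross edges yields the left-hand inequality. (Note that the paper itself gives no proof of this proposition; it only refers to \cite[Proof of Theorem 4.6]{Kig23} and \cite[Section 8.2]{KS.gc}, so your attempt must be judged on its own merits.)

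The gap is in your third paragraph, i.e., exactly in the cross-term estimate that you yourself single out as the delicate step. Your accounting rests on $K_{v}\cap K_{v'}=F_{v}(V_{0})\cap F_{v'}(V_{0})$ being a \emph{finite} set and explicitly on ``$\#V_{0}<\infty$''. But Assumption \ref{assum.CH-ss} does not assume that $\mathcal{L}$ is post-critically finite: p.-c.f.\ structures are the subject of Section \ref{sec.CGQ} (Assumption \ref{assum.pRes}), whereas the present proposition belongs to Section \ref{sec.Kig}, whose motivating examples (generalized Sierpi\'nski carpets and the other examples of \cite[Sections 4.4--4.6]{Kig23}) have infinite $V_{0}$. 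For such $K$ the number of cross edges at level $k$ between two adjacent cells $K_{v},K_{v'}$ tends to infinity as $k\to\infty$, so the (true) observation that each boundary point lies in boundedly many level-$k$ cells gives no control on the \emph{total} number of cross edges, and the claimed bound of the total cross contribution by $C\sum_{v\in W_{n}}\sigma_{p}^{j(v)}\widetilde{\mathcal{E}}_{p}^{k-j(v)}(u\circ F_{v})$ does not follow. A second problem is the tool: Lemma \ref{lem.prePI} is an $L^{p}$-Poincar\'e inequality, and a finite chain of neighbouring cells inside $K_{v}$ (as provided by Assumption \ref{assum.BF}-\ref{BF4}) only ever compares cell averages on the same side of the boundary; the last cell of the chain still meets $K_{v'}$ in an $m$-null set, so the offending difference of averages across the boundary reappears untouched at the end of the chain. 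What is actually needed is pointwise control: one telescopes $P_{k}u(\tau)-u(x)$, $x\in K_{\tau}\cap K_{\tau'}$, over \emph{all} scales $l\ge k$, using $\sigma_{p}>1$ to make the resulting series converge geometrically (a Morrey-type estimate, which is also where continuity of $u$ genuinely enters), and then runs an absorption argument, because the level-$l$ terms produced by the telescoping again contain cross edges at finer scales; alternatively one invokes the neighbour-disparity machinery of \cite[Proof of Theorem 4.6]{Kig23}, \cite[Section 8.2]{KS.gc}. Until the cross terms are handled by such an argument, the right-hand inequality, and with it the inclusion $\{u\in\contfunc(K)\mid u\circ F_{s}\in\mathcal{W}^{p}\text{ for all }s\in S\}\subseteq\mathcal{W}^{p}$ needed for \eqref{domain.ss}, remain unproven.
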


Now we can present an improvement of \cite[Theorem 4.6]{Kig23} in the following formulation. 
\begin{thm}\label{thm.KSss-Kig}
	Suppose that Assumption \ref{assum.CH-ss} holds, that \eqref{domain.ss} with $\mathcal{W}^{p}$ in place of $\mathcal{F}_{p}$ holds, and that there exists $C_{0} \in [1,\infty)$ such that for any $n \in \mathbb{N}$ and any $u \in \mathcal{W}^{p} \cap \contfunc(K)$,
	\begin{equation}\label{form.pss}
		C_{0}^{-1}\sum_{w \in W_{n}}\sigma_{p}^{j(w)}\mathcal{N}_{p}(u \circ F_{w})^{p} \le \mathcal{N}_{p}(u)^{p} \le C_{0}\sum_{w \in W_{n}}\sigma_{p}^{j(w)}\mathcal{N}_{p}(u \circ F_{w})^{p}. 
	\end{equation}
	For each $n \in \mathbb{N}$, define $\bm{k}^{(n)} = \{ k_{r}^{(n)} \}_{r > 0}$ by 
	\begin{equation*}
		k_{r}^{(n)}(x,y) \coloneqq \frac{1}{n + 1}\sum_{l = 0}^{n}\sum_{w \in W_{l}}r_{\ast}^{-j(w)\cdot(\pwalk + \hdim)}\frac{\indicator{A_{w,r}}(x,y)}{r^{\pwalk + \hdim}}, \quad x,y \in K, 
	\end{equation*}
	where $A_{w,r} \coloneqq \bigl\{ (x,y) \in K_{w} \times K_{w} \bigm| d(F_{w}^{-1}(x),F_{w}^{-1}(y)) < r \bigr\}$. 
	Then $\bm{k}^{(n)}$ is asymptotically local, \hyperref[KSwm]{\textup{(WM)$_{p,\bm{k}^{(n)}}$}} holds, $B_{p,\infty}^{\bm{k}^{(n)}} = \mathcal{W}^{p}$,
	and for any sequence $\{(\mathcal{E}_{p}^{\bm{k}^{(n)}},\mathcal{W}^{p})\}_{n\in\mathbb{N}}$ with $(\mathcal{E}_{p}^{\bm{k}^{(n)}},\mathcal{W}^{p})$ a $\bm{k}^{(n)}$-Korevaar--Schoen $p$-energy form on $(K,m)$ for each $n \in \mathbb{N}$,
	there exists a sequence $\{ n_{j} \}_{j \in \mathbb{N}} \subseteq \mathbb{N}$ with $n_{j} < n_{j + 1}$ for any $j \in \mathbb{N}$ such that the following limit exists in $[0,\infty)$ for any $u \in \mathcal{W}^{p}$: 
	\begin{equation}\label{KSss.constr}
         \breve{\mathcal{E}}_{p}^{\mathrm{KS}}(u) \coloneqq \lim_{j \to \infty}\mathcal{E}_{p}^{\bm{k}^{(n_j)}}(u). 
    \end{equation}
	Moreover, for any such $\{ \mathcal{E}_{p}^{\bm{k}^{(n)}} \}_{n \in \mathbb{N}}$ and $\{ n_{j} \}_{j \in \mathbb{N}}$, the functional $\breve{\mathcal{E}}_{p}^{\mathrm{KS}} \colon \mathcal{W}^{p} \to [0,\infty)$ defined by \eqref{KSss.constr} satisfies the following properties: 
    \begin{enumerate}[label=\textup{(\alph*)},align=left,leftmargin=*,topsep=2pt,parsep=0pt,itemsep=2pt]
    	\item\label{it:KS-Kig.ss} $(\breve{\mathcal{E}}_{p}^{\mathrm{KS}},\mathcal{W}^{p})$ is a self-similar $p$-energy form on $(\mathcal{L},m)$ with weight $(\sigma_{p}^{j_s})_{s \in S}$.  
    	\item\label{it:KS-Kig.comparable} For any $u \in \mathcal{W}^{p}$, 
    		\begin{equation}\label{e:KS-Kig.comparable}
    			(CC_{0})^{-1}\mathcal{N}_{p}(u)^{p} \le \breve{\mathcal{E}}_{p}^{\mathrm{KS}}(u) \le CC_{0}\mathcal{N}_{p}(u)^{p}, 
    		\end{equation}
    		where $C,C_{0} \in [1,\infty)$ are the constants in \eqref{KSfull} and in \eqref{form.pss} respectively.
    	\item\label{it:KS-Kig.GC} $(\breve{\mathcal{E}}_{p}^{\mathrm{KS}},\mathcal{W}^{p})$ satisfies \ref{GC}. Furthermore, for any $u,v \in \mathcal{W}^{p}$, $\{ \mathcal{E}_{p}^{\bm{k}^{(n_{j})}}(u; v) \}_{j \in \mathbb{N}}$ is convergent in $\mathbb{R}$ and 
			\begin{equation}\label{KS-Kig.compatible}
				\breve{\mathcal{E}}_{p}^{\mathrm{KS}}(u; v) = \lim_{j \to \infty}\mathcal{E}_{p}^{\bm{k}^{(n_{j})}}(u; v).  
			\end{equation}
		\item\label{it:KS-Kig.basic} Theorem \ref{thm.KS-energy}-\ref{it:difffunc},\ref{KS.conti},\ref{KS.slbdd} with $(\breve{\mathcal{E}}_{p}^{\mathrm{KS}},\mathcal{W}^{p})$ in place of $(\mathcal{E}_{p}^{\bm{k}},B_{p,\infty}^{\bm{k}})$ hold. 
		\item\label{it:KS-Kig.inv} For any isometric map $T \colon (K,d) \to (K,d)$ preserving $m$, $u \circ T \in \mathcal{W}^{p}$ and $\breve{\mathcal{E}}_{p}^{\mathrm{KS}}(u \circ T) = \breve{\mathcal{E}}_{p}^{\mathrm{KS}}(u)$ for any $u \in \mathcal{W}^{p}$.
		\item\label{it:KS-Kig.pRF} If in addition $\sigma_{p} > 1$, then $(\breve{\mathcal{E}}_{p}^{\mathrm{KS}},\mathcal{W}^{p})$ is a $p$-resistance form on $K$, and there exist $\alpha_{0},\alpha_{1} \in (0,\infty)$ independent of particular choices of $\{ \mathcal{E}_{p}^{\bm{k}^{(n)}} \}_{n \in \mathbb{N}}$ and $\{ n_j \}_{j \in \mathbb{N}}$ such that 
			\begin{equation}\label{e:KS-Kig;Rp-compl}
        		\alpha_{0}d(x,y)^{\pwalk - \hdim} \le R_{\breve{\mathcal{E}}_{p}^{\mathrm{KS}}}(x,y) \le \alpha_{1}d(x,y)^{\pwalk - \hdim} \quad \text{for any $x,y \in K$.}
    		\end{equation}
    \end{enumerate}
\end{thm}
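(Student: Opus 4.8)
The plan is to reduce everything to the already-proved facts about the genuine Korevaar--Schoen functional and to exploit the self-similarity of $\measure$ through a single change-of-variables identity. Write $Q_{r}(g) \coloneqq J_{p,r}^{\bm{k}^{s_{p},\hdim}}(g)$ for the Ahlfors-normalized Korevaar--Schoen functional of Example~\ref{ex.KS}, so that \eqref{KSfull} in Theorem~\ref{thm.Kig-KS} reads $C^{-1}\sup_{r>0}Q_{r}(g) \le \mathcal{N}_{p}(g)^{p} \le C\liminf_{r\downarrow 0}Q_{r}(g)$. First I would record the identity, obtained by the substitution $x=F_{w}(x')$, $y=F_{w}(y')$ together with $\measure(F_{w}(A))=r_{\ast}^{j(w)\hdim}\measure(A)$ and the computation $r_{\ast}^{\hdim-\pwalk}=\sigma_{p}$ from Definition~\ref{d:values},
\begin{equation*}
	J_{p,r}^{\bm{k}^{(n)}}(u) = \frac{1}{n+1}\sum_{l=0}^{n}T_{l}(u,r), \qquad T_{l}(u,r) \coloneqq \sum_{w \in W_{l}}\sigma_{p}^{j(w)}Q_{r}(u \circ F_{w}),
\end{equation*}
which is the engine for all assertions. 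Combining it with \eqref{KSfull} and the pre-self-similar estimate \eqref{form.pss} (first for $u\in\mathcal{W}^{p}\cap\contfunc(K)$, then for all $u\in\mathcal{W}^{p}$ by the density of $\mathcal{W}^{p}\cap\contfunc(K)$ in $\mathcal{W}^{p}$ from Theorem~\ref{thm.Kig-KS}) yields, uniformly in $l,r,n$,
\begin{equation*}
	(CC_{0})^{-1}\mathcal{N}_{p}(u)^{p} \le \liminf_{r \downarrow 0}T_{l}(u,r) \le \sup_{r>0}T_{l}(u,r) \le CC_{0}\mathcal{N}_{p}(u)^{p}.
\end{equation*}
Hence $B_{p,\infty}^{\bm{k}^{(n)}}=\mathcal{W}^{p}$, \hyperref[KSwm]{\textup{(WM)$_{p,\bm{k}^{(n)}}$}} holds with a constant independent of $n$, and asymptotic locality of $\bm{k}^{(n)}$ is immediate since each $A_{w,r}$ lies within distance $\delta(r)\downarrow 0$ of the diagonal, the finitely many $F_{w}$ with $w\in\bigcup_{l\le n}W_{l}$ being uniformly continuous.

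With these in hand, the existence of $\breve{\mathcal{E}}_{p}^{\mathrm{KS}}$ and the comparability~\ref{it:KS-Kig.comparable} follow the pattern of the proof of Theorem~\ref{thm.KS-energy}: every value $\mathcal{E}_{p}^{\bm{k}^{(n)}}(u)$ lies in the interval $[(CC_{0})^{-1}\mathcal{N}_{p}(u)^{p},\,CC_{0}\mathcal{N}_{p}(u)^{p}]$, so a diagonal argument over a countable dense subset of the separable space $\mathcal{W}^{p}$ (Theorem~\ref{thm.Wp}) produces $\{n_{j}\}$ along which $\mathcal{E}_{p}^{\bm{k}^{(n_{j})}}(u)$ converges on that subset, and the uniform comparability together with the triangle inequality for $\mathcal{E}_{p}^{\bm{k}^{(n)}}(\,\cdot\,)^{1/p}$ upgrades this to convergence on all of $\mathcal{W}^{p}$; passing the bounds to the limit gives \eqref{e:KS-Kig.comparable}.

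The self-similarity~\ref{it:KS-Kig.ss} is the crux and the reason for the Ces\`aro averaging, and I expect it to be the main point to get right. From the factorization $w=sv$ one has $T_{l}(u,r)=\sum_{s\in S}\sigma_{p}^{j_{s}}T_{l-1}(u\circ F_{s},r)$, whence the telescoping
\begin{equation*}
	J_{p,r}^{\bm{k}^{(n)}}(u) - \sum_{s \in S}\sigma_{p}^{j_{s}}J_{p,r}^{\bm{k}^{(n)}}(u \circ F_{s}) = \frac{1}{n+1}\bigl(T_{0}(u,r) - T_{n+1}(u,r)\bigr),
\end{equation*}
whose right-hand side is bounded in absolute value by $\frac{2CC_{0}}{n+1}\mathcal{N}_{p}(u)^{p}$ \emph{uniformly in $r$}. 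Taking $r$ along the defining subsequence of $\mathcal{E}_{p}^{\bm{k}^{(n)}}$ (legitimate since $u,u\circ F_{s}\in\mathcal{W}^{p}$ by \eqref{domain.ss}) and then $n=n_{j}\to\infty$ makes the error vanish, yielding \eqref{form.ss} and hence~\ref{it:KS-Kig.ss}; the uniformity in $r$ of the error is exactly what lets the relation survive both limits. Property~\ref{it:KS-Kig.GC} is then obtained as in Theorem~\ref{thm.KS-energy}-\ref{KS.GC}: \ref{GC} passes to pointwise limits, and \eqref{KS-Kig.compatible} follows from the $c$-differentiability estimate \eqref{c-diff} applied to each $J_{p,r}^{\bm{k}^{(n)}}$ with the uniform modulus $O_{t}$, exactly as \eqref{KS-compatible} was derived. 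Consequently each inequality in Theorem~\ref{thm.KS-energy}-\ref{it:difffunc},\ref{KS.conti},\ref{KS.slbdd} holds for every $\mathcal{E}_{p}^{\bm{k}^{(n_{j})}}$ and passes to the limit, giving~\ref{it:KS-Kig.basic} (here asymptotic locality feeds \ref{KS.slbdd}).

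For the invariance~\ref{it:KS-Kig.inv} I would verify that an $\measure$-preserving isometry $T$ permutes the cells $\{K_{w}\}_{w\in W_{l}}$ in each generation and thereby renders each $\bm{k}^{(n)}$ $T$-invariant, so that Theorem~\ref{thm.KS-energy}-\ref{KS.inv} applies to every $\mathcal{E}_{p}^{\bm{k}^{(n_{j})}}$ and passes to the limit (that $u\circ T\in\mathcal{W}^{p}$ is clear since $\mathcal{W}^{p}=\mathrm{KS}^{1,p}$ and $Q_{r}$ is isometry-invariant); this cell-permutation step is the one genuinely geometric fact to check. Finally, for~\ref{it:KS-Kig.pRF}, the extra hypothesis $\sigma_{p}>1$ promotes Assumption~\ref{assum.CH2} to Assumption~\ref{assum.CH}, so $\mathcal{W}^{p}\subseteq\contfunc(K)$ (Theorem~\ref{thm.Wp}) and Proposition~\ref{prop.KS-RF} applies with $\hdim,\pwalk$ in place of $Q,\beta_{p}$. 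Since $\breve{\mathcal{E}}_{p}^{\mathrm{KS}}\asymp\mathcal{N}_{p}(\,\cdot\,)^{p}$ with constants independent of the chosen subsequences, the conditions \ref{RF1}--\ref{RF5} transfer directly: \ref{RF1} uses that $\breve{\mathcal{E}}_{p}^{\mathrm{KS}}(u)=0\Leftrightarrow\mathcal{N}_{p}(u)=0\Leftrightarrow u\in\mathbb{R}\indicator{K}$; \ref{RF2} uses the Morrey control of the $L^{p}$-norm by $\mathcal{N}_{p}$ modulo constants, exactly as in the proof of Proposition~\ref{prop.KS-RF}; \ref{RF3},\ref{RF4} follow from the capacity bound \eqref{Kig-capu} and the Morrey estimate \eqref{morrey}; and \ref{RF5} is~\ref{it:KS-Kig.GC}. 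The resulting comparability $R_{\breve{\mathcal{E}}_{p}^{\mathrm{KS}}}\asymp d(\,\cdot\,,\,\cdot\,)^{\pwalk-\hdim}$ with subsequence-independent constants gives \eqref{e:KS-Kig;Rp-compl}.
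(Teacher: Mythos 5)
Your proposal is correct and follows essentially the same route as the paper's own proof: the change-of-variables identity $J_{p,r}^{\bm{k}^{(n)}}(u)=\frac{1}{n+1}\sum_{l=0}^{n}\sum_{w\in W_{l}}\sigma_{p}^{j(w)}J_{p,r}^{\bm{k}}(u\circ F_{w})$ (the paper's \eqref{kernel.ss}), the resulting uniform two-sided comparability with $\mathcal{N}_{p}(\,\cdot\,)^{p}$ via \eqref{KSfull} and \eqref{form.pss}, a diagonal argument over a countable dense subset of the separable space $\mathcal{W}^{p}$, and limit-passing for parts \ref{it:KS-Kig.GC}--\ref{it:KS-Kig.pRF}. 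Your telescoping form of the self-similarity argument, with its uniform-in-$r$ error bound of order $(n+1)^{-1}\mathcal{N}_{p}(u)^{p}$, is precisely the content of the paper's displayed identity in the proof of \ref{it:KS-Kig.ss}, so there is no substantive difference in approach.
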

\begin{proof}
	Set $\bm{k} = \{ k_{r} \}_{r > 0}$ by $k_{r}(x,y) \coloneqq r^{-\pwalk - \hdim}\indicator{B_{d}(x,r)}(y)$. 
	Recall that $B_{p,\infty}^{\bm{k}} = \mathrm{KS}^{1,p}$ since $ps_{p} = \pwalk$ and $m$ is $\hdim$-Ahlfors regular (see Example \ref{ex.KS}, Theorem \ref{thm.Kig-KS} and Proposition \ref{prop.ARss}).
	By using \eqref{BF.adapted}, we can easily see that $\bm{k}^{(n)}$ is asymptotically local. 
	Let us show \hyperref[KSwm]{\textup{(WM)$_{p,\bm{k}^{(n)}}$}}. 
	Note that for any $r > 0$ and any $u \in L^{p}(K,m)$, we have 
	\begin{equation}\label{kernel.ss}
		J_{p,r}^{\bm{k}^{(n)}}(u)
		= \frac{1}{n + 1}\sum_{l = 0}^{n}\sum_{w \in W_{l}}\sigma_{p}^{j(w)}J_{p,r}^{\bm{k}}(u \circ F_{w}), 
	\end{equation}
	where we used $(F_{w} \times F_{w})^{-1}(A_{w,r}) = \{ (x,y) \in K \times K \mid d(x,y) < r \}$ and $m = r_{\ast}^{j(w)\hdim}(F_{w})_{\ast}m$. 
	By combining \eqref{kernel.ss}, Theorem \ref{thm.Kig-KS} and \eqref{form.pss}, we obtain \hyperref[KSwm]{\textup{(WM)$_{p,\bm{k}^{(n)}}$}}. 
	Moreover, for any $n \in \mathbb{N}$ and any $u \in \mathcal{W}^{p}$,
	\begin{equation}\label{kernel.pss}
		(CC_{0})^{-1}\sup_{r > 0}J_{p,r}^{\bm{k}^{(n)}}(u) \le \mathcal{N}_{p}(u)^{p} \le CC_{0}\liminf_{r \to 0}J_{p,r}^{\bm{k}^{(n)}}(u),  
	\end{equation}
	where $C,C_{0} \in [1,\infty)$ are the constants in \eqref{KSfull} and in \eqref{form.pss} respectively.
	In particular, $B_{p,\infty}^{\bm{k}^{(n)}} = \mathcal{W}^{p}$ and $\{ \mathcal{E}_{p}^{\bm{k}^{(n)}}(u) \}_{n \in \mathbb{N}}$ is bounded for each $u \in \mathcal{W}^{p}$. 
	Since $\mathcal{W}^{p}$ is separable and $\mathcal{E}_{p}^{\bm{k}^{(n)}} \asymp \mathcal{N}_{p}(\,\cdot\,)^{p}$ by \eqref{kernel.pss}, a standard diagonal argument implies that there exists $\{ n_j \}_{j \in \mathbb{N}} \subseteq \mathbb{N}$ with $n_{j} < n_{j + 1}$ such that the limit $\lim_{j \to \infty}\mathcal{E}_{p}^{\bm{k}^{(n_j)}}(u) \eqqcolon \breve{\mathcal{E}}_{p}^{\mathrm{KS}}(u)$ exists for any $u \in \mathcal{W}^{p}$. 
	From this definition, \eqref{kernel.pss} and Theorem \ref{thm.KS-energy}-\ref{KS.GC}, we immediately see that \eqref{e:KS-Kig.comparable} holds and that $(\breve{\mathcal{E}}_{p}^{\mathrm{KS}},\mathcal{W}^{p})$ satisfies \ref{GC}.
	
	\ref{it:KS-Kig.ss}: 
	Since we assume that $\mathcal{W}^{p}$ satisfies \eqref{domain.ss}, it suffices to show the following equality for any $u \in \mathcal{W}^{p}$:
	\begin{equation}\label{KS-Kig.ss}
		\breve{\mathcal{E}}_{p}^{\mathrm{KS}}(u)
			= \sum_{s \in S}\sigma_{p}^{j_{s}}\breve{\mathcal{E}}_{p}^{\mathrm{KS}}(u \circ F_{s}).
	\end{equation}
	From Theorem \ref{thm.KS-energy} together with a diagonal argument, we can choose a sequence $\{ r_{l} \}_{l \in \mathbb{N}} \subseteq (0,\infty)$ with $\lim_{l \to \infty}r_{l} = 0$ such that $\mathcal{E}_{p}^{\bm{k}^{(n_j)}}(u) = \lim_{l \to \infty}J_{p,r_{l}}^{\bm{k}^{(n_j)}}(u)$ for any $j \in \mathbb{N}$ and any $u \in \mathcal{W}^{p}$.
	Using \eqref{kernel.ss}, we easily see that for any $(j,l) \in \mathbb{N}^{2}$ and any $u \in L^{p}(K,m)$, 
	\begin{align*}
		&\sum_{s \in S}\sigma_{p}^{j_{s}}J_{p,r_l}^{\bm{k}^{(n_j)}}(u \circ F_{s}) + \frac{1}{n_j + 1}J_{p,r_l}^{\bm{k}^{(n_j)}}(u) \\
		&= J_{p,r_l}^{\bm{k}^{(n_j)}}(u) + \frac{1}{n_j + 1}\sum_{w \in W_{n_j + 1}}\sigma_{p}^{j(w)}J_{p,r_l}^{\bm{k}^{(n_j)}}(u \circ F_{w}). 
	\end{align*}
	Letting $l \to \infty$ and $j \to \infty$, we obtain \eqref{KS-Kig.ss} by \eqref{kernel.pss} and \eqref{form.pss}.
	
	\ref{it:KS-Kig.GC}: 
	Similar to the proof of \eqref{KS-compatible}, by using Proposition \ref{prop.c-diff} and the convexity of $t \mapsto \mathcal{E}_{p}^{\bm{k}^{(n_{j})}}(u + tv)$, we can prove \eqref{KS-Kig.compatible}. 
	
	\ref{it:KS-Kig.basic}: 
	This is clear from Theorem \ref{thm.KS-energy}-\ref{it:difffunc},\ref{KS.conti},\ref{KS.slbdd} for $(\breve{\mathcal{E}}_{p}^{\bm{k}^{(n)}},\mathcal{W}^{p})$ and \eqref{KS-Kig.compatible}.
	
	\ref{it:KS-Kig.inv}: 
	If $T \colon (K,d) \to (K,d)$ is an isometric map preserving $m$, then for any $n\in\mathbb{N}$, $\bm{k}^{(n)}$ is clearly $T$-invariant, and hence by Theorem \ref{thm.KS-energy}-\ref{KS.inv} and $B_{p,\infty}^{\bm{k}^{(n)}} = \mathcal{W}^{p}$ we have $u \circ T \in \mathcal{W}^{p}$ and $\mathcal{E}_{p}^{\bm{k}^{(n)}}(u \circ T) = \mathcal{E}_{p}^{\bm{k}^{(n)}}(u)$ for any $u \in \mathcal{W}^{p}$, which together with \eqref{KSss.constr} implies that $\breve{\mathcal{E}}_{p}^{\mathrm{KS}}(u \circ T) = \breve{\mathcal{E}}_{p}^{\mathrm{KS}}(u)$ for any $u \in \mathcal{W}^{p}$. 
	
	\ref{it:KS-Kig.pRF}: 
	In the case $\sigma_{p} > 1$, we easily see that $(\breve{\mathcal{E}}_{p}^{\mathrm{KS}},\mathcal{W}^{p})$ is a $p$-resistance form on $K$ satisfying \eqref{e:KS-Kig;Rp-compl} by combining Proposition \ref{prop.KS-RF}, $\hdim$-Ahlfors regularity of $m$, $\pwalk > \hdim$ by $\sigma_{p} > 1$, Theorem \ref{thm.Wp}, Proposition \ref{prop.capu-CH} and \cite[Lemma 3.34]{Kig23}. 
\end{proof}

%\begin{defn}[Closed invariant sub-cone]\label{defn.inv-cone}
%    Let $\mathcal{U}_{p}^{\ast}$ be such that 
%    \[
%    \mathcal{U}_{p}^{\ast} \subseteq \biggl\{ \mathcal{E}' \biggm|
%        \begin{array}{c}
%            \text{$(\mathcal{E}',\mathcal{F}_{p})$ is a $p$-resistance form on $K$, there exist $c_{1},c_{2} > 0$} \\
%            \text{such that $c_{1}\mathcal{E}_{p}(u) \le \mathcal{E}'(u) \le c_{2}\mathcal{E}_{p}(u)$ for any $u \in \mathcal{F}_{p}$}
%        \end{array}
%    \biggr\}. 
%    \]
%    We will say that $\mathcal{U}_{p}^{\ast}$ is a \emph{closed invariant sub-cone} (with respect to $\rweight_{p} = (\rweight_{p,i})_{i \in S}$) if and only if it satisfies the following conditions:
%    \begin{enumerate}[label=\textup{(\roman*)},align=left,leftmargin=*,topsep=2pt,parsep=0pt,itemsep=2pt]
%        \item $a_{1}\mathcal{E}^{1} + a_{2}\mathcal{E}^{2} \in \mathcal{U}_{p}^{\ast}$ for any $a_{1},a_{2} \in [0,\infty)$ and any $\mathcal{E}^{1},\mathcal{E}^{2} \in \mathcal{U}_{p}^{\ast}$.
%        \item $\mathcal{E}' \in \mathcal{U}_{p}^{\ast}$ if there exist $\mathcal{E}^{n} \in \mathcal{U}_{p}^{\ast}$, $n \in \mathbb{N}$, such that  $\mathcal{E}'(u) = \lim_{n \to \infty}\mathcal{E}^{n}(u)$ for any $u \in \mathcal{F}_{p}$.
%        \item $\sum_{i \in S}\rweight_{p,i}\mathcal{E}'(u \circ F_{i}) \in \mathcal{U}_{p}^{\ast}$ for any $\mathcal{E}' \in \mathcal{U}_{p}^{\ast}$,
%    \end{enumerate}
%\end{defn}

%\begin{cor}
%	***good initial $p$-energy form yields good limited $p$-energy form. 	
%\end{cor}

We collect properties of the $p$-energy measures associated with $(\breve{\mathcal{E}}_{p}^{\mathrm{KS}},\mathcal{W}^{p})$ in the following theorem. 
See also \cite[Sections 4 and 5]{KS.gc} for other basic properties. 
Let us emphasize that we do not know whether Theorem \ref{thm.Kigsspem}-\ref{it:Kigss.basic} below holds in a more general setting of self-similar $p$-energy forms like that of \cite{KS.gc}. 
\begin{thm}\label{thm.Kigsspem}
	Suppose the same assumptions as in Theorem \ref{thm.KSss-Kig}, let $(\mathcal{E}_{p}^{\bm{k}^{(n)}},\mathcal{W}^{p})$ be any $\bm{k}^{(n)}$-Korevaar--Schoen $p$-energy form on $(K,m)$ for each $n \in \mathbb{N}$, let $\{ n_{j} \}_{j \in \mathbb{N}} \subseteq \mathbb{N}$ be any sequence as in Theorem \ref{thm.KSss-Kig}, and let $(\breve{\mathcal{E}}_{p}^{\mathrm{KS}},\mathcal{W}^{p})$ be the $p$-energy form on $(K,m)$ defined by \eqref{KSss.constr}.
	Then for any $u \in \mathcal{W}^{p} \cap \contfunc(K)$, there exists a unique positive Radon measure $\breve{\Gamma}_{p}^{\mathrm{KS}}\langle u \rangle$ on $K$ such that 
	\begin{align}\label{Kigsspem.defn}
		&\int_{K}\varphi\,d\breve{\Gamma}_{p}^{\mathrm{KS}}\langle u \rangle \nonumber \\
		&= \breve{\mathcal{E}}_{p}^{\mathrm{KS}}(u; u\varphi) - \left(\frac{p - 1}{p}\right)^{p - 1}\breve{\mathcal{E}}_{p}^{\mathrm{KS}}\bigl(\abs{u}^{\frac{p}{p - 1}}; \varphi\bigr) \quad \text{for any $\varphi \in \mathcal{W}^{p} \cap \contfunc(K)$.}
	\end{align}
	Moreover, the following hold: 
	\begin{enumerate}[label=\textup{(\alph*)},align=left,leftmargin=*,topsep=2pt,parsep=0pt,itemsep=2pt]
		\item\label{it:Kigss.ext} For any $u \in \mathcal{W}^{p}$, there exists a unique positive Radon measure $\breve{\Gamma}_{p}^{\mathrm{KS}}\langle u \rangle$ on $K$ such that for any $\{ u_{n} \}_{n \in \mathbb{N}} \subseteq \mathcal{W}^{p} \cap \contfunc(K)$ with $\lim_{n \to \infty}\mathcal{N}_{p}(u - u_{n}) = 0$ and any Borel measurable function $\varphi \colon K \to [0,\infty)$ with $\norm{\varphi}_{\sup} < \infty$, 
			\begin{equation}\label{KSpem-Kig.extension}
				\int_{K}\varphi\,d\breve{\Gamma}_{p}^{\mathrm{KS}}\langle u \rangle
				= \lim_{n \to \infty}\int_{K}\varphi\,d\breve{\Gamma}_{p}^{\mathrm{KS}}\langle u_{n} \rangle, 
			\end{equation}
			and $\breve{\Gamma}_{p}^{\mathrm{KS}}\langle u \rangle$ further satisfies $\breve{\Gamma}_{p}^{\mathrm{KS}}\langle u \rangle(K) = \breve{\mathcal{E}}_{p}^{\mathrm{KS}}(u)$.  
			Moreover, for each such $\varphi$, $(\int_{K}\varphi\,d\breve{\Gamma}_{p}^{\mathrm{KS}}\langle \,\cdot\, \rangle,\mathcal{W}^{p})$ is a $p$-energy form on $K$ satisfying \ref{GC}.
		\item\label{it:Kigss.limit} Theorem \ref{thm.KSpem-diffble}, with $\mathcal{W}^{p}$ and $\breve{\Gamma}_{p}^{\bm{k}}$ in place of $\bclosureKS$ and $\KSem$ respectively, holds. In particular, for any $u,v \in \mathcal{W}^{p}$, 
			\begin{equation}\label{e:defn.newKSpem.Kigss}
				\breve{\Gamma}_{p}^{\mathrm{KS}}\langle u;v \rangle(A) \coloneqq \frac{1}{p}\left.\frac{d}{dt}\breve{\Gamma}_{p}^{\mathrm{KS}}\langle u + tv \rangle(A)\right|_{t = 0}, \quad A \in \mathcal{B}(K), 
			\end{equation}
			defines a signed Borel measure on $K$ such that $\breve{\Gamma}_{p}^{\mathrm{KS}}\langle u;v \rangle(K) = \breve{\mathcal{E}}_{p}^{\mathrm{KS}}(u; v)$ and $\breve{\Gamma}_{p}^{\mathrm{KS}}\langle u;u \rangle = \breve{\Gamma}_{p}^{\mathrm{KS}}\langle u \rangle$.
			Furthermore, for any $u,v \in \mathcal{W}^{p}$ and any $\varphi \in \contfunc(K)$,
			\begin{equation}\label{Kigsspem.compatible}
				\int_{K}\varphi\,d\breve{\Gamma}_{p}^{\mathrm{KS}}\langle u;v \rangle 
				= \lim_{j \to \infty}\int_{K}\varphi\,d\Gamma_{p}^{\bm{k}^{(n_{j})}}\langle u;v \rangle.
			\end{equation}
		\item\label{it:Kigss.basic} Theorem \ref{thm.KSpem-two}-\ref{it:KSpem.difffunc},\ref{it:KSpem.conti}, with $\mathcal{W}^{p}$ and $\breve{\Gamma}_{p}^{\mathrm{KS}}$ in place of $\bclosureKS$ and $\Gamma_{p}^{\bm{\bm{k}}}$ respectively, hold. 
		\item\label{it:Kigss.chain} Theorems \ref{thm.KSpem-chain}, \ref{thm.EIDP} and \ref{thm.KSpem-sl}, with $\mathcal{W}^{p} \cap \contfunc(K)$ and $\breve{\Gamma}_{p}^{\mathrm{KS}}$ in place of $\KS \cap \contfunc_{b}(K)$ and $\Gamma_{p}^{\bm{\bm{k}}}$ respectively, hold.
	\end{enumerate}
\end{thm}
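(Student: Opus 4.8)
The plan is to establish Theorem~\ref{thm.Kigsspem} by \emph{transferring} the entire machinery of Section~\ref{sec.pEMKS} to the self-similar setting, using the crucial fact that the construction there was carried out abstractly for a general family of kernels $\bm{k}$ satisfying asymptotic locality and \ref{KSwm}. The starting point is the observation that each $\bm{k}^{(n)}$ is, by Theorem~\ref{thm.KSss-Kig}, asymptotically local and satisfies \hyperref[KSwm]{\textup{(WM)$_{p,\bm{k}^{(n)}}$}}, and that $B_{p,\infty}^{\bm{k}^{(n)}} = \mathcal{W}^{p}$ with $\mathcal{W}^{p} \cap \contfunc(K)$ dense in $(\contfunc(K),\norm{\,\cdot\,}_{\sup})$ by Theorem~\ref{thm.Kig-KS} and Corollary~\ref{cor.reg-Kig}. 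Hence, for each fixed $n$, every result of Section~\ref{sec.pEMKS} applies verbatim to the $\bm{k}^{(n)}$-Korevaar--Schoen $p$-energy form $(\mathcal{E}_{p}^{\bm{k}^{(n)}},\mathcal{W}^{p})$, yielding in particular the $p$-energy measures $\Gamma_{p}^{\bm{k}^{(n)}}\langle u \rangle$ and all their properties. The task is therefore to pass to the limit along $\{ n_{j} \}_{j \in \mathbb{N}}$ and define $\breve{\Gamma}_{p}^{\mathrm{KS}}\langle u \rangle$ as the measure associated with the limit functional $\breve{\mathcal{E}}_{p}^{\mathrm{KS}}$.

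First I would construct $\breve{\Gamma}_{p}^{\mathrm{KS}}\langle u \rangle$ for $u \in \mathcal{W}^{p} \cap \contfunc(K)$ directly via the Riesz--Markov--Kakutani representation theorem, exactly as in Theorem~\ref{thm.KSpem-exist}: the functional $\breve{\Psi}_{p}(u; \varphi) \coloneqq \breve{\mathcal{E}}_{p}^{\mathrm{KS}}(u; u\varphi) - \bigl(\tfrac{p - 1}{p}\bigr)^{p - 1}\breve{\mathcal{E}}_{p}^{\mathrm{KS}}\bigl(\abs{u}^{\frac{p}{p - 1}}; \varphi\bigr)$ is bounded by $\norm{\varphi}_{L^{\infty}}\breve{\mathcal{E}}_{p}^{\mathrm{KS}}(u)$ and positive, the bound following from the local H\"{o}lder continuity and \ref{GC} of $(\breve{\mathcal{E}}_{p}^{\mathrm{KS}},\mathcal{W}^{p})$ established in Theorem~\ref{thm.KSss-Kig}-\ref{it:KS-Kig.GC},\ref{it:KS-Kig.basic}. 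The positivity is best seen by combining \eqref{KS-Kig.compatible} with the analogue of \eqref{emfunc.expression} for each $\bm{k}^{(n_{j})}$ and passing to the limit in $j$, which simultaneously yields the compatibility \eqref{Kigsspem.compatible} for $u = v$. The extension \ref{it:Kigss.ext} to general $u \in \mathcal{W}^{p}$, the differentiability statements \ref{it:Kigss.limit}, and the function-wise and continuity properties \ref{it:Kigss.basic} then follow by repeating the proofs of Theorems~\ref{thm.KSpem-GC}, \ref{thm.KSpem-diffble} and \ref{thm.KSpem-two} word for word, with $\mathcal{W}^{p} = \bclosureKS = \cclosureKS$ (the latter equalities hold because $K$ is compact, so $\contfunc_{b}(K) = \contfunc_{c}(K) = \contfunc(K)$). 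In particular, Proposition~\ref{prop.totalmass} and Corollary~\ref{cor.totalmass-ext} give the total-mass identity $\breve{\Gamma}_{p}^{\mathrm{KS}}\langle u \rangle(K) = \breve{\mathcal{E}}_{p}^{\mathrm{KS}}(u)$ in \ref{it:Kigss.ext} and $\breve{\Gamma}_{p}^{\mathrm{KS}}\langle u;v \rangle(K) = \breve{\mathcal{E}}_{p}^{\mathrm{KS}}(u;v)$ in \ref{it:Kigss.limit}; note that here the inequality upgrades to an \emph{equality} for all $u \in \mathcal{W}^{p}$ precisely because compactness forces $\mathcal{W}^{p} \subseteq \cclosureKS$. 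The chain rule, image density property, and strong locality in \ref{it:Kigss.chain} follow from Theorems~\ref{thm.KSpem-chain}, \ref{thm.EIDP} and \ref{thm.KSpem-sl}, whose proofs use only \ref{GC}, the compatibility formulas, and the total-mass identity, all of which we have transferred.

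The one genuinely new point requiring care is the compatibility relation \eqref{Kigsspem.compatible} for $u \neq v$, i.e.\ that the two-variable energy measures $\Gamma_{p}^{\bm{k}^{(n_{j})}}\langle u;v \rangle$ converge to $\breve{\Gamma}_{p}^{\mathrm{KS}}\langle u;v \rangle$ when tested against $\varphi \in \contfunc(K)$. I would prove this following the scheme of Theorem~\ref{thm.KSpem-two}: combining the convexity of $t \mapsto \int_{K}\varphi\,d\Gamma_{p}^{\bm{k}^{(n_{j})}}\langle u + tv \rangle$ with the uniform modulus-of-continuity estimate \eqref{c-diff} (valid for each $\bm{k}^{(n_{j})}$ with constants controlled by $\mathcal{E}_{p}^{\bm{k}^{(n_{j})}}(u) \asymp \mathcal{N}_{p}(u)^{p}$, hence \emph{uniformly in $j$} by \eqref{kernel.pss}), I would sandwich the derivative between difference quotients, let $j \to \infty$ using the one-variable convergence already established together with \eqref{KS-Kig.compatible}, and finally let the increment $t \downarrow 0$. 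The uniformity of the error term $O_{t}$ across the subsequence $\{ n_{j} \}$ is what makes the interchange of limits legitimate, and this is the step I expect to be the main obstacle: one must verify that the constant $C_{p,u,v}$ appearing in $O_{t}$ depends on $u,v$ only through $\mathcal{N}_{p}(u)$ and $\mathcal{N}_{p}(v)$, which is guaranteed by \eqref{kernel.pss} but needs to be stated carefully. Once \eqref{Kigsspem.compatible} is in hand, the remaining assertions of \ref{it:Kigss.limit}--\ref{it:Kigss.chain} are immediate adaptations of the corresponding results in Section~\ref{sec.pEMKS}, and I would simply indicate that the proofs there carry over without change under the substitutions $\bclosureKS \rightsquigarrow \mathcal{W}^{p}$ and $\Gamma_{p}^{\bm{k}} \rightsquigarrow \breve{\Gamma}_{p}^{\mathrm{KS}}$.
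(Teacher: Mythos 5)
Your proposal is correct and takes essentially the same route as the paper's own proof: construct $\breve{\Gamma}_{p}^{\mathrm{KS}}\langle u \rangle$ for $u \in \mathcal{W}^{p} \cap \contfunc(K)$ via the Riesz--Markov--Kakutani theorem, get positivity and the one-variable compatibility by letting $j \to \infty$ in the $\bm{k}^{(n_{j})}$-level identities \eqref{KSpem.characterize} and \eqref{KSpem.limext} using \eqref{KS-Kig.compatible}, transfer \ref{GC} and the extension/differentiability statements by rerunning the arguments of Theorems \ref{thm.KSpem-GC}, \ref{thm.KSpem-diffble} and \ref{thm.KSpem-two}, prove \eqref{Kigsspem.compatible} by the convexity/difference-quotient sandwich with the \eqref{c-diff}-error uniform in $j$ thanks to \eqref{kernel.pss}, and deduce the chain rule, image density property and strong locality by passing the $\Gamma_{p}^{\bm{k}^{(n_{j})}}$-level statements to the limit. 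The only cosmetic overstatements are ``word for word'' (the kernel representations \eqref{KSpem.limext} and \eqref{KSpem.twoexp} used in those proofs must be replaced by the compatibility limits along $\{ n_{j} \}_{j \in \mathbb{N}}$, exactly the substitution you describe elsewhere) and the justification of $\mathcal{W}^{p} = \bclosureKS = \cclosureKS$, which needs the density of $\mathcal{W}^{p} \cap \contfunc(K)$ in $\mathcal{W}^{p}$ from Theorem \ref{thm.Kig-KS} in addition to compactness of $K$.
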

\begin{proof}
	Fix $\{ n_{j} \}_{j \in \mathbb{N}} \subseteq \mathbb{N}$ so that $\breve{\mathcal{E}}_{p}^{\mathrm{KS}} = \lim_{j \to \infty}\mathcal{E}_{p}^{\bm{k}^{(n_{j})}}$. 
	Let $u \in \mathcal{W}^{p} \cap \contfunc(K)$. 
	Letting $j \to \infty$ in \eqref{KSpem.characterize} with $\mathcal{E}_{p}^{\bm{k}^{(n_{j})}}$ in place of $\KSform$ and using \eqref{KS-Kig.compatible}, we have 
	\[
	0 \le \Psi_{p}(u; \varphi) \coloneqq \breve{\mathcal{E}}_{p}^{\mathrm{KS}}(u; u\varphi) - \left(\frac{p - 1}{p}\right)^{p - 1}\breve{\mathcal{E}}_{p}^{\mathrm{KS}}\bigl(\abs{u}^{\frac{p}{p - 1}}; \varphi\bigr) \le \norm{\varphi}_{\sup}\breve{\mathcal{E}}_{p}^{\mathrm{KS}}(u)
	\]
	for any $\varphi \in \mathcal{W}^{p} \cap \contfunc(K)$ with $\varphi \ge 0$. 
	Since $\mathcal{W}^{p} \cap \contfunc(K)$ is dense in $\contfunc(K)$, we can get the desired positive Radon measure $\breve{\Gamma}_{p}^{\mathrm{KS}}\langle u \rangle$ (in the case $u \in \mathcal{W}^{p} \cap \contfunc(K)$) by using the Riesz--Markov--Kakutani representation theorem as done in the proof of Theorem \ref{thm.KSpem-exist}. 
	Also, we easily see that 
	\begin{equation}\label{Kitsspem.compatible.pre}
		\int_{K}\psi\,d\breve{\Gamma}_{p}^{\mathrm{KS}}\langle u \rangle 
		= \lim_{j \to \infty}\int_{K}\psi\,d\Gamma_{p}^{\bm{k}^{(n_j)}}\langle u \rangle \quad \text{for any $\psi \in \contfunc(K)$,}
	\end{equation}
	whence $\bigl(\int_{K}\psi\,d\breve{\Gamma}_{p}^{\mathrm{KS}}\langle \,\cdot\, \rangle, \mathcal{W}^{p} \cap \contfunc(K)\bigr)$ is a $p$-energy form on $(K,m)$ satisfying \ref{GC}. 
	Then we can prove \ref{it:Kigss.ext} by following the same argument as in the proof of Theorem \ref{thm.KSpem-GC}. 
	
	The property \ref{it:Kigss.limit} except for $\breve{\Gamma}_{p}^{\mathrm{KS}}\langle u;v \rangle(K) = \breve{\mathcal{E}}_{p}^{\mathrm{KS}}(u; v)$ and for \eqref{Kigsspem.compatible} follow from \cite[Theorem 4.5 and Proposition 4.6]{KS.gc}.
	The equality \eqref{Kigsspem.compatible} can be shown in the same way as the proof of \eqref{KS-compatible} by using \eqref{Kitsspem.compatible.pre}, Proposition \ref{prop.c-diff} and the convexity of $t \mapsto \int_{K}\varphi\,d\breve{\Gamma}_{p}^{\mathrm{KS}}\langle u + tv \rangle$. 
	We have $\breve{\Gamma}_{p}^{\mathrm{KS}}\langle u;v \rangle(K) = \breve{\mathcal{E}}_{p}^{\mathrm{KS}}(u;v)$ from Proposition \ref{prop.totalmass}, \eqref{KSss.constr} and \eqref{Kigsspem.compatible}.
	
	The statement \ref{it:Kigss.basic} and the chain rule \eqref{KSpem-chain} with $\breve{\Gamma}_{p}^{\mathrm{KS}}$ in place of $\KSem$ are immediate from \eqref{Kigsspem.compatible} and the corresponding  properties of $\Gamma_{p}^{\bm{k}^{(n_{j})}}$. 
	Since we can follow the proofs of Theorems \ref{thm.EIDP} and \ref{thm.KSpem-sl} by using the chain rule of $\breve{\Gamma}_{p}^{\mathrm{KS}}$, we complete the proof of \ref{it:Kigss.chain}. 
\end{proof}

\begin{rmk}
	There is another way to construct the $p$-energy measures associated with $(\breve{\mathcal{E}}_{p}^{\mathrm{KS}},\mathcal{W}^{p})$, which is based on the self-similarity \eqref{KS-Kig.ss}; see \cite[Section 5.2]{KS.gc} for the details of this construction (see also Proposition \ref{prop.pEMss} below).
	The resulting $p$-energy measures turn out to satisfy \eqref{Kigsspem.defn} and therefore coincide with the ones $\{\breve{\Gamma}_{p}^{\mathrm{KS}}\langle u \rangle\}_{u \in \mathcal{W}^{p}}$ constructed in Theorem \ref{thm.Kigsspem} (see \cite[Proposition 5.12]{KS.gc}). 
\end{rmk}

%===== self-similar ======
\section{$p$-Resistance forms on p.-c.f. self-similar structures}\label{sec.CGQ}
\setcounter{equation}{0}
%%%
In this section, we verify \ref{KSwm} for a family of kernels $\bm{k}$ corresponding to the $(1,p)$-Korevaar--Schoen--Sobolev space under the assumption of the existence of a good $p$-resistance form on a post-critically finite self-similar structure.
(See \cite[Theorem 4.2]{CGQ22} or \cite[Section 8.3]{KS.gc} for the existing construction of self-similar $p$-resistance forms in this setting.)

%----- ss -----
\subsection{Geometry under the $p$-resistance metric}\label{sec.ss}
%%%
We first present the setting of this section.
Throughout this section, we presume the following assumption. 
\begin{assum}\label{assum.pRes}
	Let $p \in (1,\infty)$ and $\mathcal{L} = (K,S,\{ F_{i} \}_{i \in S})$ be a p.-c.f. self-similar structure with $\#S \ge 2$ and $K$ connected. 
	Let $(\mathcal{E}_{p},\mathcal{F}_{p})$ be a self-similar $p$-resistance form on $\mathcal{L}$ with weight $(\rweight_{p,i})_{i \in S} \in (0,\infty)^{S}$ such that 
	\begin{equation}\label{e:cond.R}
		\min_{i \in S}\rweight_{p,i} > 1. 
	\end{equation}
	Let $\phdim \in (0,\infty)$ be such that $\sum_{i \in S}\rweight_{p,i}^{-\phdim/(p - 1)} = 1$, and let $m$ be the self-similar measure on $\mathcal{L}$ with weight $\bigl(\rweight_{p,i}^{-\phdim/(p - 1)})_{i \in S}$. 
\end{assum}

\begin{rmk}\label{rmk.CGQ-Kig}
    \begin{enumerate}[label=\textup{(\arabic*)},align=left,leftmargin=*,topsep=2pt,parsep=0pt,itemsep=2pt]
        \item The condition \eqref{e:cond.R} corresponds to the condition (R) in \cite[p. 18]{CGQ22}.
        \item Assumption \ref{assum.pRes} is equivalent to the existence of a \emph{$p$-eigenform} on $V_{0}$ with respect to the renormalization operator with weight $(\rweight_{p,i})_{i \in S} \in (1,\infty)^{S}$, i.e., a $p$-resistance form $\mathcal{E}_{p}^{(0)}$ on $V_{0}$ such that 
        \[
        \inf\Biggl\{ \sum_{i \in S}\rweight_{p,i}\mathcal{E}_{p}^{(0)}(v \circ F_{i}) \Biggm| v \in \mathbb{R}^{V_{1}}, v|_{V_{0}} = u \Biggr\} = \mathcal{E}_{p}^{(0)}(u) \quad \text{for any $u \in \mathbb{R}^{V_{0}}$;}
        \]
        see \cite[Proposition 6.19 and Theorem 8.42]{KS.gc} for a detailed proof of this equivalence.
		In the case $p = 2$, this is nothing but the existence of a regular harmonic structure on $\mathcal{L}$ as defined in \cite[Definition 3.1.2]{Kig01}.
        \item\label{it.difference-CGQ-Kig} Any self-similar $p$-resistance form constructed in \cite[Theorem 4.6]{Kig23} must satisfy $\rweight_{p,i} = \sigma_{p}^{n_{i}}$ for some $n_{i} \in \mathbb{N}$, where $\sigma_{p}$ is the constant in \eqref{pCH.1}.
        This restriction excludes the self-similar $p$-resistance forms with weight $(\rweight_{p,i})_{i \in S} \in (1,\infty)^{S}$ satisfying $(\log \rweight_{p,i})/\log \rweight_{p,j}\not\in\mathbb{Q}$ for some $i,j\in S$, whereas they are covered by \cite{CGQ22};
		as proved in \cite[Proposition B.2]{KS.gc}, they do exist abundantly on plenty of typical affine nested fractals. 
        \item It is easy to see that $\phdim \ge 1$ by using \eqref{e:cond.R} and \eqref{Rp-ss} below.  
    \end{enumerate}
\end{rmk}

\begin{figure}[tb]\centering
   \includegraphics[height=110pt]{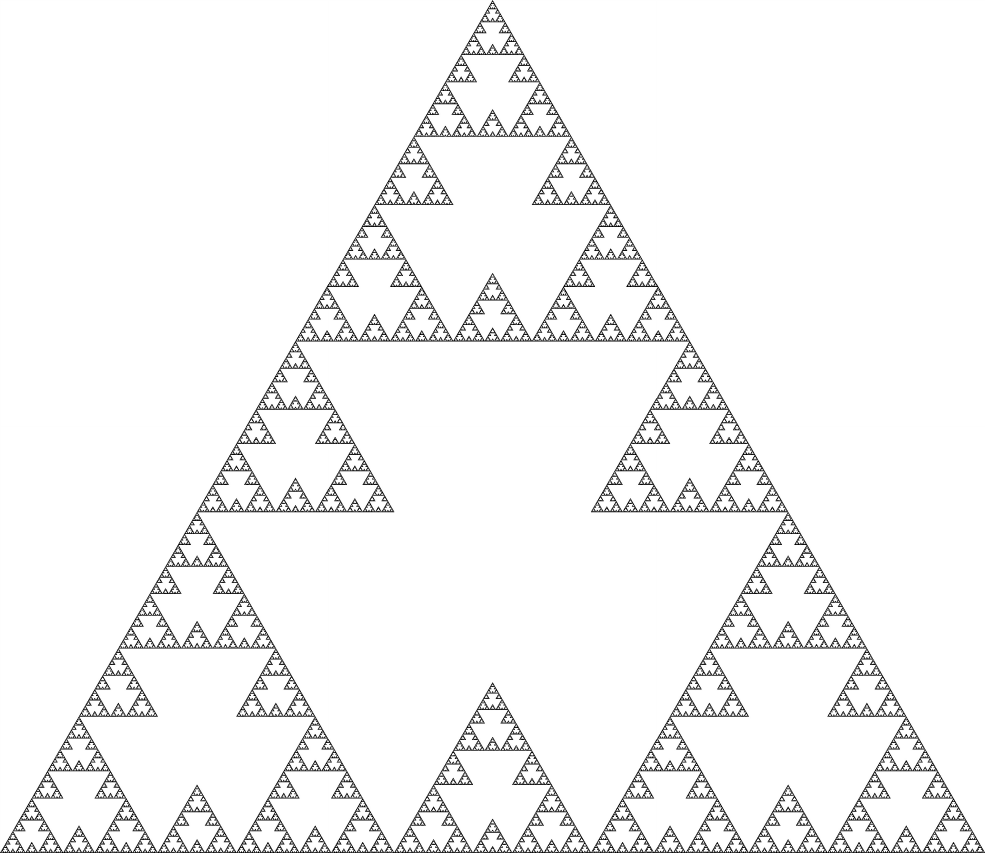}\hspace*{40pt}
   \includegraphics[height=110pt]{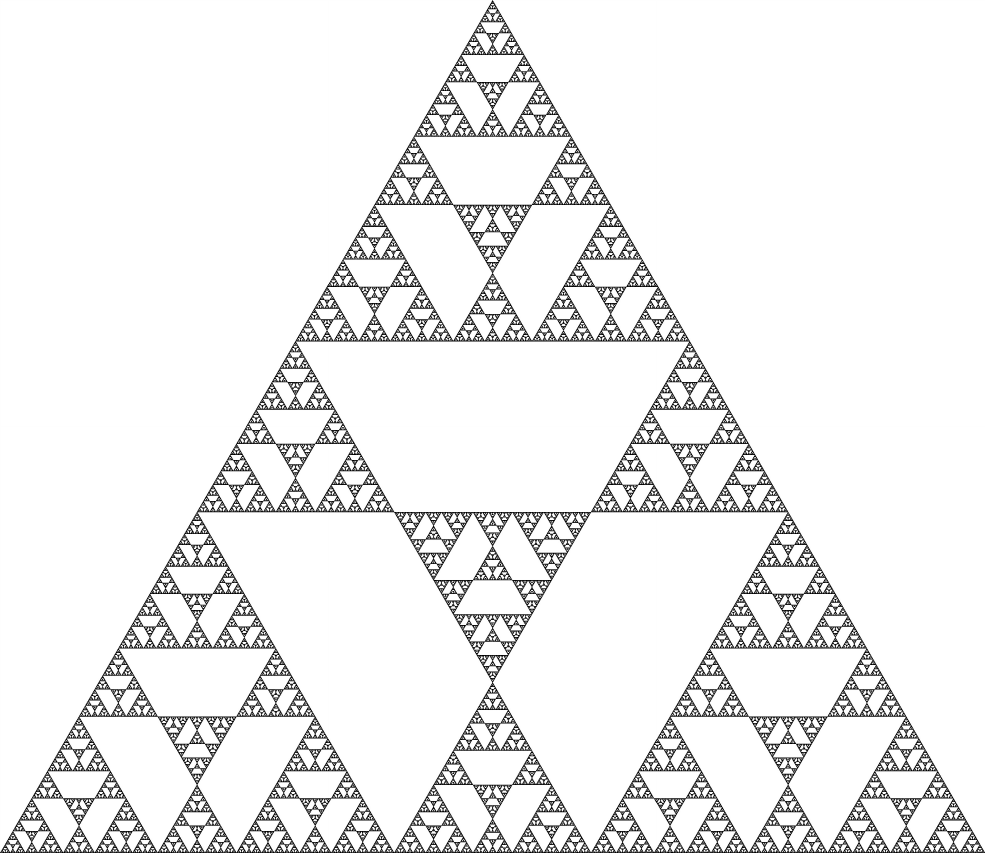}
   \caption{Some examples of affine nested fractals}\label{fig.AFN}
\end{figure}

%By \eqref{Rp-ss} and Theorem \ref{thm.pmetric}, we have $\phdim \ge 1$ since
%\begin{equation*}
%	\sup_{x,y \in V_{0}}\pmetric_{p}(x,y)
%	\le \sum_{i \in S}\sup_{x,y \in V_{0}}\pmetric_{p}(F_{i}(x),F_{i}(y)) 
%	\overset{\eqref{Rp-ss}}{\le} \left(\sum_{i \in S}\rweight_{p,i}^{-1/(p - 1)}\right)\sup_{x,y \in V_{0}}\pmetric_{p}(x,y),
%\end{equation*}
%which together with the connectedness of $K$ (see \cite[Theorem 1.6.2]{Kig01}) implies
%\[
%\sum_{i \in S}\rweight_{p,i}^{-\phdim/(p - 1)} = 1 \le \sum_{i \in S}\rweight_{p,i}^{-1/(p - 1)}.
%\]

%Recall Definition \ref{d:shift} and let $\Lambda$ be a partition of $\Sigma$. 
%Then we immediately see from \ref{SSE2} implies 
%\begin{equation}\label{ss.partition}
%	\mathcal{E}_{p}(u) = \sum_{w \in \Lambda}\rweight_{p,w}\mathcal{E}_{p}(u \circ F_{w}), \quad u \in \mathcal{F}_{p} \cap \contfunc(K). 
%\end{equation}

In this subsection, we will show the Ahlfors regularity of $m$, the capacity upper bound and the Poincar\'{e} inequality in terms of the \emph{$p$-resistance metric} of $(\mathcal{E}_{p},\mathcal{F}_{p})$, which is defined as follows.

\begin{defn}[$p$-Resistance metric; {\cite[Definition 6.33]{KS.gc}}]
	We define the \emph{$p$-resistance metric} $\pmetric_{p,\mathcal{E}_{p}} \colon K \times K \to [0,\infty)$ of $(\mathcal{E}_{p},\mathcal{F}_{p})$ by 
	\begin{equation}\label{pmetric}
    	\pmetric_{p,\mathcal{E}_{p}}(x,y) \coloneqq R_{\mathcal{E}_{p}}(x,y)^{\frac{1}{p - 1}}, \quad x,y \in K
	\end{equation}
	(recall \eqref{R-def}). For simplicity, we write $\pmetric_{p} \coloneqq \pmetric_{p,\mathcal{E}_{p}}$. 
\end{defn}
  
We record some properties of $R_{\mathcal{E}_{p}}$ and $\pmetric_{p}$. 
\begin{prop}[{\cite[Proposition 7.2 and Corollary 6.32]{KS.gc}}]\label{prop.pmetric}
	\begin{enumerate}[label=\textup{(\arabic*)},align=left,leftmargin=*,topsep=2pt,parsep=0pt,itemsep=2pt]
		\item For any $w \in W_{\ast}$ and any $x,y \in K$,
    	\begin{equation}\label{Rp-ss}
        	R_{\mathcal{E}_{p}}(F_{w}(x),F_{w}(y)) \le \rweight_{p,w}^{-1}R_{\mathcal{E}_{p}}(x,y). 
    	\end{equation}
    	\item $\pmetric_{p}$ is a metric on $K$ giving the original topology of $K$.
    In particular, $\closure{V_{\ast}}^{\pmetric_{p}} = K$.
    	\item\label{it.pmetric-Hoelder} For any $u \in \mathcal{F}_{p}$ and $x,y \in K$, 
		\[
		\abs{u(x) - u(y)}^{p} \le R_{\mathcal{E}_{p}}(x,y)\mathcal{E}_{p}(u). 
		\]
		In particular, $\mathcal{F}_{p} \subseteq \contfunc(K)$.
	\end{enumerate} 
\end{prop}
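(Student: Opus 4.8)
The plan is to establish the three assertions in the order (3), (1), (2), since the pointwise energy estimate (3) is the foundation for the other two. Assertion (3) is immediate from the definition \eqref{R-def}. If $\mathcal{E}_{p}(u) = 0$, then $u \in \mathbb{R}\indicator{K}$ by \ref{RF1}, so $u(x) = u(y)$ and the inequality reduces to $0 \le 0$; if $\mathcal{E}_{p}(u) > 0$, then $u \in \mathcal{F}_{p} \setminus \mathbb{R}\indicator{K}$ and \eqref{R-def} gives $\abs{u(x) - u(y)}^{p}/\mathcal{E}_{p}(u) \le R_{\mathcal{E}_{p}}(x,y)$, which rearranges to the claim. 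The inclusion $\mathcal{F}_{p} \subseteq \contfunc(K)$ is already part of the definition of a self-similar $p$-resistance form (Definition \ref{defn.ssform}); alternatively, rewriting (3) as $\abs{u(x)-u(y)} \le \pmetric_{p}(x,y)^{(p-1)/p}\mathcal{E}_{p}(u)^{1/p}$ exhibits every $u \in \mathcal{F}_{p}$ as $\frac{p-1}{p}$-H\"older with respect to $\pmetric_{p}$, so the inclusion re-emerges once part (2) is known.

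For (1), the multiplicativity $\rweight_{p,w} = \rweight_{p,w_{1}} \cdots \rweight_{p,w_{n}}$ reduces the claim to the case $w = i \in S$, which I would then iterate along the letters of $w$. Fixing $i \in S$ and $u \in \mathcal{F}_{p}$, the domain condition \eqref{domain.ss} gives $u \circ F_{i} \in \mathcal{F}_{p}$, so part (3) applied to $u \circ F_{i}$ yields $\abs{u(F_{i}(x)) - u(F_{i}(y))}^{p} \le R_{\mathcal{E}_{p}}(x,y)\mathcal{E}_{p}(u \circ F_{i})$. Since every summand in the self-similarity identity \eqref{form.ss} is nonnegative, $\mathcal{E}_{p}(u \circ F_{i}) \le \rweight_{p,i}^{-1}\mathcal{E}_{p}(u)$. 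Combining these and taking the supremum over $u \in \mathcal{F}_{p} \setminus \mathbb{R}\indicator{K}$ in \eqref{R-def} (with $F_{i}(x), F_{i}(y)$ as the two marked points) gives \eqref{Rp-ss} for $w = i$; the general case follows by induction, using $F_{w} = F_{i} \circ F_{v}$ for $w = iv$.

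For the metric assertion in (2), symmetry of $\pmetric_{p}$ is clear from \eqref{R-def}, and non-degeneracy follows from \ref{RF3}: $\pmetric_{p}(x,y) = 0$ forces $u(x) = u(y)$ for all $u \in \mathcal{F}_{p}$, whence $x = y$. The triangle inequality for $\pmetric_{p} = R_{\mathcal{E}_{p}}^{1/(p-1)}$ is the main obstacle. I expect the naive estimate through part (3) to be insufficient: chaining $\abs{u(x)-u(z)} \le \abs{u(x)-u(y)} + \abs{u(y)-u(z)}$ only yields subadditivity of $R_{\mathcal{E}_{p}}^{1/p}$, which is strictly weaker because $1/(p-1) > 1/p$. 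The correct route, which is the substance of the general $p$-resistance form theory in \cite{KS.gc} (Corollary~6.32), is to pass to the trace of $(\mathcal{E}_{p},\mathcal{F}_{p})$ onto the three-point set $\{x,y,z\}$: this trace is again a $p$-resistance form, it preserves the values of $R_{\mathcal{E}_{p}}$ on $\{x,y,z\}$, and on a finite $p$-resistance network the quantity $R^{1/(p-1)}$ is additive under the series law (a short Lagrange-multiplier computation shows that for two nonlinear resistors in series the $\frac{1}{p-1}$-powers of the resistances add). This gives the triangle inequality on $\{x,y,z\}$, hence for $\pmetric_{p}$; I would invoke this reduction rather than reprove it.

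Finally, to identify the $\pmetric_{p}$-topology with the original topology of $K$, I would take the supremum over $x',y' \in K$ in \eqref{Rp-ss} to obtain
\[
\diam(K_{w},\pmetric_{p}) \le \rweight_{p,w}^{-1/(p-1)}\diam(K,\pmetric_{p}),
\]
where the resistance diameter $\diam(K,\pmetric_{p}) = \sup_{x,y}R_{\mathcal{E}_{p}}(x,y)^{1/(p-1)}$ is finite by a standard chaining argument along the self-similar cells, using \eqref{Rp-ss}, \ref{RF3}, and $\min_{i}\rweight_{p,i} > 1$ from \eqref{e:cond.R}. Since $\rweight_{p,w} \ge (\min_{i}\rweight_{p,i})^{\abs{w}} \to \infty$ as $\abs{w} \to \infty$ by \eqref{e:cond.R}, the cells $K_{w}$ have $\pmetric_{p}$-diameter tending to $0$ uniformly in $\abs{w} = n$. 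Because the cells form a neighborhood basis of the original topology, i.e.\ for every original-open $U \ni x$ there is $n$ with $\bigcup\{ K_{w} \mid \abs{w} = n,\ x \in K_{w}\} \subseteq U$, the identity map from $K$ with its original topology to $(K,\pmetric_{p})$ is continuous; as the domain is compact and the target Hausdorff, it is a homeomorphism. The density $\closure{V_{\ast}}^{\pmetric_{p}} = K$ then follows, since $V_{\ast}$ is dense in $K$ in the original topology by \cite[Lemma~1.3.11]{Kig01} (here $V_{0} \neq \emptyset$ because $K$ is connected with $\#S \ge 2$).
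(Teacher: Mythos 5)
The paper contains no internal proof of this proposition: it is quoted wholesale from the companion work \cite[Proposition 7.2 and Corollary 6.32]{KS.gc}, so there is no in-paper argument to compare against, and your blind proof in fact supplies strictly more detail than the paper does. Your treatments of (3) and (1) are correct and are the standard ones: (3) is an immediate rearrangement of \eqref{R-def}, with \ref{RF1} disposing of the constant case, and (1) follows by applying (3) to $u \circ F_{i}$ after discarding the nonnegative remaining summands in \eqref{form.ss} to get $\mathcal{E}_{p}(u \circ F_{i}) \le \rweight_{p,i}^{-1}\mathcal{E}_{p}(u)$, the appeal to \eqref{domain.ss} being justified because $\mathcal{F}_{p} \subseteq \contfunc(K)$ is built into Definition \ref{defn.ssform}. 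The topological identification in (2) is also sound: \eqref{Rp-ss} and \eqref{e:cond.R} force $\diam(K_{w},\pmetric_{p}) \to 0$ uniformly in $\abs{w}$, the sets $\bigcup\{K_{w} \mid \abs{w} = n,\ x \in K_{w}\}$ form a neighborhood basis of $x$ in the original topology (\cite[Proposition 1.3.6]{Kig01}), and a continuous bijection from a compact space onto a Hausdorff space is a homeomorphism; your chaining sketch for $\diam(K,\pmetric_{p}) < \infty$ (finiteness of $R_{\mathcal{E}_{p}}$ on the finite set $V_{1}$ by \ref{RF4}, the trivial triangle inequality for $R_{\mathcal{E}_{p}}^{1/p}$, geometric decay from \eqref{e:cond.R}, and lower semicontinuity of $R_{\mathcal{E}_{p}}$ with respect to the original topology, which holds since $\mathcal{F}_{p} \subseteq \contfunc(K)$) is the right argument. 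You correctly isolate the triangle inequality for $\pmetric_{p} = R_{\mathcal{E}_{p}}^{1/(p-1)}$ as the only genuinely deep point and delegate it to \cite[Corollary 6.32]{KS.gc}; since the paper cites precisely that corollary for the entire proposition, this is a legitimate resolution rather than a gap.

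One warning about your parenthetical account of how that corollary is proved: ``trace onto $\{x,y,z\}$ plus the series law'' would not suffice if you actually tried to execute it, because a three-point trace is not a series network and, for $p \neq 2$, the trace of a $p$-resistance form onto a finite set need not be expressible as a sum of edge terms $c_{ab}\abs{u(a)-u(b)}^{p}$ at all. The known proofs instead combine the capacity duality for $R_{\mathcal{E}_{p}}$, a maximum principle, an energy-splitting inequality extracted from \ref{GC} by cutting the optimizing potential at the value it takes at the intermediate point, and an elementary convexity inequality. Since you invoke the corollary rather than reprove it, this mis-description does not invalidate your argument, but it would have become a real gap had you attempted to make that sketch rigorous.
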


%Note that we can easily extend \eqref{form.ss} to any $u \in \mathcal{F}_{p}$ once we know $\mathcal{F}_{p} \subseteq \contfunc(K)$. 

In the next definition, we introduce the \emph{symmetry} on $K$ with respect to $(\mathcal{E}_{p},\mathcal{F}_{p})$. 
\begin{defn}
	We define 
	\begin{equation}\label{e:defn-symmetry}
		\mathcal{G} \coloneqq \biggl\{ T \biggm| 
		\begin{minipage}{229pt}
			$T \colon K \to K$, $T$ is a homeomorphism preserving $m$, and
			$u \circ T, u \circ T^{-1} \in \mathcal{F}_{p}$ and $\mathcal{E}_{p}(u \circ T) = \mathcal{E}_{p}(u)$ for any $u \in \mathcal{F}_{p}$
		\end{minipage}
		\biggr\},
	\end{equation}
	which forms a subgroup of the group of surjective isometries of $(K,\pmetric_{p})$ by \eqref{R-def} and \eqref{pmetric}.
\end{defn}

Let us introduce natural \emph{scales} $\{ \Lambda_{s} \}_{s \in (0,1]}$ with respect to $\pmetric_{p}$. 
(See \cite[Definitions 6.12 and 6.13]{KM23} for the case $p = 2$.)
\begin{defn}
    \begin{enumerate}[label=\textup{(\arabic*)},align=left,leftmargin=*,topsep=2pt,parsep=0pt,itemsep=2pt]
        \item We define $\Lambda_{1} \coloneqq \{ \emptyset \}$,
        \begin{equation*}
            \Lambda_{s} \coloneqq \Bigl\{ w \Bigm| w = w_{1} \dots w_{n} \in W_{\ast} \setminus \{ \emptyset \}, (\rweight_{p,w_{1} \dots w_{n - 1}})^{-1/(p - 1)} > s \ge \rweight_{p,w}^{-1/(p - 1)} \Bigr\}
        \end{equation*}
        for each $s \in (0,1)$.
        (Note that $\{ \Lambda_{s} \}_{s \in (0,1]}$ is the scale associated with the weight function $g(w) \coloneqq \rweight_{p,w}^{-1/(p - 1)}$; see \cite[Definition 2.3.1]{Kig20}.)
        \item For each $(s,x) \in (0,1] \times K$, we define $\Lambda_{s,0}(x) \coloneqq \{ w \in \Lambda_{s} \mid x \in K_{w} \}$, $U_{0}(x,s) \coloneqq \bigcup_{w \in \Lambda_{s,0}(x)}K_{w}$, $\Lambda_{s,1}(x) \coloneqq \{ w \in \Lambda_{s} \mid K_{w} \cap U_{0}(x,s) \neq \emptyset \}$ and $U_{1}(x,s) \coloneqq \bigcup_{w \in \Lambda_{s,1}(x)}K_{w}$.
    \end{enumerate}
\end{defn}

Similar to the case $p = 2$ in \cite[Section 6.1]{KM23}, it is easy to see that $\lim_{s \downarrow 0}\min\{ \abs{w} \mid w \in \Lambda_{s} \} = \infty$, that $\Lambda_{s}$ is a partition of $\Sigma$ for any $s \in (0,1]$, and that $\Lambda_{s_{1}} \le \Lambda_{s_{2}}$ for any $s_1, s_2 \in (0,1]$ with $s_{1} \le s_{2}$.
By \cite[Proposition 2.3.7]{Kig20}, for any $x \in K$, each of $\{ U_{0}(x,s) \}_{s \in (0,1]}$ and $\{ U_{1}(x,s) \}_{s \in (0,1]}$ is non-decreasing in $s$ and forms a fundamental system of neighborhoods of $x$ in $K$.
Moreover, $\{ U_{1}(x,s) \}_{s \in (0,1]}$ can be used as a replacement for the metric balls $\{ B_{\pmetric_{p}}(x,s) \}_{(x,s) \in K \times (0,\diam(K,\pmetric_{p})]}$ in $(K,\pmetric_{p})$ by virtue of the following lemma, which was obtained in \cite[Lemma 6.14]{KM23} in the case $p = 2$.
\begin{lem}\label{lem.Rp-adapted}
    There exist $\alpha_{1},\alpha_{2} \in (0,\infty)$ such that for any $(s,x) \in (0,1] \times K$,
    \begin{equation}\label{Rp-adapted}
        B_{\pmetric_{p}}(x,\alpha_{1}s) \subseteq U_{1}(x,s) \subseteq B_{\pmetric_{p}}(x,\alpha_{2}s). 
    \end{equation}
%    (Equivalently, $\pmetric_{p}$ is $1$-adapted to the weight function $g(w) = \rweight_{p,w}^{-1/(p - 1)}$.)
\end{lem}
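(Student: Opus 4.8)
The plan is to prove that $\pmetric_{p}$ is \emph{adapted} to the scale $\{\Lambda_{s}\}_{s\in(0,1]}$, establishing the two inclusions in \eqref{Rp-adapted} separately along the lines of the case $p=2$ in \cite[Lemma 6.14]{KM23}. The case $s=1$ is trivial since $U_{1}(x,1)=K$, so throughout I would fix $s\in(0,1)$ and $x\in K$. The upper inclusion encodes that cells at scale $s$ are $\pmetric_{p}$-small and should follow directly from the self-similarity estimate \eqref{Rp-ss}; the lower inclusion encodes that points lying in cells separated from $x$ keep a definite $p$-resistance distance from $x$, and is the substantive half, requiring a cutoff function of controlled energy.

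For the upper inclusion $U_{1}(x,s)\subseteq B_{\pmetric_{p}}(x,\alpha_{2}s)$, I would first record that $D\coloneqq\diam(K,\pmetric_{p})<\infty$, since $\pmetric_{p}$ induces the compact topology of $K$ by Proposition \ref{prop.pmetric}. For any $w\in W_{\ast}$, combining \eqref{Rp-ss} with \eqref{pmetric} gives $\pmetric_{p}(F_{w}(a),F_{w}(b))\le\rweight_{p,w}^{-1/(p-1)}\pmetric_{p}(a,b)$, hence $\diam(K_{w},\pmetric_{p})\le\rweight_{p,w}^{-1/(p-1)}D\le sD$ for every $w\in\Lambda_{s}$ by the definition of $\Lambda_{s}$. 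Then for $y\in U_{1}(x,s)$ one has $y\in K_{w}$ with $w\in\Lambda_{s,1}(x)$, and $K_{w}$ meets $K_{w_{0}}$ for some $w_{0}\in\Lambda_{s,0}(x)$ at a common point $z$; since $x\in K_{w_{0}}$, the triangle inequality yields $\pmetric_{p}(x,y)\le\diam(K_{w_{0}},\pmetric_{p})+\diam(K_{w},\pmetric_{p})\le 2sD$, so $\alpha_{2}=2D$ works.

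For the lower inclusion it suffices to show $\pmetric_{p}(x,y)\ge\alpha_{1}s$ whenever $y\notin U_{1}(x,s)$, and I would do this by producing a test function for \eqref{R-def} separating $x$ from $y$. Define $u\colon K\to\mathbb{R}$ by setting, at each vertex $q\in\bigcup_{w\in\Lambda_{s}}F_{w}(V_{0})$, the value $u(q)\coloneqq 1$ if $q\in U_{0}(x,s)$ and $u(q)\coloneqq 0$ otherwise, and taking on each cell $K_{w}$, $w\in\Lambda_{s}$, the $\mathcal{E}_{p}$-energy-minimizing extension of these $F_{w}(V_{0})$-boundary values. As $V_{0}$ is finite and the boundary values are prescribed vertex-wise, hence consistently across shared faces $K_{w}\cap K_{v}=F_{w}(V_{0})\cap F_{v}(V_{0})$ (cf.\ \cite[Proposition 1.3.5-(2)]{Kig01}), this $u$ is a well-defined continuous element of $\mathcal{F}_{p}$ with $u|_{U_{0}(x,s)}\equiv 1$ and $u\equiv 0$ on every cell disjoint from $U_{0}(x,s)$. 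Iterating the self-similarity \eqref{form.ss} over the partition $\Lambda_{s}$ gives $\mathcal{E}_{p}(u)=\sum_{w\in\Lambda_{s}}\rweight_{p,w}\mathcal{E}_{p}(u\circ F_{w})$, in which only the transition cells $w\in\Lambda_{s,1}(x)\setminus\Lambda_{s,0}(x)$ contribute. For each such $w$ the boundary pattern of $u\circ F_{w}$ is one of the finitely many elements of $\{0,1\}^{V_{0}}$, so $\mathcal{E}_{p}(u\circ F_{w})\le C_{0}$ for a constant $C_{0}=C_{0}(\mathcal{E}_{p},\mathcal{F}_{p})$; moreover $\rweight_{p,\widetilde{\pi}(w)}^{-1/(p-1)}>s$ forces $\rweight_{p,w}\le(\max_{i\in S}\rweight_{p,i})\,s^{-(p-1)}$. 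Writing $N\coloneqq\sup_{(s,x)}\#\Lambda_{s,1}(x)$, this yields $\mathcal{E}_{p}(u)\le C_{1}s^{-(p-1)}$ with $C_{1}\coloneqq NC_{0}\max_{i\in S}\rweight_{p,i}$. Since $u(x)=1$ and $u(y)=0$, \eqref{R-def} gives $R_{\mathcal{E}_{p}}(x,y)\ge\mathcal{E}_{p}(u)^{-1}\ge C_{1}^{-1}s^{p-1}$, whence $\pmetric_{p}(x,y)=R_{\mathcal{E}_{p}}(x,y)^{1/(p-1)}\ge C_{1}^{-1/(p-1)}s=:\alpha_{1}s$.

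The hard part will be the finiteness of $N=\sup_{(s,x)\in(0,1]\times K}\#\Lambda_{s,1}(x)$, namely the \emph{bounded geometry} of the $p$-resistance scale: this is exactly where the post-critical finiteness of $\mathcal{L}$ enters, through the uniform bound $\sup_{x\in K}\#\chi^{-1}(x)<\infty$ and the fact that cells of a common scale intersect only along the finite vertex sets $F_{w}(V_{0})$, so that each point meets, and each cell borders, only a uniformly bounded number of cells at any scale. Establishing this uniform bound, together with the verification that the cell-wise energy minimizers glue to a genuine finite-energy element of $\mathcal{F}_{p}$, is the technical core to be transcribed from the $p=2$ treatment in \cite[Section 6.1]{KM23}; the remaining steps are the routine scaling computations sketched above.
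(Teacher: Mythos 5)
Your proposal is correct and follows essentially the same route as the paper's proof: the upper inclusion via the cell-diameter bound $\diam(K_{w},\pmetric_{p})\le\rweight_{p,w}^{-1/(p-1)}\diam(K,\pmetric_{p})$ for $w\in\Lambda_{s}$, and the lower inclusion via a piecewise-$\mathcal{E}_{p}$-harmonic cutoff obtained by gluing $\{0,1\}$-valued vertex data over the partition $\Lambda_{s}$ (membership in $\mathcal{F}_{p}$ from \eqref{domain.ss}, the energy identity from \eqref{form.ss}, and the scaling $\rweight_{p,\tau}\le(\max_{i\in S}\rweight_{p,i})\,s^{-(p-1)}$), exactly as in the paper, which uses the cutoff $u_{w}$ equal to $1$ on the single cell $K_{w}\ni x$ and to $\sum_{q\in V_{0};F_{\tau}(q)\in F_{w}(V_{0})}\psi_{q}\circ F_{\tau}^{-1}$ on neighboring cells, whereas yours is $1$ on all of $U_{0}(x,s)$ --- a cosmetic difference. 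The one point you flag as the ``technical core,'' the uniform bound $\sup_{(s,x)}\#\Lambda_{s,1}(x)<\infty$, is not something to be re-derived from \cite{KM23}: it is precisely \cite[Lemma 4.2.3]{Kig01} (each cell of $\Lambda_{s}$ meets at most $(\#\mathcal{C}_{\mathcal{L}})(\#V_{0})$ cells of $\Lambda_{s}$), which the paper simply cites.
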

\begin{proof}
    By \eqref{form.ss}, we have $\diam(K_{w},\pmetric_{p}) \le \rweight_{p,w}^{-1/(p - 1)}\diam(K,\pmetric_{p})$ for any $w \in W_{\ast}$, which implies the latter inclusion in \eqref{Rp-adapted} with $\alpha_{2} \in (2\diam(K,\pmetric_{p}),\infty)$ arbitrary.
    (In particular, $\diam(K_{w},\pmetric_{p}) < \alpha_{2}s$ for any $w \in \Lambda_{s}$.)
    We will show the former inclusion in \eqref{Rp-adapted} in the rest of this proof.
    To this end, it suffices to prove that there exists $\alpha_{1} \in (0,\infty)$ such that $\pmetric_{p}(x,y) \ge \alpha_{1}s$ for any $s \in(0,1]$, any $w,v \in \Lambda_{s}$ with $K_{w} \cap K_{v} = \emptyset$ and any $(x,y) \in K_{w} \times K_{v}$.
    Let $\psi_{q} \coloneqq h_{V_{0}}^{\mathcal{E}_{p}}\bigl[\indicator{q}^{V_{0}}\bigr]$ for any $q \in V_{0}$, where $h_{V_{0}}^{\mathcal{E}_{p}}$ denotes the $\mathcal{E}_{p}$-harmonic extension operator from $V_{0}$, that is, $\psi_{q}$ is the unique function in $\mathcal{F}_{p}$ such that $\psi_{q}|_{V_{0}} = \indicator{q}^{V_{0}}$ and $\mathcal{E}_{p}(\psi_{q}) = \min\bigl\{ \mathcal{E}_{p}(v) \bigm| v \in \mathcal{F}_{p}, v|_{V_{0}} = \indicator{q}^{V_{0}} \bigr\}$ (see \cite[Theorem 6.13]{KS.gc}).
    Fix $w \in \Lambda_{s}$ and let $u_{w} \in \contfunc(K)$ be such that, for $\tau \in \Lambda_{s}$,
    \begin{align}\label{d:uw}
        u_{w} \circ F_{\tau} =
        \begin{cases}
            1 \quad &\text{if $\tau = w$,} \\
            \sum_{q \in V_{0}; F_{\tau}(q) \in F_{w}(V_{0})}\psi_{q} \quad &\text{if $\tau \neq w$ and $K_{\tau} \cap K_{w} \neq \emptyset$,} \\
            0 \quad &\text{if $K_{\tau} \cap K_{w} = \emptyset$.}
        \end{cases}
    \end{align}
    Since $\Lambda_{s}$ is a partition of $\Sigma$, we have $u_{w} \in \mathcal{F}_{p}$ by \eqref{domain.ss}, and
    \begin{align}\label{uw.ss}
        \mathcal{E}_{p}(u_{w})
        &= \sum_{\tau \in \Lambda_{s}}\rweight_{p,\tau}\mathcal{E}_{p}(u_{w} \circ F_{\tau}) \nonumber \\
        &= \sum_{\tau \in \Lambda_{s} \setminus \{ w \}; K_{\tau} \cap K_{w} \neq \emptyset}\rweight_{p,\tau}\mathcal{E}_{p}\left(\sum_{q \in V_{0}; F_{\tau}(q) \in F_{w}(V_{0})}\psi_{q}\right)
    \end{align}
    by \eqref{form.ss}.
    Set $\overline{\rweight}_{p} \coloneqq \max_{i \in S}\rweight_{p,i} \in (1,\infty)$ and $c_{1} \coloneqq \max_{q \in V_{0}}\mathcal{E}_{p}(\psi_{q}) \in (0,\infty)$.
    Then $\rweight_{p,\tau}^{-1} \ge (\overline{\rweight}_{p})^{-1}s^{p - 1}$ for any $\tau \in \Lambda_{s}$.
    Since $\#\{ \tau \in \Lambda_{s} \mid K_{\tau} \cap K_{w} \neq \emptyset \} \le (\#\mathcal{C}_{\mathcal{L}})(\#V_{0})$ by \cite[Lemma 4.2.3]{Kig01}, \eqref{uw.ss} together with H\"{o}lder's inequality implies that
    \begin{equation}\label{uw.upper}
        \mathcal{E}_{p}(u_{w}) \le (\#\mathcal{C}_{\mathcal{L}})(\#V_{0})\overline{\rweight}_{p}s^{-p + 1}(\#V_{0})^{p - 1}c_{1} \eqqcolon (\alpha_{1}s)^{-(p - 1)}.
    \end{equation}
    For any $v \in \Lambda_{s}$ with $K_{w} \cap K_{v} = \emptyset$ and any $(x,y) \in K_{w} \times K_{v}$, we clearly have $u_{w}(x) = 1$ and $u_{w}(y) = 0$.
    Hence
    \[
    \pmetric_{p}(x,y) \ge \mathcal{E}_{p}(u)^{-1/(p - 1)} \ge \alpha_{1}s,
    \]
    which proves the desired result.
\end{proof}

Now we can show that $m$ is $\phdim$-Ahlfors regular (see \cite[Lemma 6.8]{KM23} for the case $p = 2$). 
\begin{lem}\label{lem.Rp-AR}
    There exist $c_{1},c_{2} \in (0,\infty)$ such that for any $x \in K$ and any $s \in (0,2\diam(K,\pmetric_{p})]$, 
    \begin{equation}\label{Rp-AR}
        c_{1}s^{\phdim} \le m(B_{\pmetric_{p}}(x,s)) \le c_{2}s^{\phdim}.
    \end{equation}
\end{lem}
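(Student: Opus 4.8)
The plan is to transfer the Ahlfors regularity from the metric balls $B_{\pmetric_{p}}(x,s)$ to the combinatorial neighborhoods $U_{0}(x,s)$ and $U_{1}(x,s)$ by means of Lemma \ref{lem.Rp-adapted}, and then to estimate the measures of the latter by counting cells. The key computation is that, since $m$ is the self-similar measure with weight $(\rweight_{p,i}^{-\phdim/(p-1)})_{i \in S}$ and the overlaps $K_{w} \cap K_{v}$ with $w \neq v$, $\abs{w} = \abs{v}$ are finite (hence $m$-null, as $m$ is non-atomic), one has $m(K_{w}) = \rweight_{p,w}^{-\phdim/(p-1)} = g(w)^{\phdim}$ for every $w \in W_{\ast}$, where $g(w) = \rweight_{p,w}^{-1/(p-1)}$. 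For $w \in \Lambda_{s}$, the defining inequalities $g(\widetilde{\pi}(w)) > s \ge g(w)$ together with $g(w) = g(\widetilde{\pi}(w))\rweight_{p,w_{\abs{w}}}^{-1/(p-1)} \ge \overline{\rweight}_{p}^{-1/(p-1)}g(\widetilde{\pi}(w))$, where $\overline{\rweight}_{p} \coloneqq \max_{i \in S}\rweight_{p,i}$, give $g(w) \in (\overline{\rweight}_{p}^{-1/(p-1)}s, s]$, and hence $m(K_{w}) \asymp s^{\phdim}$ uniformly in $w \in \Lambda_{s}$ and $s \in (0,1]$.

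For the upper bound I would first observe that the former inclusion in \eqref{Rp-adapted} gives $B_{\pmetric_{p}}(x,s) \subseteq U_{1}(x, s/\alpha_{1})$, so it suffices to bound $m(U_{1}(x,t)) = \sum_{w \in \Lambda_{t,1}(x)} m(K_{w}) \le \#\Lambda_{t,1}(x) \cdot \sup_{w \in \Lambda_{t}}m(K_{w})$. The crucial input here is the uniform finiteness $\sup_{t \in (0,1],\, x \in K}\#\Lambda_{t,1}(x) < \infty$: by \cite[Lemma 4.2.3]{Kig01} the number of $w \in \Lambda_{t}$ with $K_{w} \cap K_{v} \neq \emptyset$ is at most $(\#\mathcal{C}_{\mathcal{L}})(\#V_{0})$ for each fixed $v \in \Lambda_{t}$, and the number of cells $v \in \Lambda_{t}$ containing a given point $x$ is bounded by the same argument, so $\#\Lambda_{t,1}(x)$ is controlled by a constant depending only on $\mathcal{L}$. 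Combining this with $m(K_{w}) \lesssim t^{\phdim}$ yields $m(U_{1}(x,t)) \lesssim t^{\phdim}$ and hence $m(B_{\pmetric_{p}}(x,s)) \lesssim s^{\phdim}$.

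For the lower bound I would use the latter inclusion in \eqref{Rp-adapted}: since $U_{0}(x,t) \subseteq U_{1}(x,t) \subseteq B_{\pmetric_{p}}(x, \alpha_{2}t)$ and $U_{0}(x,t) \supseteq K_{w}$ for any $w \in \Lambda_{t,0}(x)$, which is non-empty because $x \in K$, I obtain $m(B_{\pmetric_{p}}(x, \alpha_{2}t)) \ge m(K_{w}) \gtrsim t^{\phdim}$; putting $s = \alpha_{2}t$ gives $m(B_{\pmetric_{p}}(x,s)) \gtrsim s^{\phdim}$. Finally I would extend the admissible range of $s$ from the natural one coming from $\Lambda_{s}$ to all of $(0, 2\diam(K,\pmetric_{p})]$ by adjusting the constants, using $m(K) = 1$ to handle large $s$. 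The main obstacle, and the only genuinely non-routine point, is establishing the uniform finiteness of $\#\Lambda_{t,1}(x)$; everything else reduces to the cell-measure computation and the ball/neighborhood comparison already supplied by Lemma \ref{lem.Rp-adapted}. I expect the counting to go through cleanly from the p.-c.f.\ structure, but some care is needed in phrasing the point-counting bound for $\#\Lambda_{t,0}(x)$, since a single point can lie in several cells of the same scale only through the finite boundary sets $F_{w}(V_{0})$.
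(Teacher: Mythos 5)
Your proposal is correct and follows essentially the same route as the paper: the paper's proof is a one-line combination of the ball/neighborhood comparison \eqref{Rp-adapted}, the cell-counting bound $\#\{ \tau \in \Lambda_{s} \mid K_{\tau} \cap K_{w} \neq \emptyset \} \le (\#\mathcal{C}_{\mathcal{L}})(\#V_{0})$ from \cite[Lemma 4.2.3]{Kig01}, and the cell-measure formula $m(K_{w}) = \rweight_{p,w}^{-\phdim/(p-1)}$ from \cite[Corollary 1.4.8]{Kig01}, which are precisely the three ingredients you assemble (your more careful treatment of $\#\Lambda_{t,0}(x)$ and of the range of $s$ just makes explicit what the paper leaves implicit).
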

\begin{proof}
    This is immediate from \eqref{Rp-adapted}, $\#\{ \tau \in \Lambda_{s} \mid K_{\tau} \cap K_{w} \neq \emptyset \} \le (\#\mathcal{C}_{\mathcal{L}})(\#V_{0})$ (see \cite[Lemma 4.2.3]{Kig01}) and $m(K_{w}) = \rweight_{p,w}^{-1/(p - 1)}$ (see \cite[Corollary 1.4.8]{Kig01}).
%    It is well-known that \eqref{Rp-AR} implies the metric doubling property of $(K,\pmetric_{p})$ (see \cite[p.~102]{HKST} for example).
\end{proof}

The proof of Lemma \ref{lem.Rp-adapted} includes the following capacity upper bound in terms of the $p$-resistance metric $\pmetric_{p}$.
\begin{prop}\label{prop.Rp-capu}
    There exists $C \in (0,\infty)$ such that for any $x \in K$ and any $s \in (0,2\diam(K,\pmetric_{p})]$, 
    \begin{equation}\label{Rp-capu}
        \inf\bigl\{ \mathcal{E}_{p}(u) \bigm| u \in \mathcal{F}_{p}, u|_{B_{\pmetric_{p}}(x,\alpha_{1}s)} = 1, \supp_{K}[u] \subseteq B_{\pmetric_{p}}(x,2\alpha_{2}s) \bigr\} \le Cs^{-(p - 1)},
    \end{equation}
    where $\alpha_{1},\alpha_{2}$ are the constants in \eqref{Rp-adapted}.
\end{prop}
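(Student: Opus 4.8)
The plan is to recycle the cutoff functions $u_{w}$, $w \in \Lambda_{s}$, and the energy bound \eqref{uw.upper} constructed in the proof of Lemma \ref{lem.Rp-adapted}, and to glue them together over the cells forming $U_{1}(x,s)$. We may assume $s \in (0,1]$: if $s \in (1, 2\diam(K,\pmetric_{p})]$, then $2\alpha_{2}s > 4\diam(K,\pmetric_{p})$ forces $B_{\pmetric_{p}}(x,2\alpha_{2}s) = K$ (recall that $\alpha_{2}$ may be taken $> 2\diam(K,\pmetric_{p})$ in Lemma \ref{lem.Rp-adapted}), so $u = \indicator{K}$ satisfies all the constraints with $\mathcal{E}_{p}(u) = 0$. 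For $s \in (0,1]$ I would set
\[
u \coloneqq \max_{w \in \Lambda_{s,1}(x)} u_{w},
\]
which lies in $\mathcal{F}_{p} \subseteq \contfunc(K)$ by Proposition \ref{prop.GClist}-\ref{GC.sadd} (applicable since $(\mathcal{E}_{p},\mathcal{F}_{p})$ satisfies \ref{GC} by \ref{RF5}) and Proposition \ref{prop.pmetric}-\ref{it.pmetric-Hoelder}, and satisfies $0 \le u \le 1$.

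First I would check the two geometric constraints. Since $u_{w} = 1$ on $K_{w}$ for each $w \in \Lambda_{s,1}(x)$, we get $u = 1$ on $U_{1}(x,s) = \bigcup_{w \in \Lambda_{s,1}(x)} K_{w} \supseteq B_{\pmetric_{p}}(x,\alpha_{1}s)$ by \eqref{Rp-adapted}. For the support, recall from the proof of Lemma \ref{lem.Rp-adapted} that $\supp_{K}[u_{w}] \subseteq \bigcup_{\tau \in \Lambda_{s};\,K_{\tau} \cap K_{w} \neq \emptyset} K_{\tau}$ and that $\diam(K_{\tau},\pmetric_{p}) < \alpha_{2}s$ for every $\tau \in \Lambda_{s}$. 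If $w \in \Lambda_{s,1}(x)$ and $K_{\tau} \cap K_{w} \neq \emptyset$, then $K_{w} \subseteq U_{1}(x,s) \subseteq B_{\pmetric_{p}}(x,\alpha_{2}s)$, so choosing $z \in K_{\tau} \cap K_{w}$ gives $\pmetric_{p}(y,x) \le \diam(K_{\tau},\pmetric_{p}) + \pmetric_{p}(z,x) < 2\alpha_{2}s$ for every $y \in K_{\tau}$; hence $\supp_{K}[u] \subseteq B_{\pmetric_{p}}(x,2\alpha_{2}s)$.

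Next I would bound the energy. Iterating Proposition \ref{prop.GClist}-\ref{GC.sadd} (which gives $\mathcal{E}_{p}(f \vee g) \le \mathcal{E}_{p}(f) + \mathcal{E}_{p}(g)$ because $\mathcal{E}_{p} \ge 0$) and then invoking \eqref{uw.upper},
\[
\mathcal{E}_{p}(u) \le \sum_{w \in \Lambda_{s,1}(x)} \mathcal{E}_{p}(u_{w}) \le \#\Lambda_{s,1}(x) \cdot (\alpha_{1}s)^{-(p - 1)}.
\]
It remains to bound $\#\Lambda_{s,1}(x)$ uniformly in $(x,s)$. Since any two elements of $\Lambda_{s,0}(x)$ share the point $x$, fixing one of them and applying $\#\{\tau \in \Lambda_{s} \mid K_{\tau} \cap K_{w} \neq \emptyset\} \le (\#\mathcal{C}_{\mathcal{L}})(\#V_{0})$ (from \cite[Lemma 4.2.3]{Kig01}) bounds $\#\Lambda_{s,0}(x)$; a further application to each $v \in \Lambda_{s,0}(x)$, together with $\Lambda_{s,1}(x) = \{w \mid K_{w} \cap K_{v} \neq \emptyset \text{ for some } v \in \Lambda_{s,0}(x)\}$, gives $\#\Lambda_{s,1}(x) \le [(\#\mathcal{C}_{\mathcal{L}})(\#V_{0})]^{2}$. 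Combining the last two displays yields $\mathcal{E}_{p}(u) \le C s^{-(p-1)}$ with $C$ independent of $(x,s)$, which is the claim.

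The main obstacle is the support estimate: the naive worry is that $\supp_{K}[u_{w}]$ for $w \in \Lambda_{s,1}(x)$ reaches into cells one layer beyond $U_{1}(x,s)$ and thereby escapes $B_{\pmetric_{p}}(x,2\alpha_{2}s)$. What makes the constant $2\alpha_{2}$ (rather than a larger multiple of $\alpha_{2}$) work is the observation that every cell $K_{\tau}$ adjacent to some $K_{w}$ with $w \in \Lambda_{s,1}(x)$ already meets $U_{1}(x,s)$, hence lies within one cell-diameter of $B_{\pmetric_{p}}(x,\alpha_{2}s)$; the uniform finiteness of the cell overlaps, again via \cite[Lemma 4.2.3]{Kig01}, is what simultaneously controls the support and the energy.
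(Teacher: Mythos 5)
Your proof is correct and follows essentially the same route as the paper: take $\varphi = \max_{w \in \Lambda_{s,1}(x)} u_{w}$ with the cutoffs $u_{w}$ from the proof of Lemma \ref{lem.Rp-adapted}, verify the two constraints via \eqref{Rp-adapted} and $\diam(K_{\tau},\pmetric_{p}) < \alpha_{2}s$, and bound the energy by \eqref{sadd}, \eqref{uw.upper} and the uniform cell-count from \cite[Lemma 4.2.3]{Kig01}. Your explicit treatment of the regime $s \in (1, 2\diam(K,\pmetric_{p})]$ and your more detailed support estimate are just fuller versions of steps the paper leaves implicit; the slightly larger counting constant $[(\#\mathcal{C}_{\mathcal{L}})(\#V_{0})]^{2}$ is immaterial.
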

\begin{proof}
    Let $u_{w} \in \mathcal{F}_{p}$ be the same function as in the proof of Lemma \ref{lem.Rp-adapted} for each $w \in \Lambda_{s}$.
    Then $\varphi \coloneqq \max_{w \in \Lambda_{s,1}(x)}u_{w}$ satisfies $\varphi|_{U_{1}(x,s)} = 1$.
    Since $\diam(K_{w},\pmetric_{p}) < \alpha_{2}s$, we see from \eqref{Rp-adapted} that $\supp_{K}[\varphi] \subseteq B_{\pmetric_{p}}(x,2\alpha_{2}s)$.
    By \eqref{sadd} for $(\mathcal{E}_{p},\mathcal{F}_{p})$, \eqref{uw.upper} and \cite[Lemma 4.2.3]{Kig01}, we have $\varphi \in \mathcal{F}_{p}$ and
    \[
    \mathcal{E}_{p}(\varphi) \le \sum_{w \in \Lambda_{s,1}(x)}\mathcal{E}_{p}(u_{w}) \le (\alpha_{1}s)^{-(p - 1)}(\#\mathcal{C}_{\mathcal{L}})(\#V_{0}) \eqqcolon Cs^{-(p - 1)}. 
    \qedhere\]
\end{proof}

Similar to Lemma \ref{lem.unity-Kig} and Corollary \ref{cor.reg-Kig}, we can easily show the next lemma as a consequence of \eqref{Rp-capu}, and obtain the regularity of $\mathcal{F}_{p}$.
\begin{lem}\label{lem.Rp-unity}
    Let $\varepsilon \in (0, 1)$ and let $V$ be a maximal $\varepsilon$-net of $(K, \pmetric_{p})$.
    Then there exists a family of functions $\{ \psi_z \}_{z \in V}$ that satisfies the following properties:
    \begin{enumerate}[label=\textup{(\roman*)},align=left,leftmargin=*,topsep=2pt,parsep=0pt,itemsep=2pt]
        \item $\sum_{z \in V}\psi_z \equiv 1$.
        \item $\psi_z \in \mathcal{F}_{p}$, $0 \le \psi_z \le 1$, $\psi_{z}|_{B_{\pmetric_{p}}(z, \varepsilon/4)} \equiv 1$ and $\supp_{K}[\psi_z] \subseteq B_{\pmetric_{p}}(z, 5\varepsilon/4)$ for any $z \in V$;
        \item If $z \in V$ and $z' \in V \setminus \{ z \}$, then $\psi_{z'}|_{B_{\pmetric_{p}}(z, \varepsilon/4)} \equiv 0$.
        \item There exists $C \in (0,\infty)$ such that $\mathcal{E}_{p}(\psi_z) \le C\varepsilon^{-(p - 1)}$ for any $z \in V$.
    \end{enumerate}
\end{lem}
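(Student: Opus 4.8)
The plan is to follow verbatim the proof of Lemma \ref{lem.unity-Kig} (which in turn adapts \cite[Lemma 6.26]{MS+}), with the $p$-resistance metric $\pmetric_{p}$, the form $(\mathcal{E}_{p},\mathcal{F}_{p})$ and the capacity bound \eqref{Rp-capu} playing the roles of $d$, $(\mathcal{N}_{p},\mathcal{W}^{p})$ and \eqref{BF.capu}. Two elementary geometric inputs will be used throughout: since $V$ is $\varepsilon$-separated, the cores $\{B_{\pmetric_{p}}(z,\varepsilon/4)\}_{z\in V}$ are pairwise disjoint and, more precisely, $B_{\pmetric_{p}}(z,\varepsilon/3)\cap B_{\pmetric_{p}}(z',\varepsilon/3)=\emptyset$ for $z\neq z'$; and by the $\phdim$-Ahlfors regularity of $m$ (Lemma \ref{lem.Rp-AR}) together with the $\varepsilon$-separation, the cover $\{B_{\pmetric_{p}}(z,5\varepsilon/4)\}_{z\in V}$ has uniformly bounded overlap, i.e.\ $C_{0}\coloneqq\sup_{z\in V}\#\{z'\in V\mid B_{\pmetric_{p}}(z',5\varepsilon/4)\cap B_{\pmetric_{p}}(z,5\varepsilon/4)\neq\emptyset\}<\infty$ with $C_{0}$ independent of $\varepsilon$.

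First I would produce, for each $z\in V$, two basic cutoffs from \eqref{Rp-capu}: a bump $g_{z}\in\mathcal{F}_{p}$ with $0\le g_{z}\le1$, $g_{z}\equiv1$ on $B_{\pmetric_{p}}(z,\varepsilon)$, $\supp_{K}[g_{z}]\subseteq B_{\pmetric_{p}}(z,5\varepsilon/4)$ and $\mathcal{E}_{p}(g_{z})\lesssim\varepsilon^{-(p-1)}$ (obtained from the condenser between two concentric balls of radii comparable to $\varepsilon$, exactly as $\varphi$ is built in the proof of Proposition \ref{prop.Rp-capu}, and truncated to $[0,1]$ by Proposition \ref{prop.GClist}-\ref{GC.lip}); and an anti-cutoff $a_{z}\in\mathcal{F}_{p}$ with $0\le a_{z}\le1$, $a_{z}\equiv0$ on $B_{\pmetric_{p}}(z,\varepsilon/4)$, $a_{z}\equiv1$ off $B_{\pmetric_{p}}(z,\varepsilon/3)$ and $\mathcal{E}_{p}(a_{z})\lesssim\varepsilon^{-(p-1)}$ (take $1$ minus such a bump; recall $\mathcal{E}_{p}$ is invariant under adding constants). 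To enforce properties (ii) and (iii) I then ``carve out'' neighbouring cores by setting $\tilde g_{z}\coloneqq g_{z}\wedge\bigwedge_{z'\in N(z)}a_{z'}$, where $N(z)$ is the (at most $C_{0}$) set of $z'\neq z$ whose support ball meets that of $z$. Iterating \eqref{sadd} gives $\mathcal{E}_{p}(\tilde g_{z})\lesssim(1+C_{0})\varepsilon^{-(p-1)}\lesssim\varepsilon^{-(p-1)}$, and by the disjointness $B_{\pmetric_{p}}(z,\varepsilon/4)\cap B_{\pmetric_{p}}(z',\varepsilon/3)=\emptyset$ one checks that $\tilde g_{z}\equiv1$ on $B_{\pmetric_{p}}(z,\varepsilon/4)$ while $\tilde g_{z}\equiv0$ on $B_{\pmetric_{p}}(z',\varepsilon/4)$ for every $z'\neq z$.

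The partition of unity is then obtained by normalisation. Writing $G\coloneqq\sum_{z\in V}\tilde g_{z}$, the crucial point is the two-sided bound $1\le G\le C_{0}$: the upper bound is the bounded overlap, and for the lower bound I would fix $x\in K$, take a nearest net point $z_{0}$ (so $\pmetric_{p}(x,z_{0})<\varepsilon$ by maximality), and observe that any $z'\neq z_{0}$ with $a_{z'}(x)<1$ would satisfy $\pmetric_{p}(x,z')<\varepsilon/3$, forcing $\pmetric_{p}(z_{0},z')<2\varepsilon/3<\varepsilon$ and contradicting $\varepsilon$-separation; hence $a_{z'}(x)=1$ for all relevant $z'$ and $\tilde g_{z_{0}}(x)=g_{z_{0}}(x)=1$. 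Setting $G_{z}\coloneqq\sum_{z'\in\{z\}\cup N(z)}\tilde g_{z'}$, which agrees with $G$ on $\supp_{K}[\tilde g_{z}]$, I would define $\psi_{z}\coloneqq\tilde g_{z}\cdot(G_{z}\vee1)^{-1}$. Then $\psi_{z}=\tilde g_{z}/G$ everywhere, so $\sum_{z}\psi_{z}=1$, giving (i); properties (ii) and (iii) are immediate from the purity of $\tilde g_{z}$ on the cores together with $G\equiv1$ there; and $0\le\psi_{z}\le1$ with $\supp_{K}[\psi_{z}]\subseteq B_{\pmetric_{p}}(z,5\varepsilon/4)$.

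The main obstacle is the energy bound (iv), i.e.\ controlling the energy of the quotient $\psi_{z}$. Here $G_{z}\vee1$ takes values in $[1,C_{0}]$ and is a sum (followed by $\vee\,1$) of at most $C_{0}+1$ functions of energy $\lesssim\varepsilon^{-(p-1)}$, so $\mathcal{E}_{p}(G_{z}\vee1)\lesssim\varepsilon^{-(p-1)}$ by the triangle inequality of Proposition \ref{prop.GClist}-\ref{GC.tri} and \eqref{sadd}. Composing with a $\contfunc^{1}$ function that equals $1/t$ on $[1,C_{0}]$ and vanishes at $0$, the estimate \eqref{compos} yields $\mathcal{E}_{p}\bigl((G_{z}\vee1)^{-1}\bigr)\lesssim\mathcal{E}_{p}(G_{z}\vee1)\lesssim\varepsilon^{-(p-1)}$, and finally the Leibniz inequality \eqref{leibniz} applied to the product $\tilde g_{z}\cdot(G_{z}\vee1)^{-1}$ (both factors bounded by $1$ in $L^{\infty}$) gives $\mathcal{E}_{p}(\psi_{z})\lesssim\varepsilon^{-(p-1)}$, as required. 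The only delicate points, as in Lemma \ref{lem.unity-Kig}, are that every implicit constant must be traced to depend solely on $C_{0}$ and the constants in \eqref{Rp-capu} and Lemma \ref{lem.Rp-AR} (hence not on $\varepsilon$), and that each manipulation stays within $\mathcal{F}_{p}$, which is guaranteed throughout by the generalized $p$-contraction property \ref{GC}.
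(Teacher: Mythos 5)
Your overall scheme---basic bumps and anti-cutoffs, carving out neighbouring cores with $\wedge$ via \eqref{sadd}, normalising by the localised denominator $(G_{z}\vee 1)^{-1}$, and running the energy bounds through Proposition \ref{prop.GClist} (triangle inequality, \eqref{compos}, \eqref{leibniz})---is sound, and it is the same kind of argument the paper itself invokes by referring to \cite[Lemma 6.26]{MS+}; your verifications of (i)--(iii) and the reduction of (iv) to the GC toolbox are correct as far as they go. The genuine gap is at the very first step, the existence of the building blocks. You claim that a bump $g_{z}$ with $g_{z}\equiv 1$ on $B_{\pmetric_{p}}(z,\varepsilon)$, $\supp_{K}[g_{z}]\subseteq B_{\pmetric_{p}}(z,5\varepsilon/4)$ and $\mathcal{E}_{p}(g_{z})\lesssim\varepsilon^{-(p-1)}$ is obtained ``exactly as $\varphi$ is built in the proof of Proposition \ref{prop.Rp-capu}''. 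It is not: that construction (the maximum of the functions $u_{w}$ over $w\in\Lambda_{s,1}(z)$) is guaranteed to equal $1$ only on $U_{1}(z,s)\supseteq B_{\pmetric_{p}}(z,\alpha_{1}s)$ and to be supported only in $B_{\pmetric_{p}}(z,2\alpha_{2}s)$, so the only support-to-core radius ratio that \eqref{Rp-capu} provides is the fixed structural constant $2\alpha_{2}/\alpha_{1}$. Since $K$ is connected, necessarily $\alpha_{2}\ge\alpha_{1}$ (otherwise, for small $s$, connectedness produces a point of $B_{\pmetric_{p}}(z,\alpha_{1}s)\setminus B_{\pmetric_{p}}(z,\alpha_{2}s)$, contradicting \eqref{Rp-adapted}), so this ratio is at least $2$, and nothing in the paper bounds it from above; it cannot be prescribed to be $5/4$ (for $g_{z}$) or $4/3$ (for $a_{z}$). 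Taking $s=\varepsilon/\alpha_{1}$ so that the core contains $B_{\pmetric_{p}}(z,\varepsilon)$, the resulting function is only known to be supported in $B_{\pmetric_{p}}(z,(2\alpha_{2}/\alpha_{1})\varepsilon)$, which destroys (ii), (iii), the definition of $N(z)$, and the disjointness underlying your lower bound $G\ge 1$.

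The missing idea---which is precisely what the argument of \cite[Lemma 6.26]{MS+} supplies and what you must add---is a covering step converting the fixed-ratio capacity bound \eqref{Rp-capu} into a prescribed-ratio one at bounded cost. Choose $s\asymp\varepsilon$ with $2\alpha_{2}s\le\varepsilon/4$, pick $y_{1},\dots,y_{N}\in B_{\pmetric_{p}}(z,\varepsilon)$ so that the balls $B_{\pmetric_{p}}(y_{i},\alpha_{1}s)$ cover $B_{\pmetric_{p}}(z,\varepsilon)$, apply \eqref{Rp-capu} at each $y_{i}$, and take the pointwise maximum of the resulting cutoffs; iterating \eqref{sadd} keeps the maximum in $\mathcal{F}_{p}$ with energy at most the sum of the individual energies. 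This maximum equals $1$ on $B_{\pmetric_{p}}(z,\varepsilon)$ and is supported in $B_{\pmetric_{p}}(z,\varepsilon+2\alpha_{2}s)\subseteq B_{\pmetric_{p}}(z,5\varepsilon/4)$, and by the $\phdim$-Ahlfors regularity of $m$ (Lemma \ref{lem.Rp-AR}) one can take $N$ bounded by a constant depending only on $\alpha_{1},\alpha_{2}$ and the regularity constants, not on $\varepsilon$ or $z$; hence its energy is $\lesssim Ns^{-(p-1)}\lesssim\varepsilon^{-(p-1)}$. The same device yields your anti-cutoffs $a_{z}$. With this step inserted, the rest of your proof goes through verbatim, and all implicit constants depend only on $p$, the constants in \eqref{Rp-capu} and \eqref{Rp-adapted}, and those of Lemma \ref{lem.Rp-AR}, as the statement requires.
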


\begin{cor}\label{cor.regular}
    $(\mathcal{E}_{p},\mathcal{F}_{p})$ is regular, i.e., $\mathcal{F}_{p}$ is dense in $(\contfunc(K),\norm{\,\cdot\,}_{\sup})$.
\end{cor}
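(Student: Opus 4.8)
The plan is to deduce the regularity of $(\mathcal{E}_{p},\mathcal{F}_{p})$, i.e.\ the density of $\mathcal{F}_{p}$ in $(\contfunc(K),\norm{\,\cdot\,}_{\sup})$, directly from the partition of unity furnished by Lemma \ref{lem.Rp-unity}. This is a completely standard argument once such a partition of unity subordinate to arbitrarily fine covers is available, and I would mimic the proof of Corollary \ref{cor.reg-Kig} (which in turn follows the argument behind Lemma \ref{lem.unity-Kig}). First I would fix an arbitrary $f \in \contfunc(K)$ and $\varepsilon > 0$. Since $K$ is compact and $f$ is continuous, $f$ is uniformly continuous with respect to $\pmetric_{p}$ (recall from Proposition \ref{prop.pmetric} that $\pmetric_{p}$ gives the original topology of $K$), so I may choose $\eta \in (0,1)$ so small that $\abs{f(x) - f(y)} < \varepsilon$ whenever $\pmetric_{p}(x,y) < 5\eta/4$.

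Next I would take a maximal $\eta$-net $V$ of $(K,\pmetric_{p})$ and the associated family $\{ \psi_{z} \}_{z \in V}$ from Lemma \ref{lem.Rp-unity}, so that $\sum_{z \in V}\psi_{z} \equiv 1$, each $\psi_{z} \in \mathcal{F}_{p}$, $0 \le \psi_{z} \le 1$ and $\supp_{K}[\psi_{z}] \subseteq B_{\pmetric_{p}}(z, 5\eta/4)$. I would then define
\[
g \coloneqq \sum_{z \in V}f(z)\psi_{z},
\]
which is a finite sum since $V$ is finite ($K$ being compact). Because $\mathcal{F}_{p}$ is a linear subspace of $\contfunc(K)$ by \ref{RF1}, we have $g \in \mathcal{F}_{p}$. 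It remains to estimate $\norm{f - g}_{\sup}$: for any $x \in K$, using $\sum_{z}\psi_{z}(x) = 1$ and $\psi_{z}(x) \ge 0$,
\[
\abs{f(x) - g(x)} = \abs{\sum_{z \in V}\bigl(f(x) - f(z)\bigr)\psi_{z}(x)} \le \sum_{z \in V}\abs{f(x) - f(z)}\psi_{z}(x).
\]
In the sum only those $z$ with $\psi_{z}(x) \neq 0$ contribute, and for such $z$ we have $x \in \supp_{K}[\psi_{z}] \subseteq B_{\pmetric_{p}}(z, 5\eta/4)$, whence $\pmetric_{p}(x,z) < 5\eta/4$ and therefore $\abs{f(x) - f(z)} < \varepsilon$ by the choice of $\eta$. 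Thus $\abs{f(x) - g(x)} < \varepsilon \sum_{z}\psi_{z}(x) = \varepsilon$, giving $\norm{f - g}_{\sup} \le \varepsilon$.

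Since $\varepsilon > 0$ and $f \in \contfunc(K)$ were arbitrary, this shows $\mathcal{F}_{p}$ is dense in $(\contfunc(K),\norm{\,\cdot\,}_{\sup})$, which is precisely the desired regularity. I do not anticipate any serious obstacle here: all the technical content — in particular the existence of a partition of unity by elements of $\mathcal{F}_{p}$ with controlled supports — has already been packaged into Lemma \ref{lem.Rp-unity}, whose proof rests on the capacity upper bound \eqref{Rp-capu} and the subadditivity \eqref{sadd}. The only minor points to verify are the uniform continuity of $f$ with respect to $\pmetric_{p}$ (immediate from compactness of $K$) and the finiteness of $V$ (immediate from compactness together with the fact that a maximal $\eta$-net is automatically $\eta$-separated, so its cardinality is controlled via the Ahlfors regularity from Lemma \ref{lem.Rp-AR}); neither requires more than a sentence.
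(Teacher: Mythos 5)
Your proof is correct and is exactly the argument the paper intends: the corollary is stated as an immediate consequence of the partition of unity in Lemma \ref{lem.Rp-unity}, and the standard approximation $g = \sum_{z \in V} f(z)\psi_{z}$ together with uniform continuity of $f$ with respect to $\pmetric_{p}$ (which metrizes the topology of the compact space $K$ by Proposition \ref{prop.pmetric}) is precisely the omitted routine step.
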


Next, in order to state a Poincar\'{e}-type inequality in this context, we introduce the associated self-similar $p$-energy measures in Proposition \ref{prop.pEMss} and a localized version of $\mathcal{F}_{p}$ in Definition \ref{defn.local}. 
Thanks to \eqref{form.ss}, we can define the $p$-energy measures associated with $(\mathcal{E}_{p},\mathcal{F}_{p})$ by using Kolmogorov's extension theorem.
We recall fundamental results on the $p$-energy measures constructed in this way in the following proposition.
See \cite[Section 9]{MS+} and \cite[Section 5.2]{KS.gc} for further details and properties of them.
\begin{prop}[Self-similar $p$-energy measures]\label{prop.pEMss}
	For each $u \in \mathcal{F}_{p}$, there exists a unique positive Radon measure $\Gamma_{\mathcal{E}_{p}}\langle u \rangle$ on $K$ satisfying
    	\begin{equation}\label{eq.functional}
    	\int_{K}\varphi\,d\Gamma_{\mathcal{E}_{p}}\langle u \rangle 
    	= \mathcal{E}_{p}(u; u\varphi) - \left(\frac{p - 1}{p}\right)^{p - 1}\mathcal{E}_{p}\bigl(\abs{u}^{\frac{p}{p - 1}}; \varphi\bigr) \quad \text{for any $\varphi \in \mathcal{F}_{p}$}.
    	\end{equation}
	Moreover, the following hold:
    \begin{enumerate}[label=\textup{(\roman*)},align=left,leftmargin=*,topsep=2pt,parsep=0pt,itemsep=2pt]
        \item \label{em.eachcell} $\Gamma_{\mathcal{E}_{p}}\langle u \rangle(K_{w}) = \rweight_{p,w}\mathcal{E}_{p}(u \circ F_{w})$ for any $u \in \mathcal{F}_{p}$ and any $w \in W_{\ast}$.
        \item \label{em.seminorm} $\Gamma_{\mathcal{E}_{p}}\langle \,\cdot\, \rangle(A)^{1/p}$ is a seminorm on $\mathcal{F}_{p}$ for any $A \in \mathcal{B}(K)$.
        \item \label{em.nomeas} $\Gamma_{\mathcal{E}_{p}}\langle u \rangle(K_{w} \cap K_{\tau}) = 0$ for any $u \in \mathcal{F}_{p}$ and any $w,\tau \in W_{\ast}$ with $\Sigma_{w} \cap \Sigma_{\tau} = \emptyset$.
        \item \label{em.sl} $\Gamma_{\mathcal{E}_{p}}\langle u \rangle(A) = \Gamma_{\mathcal{E}_{p}}\langle v \rangle(A)$ for any $u,v \in \mathcal{F}_{p}$ and any $A \in \mathcal{B}(K)$ with $(u - v)|_{A} \in \mathbb{R}\indicator{A}$.
    \end{enumerate}
\end{prop}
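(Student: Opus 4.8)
The plan is to construct, for each fixed $u \in \mathcal{F}_{p}$, a premeasure on the cylinder sets of the shift space $\Sigma = S^{\mathbb{N}}$ and to extend it via Kolmogorov's theorem, then push it forward to $K$ along $\chi$. Concretely, I would set $\nu_{u}(\Sigma_{w}) \coloneqq \rho_{p,w}\mathcal{E}_{p}(u \circ F_{w})$ for each $w \in W_{\ast}$; note $u \circ F_{w} \in \mathcal{F}_{p}$ by \eqref{domain.ss}. Applying the self-similarity \eqref{form.ss} to $u \circ F_{w}$ gives $\mathcal{E}_{p}(u \circ F_{w}) = \sum_{s \in S}\rho_{p,s}\mathcal{E}_{p}(u \circ F_{ws})$, hence $\nu_{u}(\Sigma_{w}) = \sum_{s \in S}\nu_{u}(\Sigma_{ws})$, which is exactly the required consistency condition. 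Since the cylinders form a basis of clopen subsets of the compact metrizable space $\Sigma$, finite additivity together with the (automatic) continuity from above at $\emptyset$ yields a finite Borel measure $\nu_{u}$ on $\Sigma$ with $\nu_{u}(\Sigma) = \mathcal{E}_{p}(u)$, and I would define $\Gamma_{\mathcal{E}_{p}}\langle u \rangle \coloneqq \chi_{\ast}\nu_{u}$, a finite positive Radon measure on $K$.

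To obtain \ref{em.eachcell} from this construction I must discard the overlaps: $\chi^{-1}(K_{w}) \supseteq \Sigma_{w}$, with the difference contained in $\chi^{-1}$ of the cell boundaries. Since $\mathcal{L}$ is post-critically finite, $K_{w} \cap K_{\tau} = F_{w}(V_{0}) \cap F_{\tau}(V_{0})$ is a finite subset of $V_{\ast}$ whenever $\Sigma_{w} \cap \Sigma_{\tau} = \emptyset$, so both \ref{em.nomeas} and the identity $\Gamma_{\mathcal{E}_{p}}\langle u \rangle(K_{w}) = \nu_{u}(\Sigma_{w})$ reduce to showing that $\Gamma_{\mathcal{E}_{p}}\langle u \rangle$ is non-atomic on $V_{\ast}$; by self-similarity it suffices to check $\Gamma_{\mathcal{E}_{p}}\langle u \rangle(\{q\}) = 0$ for $q \in V_{0}$, which I would establish along the lines of \cite[Section 9]{MS+}.

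The main step, and the one I expect to be the real obstacle, is verifying the functional characterization \eqref{eq.functional}. Writing $\Psi_{u}(\varphi) \coloneqq \mathcal{E}_{p}(u; u\varphi) - \bigl(\tfrac{p-1}{p}\bigr)^{p-1}\mathcal{E}_{p}(\abs{u}^{p/(p-1)}; \varphi)$, linearity of $\Psi_{u}$ in $\varphi$ is immediate from Proposition \ref{prop.form-basic}, but its positivity ($\Psi_{u}(\varphi) \ge 0$ for $\varphi \ge 0$) and the bound $\abs{\Psi_{u}(\varphi)} \le \norm{\varphi}_{\sup}\mathcal{E}_{p}(u)$ are the substantive inputs; these hold for any self-similar $p$-energy form satisfying \ref{GC} and are exactly the content of \cite[Section 5.2]{KS.gc}. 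Granting them, $\Psi_{u}$ extends to a bounded positive linear functional on $(\contfunc(K), \norm{\,\cdot\,}_{\sup})$ — using that $\mathcal{F}_{p}$ is dense in $\contfunc(K)$ by Corollary \ref{cor.regular} — and the Riesz–Markov–Kakutani theorem produces a positive Radon measure representing it; a self-similarity computation identifying its cell masses with $\rho_{p,w}\mathcal{E}_{p}(u \circ F_{w})$ shows it coincides with $\chi_{\ast}\nu_{u}$, giving both \eqref{eq.functional} and the asserted uniqueness.

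Finally, I would deduce the remaining structural properties. Property \ref{em.seminorm} follows from \ref{GC} for $(\mathcal{E}_{p}, \mathcal{F}_{p})$: for $A = K_{w}$ it is the triangle inequality and $p$-homogeneity of $\mathcal{E}_{p}(\,\cdot\,)^{1/p}$ transported by the linear map $u \mapsto u \circ F_{w}$, and the extension to arbitrary $A \in \mathcal{B}(K)$ proceeds through indicators, non-negative simple functions and monotone convergence exactly as in the proof of Theorem \ref{thm.KSpem-GC}. Property \ref{em.sl} is the strong locality of the $p$-energy measures and follows from the same reasoning as Theorem \ref{thm.KSpem-sl}, using \ref{em.eachcell} and the fact that the form localizes on cells via \eqref{form.ss}. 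Throughout, the only genuinely analytic ingredient beyond the self-similar bookkeeping is the positivity of $\Psi_{u}$, which is why I single it out as the crux.
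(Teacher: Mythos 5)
Your proposal is correct and follows essentially the same route as the paper: the measure is built by Kolmogorov-type extension on the shift space from the consistent cell masses $\rweight_{p,w}\mathcal{E}_{p}(u\circ F_{w})$ and pushed forward along $\chi$, the substantive verifications (the identity \eqref{eq.functional} with its positivity/boundedness, the seminorm property, and the vanishing on cell overlaps) are delegated to \cite[Section 9]{MS+} and \cite[Section 5.2]{KS.gc} exactly as the paper does, and uniqueness comes from the regularity of $(\mathcal{E}_{p},\mathcal{F}_{p})$ (Corollary \ref{cor.regular}) together with the uniqueness part of the Riesz--Markov--Kakutani theorem. The only cosmetic difference is that you construct a second measure via Riesz--Markov--Kakutani from $\Psi_{u}$ and then identify it with $\chi_{\ast}\nu_{u}$, whereas the paper proves directly that $\chi_{\ast}\nu_{u}$ satisfies \eqref{eq.functional}; the content is the same.
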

\begin{proof}
    For the construction of a candidate for $\Gamma_{\mathcal{E}_{p}}\langle u \rangle$, see \cite[Section 9]{MS+} or \cite[Section 5.2]{KS.gc}.
    Then the properties \ref{em.seminorm}, \ref{em.nomeas} and \ref{em.sl} follow from \cite[Proposition 9.3, Corollaries 9.8 and 9.9]{MS+} since $\#(K_{w} \cap K_{\tau}) < \infty$ by $\#V_{0} < \infty$ and \cite[Proposition 1.3.5-(2)]{Kig01}.
    We obtain \ref{em.eachcell} by combining \ref{em.nomeas} and \cite[Proposition 9.4]{MS+}. 
    The equality \eqref{eq.functional} is proved in \cite[Proposition 5.12]{KS.gc}, and the uniqueness of a positive Radon measure on $K$ satisfying \eqref{eq.functional} follows from Corollary \ref{cor.regular} and the uniqueness part of the Riesz--Markov--Kakutani representation theorem (see, e.g., \cite[Theorems 2.14 and 2.18]{Rud}).
\end{proof}

%In the rest of this section, we also fix the associated $p$-energy measures $\{ \Gamma_{\mathcal{E}_{p}}\langle u \rangle \}_{u \in \mathcal{F}_{p}}$ given in Proposition \ref{prop.pEMss}. 

\begin{defn}\label{defn.local}
    Let $U$ be a non-empty open subset of $K$.
    We define a linear subspace $\mathcal{F}_{p,\mathrm{loc}}(U)$ of $\contfunc(U)$ by
    \begin{equation}\label{d:Floc}
        \mathcal{F}_{p,\mathrm{loc}}(U) \coloneqq
        \biggl\{ f \in \contfunc(U) \biggm|
            \begin{minipage}{150pt}
			$f|_{A} = f^{\#}|_{A}$ for some $f^{\#} \in \mathcal{F}$
			for each relatively compact open subset $A$ of $U$
        \end{minipage}
        \biggr\}.
    \end{equation}
	For each $f \in \mathcal{F}_{p,\mathrm{loc}}(U)$, we further define a positive Radon measure $\Gamma_{\mathcal{E}_{p}}\langle f \rangle$ on $U$ as follows.
	We first define $\Gamma_{\mathcal{E}_{p}}\langle f \rangle(E) \coloneqq \Gamma_{\mathcal{E}_{p}}\langle f^{\#} \rangle(E)$ for each relatively compact Borel subset $E$ of $U$, with $A \subseteq U$ and $f^{\#} \in \mathcal{F}_{p}$ as in \eqref{d:Floc} chosen so that $E \subseteq A$;
	this definition of $\Gamma_{\mathcal{E}_{p}}\langle f \rangle(E)$ is independent of a particular choice of such $A$ and $f^{\#}$ by Proposition \ref{prop.pEMss}-\ref{em.sl}.
	We then define $\Gamma_{\mathcal{E}_{p}}\langle f \rangle(E) \coloneqq \lim_{n \to \infty}\Gamma_{\mathcal{E}_{p}}\langle f \rangle(E \cap A_{n})$ for each $E \in \mathcal{B}(U)$, where $\{ A_{n} \}_{n \in \mathbb{N}}$ is a non-decreasing sequence of relatively compact open subsets of $U$ such that $\bigcup_{n \in \mathbb{N}}A_{n} = U$; it is clear that this definition of $\Gamma_{\mathcal{E}_{p}}\langle f \rangle(E)$ is independent of a particular choice of $\{ A_{n} \}_{n \in \mathbb{N}}$, coincides with the previous one when $E$ is relatively compact in $U$, and gives a Radon measure on $U$.
\end{defn}

Now we can prove a Poincar\'{e}-type inequality in terms of the $p$-resistance metric.%
\begin{prop}[$(p,p)$-Poincar\'{e} inequality]\label{prop.Rp-PI}
    There exist $C,A \in (0,\infty)$ with $A \geq 1$ such that for any $(x,s) \in K \times (0,\diam(K,\pmetric_{p})]$ and any $u \in \mathcal{F}_{p,\mathrm{loc}}(B_{\pmetric_{p}}(x,As))$, 
    \begin{equation}\label{Rp-PI}
        \int_{B_{\pmetric_{p}}(x,s)}\abs{u(y) - u_{B_{\pmetric_{p}}(x,s)}}^{p}\,m(dy) \le Cs^{\phdim + p - 1}\Gamma_{\mathcal{E}_{p}}\langle u \rangle(B_{\pmetric_{p}}(x,As)).
    \end{equation}
\end{prop}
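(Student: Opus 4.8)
The plan is to prove a cell-wise oscillation estimate and then combine it over the finitely many cells covering the ball by a chaining argument. Write $\overline{\rweight}_{p} \coloneqq \max_{i \in S}\rweight_{p,i}$ and fix $(x,s)$; set $s' \coloneqq s/\alpha_{1}$ so that $B_{\pmetric_{p}}(x,s) \subseteq U_{1}(x,s') \subseteq B_{\pmetric_{p}}\bigl(x,(\alpha_{2}/\alpha_{1})s\bigr)$ by Lemma \ref{lem.Rp-adapted}, and take $A \coloneqq \max\{1, 2\alpha_{2}/\alpha_{1}\}$. The cells $\{ K_{w} \}_{w \in \Lambda_{s',1}(x)}$ then all lie (relatively compactly) in $B_{\pmetric_{p}}(x,As)$, and for each $w \in \Lambda_{s'}$ the definition of $\Lambda_{s'}$ gives $\overline{\rweight}_{p}^{-1/(p-1)}s' \le \rweight_{p,w}^{-1/(p-1)} \le s'$, so that $\rweight_{p,w}^{-1} \asymp s^{p-1}$ and, by Lemma \ref{lem.Rp-AR}, $m(K_{w}) \asymp s^{\phdim}$, with implicit constants depending only on $(\mathcal{E}_{p},\mathcal{F}_{p})$. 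I would prove \eqref{Rp-PI} first for $u \in \mathcal{F}_{p}$; the case $u \in \mathcal{F}_{p,\mathrm{loc}}(B_{\pmetric_{p}}(x,As))$ then follows verbatim upon replacing $u$ on each cell $K_{w}$ by a global representative $u^{\#} \in \mathcal{F}_{p}$, exactly as in the last line of the proof of Proposition \ref{prop.PI-Kig}.

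The key local input is an oscillation bound: for $w \in \Lambda_{s'}$ and $y,z \in K_{w}$, applying Proposition \ref{prop.pmetric}-\ref{it.pmetric-Hoelder} to $u \circ F_{w} \in \mathcal{F}_{p}$ (which lies in $\mathcal{F}_{p}$ by \eqref{domain.ss}), together with $R_{\mathcal{E}_{p}} = \pmetric_{p}^{p-1} \le \diam(K,\pmetric_{p})^{p-1}$, gives $\abs{u(y) - u(z)}^{p} \le \diam(K,\pmetric_{p})^{p-1}\mathcal{E}_{p}(u \circ F_{w})$; hence, by Proposition \ref{prop.pEMss}-\ref{em.eachcell} and $\rweight_{p,w}^{-1} \asymp s^{p-1}$,
\begin{equation*}
	\osc_{K_{w}}u \coloneqq \sup_{y,z \in K_{w}}\abs{u(y) - u(z)} \lesssim s^{(p-1)/p}\,\Gamma_{\mathcal{E}_{p}}\langle u \rangle(K_{w})^{1/p}.
\end{equation*}
In particular $\int_{K_{w}}\abs{u - u_{K_{w}}}^{p}\,dm \le m(K_{w})(\osc_{K_{w}}u)^{p} \lesssim s^{\phdim + p - 1}\Gamma_{\mathcal{E}_{p}}\langle u \rangle(K_{w})$, which is the cell-wise form of \eqref{Rp-PI}.

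To globalize, I would first reduce the average $u_{B_{\pmetric_{p}}(x,s)}$ to the constant $c \coloneqq u_{K_{w_{0}}}$ for a fixed $w_{0} \in \Lambda_{s',0}(x)$, via the elementary bound $\int_{B}\abs{u - u_{B}}^{p}\,dm \le 2^{p}\int_{B}\abs{u - c}^{p}\,dm$. Splitting $B_{\pmetric_{p}}(x,s) \subseteq \bigcup_{w \in \Lambda_{s',1}(x)}K_{w}$ and inserting the cell-wise estimate, the task reduces to bounding $\sum_{w}m(K_{w})\abs{u_{K_{w}} - u_{K_{w_{0}}}}^{p}$. Here I would use that $N \coloneqq \#\Lambda_{s',1}(x)$ is bounded independently of $(x,s)$ (the cells lie in $B_{\pmetric_{p}}(x,As)$, are essentially disjoint, and each has mass $\asymp s^{\phdim}$, so Lemma \ref{lem.Rp-AR} forces $N \lesssim 1$), and that the intersection graph of $\{ K_{w} \}_{w \in \Lambda_{s',1}(x)}$ is connected because $U_{1}(x,s')$ is a connected finite union of the connected compact sets $K_{w}$. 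Any two cells are thus joined by a chain of at most $N$ pairwise-intersecting cells, and a discrete Hölder inequality reduces matters to estimating $\abs{u_{K_{w}} - u_{K_{w'}}}^{p}$ for adjacent cells. For such a pair, picking $\zeta \in K_{w} \cap K_{w'}$ and using $\abs{u_{K_{w}} - u(\zeta)}^{p} \le \fint_{K_{w}}\abs{u - u(\zeta)}^{p}\,dm \le (\osc_{K_{w}}u)^{p}$, the oscillation bound yields $\abs{u_{K_{w}} - u_{K_{w'}}}^{p} \lesssim s^{p-1}\bigl(\Gamma_{\mathcal{E}_{p}}\langle u \rangle(K_{w}) + \Gamma_{\mathcal{E}_{p}}\langle u \rangle(K_{w'})\bigr)$. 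Summing, using $m(K_{w}) \asymp s^{\phdim}$ and the fact that the cells carry $\Gamma_{\mathcal{E}_{p}}\langle u \rangle$ without overlap (Proposition \ref{prop.pEMss}-\ref{em.nomeas}), so that $\sum_{w \in \Lambda_{s',1}(x)}\Gamma_{\mathcal{E}_{p}}\langle u \rangle(K_{w}) = \Gamma_{\mathcal{E}_{p}}\langle u \rangle\bigl(U_{1}(x,s')\bigr) \le \Gamma_{\mathcal{E}_{p}}\langle u \rangle\bigl(B_{\pmetric_{p}}(x,As)\bigr)$, gives \eqref{Rp-PI}.

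The main obstacle I anticipate is the combinatorial chaining step: converting the connectivity of the union $U_{1}(x,s')$ into a uniformly bounded chain length between any two of the covering cells. The cardinality bound is routine via Ahlfors regularity (Lemma \ref{lem.Rp-AR}) and $\#\{\tau \in \Lambda_{s'} \mid K_{\tau} \cap K_{w} \neq \emptyset\} \le (\#\mathcal{C}_{\mathcal{L}})(\#V_{0})$ from \cite[Lemma 4.2.3]{Kig01}, while the connectivity of the nerve follows from the topological fact that a finite union of connected closed sets is connected precisely when its intersection graph is. All remaining steps are standard $L^{p}$-manipulations and scaling, with constants tracked through $\alpha_{1},\alpha_{2},\overline{\rweight}_{p}$ and $\diam(K,\pmetric_{p})$.
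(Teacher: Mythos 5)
Your proposal is correct and takes essentially the same route as the paper's own proof: both decompose the ball into the cells $\{K_{w}\}_{w \in \Lambda_{\cdot,1}(x)}$, bound the oscillation of $u$ on each cell by $s^{p-1}\Gamma_{\mathcal{E}_{p}}\langle u \rangle(K_{w})$ via Proposition \ref{prop.pmetric}-\ref{it.pmetric-Hoelder} applied to $u \circ F_{w}$ combined with Proposition \ref{prop.pEMss}-\ref{em.eachcell} and the scaling $\rweight_{p,w}^{-1} \asymp s^{p-1}$, and then chain across intersecting cells using the uniform bound on $\#\Lambda_{s,1}(x)$ and the non-overlap of the energy measures. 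The only cosmetic difference is that the paper chains pointwise values $u(y),u(z)$ through at most three cells meeting near $x$ and compares against $u_{U_{1}(x,s)}$, whereas you chain cell averages through the connected nerve graph and compare against $u_{K_{w_{0}}}$; both variants rest on the same lemmas and give the same estimate up to inessential constants.
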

\begin{proof}
    For simplicity, we consider the case $u \in \mathcal{F}_{p}$.
    Note that, since $m(K_{v} \cap K_{v'}) = 0$ for any $v,v' \in W_{\ast}$ with $\Sigma_{v} \cap \Sigma_{v'} = \emptyset$ (see \cite[Corollary 1.4.8]{Kig01}), 
    \begin{equation*}
        \int_{B_{\pmetric_{p}}(x,\alpha_{1}s)}\abs{u - u_{B_{\pmetric_{p}}(x,\alpha_{1}s)}}^{p}\,dm 
%        &\le \int_{U_{1}(x,s)}\abs{u(y) - u_{U_{1}(x,s)}}^{p}\,m(dy) \\
        \le \sum_{w \in \Lambda_{s,1}(x)}\int_{K_{w}}\abs{u - u_{U_{1}(x,s)}}^{p}\,dm. 
    \end{equation*}
    Let $w \in \Lambda_{s,1}(x)$.
    For any $(y,z) \in K_{w} \times U_{1}(x,s)$, there exist $v^{1},v^{2},v^{3} \in \Lambda_{s,1}(x)$ such that $v^{1} = w$, $z \in K_{v^{3}}$ and $K_{v^{i}} \cap K_{v^{i + 1}} \neq \emptyset$ for each $i \in \{ 1,2 \}$.
    Let us fix $x_{i} \in K_{v^{i}} \cap K_{v^{i + 1}}$ and $q_{i} \in V_{0}$ so that $x_{i} = F_{v^{i}}(q_{i})$.
    Then 
    \begin{align*}
        \abs{u(y) - u(z)}^{p}
        &\le 3^{p - 1}\Bigl(\abs{u(y) - u(x^{1})}^{p} + \abs{u(x^{1}) - u(x^{2})}^{p} + \abs{u(x^{2}) - u(z)}^{p}\Bigr) \\
 %       &= \bigl(3\diam(K,\pmetric)\bigr)^{p - 1}\sum_{i = 1}^{3}\mathcal{E}_{p}(u \circ F_{v^{i}}) \\
        &\le \bigl(3\diam(K,\pmetric_{p})\bigr)^{p - 1}\sum_{i = 1}^{3}\rweight_{p,v^{i}}^{-1}\Gamma_{\mathcal{E}_{p}}\langle u \rangle(K_{v^{i}}) \\
        &\le Cs^{p - 1}\Gamma_{\mathcal{E}_{p}}\langle u \rangle\left(\bigcup_{i = 1}^{3}K_{v^{i}}\right)
        \le Cs^{p - 1}\Gamma_{\mathcal{E}_{p}}\langle u \rangle(B_{\pmetric_{p}}(x,\alpha_{2}s)).
    \end{align*}
    Therefore, noting that $m(K_{w}) \lesssim s^{\phdim}$ by \eqref{Rp-adapted} and \eqref{Rp-AR}, we have
    \begin{align*}
        \int_{K_{w}}\abs{u(y) - u_{U_{1}(x,s)}}^{p}\,m(dy)
        &\le \int_{K_{w}}\fint_{U_{1}(x,s)}\abs{u(y) - u(z)}^{p}\,m(dx)m(dy) \\
%        &\le Cs^{p - 1}\Gamma_{\mathcal{E}_{p}}\langle u \rangle(B_{\pmetric}(x,\alpha_{2}s))m(K_{w}) \\
        &\lesssim s^{\phdim + p - 1}\Gamma_{\mathcal{E}_{p}}\langle u \rangle(B_{\pmetric_{p}}(x,\alpha_{2}s)),  
    \end{align*}
    which together with $\sup_{(x,s) \in K \times (0,1]}\#\Lambda_{s,1}(x) < \infty$ (see \cite[Lemma 4.2.3]{Kig01}) yields \eqref{Rp-PI}.
\end{proof}

%----- energy measure -----
\subsection{Estimates on self-similar $p$-energy measures and weak monotonicity}
%%%
In this subsection, we show localized energy estimates on Korevaar--Schoen $p$-energy forms in terms of their associated self-similar $p$-energy measures and verify \ref{KSwm}.
We continue to follow the setting in the previous subsection, i.e., we suppose that Assumption \ref{assum.pRes} holds.
We consider $\mathcal{E}_{p}$ as a $[0,\infty]$-valued functional defined on $L^{p}(K,m)$ by setting $\mathcal{E}_{p}(f) \coloneqq \infty$ for $f \in L^{p}(K,m) \setminus \mathcal{F}_{p}$.

Similar arguments as in Propositions \ref{prop.upper} and \ref{prop.lower} yield an upper bound on localized Korevaar--Schoen energy functionals in Proposition \ref{prop.upper-Rp} and a lower bound on them in Proposition \ref{prop.lower-Rp} below.
\begin{prop}\label{prop.upper-Rp}
    There exists $C \in (0,\infty)$ such that for any $E \in \mathcal{B}(K)$, any open neighborhood $E'$ of $\overline{E}^{K}$ and any $u \in \mathcal{F}_{p,\mathrm{loc}}(E')$, 
	\begin{equation}\label{KSu.local-Rp}
	    \limsup_{s \downarrow 0}\int_{E}\fint_{B_{\pmetric_{p}}(x, s)}\frac{\abs{u(x) - u(y)}^{p}}{s^{\phdim + p - 1}}\,\measure(dy)\measure(dx)
        \le C\Gamma_{\mathcal{E}_{p}}\langle u \rangle\bigl(\overline{E}^{K}\bigr).
	\end{equation}
	Moreover, with $C \in (0,\infty)$ the same as in \eqref{KSu.local-Rp}, for any $f \in L^{p}(K,m)$, 
	\begin{equation}\label{KSu.global-Rp}
	\sup_{s > 0}\int_{K}\fint_{B_{\pmetric_{p}}(x, s)}\frac{\abs{f(x) - f(y)}^{p}}{s^{\phdim + p - 1}}\,\measure(dy)\measure(dx)
        \le C\mathcal{E}_{p}(f).
	\end{equation}
\end{prop}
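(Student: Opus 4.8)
The plan is to mimic the proof of Proposition \ref{prop.upper}, replacing the tree-indexed partition $\{K_w\}_{w\in T}$, the graph energies $\widetilde{\mathcal{E}}_{p,A}^{n}$ and the metric balls adapted via \eqref{BF.adapted} with their counterparts in the present self-similar setting: the cell decomposition $\{K_w\}_{w\in\Lambda_s}$ indexed by the scale $\Lambda_s$, the self-similar $p$-energy measure $\Gamma_{\mathcal{E}_{p}}\langle\,\cdot\,\rangle$ from Proposition \ref{prop.pEMss}, and the inclusions \eqref{Rp-adapted} from Lemma \ref{lem.Rp-adapted}. The key quantitative inputs are the $(p,p)$-Poincar\'e inequality \eqref{Rp-PI}, the Ahlfors regularity \eqref{Rp-AR}, the additivity and strong locality of $\Gamma_{\mathcal{E}_{p}}\langle\,\cdot\,\rangle$ on disjoint cells (Proposition \ref{prop.pEMss}-\ref{em.eachcell},\ref{em.nomeas}), and the bounded overlap $\sup_{(x,s)}\#\Lambda_{s,1}(x)<\infty$ (via \cite[Lemma 4.2.3]{Kig01}).

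First I would fix $E\in\mathcal{B}(K)$, an open neighborhood $E'$ of $\overline{E}^{K}$ and $u\in\mathcal{F}_{p,\mathrm{loc}}(E')$. Given $s\in(0,1]$, for $x\in E$ and $y\in B_{\pmetric_{p}}(x,s)$ I would locate $x\in K_w$ and $y\in K_v$ with $w,v\in\Lambda_{s}$, and use \eqref{Rp-adapted} to guarantee that $v$ lies within a bounded $\Lambda_s$-neighborhood of $w$, so that both cells are contained in $U_{1}(x,cs)$ for a fixed constant $c$; for small $s$ these cells stay inside $E'$, so one may replace $u$ by a global representative $u^{\#}\in\mathcal{F}_{p}$ agreeing with $u$ on a relatively compact open set containing $\overline{E}^K$. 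Then I would split
\[
\abs{u(x)-u(y)}^{p}\lesssim \abs{u(x)-u_{K_w}}^{p}+\abs{u_{K_w}-u_{K_v}}^{p}+\abs{u(y)-u_{K_v}}^{p},
\]
integrate over $B_{\pmetric_{p}}(x,s)$ and then over $E\subseteq\bigcup_{w\in\Lambda_{s}[E]}K_w$, and bound the oscillation terms $\int_{K_w}\abs{u-u_{K_w}}^{p}\,dm$ by the Poincar\'e inequality \eqref{Rp-PI} (applied at the appropriate scale to the cell $K_w$, giving a factor $s^{\phdim+p-1}\Gamma_{\mathcal{E}_{p}}\langle u\rangle(U_{1}(x,cs))$) and the average-difference terms $\abs{u_{K_w}-u_{K_v}}^{p}$ likewise by a chaining argument over the bounded number of intermediate cells together with \eqref{em.eachcell} and \eqref{Rp-ss}, each contributing $s^{p-1}\Gamma_{\mathcal{E}_{p}}\langle u\rangle$ on a controlled neighborhood. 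The volume normalization $m(K_w)\asymp s^{\phdim}$ from \eqref{Rp-AR} cancels against the $s^{-\phdim}$ coming from $\fint_{B_{\pmetric_{p}}(x,s)}$, leaving the desired $s^{-(\phdim+p-1)}$ scaling. Summing over $w\in\Lambda_{s}[E]$ and using the bounded overlap of the neighborhoods, the right-hand side collapses to $C\,\Gamma_{\mathcal{E}_{p}}\langle u\rangle\bigl((E)_{\pmetric_p,\delta(s)}\bigr)$ for a gauge $\delta(s)\downarrow 0$; letting $s\downarrow 0$ and invoking outer regularity of the finite Radon measure $\Gamma_{\mathcal{E}_{p}}\langle u\rangle$ yields \eqref{KSu.local-Rp}, and taking $E=K$ gives \eqref{KSu.global-Rp} with $\Gamma_{\mathcal{E}_{p}}\langle f\rangle(K)=\mathcal{E}_{p}(f)$ (with both sides $+\infty$ when $f\notin\mathcal{F}_{p}$, so the inequality is trivial there).

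The main obstacle I anticipate is the bookkeeping in the chaining estimate for the average-difference terms $\abs{u_{K_w}-u_{K_v}}^{p}$: unlike the $p$-conductively homogeneous case, here the cells in $\Lambda_s$ need not have comparable $\pmetric_p$-diameters across different words, so I must use \eqref{Rp-ss} and the uniform bound $\#\{\tau\in\Lambda_{s}\mid K_\tau\cap K_w\neq\emptyset\}\le(\#\mathcal{C}_{\mathcal{L}})(\#V_{0})$ carefully to ensure the chain of adjacent cells linking $x$ to $y$ has bounded length and that each link's energy is controlled by $s^{p-1}\Gamma_{\mathcal{E}_{p}}\langle u\rangle$ on a single cell. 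A secondary technical point is the passage from the global representative $u^{\#}$ back to $u\in\mathcal{F}_{p,\mathrm{loc}}(E')$, which requires that for all small $s$ the relevant cells and balls remain inside the relatively compact set on which $u=u^{\#}$; this is handled exactly as in the last paragraph of the proof of Proposition \ref{prop.lower}, using that $\{U_{1}(x,s)\}_s$ forms a fundamental system of neighborhoods shrinking to $x$.
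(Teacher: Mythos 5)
Your proposal is correct, and its overall skeleton matches the paper's: establish a per-scale estimate bounding the Korevaar--Schoen integrand at scale $s$ by $\Gamma_{\mathcal{E}_{p}}\langle u \rangle$ of a shrinking neighborhood $(E)_{\pmetric_{p},\delta(s)}$, pass from $u$ to a global representative $u^{\#}\in\mathcal{F}_{p}$ and back via the strong locality in Proposition \ref{prop.pEMss}-\ref{em.sl}, let $s \downarrow 0$ using continuity of the finite measure along $(E)_{\pmetric_{p},\delta}\downarrow\closure{E}^{K}$, and handle \eqref{KSu.global-Rp} by taking $E = K$ (with the case $f \notin \mathcal{F}_{p}$ trivial since $\mathcal{E}_{p}(f)=\infty$). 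Where you diverge is in how the per-scale estimate is produced. The paper does it with metric balls only, following \cite[(7.2)]{MS+}: for $x\in E$ and $y\in B_{\pmetric_{p}}(x,s)$ one compares both $u^{\#}(x)$ and $u^{\#}(y)$ to the single ball average $u^{\#}_{B_{\pmetric_{p}}(x,2s)}$, applies the $(p,p)$-Poincar\'{e} inequality \eqref{Rp-PI} once, and uses Ahlfors regularity \eqref{Rp-AR} together with a Fubini/covering step to arrive at \eqref{preKS-local-Rp}; no cell decomposition, adjacency graph, or chaining is needed. Your route instead decomposes through the cells $\{K_{w}\}_{w\in\Lambda_{s}}$ and re-runs the three-term splitting with chaining over adjacent cells via \eqref{em.eachcell}, \eqref{Rp-ss} and the bounded-degree bound from \cite[Lemma 4.2.3]{Kig01} --- precisely the argument that is already packaged inside the proof of Proposition \ref{prop.Rp-PI}, so the "main obstacle" you anticipate is self-inflicted by the choice of decomposition rather than intrinsic to the statement. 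Both implementations are valid; the paper's is shorter and cleaner because \eqref{Rp-PI} is invoked as a black box on balls, while yours is more self-contained and runs parallel to Proposition \ref{prop.upper}, making the role of the self-similar cell structure and of Proposition \ref{prop.pEMss}-\ref{em.nomeas} explicit. One point to make sure survives in a write-up: \eqref{KSu.global-Rp} requires the per-scale estimate for \emph{every} $s>0$ (not the $\limsup$ statement \eqref{KSu.local-Rp} with $E=K$), so you must quote your intermediate, fixed-$s$ inequality there, and for scales with $s/\alpha_{1}>1$ fall back on $\Lambda_{1}=\{\emptyset\}$ and the trivial resistance bound, which is routine.
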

\begin{proof}
%    We may assume for simplicity that $u \in \mathcal{F}_{p}$, since the case $u \in \mathcal{F}_{p,\mathrm{loc}}(E')$ is similar and $\mathcal{E}_{p}(f) = \infty$ for $f \in L^{p}(K,m) \setminus \mathcal{F}_{p}$.
    Let $V$ be a relatively compact open subset of $E'$ with $V \supseteq \closure{E}^{K}$ and let $u^{\#} \in \mathcal{F}_{p}$ satisfy $u^{\#} = u$ $m$-a.e.\ on $V$. 
%    Let $s \in \bigl(0,(2A)^{-1}\delta\bigr)$, where $A \ge 1$ is the constant in \eqref{Rp-PI}, and let $N_s \subseteq E$ be a maximal $s$-net of $E$ with respect to the metric $\pmetric_{p}$.
    Similar to \cite[(7.2)]{MS+}, by using \eqref{Rp-AR} and \eqref{Rp-PI}, we easily see that for any $s \in (0,\infty)$, 
    \begin{equation}\label{preKS-local-Rp}
        \int_{E}\fint_{B_{\pmetric_{p}}(x, s)}\frac{\abs{u^{\#}(x) - u^{\#}(y)}^{p}}{s^{\phdim + p - 1}}\,\measure(dy)\measure(dx)
%        &\le \sum_{y \in N_{s}}\int_{B_{\pmetric}(y, s)}\fint_{B_{\pmetric}(x, s)}\frac{\abs{u(x) - u(y)}^{p}}{s^{\phdim + p - 1}}\,\measure(dy)\measure(dx) \nonumber \\
%        &\le C_{1}\sum_{y \in N_{s}}\int_{B_{\pmetric}(y, 2s)}\fint_{B_{\pmetric}(y, 2s)}\frac{\abs{u(x) - u(y)}^{p}}{s^{\phdim + p - 1}}\,\measure(dy)\measure(dx) \nonumber \\
%        &\le C_{2}\sum_{y \in N_{s}}\Gamma_{\mathcal{E}_{p}}\langle u \rangle(B_{\pmetric}(y,2As))
        \le C\Gamma_{\mathcal{E}_{p}}\langle u^{\#} \rangle\bigl((E)_{\pmetric_{p},2As}\bigr),
    \end{equation}
    where $A \in [1,\infty)$ is the constant in \eqref{Rp-PI} and $C \in (0,\infty)$ is independent of $x$, $s$ and $f$.
    We get \eqref{KSu.local-Rp} by letting $s \downarrow 0$ since $\Gamma_{\mathcal{E}_{p}}\langle u^{\#} \rangle\bigl((E)_{\pmetric_{p},2As}\bigr) = \Gamma_{\mathcal{E}_{p}}\langle u \rangle\bigl((E)_{\pmetric_{p},2As}\bigr)$ for any $s \in (0,\infty)$ with $(E)_{\pmetric_{p},2As} \subseteq V$ by Proposition \ref{prop.pEMss}-\ref{em.sl}.
    The estimate \eqref{KSu.global-Rp} for $f \in \mathcal{F}_{p}$ is easily implied by $\Gamma_{\mathcal{E}_{p}}\langle f \rangle(K) = \mathcal{E}_{p}(f)$ and \eqref{preKS-local-Rp} with $E = K$. For $f \in L^{p}(K,m) \setminus \mathcal{F}_{p}$, \eqref{KSu.global-Rp} is obvious by $\mathcal{E}_{p}(f) = \infty$, so the proof is completed.
\end{proof}

\begin{prop}\label{prop.lower-Rp}
	There exists $C \in (0,\infty)$ such that for any $E \in \mathcal{B}(K)$, any open neighborhood $E'$ of $\closure{E}^{K}$ and any $u \in \mathcal{F}_{p,\mathrm{loc}}(E')$,
	\begin{equation}\label{KSl.local-Rp}
		\Gamma_{\mathcal{E}_{p}}\langle u \rangle(E) \le C\lim_{\delta \downarrow 0}\liminf_{s \downarrow 0}\int_{(E)_{\pmetric_{p},\delta}}\fint_{B_{\pmetric_{p}}(x,s)}\frac{\abs{u(x) - u(y)}^{p}}{s^{\phdim + p - 1}}\,\measure(dy)\measure(dx).
	\end{equation}
	Furthermore, with $C \in (0,\infty)$ the same as in \eqref{KSl.local-Rp}, for any $f \in L^{p}(K, \measure)$,
	\begin{equation}\label{KSl.global-Rp}
		\mathcal{E}_{p}(f) \le C\liminf_{s \downarrow 0}\int_{K}\fint_{B_{\pmetric_{p}}(x,s)}\frac{\abs{f(x) - f(y)}^{p}}{s^{\phdim + p - 1}}\,\measure(dy)\measure(dx).
	\end{equation}
\end{prop}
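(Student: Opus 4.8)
The plan is to establish Proposition~\ref{prop.lower-Rp} by following the same strategy as the proof of Proposition~\ref{prop.lower}, with the partition of unity from Lemma~\ref{lem.Rp-unity} playing the role of the one from Lemma~\ref{lem.unity-Kig} and the self-similar $p$-energy measure $\Gamma_{\mathcal{E}_{p}}\langle\,\cdot\,\rangle$ replacing $\widetilde{\mathcal{E}}_{p}^{n}$. For $s\in(0,1)$, let $N_{s}$ be a maximal $s$-net of $(K,\pmetric_{p})$, let $\{\psi_{z,s}\}_{z\in N_{s}}$ be the partition of unity from Lemma~\ref{lem.Rp-unity}, and define the approximation operator $A_{s}f\coloneqq\sum_{z\in N_{s}}f_{B_{\pmetric_{p}}(z,s/4)}\psi_{z,s}$, which maps $L^{p}(K,m)$ into $\mathcal{F}_{p}$. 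As in Proposition~\ref{prop.lower} one checks that $\lim_{s\to 0}\norm{A_{s}f - f}_{L^{p}(K,m)}=0$ and $\sup_{s>0}\norm{A_{s}}_{L^{p}\to L^{p}}<\infty$.

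The first key step is the local energy comparison for $A_{s}f$, namely the $\pmetric_{p}$-analogue of \eqref{KSl.1}--\eqref{KSl.2}: using Lemma~\ref{lem.Rp-unity}-(iv), the Ahlfors regularity \eqref{Rp-AR}, Proposition~\ref{prop.pEMss}-\ref{em.eachcell},\ref{em.seminorm} and the metric doubling property of $(K,\pmetric_{p})$ (which follows from \eqref{Rp-AR}), I expect an estimate of the form
\begin{equation*}
	\Gamma_{\mathcal{E}_{p}}\langle A_{s}f \rangle\bigl((E)_{\pmetric_{p},\delta/2}\bigr)
	\lesssim \int_{(E)_{\pmetric_{p},\delta}}\fint_{B_{\pmetric_{p}}(x,cs)}\frac{\abs{f(x) - f(y)}^{p}}{s^{\phdim + p - 1}}\,m(dy)m(dx)
\end{equation*}
for all small enough $s>0$, with $c\in(0,\infty)$ a fixed constant and the implicit constant depending only on the structural constants. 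Crucially, the locality of $\Gamma_{\mathcal{E}_{p}}\langle\,\cdot\,\rangle$ from Proposition~\ref{prop.pEMss}-\ref{em.sl} lets me patch the cell-wise energies of $A_{s}f$ into the localized left-hand side over $(E)_{\pmetric_{p},\delta/2}$, exactly paralleling the role of uniform finiteness and Lemma~\ref{lem.Kig-wm} in the previous section. This is the step I expect to be the main obstacle, since I must carefully control overlaps of the balls $B_{\pmetric_{p}}(z,s)$ and relate the discrete differences $f_{B_{\pmetric_{p}}(z,s/4)}-f_{B_{\pmetric_{p}}(w,s/4)}$ across neighboring net points to the double integral of $\abs{f(x)-f(y)}^{p}$; the self-similar structure of the $p$-energy measure makes the cell decomposition cleaner than in the $\mathcal{W}^{p}$ setting, but verifying the combinatorial bookkeeping under $\pmetric_{p}$ requires the net and neighbor estimates encoded in $\sup_{(x,s)}\#\Lambda_{s,1}(x)<\infty$.

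With this comparison in hand, the remaining steps are standard weak-compactness arguments. For $u\in\mathcal{F}_{p,\mathrm{loc}}(E')$ I would fix a relatively compact open $V$ with $\closure{E}^{K}\subseteq V\subseteq E'$ and $u^{\#}\in\mathcal{F}_{p}$ agreeing with $u$ on $V$, choose a sequence $s_{k}\downarrow 0$ realizing the $\liminf$ in \eqref{KSl.local-Rp}, and set $u_{k}\coloneqq A_{s_{k}/c}u^{\#}$; the global upper bound \eqref{KSu.global-Rp} together with the local comparison shows $\{u_{k}\}_{k}$ is bounded in $\mathcal{F}_{p}$. Since $\mathcal{F}_{p}/\mathbb{R}\indicator{K}$ is a reflexive Banach space (Definition~\ref{defn.RFp}-\ref{RF2}, \ref{RF5} and the reflexivity consequence of \ref{GC}), after passing to a subsequence $u_{k}$ converges weakly to $u^{\#}$, and Mazur's lemma converts weak convergence into norm convergence of suitable convex combinations; applying the local comparison and the lower semicontinuity of $\Gamma_{\mathcal{E}_{p}}\langle\,\cdot\,\rangle(\overline{U}^{K})^{1/p}$ yields $\Gamma_{\mathcal{E}_{p}}\langle u^{\#}\rangle(E)\le C\lim_{\delta\downarrow 0}\liminf_{s\downarrow 0}\int_{(E)_{\pmetric_{p},\delta}}(\cdots)$. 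Replacing $u^{\#}$ by $u$ on $V$ via Proposition~\ref{prop.pEMss}-\ref{em.sl} and noting $J_{p,s}(u^{\#}\,\vert\,(E)_{\pmetric_{p},\delta})=J_{p,s}(u\,\vert\,(E)_{\pmetric_{p},\delta})$ for small $\delta,s$ gives \eqref{KSl.local-Rp}. Finally, taking $E=K$ and running the same Mazur-lemma argument globally (using $\Gamma_{\mathcal{E}_{p}}\langle f\rangle(K)=\mathcal{E}_{p}(f)$ for $f\in\mathcal{F}_{p}$, and observing the estimate is trivial when $\mathcal{E}_{p}(f)=\infty$) produces \eqref{KSl.global-Rp}.
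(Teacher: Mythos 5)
Your proposal follows essentially the same route as the paper's proof: the same approximation operator $A_{s}$ built from the partition of unity of Lemma \ref{lem.Rp-unity}, the same localized comparison estimate (the paper's \eqref{KSl-Rp.1}--\eqref{KSl-Rp.2}), proved via the locality property in Proposition \ref{prop.pEMss}-\ref{em.sl} together with Ahlfors regularity and metric doubling, followed by the same weak-compactness/Mazur-lemma conclusion carried over from Proposition \ref{prop.lower}, including the final replacement of $u^{\#}$ by $u$ via Proposition \ref{prop.pEMss}-\ref{em.sl}. One minor correction: in the global case \eqref{KSl.global-Rp} the trivial situation is when the right-hand side is infinite, not when $\mathcal{E}_{p}(f)=\infty$ as your parenthetical states (when $\mathcal{E}_{p}(f)=\infty$ the inequality is exactly the nontrivial assertion, which your Mazur argument handles by showing that finiteness of the right-hand side forces $f \in \mathcal{F}_{p}$ together with the quantitative bound).
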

\begin{proof}
    Let $s \in (0, 1)$ and fix a maximal $r$-net $N_{s}$ of $(K, \pmetric_{p})$.
    Let $\{ \psi_{z,s} \}_{z \in N_{s}}$ be a partition of unity as given in Lemma \ref{lem.Rp-unity} and define $A_{s} \colon L^{p}(K,m) \to \mathcal{F}_{p}$ by $A_{s}f \coloneqq \sum_{z \in N_{s}}f_{B_{\pmetric_{p}}(z, s/4)}\psi_{z,s}$ for $f \in L^{p}(K,m)$.
    Then we can easily see that $\lim_{r \to 0}\norm{A_{r}f - f}_{L^{p}(K,m)} = 0$ and $\sup_{r > 0}\norm{A_{r}}_{L^{p}(K,m) \to L^{p}(K,m)} < \infty$.
    Using Proposition \ref{prop.pEMss}-\ref{em.sl}, we can show that there exists $C_{1} > 0$ that is independent of $x$, $s$ and $f$ such that
    \begin{align}\label{KSl-Rp.1}
        &\Gamma_{\mathcal{E}_{p}}\langle A_{s}f \rangle\bigl(B_{\pmetric_{p}}(z,5s/4)\bigr) \nonumber \\
        &\le C_{1}\sum_{w \in N_{s} \cap B_{\pmetric_{p}}(z, 11s/4)}\int_{B_{\pmetric_{p}}(w, 3s)}\fint_{B_{\pmetric_{p}}(x, 9s)}\frac{\abs{f(x) - f(y)}^{p}}{s^{\phdim + p - 1}}\,\measure(dy)\measure(dx),
    \end{align}
    for any small enough $s > 0$.
    Let us fix $\delta > 0$ and define $N_{s}(E) \coloneqq \{ z \in N_{s} \mid E \cap B_{\pmetric_{p}}(z,s) \neq \emptyset \}$.
    Since $\bigcup_{z \in N_{s}(E)}\bigcup_{w \in N_{s} \cap B_{\pmetric_{p}}(z,11s/4)}B_{\pmetric_{p}}(w,3s) \subseteq (E)_{\pmetric_{p},\delta}$ for all small enough $s > 0$ and $(K,\pmetric_{p})$ is metric doubling by Lemma \ref{lem.Rp-AR}, we have
    \begin{align}\label{KSl-Rp.2}
        &\Gamma_{\mathcal{E}_{p}}\langle A_{s}f \rangle(E) 
        \le \sum_{z \in N_{s}(E)}\Gamma_{\mathcal{E}_{p}}\langle A_{s}f \rangle\bigl(B_{\pmetric_{p}}(z,5s/4)\bigr) \nonumber \\
%        &\overset{\eqref{KSl-Rp.1}}{\le} C_{1}\sum_{z \in N_{s}(E)}\sum_{w \in N_{s} \cap B_{\pmetric}(z, 11s/4)}\int_{B_{\pmetric}(w, 3s)}\fint_{B_{\pmetric}(x, 9s)}\frac{\abs{f(x) - f(y)}^{p}}{s^{\phdim + p - 1}}\,\measure(dy)\measure(dx) \nonumber \\
        &\quad \overset{\eqref{KSl-Rp.1}}{\le} C\int_{(E)_{\pmetric_{p},\delta}}\fint_{B_{\pmetric_{p}}(x, 9s)}\frac{\abs{f(x) - f(y)}^{p}}{s^{\phdim + p - 1}}\,\measure(dy)\measure(dx), \quad f \in L^{p}(K,m),
    \end{align}
    where $C \in (0,\infty)$ is independent of $x,s$ and $f$.
%    In particular, for any $f \in L^{p}(K,m)$, 
%    \begin{equation}\label{KSl-Rp.3}
%        \mathcal{E}_{p}(A_{s}f) \le C_{2}\int_{K}\fint_{B_{\pmetric_{p}}(x, 9s)}\frac{\abs{f(x) - f(y)}^{p}}{s^{\phdim + p - 1}}\,\measure(dy)\measure(dx).
%    \end{equation}
    Once we get \eqref{KSl-Rp.2}, the argument in the proof of Proposition \ref{prop.lower} with minor modifications proves \eqref{KSl.local-Rp}. 
 	Indeed, for $u \in \mathcal{F}_{p,\mathrm{loc}}(E')$, a relatively compact open subset $V$ of $E'$ with $V \supseteq \closure{E}^{K}$ and $u^{\#} \in \mathcal{F}_{p}$ satisfying $u^{\#} = u$ $m$-a.e.\ on $V$, we have from Proposition \ref{prop.pEMss}-\ref{em.sl} that $\Gamma_{\mathcal{E}_{p}}\langle A_{s}u^{\#} \rangle(E) = \Gamma_{\mathcal{E}_{p}}\langle A_{s}u \rangle(E)$ if $s$ is sufficiently small. Then similar arguments using Mazur's lemma as in the proof of Proposition \ref{prop.lower} implies \eqref{KSl.local-Rp} and \eqref{KSl.global-Rp}. 
\end{proof}

Now we can identify $\mathcal{F}_{p}$ as the $(1,p)$-Korevaar--Schoen--Sobolev space.
\begin{thm}\label{thm.CGQ-KS}
    Let $s_{p}$, $\bm{k} \coloneqq \bm{k}^{s_{p}}$ and $\mathrm{KS}^{1,p}(K,\pmetric_{p},m)$ be as defined in Example \ref{ex.KS} with $\pmetric_{p}$ in place of $d$.   
	Then $s_{p} = (\phdim + p - 1)/p$, $\mathcal{F}_{p} = \mathrm{KS}^{1,p}(K,\pmetric_{p},m)$, and \ref{KSwm} holds.
    Moreover, there exists $C \in [1,\infty)$ such that 
    \begin{equation}\label{KSfull2}
    	C^{-1}\sup_{r > 0}J_{p,r}^{\bm{k}}(f) \le \mathcal{E}_{p}(f) \le C\liminf_{r \downarrow 0}J_{p,r}^{\bm{k}}(f) \quad \text{for any $f \in L^{p}(K,m)$.}
    \end{equation}

\end{thm}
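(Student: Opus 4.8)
The plan is to read off the two-sided comparison \eqref{KSfull2} directly from the global localized energy estimates \eqref{KSu.global-Rp} and \eqref{KSl.global-Rp}, and then to extract the remaining assertions as formal consequences. The first step is to set $s_{0} \coloneqq (\phdim + p - 1)/p$ and to observe that, since $m$ is $\phdim$-Ahlfors regular with respect to $\pmetric_{p}$ by Lemma \ref{lem.Rp-AR}, the Besov kernel $\bm{k}^{s_{0}}$ of Example \ref{ex.KS} (built from $\pmetric_{p}$) satisfies
\[
J_{p,r}^{\bm{k}^{s_{0}}}(f) = \int_{K}\fint_{B_{\pmetric_{p}}(x,r)}\frac{\abs{f(x) - f(y)}^{p}}{r^{ps_{0}}}\,m(dy)m(dx)
\]
with $ps_{0} = \phdim + p - 1$, so that $J_{p,r}^{\bm{k}^{s_{0}}}$ coincides exactly with the functional appearing in Propositions \ref{prop.upper-Rp} and \ref{prop.lower-Rp}. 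Then \eqref{KSu.global-Rp} reads $\sup_{r > 0}J_{p,r}^{\bm{k}^{s_{0}}}(f) \le C\mathcal{E}_{p}(f)$ and \eqref{KSl.global-Rp} reads $\mathcal{E}_{p}(f) \le C\liminf_{r \downarrow 0}J_{p,r}^{\bm{k}^{s_{0}}}(f)$ for every $f \in L^{p}(K,m)$, which together are precisely \eqref{KSfull2} for $\bm{k}^{s_{0}}$.

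The identification $\mathcal{F}_{p} = B_{p,\infty}^{s_{0}}$ and \ref{KSwm} are then immediate. Recalling that $\mathcal{E}_{p}$ is extended to $L^{p}(K,m)$ by $\mathcal{E}_{p}(f) = \infty$ for $f \notin \mathcal{F}_{p}$, and that $\mathcal{F}_{p} \subseteq \contfunc(K) \subseteq L^{p}(K,m)$ by Proposition \ref{prop.pmetric}-\ref{it.pmetric-Hoelder} together with the compactness of $K$, the chain $\sup_{r > 0}J_{p,r}^{\bm{k}^{s_{0}}}(f) \le C\mathcal{E}_{p}(f) \le C^{2}\liminf_{r \downarrow 0}J_{p,r}^{\bm{k}^{s_{0}}}(f)$ shows that $f \in B_{p,\infty}^{s_{0}}$ if and only if $\mathcal{E}_{p}(f) < \infty$, i.e.\ if and only if $f \in \mathcal{F}_{p}$; the same chain is exactly \hyperref[KSwm]{\textup{(WM)$_{p,\bm{k}^{s_{0}}}$}} with constant $C^{2}$.

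For $s_{p} = s_{0}$ I would argue the two inequalities separately. The bound $s_{p} \ge s_{0}$ holds because $B_{p,\infty}^{s_{0}} = \mathcal{F}_{p}$ contains non-constant functions, as $\mathcal{F}_{p}$ separates the points of $K$ by \ref{RF3}. For $s_{p} \le s_{0}$, I would take any $s > s_{0}$ and any $f \in B_{p,\infty}^{s}$; since $J_{p,r}^{\bm{k}^{s_{0}}}(f) = r^{p(s - s_{0})}J_{p,r}^{\bm{k}^{s}}(f) \le r^{p(s - s_{0})}\sup_{r > 0}J_{p,r}^{\bm{k}^{s}}(f)$ and $s > s_{0}$, we obtain $\liminf_{r \downarrow 0}J_{p,r}^{\bm{k}^{s_{0}}}(f) = 0$, whence $\mathcal{E}_{p}(f) = 0$ by \eqref{KSl.global-Rp} and therefore $f$ is constant by \ref{RF1}. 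Thus $B_{p,\infty}^{s}$ contains only constants for every $s > s_{0}$, giving $s_{p} \le s_{0}$. Once $s_{p} = s_{0}$ is established we have $\bm{k}^{s_{p}} = \bm{k}^{s_{0}}$, and all the displayed conclusions follow.

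Since the genuinely analytic content is packaged in Lemma \ref{lem.Rp-AR} and Propositions \ref{prop.upper-Rp} and \ref{prop.lower-Rp}, the only points requiring care are the bookkeeping that normalizes the Besov kernel of Example \ref{ex.KS} against the averaged functionals of those propositions (the exponent matching $ps_{0} = \phdim + p - 1$ and the use of $\fint$ rather than the $r^{-\phdim}$ normalization) and the verification that $\mathcal{F}_{p} \subseteq L^{p}(K,m)$, so that the all-$f$ statements \eqref{KSu.global-Rp}--\eqref{KSl.global-Rp} apply. I do not anticipate any substantial obstacle beyond this.
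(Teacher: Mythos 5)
Your proposal is correct, and its overall skeleton matches the paper's: both read off \eqref{KSfull2}, the identification $\mathcal{F}_{p} = B_{p,\infty}^{(\phdim+p-1)/p}$, and \ref{KSwm} directly from the global bounds \eqref{KSu.global-Rp} and \eqref{KSl.global-Rp}, and both get $s_{p} \ge (\phdim+p-1)/p$ from the existence of a non-constant element of $\mathcal{F}_{p}$ (you cite \ref{RF3}, the paper cites \eqref{Rp-capu}; both are fine). The genuine divergence is in the upper bound $s_{p} \le (\phdim+p-1)/p$. The paper goes back inside the proof of Proposition \ref{prop.lower-Rp}: it fixes a non-constant $f \in \mathcal{F}_{p}$, applies the intermediate discrete-approximation inequality \eqref{KSl-Rp.2} with $E = K$ to get \eqref{diverge-Rp}, and concludes that $f \notin B_{p,\infty}^{s}$ because $r^{\phdim+p-1-sp} \to \infty$ while $\mathcal{E}_{p}(A_{r}f)$ stays positive; this last point (stated in the paper as ``$\sup_{r>0}\mathcal{E}_{p}(A_{r}f) > 0$'', though what is really used is a positive lower bound along $r \downarrow 0$) requires knowing that the approximations $A_{r}f$ of a non-constant $f$ cannot have energies tending to $0$. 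You instead apply the already-packaged statement \eqref{KSl.global-Rp} to an \emph{arbitrary} $f \in B_{p,\infty}^{s}$, using the elementary scaling identity $J_{p,r}^{\bm{k}^{s_{0}}}(f) = r^{p(s-s_{0})}J_{p,r}^{\bm{k}^{s}}(f)$ to force $\liminf_{r\downarrow 0}J_{p,r}^{\bm{k}^{s_{0}}}(f) = 0$, hence $\mathcal{E}_{p}(f) = 0$ and $f \in \mathbb{R}\indicator{K}$ by \ref{RF1}. This is a legitimate and arguably cleaner route: it shows at once that $B_{p,\infty}^{s}$ consists only of constants for every $s > s_{0}$ (rather than exhibiting one witness $f \notin B_{p,\infty}^{s}$), it avoids re-introducing the operators $A_{r}$ and the positivity-of-energy issue entirely, and it uses only the quoted statement of Proposition \ref{prop.lower-Rp} rather than its proof. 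What the paper's variant buys in exchange is independence from the convention that $\mathcal{E}_{p} \equiv \infty$ off $\mathcal{F}_{p}$ (it never needs to evaluate $\mathcal{E}_{p}$ at a general $L^{p}$ function), but since the paper itself adopts that convention, your argument fits its framework without change.
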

\begin{proof}
    We have $\mathcal{F}_{p} = B_{p,\infty}^{(\phdim + p - 1)/p}$ and \eqref{KSfull2} by \eqref{KSu.global-Rp} and \eqref{KSl.global-Rp}.
    In particular, $s_{p} \ge (\phdim + p - 1)/p$.
    Let $s > (\phdim + p - 1)/p$ and let $f \in \mathcal{F}_{p} \setminus \mathbb{R}\indicator{K}$, which exists by \eqref{Rp-capu}.
    Let $A_{r} \colon L^{p}(K,m) \to \mathcal{F}_{p}$ be the same operator as in the proof of Proposition \ref{prop.lower-Rp} for each $r \in (0,1)$.
    Then, by \eqref{KSl-Rp.2} with $E = K$, for any $r \in(0,1)$ and $f \in L^{p}(K,m)$,
    \begin{equation}\label{diverge-Rp}
        \frac{r^{\phdim + p - 1}}{r^{sp}}\mathcal{E}_{p}(A_{r}f) \le C\int_{K}\fint_{B_{\pmetric_{p}}(x,9r)}\frac{\abs{f(x) - f(y)}^{p}}{r^{sp}}\,m(dy)m(dx),
    \end{equation}
    where $C > 0$ is independent of $f$ and $r$.
    Clearly, $\sup_{r > 0}\mathcal{E}_{p}(A_{r}f) > 0$ and $r^{\phdim + p - 1 - sp} \to \infty$ as $r \downarrow 0$.
    Hence we obtain $s \ge s_{p}$ since $f \not\in B_{p,\infty}^{s}$ by \eqref{diverge-Rp}.
    This implies that $(\phdim + p - 1)/p \ge s_{p}$.
    In particular, we obtain $\mathcal{F}_{p} = \mathrm{KS}^{1,p}(K,\pmetric_{p},m)$.
    Also, \ref{KSwm} follows from \eqref{KSu.global-Rp} and \eqref{KSl.global-Rp}.
\end{proof}

Unfortunately, it is not clear whether Korevaar--Schoen $p$-energy forms $(\mathcal{E}_{p}^{\mathrm{KS}},\mathcal{F}_{p})$ on $(K,\pmetric_{p},m)$, which exist by Theorems \ref{thm.CGQ-KS} and \ref{thm.KS-energy} (recall Example \ref{ex.KS}), are self-similar or not.
However, we can construct a self-similar $p$-resistance form on $\mathcal{L}$ by the same argument as in the proof of Theorem \ref{thm.KSss-Kig}. 
Recall that $\mathcal{F}_{p} \cap \contfunc(K) = \mathcal{F}_{p}$ is dense both in $(\contfunc(K),\norm{\,\cdot\,}_{\sup})$ and in $\mathcal{F}_{p}$ by Proposition \ref{prop.pmetric}-\ref{it.pmetric-Hoelder} and Corollary \ref{cor.regular}.
\begin{thm}\label{thm.KSss-CGQ}
	For each $n \in \mathbb{N}$, define $\bm{k}^{(n)} = \{ k_{r}^{(n)} \}_{r > 0}$ by 
	\begin{equation*}
		k_{r}^{(n)}(x,y) \coloneqq \frac{1}{n + 1}\sum_{l = 0}^{n}\sum_{w \in W_{l}}\rweight_{p,w}^{(2\phdim + p - 1)/(p - 1)}\frac{\indicator{A_{w,r}}(x,y)}{r^{2\phdim + p - 1}}, \quad x,y \in K, 
	\end{equation*}
	where $A_{w,r} \coloneqq \bigl\{ (x,y) \in K_{w} \times K_{w} \bigm| \pmetric_{p}(F_{w}^{-1}(x),F_{w}^{-1}(y)) < r \bigr\}$. 
	Then $\bm{k}^{(n)}$ is asymptotically local, \hyperref[KSwm]{\textup{(WM)$_{p,\bm{k}^{(n)}}$}} holds, $B_{p,\infty}^{\bm{k}^{(n)}} = \mathcal{F}_{p}$, 
	and for any sequence $\{(\mathcal{E}_{p}^{\bm{k}^{(n)}},\mathcal{F}_{p})\}_{n\in\mathbb{N}}$ with $(\mathcal{E}_{p}^{\bm{k}^{(n)}},\mathcal{F}_{p})$ a $\bm{k}^{(n)}$-Korevaar--Schoen $p$-energy form on $(K,m)$ for each $n \in \mathbb{N}$,
	there exists a sequence $\{ n_{j} \}_{j \in \mathbb{N}} \subseteq \mathbb{N}$ with $n_{j} < n_{j + 1}$ for any $j \in \mathbb{N}$ such that the following limit exists in $[0,\infty)$ for any $u \in \mathcal{F}_{p}$: 
	\begin{equation}\label{KSss.constr2-CGQ}
         \breve{\mathcal{E}}_{p}^{\mathrm{KS}}(u) \coloneqq \lim_{j \to \infty}\mathcal{E}_{p}^{\bm{k}^{(n_j)}}(u).
    \end{equation}
    Moreover, for any such $\{ \mathcal{E}_{p}^{\bm{k}^{(n)}} \}_{n \in \mathbb{N}}$ and $\{ n_{j} \}_{j \in \mathbb{N}}$, the functional $\breve{\mathcal{E}}_{p}^{\mathrm{KS}} \colon \mathcal{F}_{p} \to [0,\infty)$ defined by \eqref{KSss.constr2-CGQ} satisfies the following properties: 
    \begin{enumerate}[label=\textup{(\alph*)},align=left,leftmargin=*,topsep=2pt,parsep=0pt,itemsep=2pt]
    	\item\label{it:CGQss.ss} $(\breve{\mathcal{E}}_{p}^{\mathrm{KS}},\mathcal{F}_{p})$ is a self-similar $p$-resistance form on $\mathcal{L}$ with weight $(\rweight_{p,i})_{i \in S}$. 
    	\item\label{it:CGQss.comparable} For any $u \in \mathcal{F}_{p}$, 
    		\begin{equation*}
    			C^{-1}\mathcal{E}_{p}(u) \le \breve{\mathcal{E}}_{p}^{\mathrm{KS}}(u) \le C\mathcal{E}_{p}(u), 
    		\end{equation*}
    		where $C \in [1,\infty)$ is the constant in \eqref{KSfull2}. 
    	\item\label{it:CGQss.two-limit} For any $u,v \in \mathcal{F}_{p}$, $\{ \mathcal{E}_{p}^{\bm{k}^{(n_{j})}}(u; v) \}_{j \in \mathbb{N}}$ is convergent in $\mathbb{R}$ and 
    	\begin{equation}\label{KS-CGQ.compatible}
			\breve{\mathcal{E}}_{p}^{\mathrm{KS}}(u; v) = \lim_{j \to \infty}\mathcal{E}_{p}^{\bm{k}^{(n_{j})}}(u; v). 
		\end{equation}
		\item\label{it:CGQss.basic} Theorem \ref{thm.KS-energy}-\ref{it:difffunc},\ref{KS.conti},\ref{KS.slbdd} with $(\breve{\mathcal{E}}_{p}^{\mathrm{KS}},\mathcal{F}_{p})$ in place of $(\mathcal{E}_{p}^{\bm{k}},B_{p,\infty}^{\bm{k}})$ hold. 
		\item\label{it:CGQss.inv} $\breve{\mathcal{E}}_{p}^{\mathrm{KS}}(u \circ T) = \breve{\mathcal{E}}_{p}^{\mathrm{KS}}(u)$ for any $u \in \mathcal{F}_{p}$ and any $T \in \mathcal{G}$ (recall \eqref{e:defn-symmetry}).
    \end{enumerate}
\end{thm}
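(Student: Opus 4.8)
The plan is to follow the proof of Theorem \ref{thm.KSss-Kig} line by line, taking as the base family of kernels $\bm{k} = \{k_r\}_{r>0}$ with $k_r(x,y) \coloneqq r^{-(2\phdim+p-1)}\indicator{B_{\pmetric_p}(x,r)}(y)$, which by Theorem \ref{thm.CGQ-KS} together with the $\phdim$-Ahlfors regularity of $m$ (Lemma \ref{lem.Rp-AR}) satisfies $B_{p,\infty}^{\bm{k}} = \mathcal{F}_{p}$ and \eqref{KSfull2}. The computational heart is the scaling identity
\[
J_{p,r}^{\bm{k}^{(n)}}(u) = \frac{1}{n+1}\sum_{l=0}^{n}\sum_{w\in W_{l}}\rweight_{p,w}J_{p,r}^{\bm{k}}(u\circ F_{w}), \qquad u \in L^{p}(K,m),
\]
which I would derive by the change of variables $x = F_{w}(x')$, $y = F_{w}(y')$ in each summand: on $K_{w}\times K_{w}$ one has $m|_{K_{w}} = \rweight_{p,w}^{-\phdim/(p-1)}(F_{w})_{\ast}m$ (self-similarity of $m$ and \cite[Corollary 1.4.8]{Kig01}) and $(F_{w}\times F_{w})^{-1}(A_{w,r}) = \{(x',y')\in K\times K \mid \pmetric_{p}(x',y')<r\}$, so the prefactor $\rweight_{p,w}^{(2\phdim+p-1)/(p-1)}$ exactly cancels the two measure factors $\rweight_{p,w}^{-2\phdim/(p-1)}$ to leave $\rweight_{p,w}$. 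Crucially, and in contrast with Theorem \ref{thm.KSss-Kig}, this weight is precisely the self-similarity weight of $\mathcal{E}_{p}$, so the pre-self-similarity holds with constant $1$ through the exact identity $\mathcal{E}_{p}(u) = \sum_{w\in W_{l}}\rweight_{p,w}\mathcal{E}_{p}(u\circ F_{w})$ obtained by iterating \eqref{form.ss}.

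Next I would read off the basic claims. Asymptotic locality is immediate because $\rweight_{p,w}>1$ forces $\pmetric_{p}(x,y)<r$ whenever $(x,y)\in A_{w,r}$ (as $F_{w}$ is a $\pmetric_{p}$-contraction by Proposition \ref{prop.pmetric}), whence $k_{r}^{(n)}$ vanishes off $\{\pmetric_{p}(x,y)<r\}$ and one may take $\delta(r)=r$. Feeding \eqref{KSfull2} for each $u\circ F_{w}$ into the scaling identity and using the exact self-similarity of $\mathcal{E}_{p}$ gives $C^{-1}\mathcal{E}_{p}(u) \le \liminf_{r\downarrow 0}J_{p,r}^{\bm{k}^{(n)}}(u) \le \sup_{r>0}J_{p,r}^{\bm{k}^{(n)}}(u) \le C\mathcal{E}_{p}(u)$ with $C$ the constant in \eqref{KSfull2} (via $\sup$/$\liminf$ of a finite sum); this yields \hyperref[KSwm]{\textup{(WM)$_{p,\bm{k}^{(n)}}$}}, the identity $B_{p,\infty}^{\bm{k}^{(n)}}=\mathcal{F}_{p}$, the uniform two-sided bound $\mathcal{E}_{p}^{\bm{k}^{(n)}}(u)\in[C^{-1}\mathcal{E}_{p}(u),C\mathcal{E}_{p}(u)]$ (hence \ref{it:CGQss.comparable}), and the existence of the subsequence $\{n_{j}\}$ via the diagonal argument over a countable dense subset of the separable space $\mathcal{F}_{p}$ (Theorem \ref{thm.banach}). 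That $\breve{\mathcal{E}}_{p}^{\mathrm{KS}}$ satisfies \ref{GC} then follows by passing this property from each $\mathcal{E}_{p}^{\bm{k}^{(n_{j})}}$ (Theorem \ref{thm.KS-energy}-\ref{KS.GC}) to the pointwise limit.

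For the remaining properties I would argue as in Theorem \ref{thm.KSss-Kig}. The self-similarity \ref{it:CGQss.ss} rests on the telescoping identity
\[
\sum_{i\in S}\rweight_{p,i}J_{p,r}^{\bm{k}^{(n)}}(u\circ F_{i}) + \frac{1}{n+1}J_{p,r}^{\bm{k}}(u) = J_{p,r}^{\bm{k}^{(n)}}(u) + \frac{1}{n+1}\sum_{w\in W_{n+1}}\rweight_{p,w}J_{p,r}^{\bm{k}}(u\circ F_{w});
\]
choosing by a further diagonal argument a sequence $r_{l}\downarrow 0$ with $\mathcal{E}_{p}^{\bm{k}^{(n_{j})}}(u) = \lim_{l}J_{p,r_{l}}^{\bm{k}^{(n_{j})}}(u)$, letting $l\to\infty$ and then $j\to\infty$ makes both $(n_{j}+1)^{-1}$ boundary terms vanish (they are uniformly bounded by comparability) and produces $\breve{\mathcal{E}}_{p}^{\mathrm{KS}}(u) = \sum_{i\in S}\rweight_{p,i}\breve{\mathcal{E}}_{p}^{\mathrm{KS}}(u\circ F_{i})$, while \eqref{domain.ss} holds for $\mathcal{F}_{p}$ already by Assumption \ref{assum.pRes} and $\mathcal{F}_{p}\subseteq\contfunc(K)$ by Proposition \ref{prop.pmetric}. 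That $(\breve{\mathcal{E}}_{p}^{\mathrm{KS}},\mathcal{F}_{p})$ is a $p$-resistance form is inherited from $(\mathcal{E}_{p},\mathcal{F}_{p})$ through the two-sided comparability \ref{it:CGQss.comparable}: \ref{RF1}--\ref{RF4} transfer because equivalent seminorms share the kernel $\mathbb{R}\indicator{K}$, the same completeness, and comparable resistances, and \ref{RF5} is the \ref{GC} just established. Finally \ref{it:CGQss.two-limit}, and hence \ref{it:CGQss.basic}, are obtained exactly as in the proof of \eqref{KS-compatible}, using the $c$-difference estimate of Proposition \ref{prop.c-diff} for $J_{p,r}^{\bm{k}^{(n_{j})}}$ and the convexity of $t\mapsto \mathcal{E}_{p}^{\bm{k}^{(n_{j})}}(u+tv)$.

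The invariance \ref{it:CGQss.inv}, which I expect to be the main obstacle, I would reduce to showing that each $\bm{k}^{(n)}$ is $T$-invariant for $T\in\mathcal{G}$: granting this, Theorem \ref{thm.KS-energy}-\ref{KS.inv} and $B_{p,\infty}^{\bm{k}^{(n)}}=\mathcal{F}_{p}$ give $\mathcal{E}_{p}^{\bm{k}^{(n)}}(u\circ T) = \mathcal{E}_{p}^{\bm{k}^{(n)}}(u)$, and passing to the limit yields $\breve{\mathcal{E}}_{p}^{\mathrm{KS}}(u\circ T)=\breve{\mathcal{E}}_{p}^{\mathrm{KS}}(u)$. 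The required $T$-invariance hinges on the structural fact that a surjective $\pmetric_{p}$-isometry preserving $\mathcal{E}_{p}$ and $m$ permutes the level-$l$ cells $\{K_{w}\}_{w\in W_{l}}$, preserving the weights $\rweight_{p,w}$ and intertwining the contractions $F_{w}$ up to a $\pmetric_{p}$-isometry of $K$, so that $T\times T$ carries $A_{w,r}$ onto $A_{w',r}$ with $\rweight_{p,w}=\rweight_{p,w'}$ and the summands defining $k_{r}^{(n)}$ are merely permuted. Verifying that members of $\mathcal{G}$ act cellularly is the one step where the geometry of the p.-c.f.\ structure must genuinely be used, and I would isolate it as a separate lemma rather than leave it as \emph{clear} (as in the setting of Theorem \ref{thm.KSss-Kig}).
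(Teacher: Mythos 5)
Your proposal is correct and follows essentially the same route as the paper: the paper's own proof of Theorem \ref{thm.KSss-CGQ} consists of the remark that one repeats the proof of Theorem \ref{thm.KSss-Kig}, and that is exactly what you do — the scaling identity $J_{p,r}^{\bm{k}^{(n)}}(u) = \frac{1}{n+1}\sum_{l=0}^{n}\sum_{w\in W_{l}}\rweight_{p,w}J_{p,r}^{\bm{k}}(u\circ F_{w})$ (the analogue of \eqref{kernel.ss}, with the exponent arithmetic checked correctly), the two-sided bounds obtained by feeding \eqref{KSfull2} into the exact self-similarity of $\mathcal{E}_{p}$, the diagonal argument over a countable dense subset, the telescoping identity for \ref{it:CGQss.ss}, and the use of Proposition \ref{prop.c-diff} for \ref{it:CGQss.two-limit} all match the paper's template.

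Two points of detail are worth recording. First, your telescoping identity is the \emph{corrected} one: the boundary terms carrying the factor $(n_{j}+1)^{-1}$ must involve the base functional $J_{p,r}^{\bm{k}}$, not $J_{p,r}^{\bm{k}^{(n_{j})}}$ as displayed in the paper's proof of Theorem \ref{thm.KSss-Kig}; with the base kernel there, both sides equal $\frac{1}{n_{j}+1}\sum_{l=0}^{n_{j}+1}\sum_{w\in W_{l}}\rweight_{p,w}J_{p,r}^{\bm{k}}(u\circ F_{w})$, whereas the paper's version as printed is false. Your further observation that these boundary terms need only be \emph{bounded}, by $C(n_{j}+1)^{-1}\mathcal{E}_{p}(u)$ uniformly in $r$, rather than convergent, is what makes the passage to the limit legitimate. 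Second, the only step you do not actually carry out is the lemma underlying \ref{it:CGQss.inv}: that every $T\in\mathcal{G}$ permutes the cells $\{K_{w}\}_{w\in W_{l}}$ preserving the weights $\rweight_{p,w}$ and intertwines the maps $F_{w}$ up to elements of $\mathcal{G}$, so that each $\bm{k}^{(n)}$ is $T$-invariant. You are right that this cannot be waved away: invariance of the base kernel under $T$ is clear (since $T$ is a $\pmetric_{p}$-isometry preserving $m$), but for $n\ge 1$ the kernels $\bm{k}^{(n)}$ see the maps $F_{w}$ themselves and not just $\pmetric_{p}$, because \eqref{Rp-ss} is only an inequality. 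This is precisely the point the paper's template disposes of with the words ``clearly $T$-invariant,'' so your write-up is no less complete than the paper's own proof; but a fully rigorous treatment of \ref{it:CGQss.inv} does require proving (or explicitly assuming) that cellular action of $\mathcal{G}$, and neither you nor the paper supplies that argument.
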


In addition, we obtain the $p$-energy measures associated with the $p$-resistance form $(\breve{\mathcal{E}}_{p}^{\mathrm{KS}},\mathcal{F}_{p})$ in the same way as in Theorem \ref{thm.Kigsspem}.
(See also \cite[Sections 4 and 5]{KS.gc} for other basic properties. 
As mentioned before Theorem \ref{thm.Kigsspem}, we do not know whether Theorem \ref{thm.CGQsspem}-\ref{it:CGQ.basic} below holds for general self-similar $p$-resistance forms.)
\begin{thm} \label{thm.CGQsspem}
	Let $(\mathcal{E}_{p}^{\bm{k}^{(n)}},\mathcal{F}_{p})$ be any $\bm{k}^{(n)}$-Korevaar--Schoen $p$-energy form on $(K,m)$ for each $n \in \mathbb{N}$, let $\{ n_{j} \}_{j \in \mathbb{N}} \subseteq \mathbb{N}$ be any sequence as in Theorem \ref{thm.KSss-CGQ}, and let $(\breve{\mathcal{E}}_{p}^{\mathrm{KS}},\mathcal{F}_{p})$ be the $p$-resistance form on $K$ defined by \eqref{KSss.constr2-CGQ}.
	Then for any $u \in \mathcal{F}_{p}$, there exists a unique positive Radon measure $\breve{\Gamma}_{p}^{\mathrm{KS}}\langle u \rangle$ on $K$ such that for any $\varphi \in \mathcal{F}_{p}$, 
	\begin{equation}\label{CGQsspem.defn}
		\int_{K}\varphi\,d\breve{\Gamma}_{p}^{\mathrm{KS}}\langle u \rangle = \breve{\mathcal{E}}_{p}^{\mathrm{KS}}(u; u\varphi) - \left(\frac{p - 1}{p}\right)^{p - 1}\breve{\mathcal{E}}_{p}^{\mathrm{KS}}\bigl(\abs{u}^{\frac{p}{p - 1}}; \varphi\bigr). 
	\end{equation}
	Moreover, the following hold: 
	\begin{enumerate}[label=\textup{(\alph*)},align=left,leftmargin=*,topsep=2pt,parsep=0pt,itemsep=2pt]
		\item\label{it:CGQ.ext} Let $\varphi \colon K \to [0,\infty)$ be a Borel measurable function with $\norm{\varphi}_{\sup} < \infty$. Then $(\int_{K}\varphi\,d\breve{\Gamma}_{p}^{\mathrm{KS}}\langle \,\cdot\, \rangle,\mathcal{F}_{p})$ is a $p$-energy form on $(K,m)$ satisfying \ref{GC}.
		\item\label{it:CGQ.limit} Theorem \ref{thm.KSpem-diffble}, with $\mathcal{F}_{p}$ and $\breve{\Gamma}_{p}^{\bm{k}}$ in place of $\bclosureKS$ and $\KSem$ respectively, holds. In particular, for any $u,v \in \mathcal{F}_{p}$, 
			\begin{equation}\label{e:defn.newKSpem.CGQss}
				\breve{\Gamma}_{p}^{\mathrm{KS}}\langle u;v \rangle(A) \coloneqq \frac{1}{p}\left.\frac{d}{dt}\breve{\Gamma}_{p}^{\mathrm{KS}}\langle u + tv \rangle(A)\right|_{t = 0}, \quad A \in \mathcal{B}(K),
			\end{equation}
			defines a signed Borel measure on $K$ such that $\breve{\Gamma}_{p}^{\mathrm{KS}}\langle u;v \rangle(K) = \breve{\mathcal{E}}_{p}^{\mathrm{KS}}(u;v)$ and $\breve{\Gamma}_{p}^{\mathrm{KS}}\langle u;u \rangle = \breve{\Gamma}_{p}^{\mathrm{KS}}\langle u \rangle$.
			Furthermore, for any $u,v \in \mathcal{F}_{p}$ and any $\varphi \in \contfunc(K)$,
			\begin{equation}\label{CGQpem.compatible}
				\int_{K}\varphi\,d\breve{\Gamma}_{p}^{\mathrm{KS}}\langle u;v \rangle 
				= \lim_{j \to \infty}\int_{K}\varphi\,d\Gamma_{p}^{\bm{k}^{(n_{j})}}\langle u;v \rangle.
			\end{equation}
		\item\label{it:CGQ.basic} Theorem \ref{thm.KSpem-two}-\ref{it:KSpem.difffunc},\ref{it:KSpem.conti}, with $\mathcal{F}_{p}$ and $\breve{\Gamma}_{p}^{\mathrm{KS}}$ in place of $\bclosureKS$ and $\Gamma_{p}^{\bm{\bm{k}}}$ respectively, hold. 
		\item\label{it:CGQ.chain} Theorems \ref{thm.KSpem-chain}, \ref{thm.EIDP} and \ref{thm.KSpem-sl}, with $\mathcal{F}_{p}$ and $\breve{\Gamma}_{p}^{\mathrm{KS}}$ in place of $\KS \cap \contfunc_{b}(K)$ and $\Gamma_{p}^{\bm{\bm{k}}}$ respectively, hold.
	\end{enumerate}
\end{thm}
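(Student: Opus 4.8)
Theorem \ref{thm.CGQsspem} is the $p$-resistance-form analogue of Theorem \ref{thm.Kigsspem}, and the plan is to mirror that proof essentially verbatim, transporting every ingredient through the identification $(\breve{\mathcal{E}}_{p}^{\mathrm{KS}},\mathcal{F}_{p}) = \lim_{j}(\mathcal{E}_{p}^{\bm{k}^{(n_{j})}},\mathcal{F}_{p})$ established in Theorem \ref{thm.KSss-CGQ}. The key structural facts I would invoke at the outset are that $\mathcal{F}_{p} = B_{p,\infty}^{\bm{k}^{(n)}}$ for every $n$, that each $\bm{k}^{(n)}$ is asymptotically local with \ref{KSwm} holding (Theorem \ref{thm.KSss-CGQ}), and crucially that $\mathcal{F}_{p} \subseteq \contfunc(K)$ with $\mathcal{F}_{p} = \mathcal{F}_{p}\cap\contfunc(K) = \mathcal{F}_{p}\cap\contfunc_{b}(K)$ dense in $(\contfunc(K),\norm{\,\cdot\,}_{\sup})$ (Proposition \ref{prop.pmetric}-\ref{it.pmetric-Hoelder} and Corollary \ref{cor.regular}, noting $K$ compact). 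This last point is what makes the $p$-resistance setting cleaner than the general one: since $K$ is compact and every element of $\mathcal{F}_{p}$ is continuous and bounded, the distinctions among $\mathcal{F}_{p}$, $\bclosureKS$, $\cclosureKS$, and $\KS\cap\contfunc_{c}(K)$ all collapse, so I do not need the completion gymnastics of Section \ref{sec.pEMKS}.

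\textbf{Construction of $\breve{\Gamma}_{p}^{\mathrm{KS}}\langle u \rangle$ and the functional identity.} For the main statement, I would fix $\{n_{j}\}$ with $\breve{\mathcal{E}}_{p}^{\mathrm{KS}} = \lim_{j\to\infty}\mathcal{E}_{p}^{\bm{k}^{(n_{j})}}$ and, for $u\in\mathcal{F}_{p}$, pass to the limit in \eqref{KSpem.characterize} applied to each $\mathcal{E}_{p}^{\bm{k}^{(n_{j})}}$, using the two-variable convergence \eqref{KS-CGQ.compatible} from Theorem \ref{thm.KSss-CGQ}-\ref{it:CGQss.two-limit}. This yields, for $\varphi\in\mathcal{F}_{p}$,
\[
0 \le \Psi_{p}(u;\varphi) \coloneqq \breve{\mathcal{E}}_{p}^{\mathrm{KS}}(u; u\varphi) - \Bigl(\tfrac{p - 1}{p}\Bigr)^{p - 1}\breve{\mathcal{E}}_{p}^{\mathrm{KS}}\bigl(\abs{u}^{\frac{p}{p - 1}}; \varphi\bigr) \le \norm{\varphi}_{\sup}\breve{\mathcal{E}}_{p}^{\mathrm{KS}}(u)
\]
whenever $\varphi\ge 0$, so that $\Psi_{p}(u;\,\cdot\,)$ extends by density of $\mathcal{F}_{p}$ in $\contfunc(K)$ to a bounded positive linear functional on $\contfunc(K)$; the Riesz--Markov--Kakutani theorem then gives the desired positive Radon measure $\breve{\Gamma}_{p}^{\mathrm{KS}}\langle u \rangle$ with total mass $\le\breve{\mathcal{E}}_{p}^{\mathrm{KS}}(u)$, exactly as in the proof of Theorem \ref{thm.KSpem-exist}. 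Along the way I would record the compatibility
\[
\int_{K}\psi\,d\breve{\Gamma}_{p}^{\mathrm{KS}}\langle u \rangle = \lim_{j\to\infty}\int_{K}\psi\,d\Gamma_{p}^{\bm{k}^{(n_{j})}}\langle u \rangle, \quad \psi\in\contfunc(K),
\]
which is the analogue of \eqref{Kitsspem.compatible.pre} and is the engine for everything that follows.

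\textbf{The remaining items.} Part \ref{it:CGQ.ext} follows by running the \ref{GC}-stability argument of Theorem \ref{thm.KSpem-GC} for $(\int_{K}\varphi\,d\breve{\Gamma}_{p}^{\mathrm{KS}}\langle\,\cdot\,\rangle,\mathcal{F}_{p})$; since $\mathcal{F}_{p}$ is already complete and reflexive (it is a $p$-resistance form, hence $\mathcal{F}_{p}/\mathbb{R}\indicator{K}$ is Banach by \ref{RF2}, and separable by Theorem \ref{thm.KSss-CGQ}), no completion step is needed and the reverse/forward Minkowski computation carries over directly. Part \ref{it:CGQ.limit} splits: the signed-measure structure and the differentiability/Hölder estimates come from \cite[Theorem 4.5 and Proposition 4.6]{KS.gc}, the total-mass equality $\breve{\Gamma}_{p}^{\mathrm{KS}}\langle u;v\rangle(K) = \breve{\mathcal{E}}_{p}^{\mathrm{KS}}(u;v)$ from Proposition \ref{prop.totalmass} together with \eqref{CGQpem.compatible}, and \eqref{CGQpem.compatible} itself by the convexity-plus-Proposition \ref{prop.c-diff} argument used for \eqref{KS-compatible} and \eqref{Kigsspem.compatible}. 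Parts \ref{it:CGQ.basic} and \ref{it:CGQ.chain} are then immediate consequences of \eqref{CGQpem.compatible} and the corresponding already-proved properties of $\Gamma_{p}^{\bm{k}^{(n_{j})}}$ (Theorems \ref{thm.KSpem-two}, \ref{thm.KSpem-chain}, \ref{thm.EIDP}, \ref{thm.KSpem-sl}), exactly as in Theorem \ref{thm.Kigsspem}-\ref{it:Kigss.basic},\ref{it:Kigss.chain}.

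\textbf{Expected main obstacle.} The genuinely non-routine point is verifying that the limit measures $\breve{\Gamma}_{p}^{\mathrm{KS}}\langle u\rangle$ obtained from the subsequential Korevaar--Schoen construction actually satisfy the defining functional identity \eqref{CGQsspem.defn} against \emph{all} $\varphi\in\mathcal{F}_{p}$ rather than merely against $\varphi\in\contfunc(K)$, and correspondingly that Proposition \ref{prop.totalmass} is applicable here. In the present $p$-resistance setting $\mathcal{F}_{p}=\mathcal{F}_{p}\cap\contfunc_{c}(K)$ because $K$ is compact, so strong locality (Theorem \ref{thm.KS-energy}-\ref{KS.slbdd}, transported via Theorem \ref{thm.KSss-CGQ}-\ref{it:CGQss.basic}) applies to every $u\in\mathcal{F}_{p}$, and the $\KSem\langle u\rangle(K)=\KSform(u)$ argument of Proposition \ref{prop.totalmass} goes through without the $\contfunc_{b}$-versus-$\contfunc_{c}$ caveat; I would state this reduction explicitly at the start. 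The only care required is that the self-similar energy measures $\Gamma_{\mathcal{E}_{p}}\langle\,\cdot\,\rangle$ of Proposition \ref{prop.pEMss} a priori live on a different footing than the Korevaar--Schoen limits $\breve{\Gamma}_{p}^{\mathrm{KS}}\langle\,\cdot\,\rangle$, but since both satisfy the same functional identity \eqref{eq.functional}/\eqref{CGQsspem.defn} with the \emph{same} form $\breve{\mathcal{E}}_{p}^{\mathrm{KS}}\asymp\mathcal{E}_{p}$ tested against the dense class $\mathcal{F}_{p}$, the uniqueness part of Riesz--Markov--Kakutani forces any ambiguity to vanish; I expect no new difficulty here beyond bookkeeping.
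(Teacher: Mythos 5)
Your proposal is correct and takes essentially the same approach as the paper: the paper proves this theorem precisely by repeating the argument of Theorem \ref{thm.Kigsspem} (limit of \eqref{KSpem.characterize} along $\{n_j\}$ via \eqref{KS-CGQ.compatible}, Riesz--Markov--Kakutani through the density of $\mathcal{F}_{p}$ in $(\contfunc(K),\norm{\,\cdot\,}_{\sup})$ from Corollary \ref{cor.regular}, then transporting (a)--(d) through the compatibility limit), with exactly the simplification you identify, namely that $\mathcal{F}_{p}\subseteq\contfunc(K)$ and compactness of $K$ collapse the domain distinctions of Section \ref{sec.pEMKS}. One small correction to your closing remark, which does not affect the proof: $\Gamma_{\mathcal{E}_{p}}\langle\,\cdot\,\rangle$ and $\breve{\Gamma}_{p}^{\mathrm{KS}}\langle\,\cdot\,\rangle$ satisfy \eqref{eq.functional} and \eqref{CGQsspem.defn} with the \emph{different} (merely comparable, not equal) forms $\mathcal{E}_{p}$ and $\breve{\mathcal{E}}_{p}^{\mathrm{KS}}$, so Riesz uniqueness does not identify the two families of measures — but no such identification is claimed or needed in the theorem.
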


\appendix
%===== WM for the variant of the KS family of kernels =====
\setcounter{section}{0} %% Should be deleted?
\setcounter{theorem}{0} %% Should be deleted?
\setcounter{equation}{0}
\renewcommand{\thesection}{\Alph{section}} %% Should be deleted?
\section{Appendix: An alternative family of kernels in Example \ref{ex.KS}}\label{app:WMdistance}
%\addcontentsline{toc}{section}{Appendix}
%%%
In this Appendix, we give a simple sufficient condition for $B_{p,\infty}^{\bm{k}^{\#}} = \mathrm{KS}^{1,p}$ and \hyperref[KSwm]{\textup{(WM)$_{p,\bm{k}^{\#}}$}} where $\bm{k}^{\#} = \{ k^{\#}_{r} \}_{r > 0}$ is defined by \eqref{KSkernel.dist}. 
As in Example \ref{ex.KS}, we fix $p \in (1,\infty)$ and assume that $(K,d)$ is a connected separable metric space with $\#K \ge 2$ and that $m$ is a Borel measure on $K$ with full topological support satisfying $m(B_{d}(x,r)) < \infty$ for any $(x,r) \in K \times (0,\infty)$.  
\begin{prop}\label{prop:KSidentify}
	Let $s \in (0,\infty)$ and let $\bm{k}^{s} = \{ k_{r}^{s} \}_{r > 0}$ be the family of kernels defined by \eqref{KSkernel}. 
	Assume that $m$ is volume doubling, and that the following Poincar\'{e}-type inequality holds: there exist $C \in (0,\infty)$ and $\lambda \in [1,\infty)$ such that for any $u \in B^{\bm{k}^{s}}_{p,\infty}$ and any $(z,r) \in K \times (0,\infty)$, 
	\begin{align}\label{e:KSPI}
		&\int_{B_{d}(z,r)}\abs{u - u_{B_{d}(z,r)}}^{p}\,dm \nonumber \\
		&\quad\le Cr^{ps}\liminf_{\delta \downarrow 0}\int_{B_{d}(z,\lambda r)}\fint_{B_{d}(x,\delta)}\frac{\abs{u(x) - u(y)}^{p}}{\delta^{ps}}\,m(dy)m(dx). 
	\end{align}
	Then there exists $C' \in [1,\infty)$ such that for any $u \in B^{\bm{k}^{s}}_{p,\infty}$, 
	\begin{align}\label{e:KScomparable}
		&\sup_{r > 0}\int_{K}\fint_{B_{d}(x,r)}\frac{\abs{u(x)-u(y)}^p}{d(x,y)^{ps}}\,m(dy)m(dx) \nonumber \\
		&\quad \le C'\liminf_{r \downarrow 0}\int_{K}\fint_{B_{d}(x,r)}\frac{\abs{u(x)-u(y)}^{p}}{r^{ps}}\,m(dy)m(dx).   
	\end{align}
	In particular, if \eqref{e:KSPI} with $s = s_{p}$ holds, then the family of kernels $\bm{k}^{\#} = \{ k^{\#}_{r} \}_{r > 0}$ defined by \eqref{KSkernel.dist} satisfies $B_{p,\infty}^{\bm{k}^{\#}} = \mathrm{KS}^{1,p}$ and \hyperref[KSwm]{\textup{(WM)$_{p,\bm{k}^{\#}}$}}. 
\end{prop}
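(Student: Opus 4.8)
The plan is to derive the final ``in particular'' assertion from the main comparison \eqref{e:KScomparable} (applied with $s=s_{p}$) together with one elementary kernel inequality, so I first record two observations. Note that the left-hand side of \eqref{e:KScomparable} is exactly $\sup_{r>0}J_{p,r}^{\bm{k}^{\#}}(u)$ and its right-hand side is $C'\liminf_{r\downarrow0}J_{p,r}^{\bm{k}^{s}}(u)$, because $k_{r}^{\#}(x,y)=\indicator{B_{d}(x,r)}(y)/(d(x,y)^{ps}m(B_{d}(x,r)))$ and $k_{r}^{s}(x,y)=\indicator{B_{d}(x,r)}(y)/(r^{ps}m(B_{d}(x,r)))$. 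Second, since $d(x,y)<r$ on $B_{d}(x,r)$ we have $d(x,y)^{ps}\le r^{ps}$ there, giving the pointwise bound $k_{r}^{s}\le k_{r}^{\#}$ and hence
\[
J_{p,r}^{\bm{k}^{s}}(u)\le J_{p,r}^{\bm{k}^{\#}}(u)\qquad\text{for every }r>0\text{ and }u\in L^{p}(K,m).
\]

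With these in hand the final statement is immediate. Take $s=s_{p}$, so that $B_{p,\infty}^{\bm{k}^{s_{p}}}=\mathrm{KS}^{1,p}$ by definition (Example \ref{ex.KS}). If $u\in\mathrm{KS}^{1,p}$ then $\liminf_{r\downarrow0}J_{p,r}^{\bm{k}^{s_{p}}}(u)\le\sup_{r>0}J_{p,r}^{\bm{k}^{s_{p}}}(u)<\infty$, so \eqref{e:KScomparable} yields $\sup_{r>0}J_{p,r}^{\bm{k}^{\#}}(u)<\infty$, i.e.\ $u\in B_{p,\infty}^{\bm{k}^{\#}}$; conversely, if $u\in B_{p,\infty}^{\bm{k}^{\#}}$ then the kernel comparison gives $\sup_{r>0}J_{p,r}^{\bm{k}^{s_{p}}}(u)\le\sup_{r>0}J_{p,r}^{\bm{k}^{\#}}(u)<\infty$, whence $u\in\mathrm{KS}^{1,p}$. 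This proves $B_{p,\infty}^{\bm{k}^{\#}}=\mathrm{KS}^{1,p}$ with comparable norms. For the weak monotonicity, let $u\in B_{p,\infty}^{\bm{k}^{\#}}=\mathrm{KS}^{1,p}$, so that \eqref{e:KScomparable} applies to $u$; chaining it with the kernel comparison taken under $\liminf$ gives
\[
\sup_{r>0}J_{p,r}^{\bm{k}^{\#}}(u)\le C'\liminf_{r\downarrow0}J_{p,r}^{\bm{k}^{s_{p}}}(u)\le C'\liminf_{r\downarrow0}J_{p,r}^{\bm{k}^{\#}}(u),
\]
which is precisely \textup{(WM)$_{p,\bm{k}^{\#}}$} with constant $C'$.

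It remains to establish \eqref{e:KScomparable} itself, the heart of the matter, which I would split into two estimates. The first is purely geometric and passes from the distance-weighted functional to the unweighted one on the $\sup$-scale: decomposing $B_{d}(x,r)$ into the dyadic annuli $A_{j}(x)=B_{d}(x,2^{-j}r)\setminus B_{d}(x,2^{-j-1}r)$, on which $d(x,y)\asymp2^{-j}r$, and bounding each annular integral by the full ball integral over $B_{d}(x,2^{-j}r)$, I would invoke the \emph{reverse} volume doubling property --- available because $(K,d)$ is connected with $\#K\ge2$ and $m$ is volume doubling --- to extract a geometric factor $c^{-j}$ (with $c>1$) from the ratio $m(B_{d}(x,2^{-j}r))/m(B_{d}(x,r))$ at scales below a fixed fraction of the diameter (the finitely many top scales being absorbed into the constant). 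Summing the resulting geometric series yields $J_{p,r}^{\bm{k}^{\#}}(u)\lesssim\sum_{j\ge0}c^{-j}J_{p,2^{-j}r}^{\bm{k}^{s}}(u)$, hence $\sup_{r>0}J_{p,r}^{\bm{k}^{\#}}(u)\lesssim\sup_{\rho>0}J_{p,\rho}^{\bm{k}^{s}}(u)$. The second estimate replaces this $\sup$ by $\liminf$, i.e.\ proves \textup{(WM)$_{p,\bm{k}^{s}}$}, and is where \eqref{e:KSPI} enters: fixing $\rho$, covering $K$ by a $\rho$-net $\{B_{d}(z,\rho)\}_{z}$ of bounded overlap (from volume doubling), splitting $\abs{u(x)-u(y)}^{p}\lesssim\abs{u(x)-u_{B_{d}(z,2\rho)}}^{p}+\abs{u(y)-u_{B_{d}(z,2\rho)}}^{p}$ for $x,y$ in a common ball, and applying \eqref{e:KSPI} on each $B_{d}(z,2\rho)$ converts $J_{p,\rho}^{\bm{k}^{s}}(u)$ into a sum of localized $\liminf_{\delta\downarrow0}$-energies over the dilated balls $B_{d}(z,2\lambda\rho)$; superadditivity of $\liminf$ and the bounded overlap of these enlarged balls then collapse the sum into $C\liminf_{\delta\downarrow0}J_{p,\delta}^{\bm{k}^{s}}(u)$. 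Composing the two estimates is \eqref{e:KScomparable}.

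The main obstacle is the second estimate, the upgrade from $\sup$ to $\liminf$ via \eqref{e:KSPI}. The delicate points are that for non-compact $K$ the covering is infinite, so the bounded-overlap bookkeeping and the inequality $\sum_{z}\liminf_{\delta}(\cdots)\le\liminf_{\delta}\sum_{z}(\cdots)$ must be justified as monotone limits of finite subsums, and that recovering the \emph{global} $\liminf_{\delta\downarrow0}J_{p,\delta}^{\bm{k}^{s}}(u)$ from the localized energies on $B_{d}(z,2\lambda\rho)$ requires the overlap multiplicity of $\{B_{d}(z,2\lambda\rho)\}_{z}$ to be bounded uniformly in $\rho$, which is exactly where volume doubling is used quantitatively. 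By contrast, the first, geometric estimate is routine once reverse volume doubling is invoked, and the deduction of the final statement from \eqref{e:KScomparable} is elementary.
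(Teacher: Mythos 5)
Your proof is correct and follows essentially the same route as the paper's: the dyadic-annulus decomposition with reverse volume doubling to control the $\bm{k}^{\#}$-functional, and a maximal-net covering combined with \eqref{e:KSPI}, bounded overlap of the dilated balls, and superadditivity of $\liminf$ to reach the global $\liminf$ in \eqref{e:KScomparable}. The only differences are organizational: you factor the argument through the intermediate weak monotonicity \textup{(WM)$_{p,\bm{k}^{s}}$} and spell out the concluding ``in particular'' deduction (which the paper leaves implicit), whereas the paper runs both ingredients in a single chain, applying \eqref{e:KSPI} within each annulus scale.
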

\begin{rmk}
	If $(K,d,m)$ supports the $p$-Poincar\'{e} inequality in terms of upper gradients, then the estimate \eqref{e:KScomparable} with $s = 1$ follows from \cite[Corollaries 6.3 and 6.5]{LPZ22+}. 
	For $p$-conductively homogeneous compact metric spaces and post-critically finite self-similar sets as in the settings of Sections \ref{sec.Kig} and \ref{sec.CGQ}, we can verify \eqref{e:KScomparable} with $s = s_{p}$; see Propositions \ref{prop.PI-Kig}, \ref{prop.Rp-PI} and \ref{prop.lower-Rp}.  
\end{rmk}
\begin{proof}[Proof of Proposition \ref{prop:KSidentify}]
	Since $m$ is volume doubling and $(K,d)$ is connected, we have the following \emph{reverse volume doubling} property of $m$ (see, e.g., \cite[Corollary 3.8]{BB11} or \cite[Exercise 13.1]{Hei}): there exist $c_{1},\alpha \in (0,\infty)$ depending only on the doubling constant $C_{\mathrm{D}}$ of $m$ such that 
	\begin{equation}\label{e:RVD}
		\frac{m(B_{d}(x,r))}{m(B_{d}(x,R))} \le c_{1}\Bigl(\frac{r}{R}\Bigr)^{\alpha} \quad \text{for any $x \in K$ and any $0 < r \le R < \diam(K,d)$.}
	\end{equation}
	Let $r \in (0,\infty)$. We have 
	\begin{align*}
		&\int_{K}\fint_{B_{d}(x,r)}\frac{\abs{u(x) - u(y)}^{p}}{d(x,y)^{ps}}\,m(dy)m(dx) \\
		&\overset{\eqref{e:RVD}}{\le} \sum_{j = 0}^{\infty} \frac{c_{1}}{2^{\alpha j}} \int_{K}\int_{B_{d}(x,2^{-j}r) \setminus B_{d}(x,2^{-(j + 1)}r)}\frac{\abs{u(x) - u(y)}^{p}}{d(x,y)^{ps}m(B(x,2^{-j}r))}\,m(dy)m(dx). 
	\end{align*}
	Let $j \in \mathbb{N}\cup\{0\}$, and let $N_{j} \subset K$ be a \emph{$2^{-j}r$-net} in $(K,d)$, i.e., a maximal subset of $K$ such that $d(z_{1},z_{2})\geq 2^{-j}r$ for any $z_{1},z_{2}\in N_{j}$ with $z_{1}\not=z_{2}$; such $N_{j}$ exists and is countable since $B_{d}(x,R)$ is totally bounded for any $(x,R) \in K \times (0,\infty)$ thanks to the metric doubling property of $d$ implied by the volume doubling property of $m$. 
	Then we see that 
	\begin{align*}
		&\int_{K}\int_{B_{d}(x,2^{-j}r) \setminus B_{d}(x,2^{-(j + 1)}r)}\frac{\abs{u(x) - u(y)}^{p}}{d(x,y)^{ps}m(B_{d}(x,2^{-j}r))}\,m(dy)m(dx) \\
		&\le \sum_{z \in N_{j}}\int_{B_{d}(z,2^{-j}r)}\int_{B_{d}(x,2^{-j}r) \setminus B(x,2^{-(j + 1)}r)}\frac{\abs{u(x) - u(y)}^{p}}{d(x,y)^{ps}m(B_{d}(x,2^{-j}r))}\,m(dy)m(dx) \\
		&\le \sum_{z \in N_{j}}\frac{C_{\mathrm{D}}^{2}(2^{-(j+1)}r)^{-ps}}{m(B_{d}(z,2^{-j}r))}\int_{B_{d}(z,2^{-j}r)}\int_{B_{d}(x,2^{-j}r)}\abs{u(x) - u(y)}^{p}\,m(dy)m(dx) \\
		&\le \sum_{z \in N_{j}}\frac{2^{p}C_{\mathrm{D}}^{2}(2^{-(j+1)}r)^{-ps}}{m(B_{d}(z,2^{-j}r))}\iint_{B_{d}(z,2^{-j+1}r)^{2}}\abs{u(x) - u_{B_{d}(z,2^{-j+1}r)}}^{p}\,m(dy)m(dx) \\
		&\overset{\eqref{e:KSPI}}{\le} 2^{p + 2ps}C_{\mathrm{D}}^{3}\sum_{z \in N_{j}}\liminf_{\delta \downarrow 0}\int_{B_{d}(z,\lambda 2^{-j+1}r)}\fint_{B_{d}(x,\delta)}\frac{\abs{u(x) - u(y)}^{p}}{\delta^{ps}}\,m(dy)m(dx) \\
		&\le c_{2}\liminf_{\delta \downarrow 0}\int_{K}\fint_{B_{d}(x,\delta)}\frac{\abs{u(x) - u(y)}^{p}}{\delta^{ps}}\,m(dy)m(dx), 
	\end{align*}
	where $c_{2}$ depends only on $p,s,\lambda,C_{\mathrm{D}}$ and $C$ in \eqref{e:KSPI}. 
	Combining the above estimates, we obtain 
	\begin{align*}
		&\int_{K}\fint_{B_{d}(x,r)}\frac{\abs{u(x) - u(y)}^{p}}{d(x,y)^{ps}}\,m(dy)m(dx) \\ 
		&\le c_{1}c_{2}\sum_{j = 0}^{\infty}2^{-\alpha j}\liminf_{\delta \downarrow 0}\int_{K}\fint_{B_{d}(x,\delta)}\frac{\abs{u(x) - u(y)}^{p}}{\delta^{ps}}\,m(dy)m(dx) \\
		&= c_{1}c_{2}(1 - 2^{-\alpha})^{-1}\liminf_{\delta \downarrow 0}\int_{K}\fint_{B_{d}(x,\delta)}\frac{\abs{u(x) - u(y)}^{p}}{\delta^{ps}}\,m(dy)m(dx),
	\end{align*}
	which shows \eqref{e:KScomparable}. 
\end{proof}

\begin{acknowledgement}
	The authors would like to thank Mathav Murugan for his comments on an earlier version of this paper. 
    Naotaka Kajino was supported in part by JSPS KAKENHI Grant Numbers JP21H00989, JP22H01128, JP23K22399. Ryosuke Shimizu was supported in part by JSPS KAKENHI Grant Numbers JP20J20207, JP23KJ2011. This work was supported by the Research Institute for Mathematical Sciences, an International Joint Usage/Research Center located in Kyoto University.
\end{acknowledgement}

\end{document}